\newcommand{\E}{\ensuremath{\mathbb{E}}} 
\newcommand{\dP}{\ensuremath{\mathbb{P}}} 
\newcommand{\dR}{\ensuremath{\mathbb{R}}} 
\newcommand{\R}{\dR}
\newcommand{\1}{\mathbbm 1}
\newcommand{\cX}{\mathcal{X}}
\newcommand{\cY}{\mathcal{Y}}
\newcommand{\cP}{\mathcal{P}}
\newcommand{\ind}{\mathrm{1}\hskip -3.2pt \mathrm{I}} 
\newcommand{\card}{\mathrm{card}}
\newcommand{\diam}{\mathrm{diam}}
\newtheorem{theorem}{Theorem}
\newtheorem{proposition}[theorem]{Proposition}
\newtheorem{lemma}[theorem]{Lemma}
\newtheorem{corollary}[theorem]{Corollary}
\theoremstyle{definition}
\theoremstyle{remark} \newtheorem{remark}{Remark}
\begin{document}

\title{Combinatorial optimization over two random point sets}

\author{Franck Barthe and Charles Bordenave}
\maketitle

\begin{abstract}
We analyze combinatorial optimization problems over a pair of random point sets $(\cX,\cY)$ in $\R^d$ of equal cardinal. Typical examples include the matching of minimal length, the traveling salesperson tour constrained to alternate between points of each set, or the connected bipartite $r$-regular graph of minimal length. As the cardinal of the sets goes to infinity, we investigate the convergence of such bipartite functionals.
\footnote{\noindent {\em AMS $2000$ Mathematics Subject Classification : } 90C27 (Primary); 39B62 ; 60D05 (Secondary).
{\em Keywords : } combinatorial optimization, minimal matching, geometric probability.}
\end{abstract}

\section{Introduction}
This work pertains to the probabilistic study of Euclidean combinatorial optimization
problems.  The starting point in this field is the celebrated theorem of 
 Beardwood, Halton and
Hammersley \cite{BHH} about the traveling salesperson problem. Its ensures that given
a sequence $(X_i)_{i\ge 1}$ of independent random variables on $\R^d$, $d\ge 2$ with 
common law $\mu$ of bounded support, then almost surely 
$$ \lim_{n\to\infty} n^{\frac1d-1}T(X_1,\ldots, X_n)=\beta_d \int f^{1-\frac1d}.$$
Here $\beta_d$ is a constant depending only on the dimension, $f$ is the density of the 
absolutely continuous part of $\mu$ and 
$$T(X_1,\ldots, X_n)=\inf_{\sigma\in \mathcal S_n} \sum_{i=1}^{n-1} |X_{\sigma(i+1)}-X_{\sigma(i)}|+
  |X_{\sigma(1)}-X_{\sigma(n)}|$$
is the length (for the canonical Euclidean distance) of the shorstest tour through the points
$X_1,\ldots, X_n$. In the above formula $\mathcal S_n$ stands for the set of permutations of
$\{1,2,\ldots,n\}$.
Very informally, this result supports the following interpretation:
  when the number of points $n$ is large, for $\mu$ almost every $x$, if the salesperson is at $X_i=x$
  then  the distance 
 to the next point  in the optimal tour  is 
comparable to $ \beta(d) (nf(x))^{-1/d}$ if $f(x)>0$ and of lower order otherwise. 
This should be compared to the fact that the distance from $X_i=x$ to $\{X_j, j\le n \mbox{ and } j \neq i\}$
also stabilizes at the same rate. 

 Later, Papadimitriou
\cite{P78} and Steele \cite{S81} have initiated a general theory of
Euclidean functionals $F(\{X_1,\ldots,X_n\})$ that satisfy almost sure
limits of this type. We refer the reader to the monographs of Steele
\cite{S} and Yukich \cite{Y} for a full treatment of this now mature
theory, and present a short outline. It is convenient to consider
multisets rather than sets, so throughout the paper 
$\{x_1,\ldots,x_n\}$ will stand for a multiset (the elements are
unordered but may be repeated). The umbrella theorem in \cite{Y} puts forward 
the following three features of a functional $F$ on finite multisets of $\R^d$:
\begin{itemize}
 \item $F$ is $1$-homogeneous if it is translation invariant and dilation covariant:
    $$ F(a+\lambda \cX)=\lambda F(\cX)$$
   for all finite multisets $\cX$, all $a\in \R^d$ and $\lambda\in \R^+$.
 \item  The key assumption is subadditivity: $F$ is subadditive if there exists a constant
  $C>0$ such that for all multisets $\cX,\cY$ in the unit cube $[0,1]^d$, 
   $$ F(\cX\cup\cY)\le F(\cX)+F(\cY)+C.$$
 As noted by Rhee in \cite{rhee1}, this assumption implies that there is another constant $C'$ such that
 for all multiset in $[0,1]^d$,
 \begin{equation}\label{eq:good-bound}
   |F( \cX) | \leq C'  \left( \card( \cX) \right)^{1- \frac{1 }{d} }.  
\end{equation}
Hence  the worst case  for $n$ points is at most in $n^{1-\frac1d}$ and the above mentioned theorems
show that the average case is of the same order.
 \item The third important property is smoothness (or regularity). A functional $F$ on finite multisets 
 $\R^d$  is smooth if there is a constant $C''$ such that for all multisets $\cX,\cY,\mathcal Z$ in 
 $[0,1]^d$, it holds
  $$| F(\cX\cup \mathcal Y)-F(\cX\cup \mathcal Z)|  \le C''   \left( \card( \cY)^{1- \frac{1 }{d}} +
   \card( \mathcal Z)^{1- \frac{1 }{d}} \right).$$
\end{itemize}
These three properties are enough to show upper limits for $F$, on the model of the Beardwood, Halton, Hammersley
theorem. To have the full limits, the umbrella theorem of \cite{Y} also requires to check a few more 
properties of a so-called boundary functional associated with $F$. 

 Next, let us present a classical optimization problem which does not enter the above picture.
 Given two multi-subsets of $\R^d$
with the same cardinality, $\mathcal X=\{X_1,\ldots,X_n\}$ and
$\mathcal Y=\{Y_1,\ldots,Y_n\}$, the cost of the minimal bipartite
matching of $\mathcal X$ and $\mathcal Y$ is defined as
  $$ M_1(\mathcal X,\mathcal Y)=\min_{\sigma\in \mathcal S_n} \sum_{i=1}^{n} |X_i-Y_{\sigma(i)}|,$$
  where the minimum runs over all permutations of $\{1,\ldots,n\}$.
  It is well-known that $n^{-1}
M_1\big(\{X_i\}_{i=1}^n, \{Y_i\}_{i=1}^n\big)$ coincides with the
power of the $L_1$-Wasserstein distance between the empirical distributions
$$  W_1\Big(\frac1n \sum_i \delta_{X_i},\frac1n \sum_i \delta_{Y_i}\Big),$$
hence it is easily seen to tend to $0$, for example when $\mu$ has
bounded support. Recall that given two finite measures $\mu_1$, $\mu_1$ on $\mathbb R^d$
with the same total  mass,
$$ W_1(\mu_1,\mu_2)= \inf_{\pi\in\Pi(\mu_1,\mu_2)}  \int_{\R^d\times \R^d} |x-y| \, d\pi(x,y) ,$$
where $\Pi(\mu_1,\mu_2)$ is the set of measures on $(\mathbb R^d)^2$ having $\mu_1$ as first marginal 
and $\mu_2$ as second marginal (see e.g. \cite{rachev,villani} for more background).
Note that  for all
finite multisets $\cX$, $\cY$ in $[0,1]^d$ with $\card(\cX) =
\card(\cY)$,
$$
M_1( \cX, \cY) \leq \sqrt d \, \card( \cX),
$$
and equality holds for some well-chosen configurations of any cardinal
(all elements in $\cX$ at $(0,\cdots,0)$ and all elements in $\cY$ at
$(1,\cdots,1)$). Hence, an interesting feature of $L$ (as well as
others bipartite Euclidean optimization functionals) is that the
growth bound assumption  \eqref{eq:good-bound} fails, hence it is not subadditive in the above sense.
However Dobri\'c and Yukich have stated the following theorem:
  \begin{theorem}[\cite{DY}] \label{th:main} Let $d\ge 3$ be an integer. Assume that $\mu$ is a probability measure on $\mathbb R^d$ having a bounded support.
 Consider mutually independent random variables $(X_i)_{i\ge 1}$ and $(Y_j)_{j\ge 1}$ having distribution $\mu$.
 Then, almost surely,
$$ \lim_{n}  n^{\frac{1}{d}-1}  M_1\big(\{X_1,\ldots,X_n\}, \{Y_1,\ldots,Y_n\}\big) 
 = \beta_1(d) \int _{\R^d} f^{\frac{d-1}{d}},$$
where $f(x)\, dx$ is the absolutely continuous part of $\mu$ and $\beta_1(d)$ is a constant depending only on the dimension $d$. 
\end{theorem}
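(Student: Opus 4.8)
The functional $M_1$ fails the growth bound \eqref{eq:good-bound}, so it falls outside Yukich's umbrella theorem; the plan is to run the quasi-additive Euclidean functional programme in the form adapted to bipartite matchings. By translation invariance and dilation covariance we may assume $\mathrm{supp}(\mu)\subseteq[0,1]^d$. We \emph{define} $\beta_1(d)$ to be the limit, established below, of $n^{\frac1d-1}\,\E\,M_1(\cX_n,\cY_n)$ when $\cX_n,\cY_n$ are two independent samples of $n$ i.i.d.\ uniform points in $[0,1]^d$; the theorem then asserts that for a general $\mu$ this limit persists with the extra factor $\int f^{(d-1)/d}$, and holds almost surely. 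The quantitative input used throughout is the a priori bound $\E\,M_1(\cX_n,\cY_n)\le C\,n^{1-\frac1d}$ for $d\ge3$ and any $\mu$ of bounded support: since $n^{-1}M_1(\cX_n,\cY_n)=W_1(\mu_n^X,\mu_n^Y)\le W_1(\mu_n^X,\mu)+W_1(\mu,\mu_n^Y)$, it suffices to have the universal rate $\E\,W_1(\mu_n,\mu)\lesssim n^{-\frac1d}$, proved by a dyadic multiscale transport in which the mass imbalance at scale $2^{-k}$, of expected total size $\lesssim 2^{kd/2}n^{-1/2}$, is carried at cost $2^{-k}$ and the resulting sum $\sum_k 2^{k(d/2-1)}n^{-1/2}$ is, for $d\ge3$, dominated by the term at the finest useful scale $k\simeq\frac1d\log_2 n$; in dimension $2$ this loses a $\sqrt{\log n}$, which is the ultimate reason for assuming $d\ge3$.

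For the uniform-case limit I would introduce the boundary (Wasserstein) functional $M_1^B(\cX,\cY)$ on $[0,1]^d$: one matches some pairs $X_i\leftrightarrow Y_{\sigma(i)}$ and pays $\dist(\,\cdot\,,\partial[0,1]^d)$ for every point of $\cX$ or $\cY$ left unmatched, with no constraint $\card\cX=\card\cY$. Then $M_1^B\le M_1$ when $\card\cX=\card\cY$; $M_1^B$ is \emph{superadditive} under partition of the cube into congruent subcubes, because restricting an optimal boundary matching to a subcube $Q$ and rerouting to $\partial Q$ every edge that leaves $Q$ or reaches $\partial[0,1]^d$ only decreases the cost (a straight segment exiting $Q$ crosses $\partial Q$ first); and $M_1$ and $M_1^B$ are \emph{smooth}, an inserted or removed point moving either functional by at most $\diam([0,1]^d)$. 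After Poissonising to make the subcube counts independent, superadditivity together with the scaling $M_1^B(\,\cdot\,;[0,\tfrac1m]^d)=\tfrac1m M_1^B(\,\cdot\,;[0,1]^d)$ makes $n\mapsto n^{\frac1d-1}\,\E\,M_1^B(\cX_n,\cY_n)$ essentially nondecreasing along geometric subsequences, hence convergent to some $\beta_1(d)\in(0,\infty)$ (finiteness from the a priori bound, interpolation between subsequences and de-Poissonisation from smoothness). It remains to check $\E\,\bigl|M_1-M_1^B\bigr|(\cX_n,\cY_n)=o(n^{1-\frac1d})$: an optimal boundary matching sacrifices to $\partial[0,1]^d$ only $O(n^{1-\frac1d})$ points, essentially those within the typical interpoint distance $n^{-\frac1d}$ of the boundary, and these can be re-paired inside that thin shell at total cost $O(n^{1-\frac2d})$; this carries the limit $\beta_1(d)$ over from $M_1^B$ to $M_1$.

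For a general $\mu$, partition $[0,1]^d$ into $m^d$ subcubes $Q_j$ of side $1/m$ and let $N_j^X,N_j^Y$ be the numbers of points of $\cX_n,\cY_n$ in $Q_j$, concentrated around $n\mu(Q_j)$. Upper bound: match $\min(N_j^X,N_j^Y)$ pairs inside each $Q_j$ and carry the $\sum_j|N_j^X-N_j^Y|$ leftovers across the cube at cost $\le\sqrt d\sum_j|N_j^X-N_j^Y|=O(\sqrt{n\,m^d})$, which is $o(n^{1-\frac1d})$ for fixed $m$ because $d\ge3$; conditionally on the counts the within-$Q_j$ points are i.i.d.\ from the renormalised restriction of $\mu$, so the within-cube cost has expectation $\lesssim\frac1m N_j^{1-\frac1d}$ by the universal bound $\E\,W_1(\nu_K,\nu)\lesssim(\diam\nu)\,K^{-\frac1d}$ ($d\ge3$). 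On the subcubes where $\mu$ is far from uniform, whose total $\mu$-mass tends to $0$ as $m\to\infty$ by the Lebesgue density theorem and the absolute continuity of the finite measure $f^{(d-1)/d}\,dx$ (finite since $\mathrm{supp}\,\mu$ is bounded, by H\"older), a H\"older inequality over these cubes bounds the joint contribution by $o(n^{1-\frac1d})$; on the remaining subcubes the uniform-case asymptotics give $\frac1m\beta_1(d)N_j^{1-\frac1d}(1+o(1))$ with $N_j\approx n f(x_j)m^{-d}$, and $\beta_1(d)\,n^{1-\frac1d}m^{-d}\sum_j f(x_j)^{(d-1)/d}\to\beta_1(d)\,n^{1-\frac1d}\int f^{(d-1)/d}$ (Riemann-sum convergence via the martingale convergence theorem and Jensen's inequality). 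The matching lower bound uses $M_1\ge M_1^B\ge\sum_j M_1^B(\,\cdot\,;Q_j)$ by superadditivity together with the uniform-case asymptotics for $M_1^B$ on the good subcubes; samples drawn from the singular part of $\mu$, being asymptotically supported on a Lebesgue-null set, are handled separately and contribute only $o(n^{1-\frac1d})$. Letting $n\to\infty$, then $m\to\infty$, then the exceptional set shrink, yields $n^{\frac1d-1}\,\E\,M_1(\cX_n,\cY_n)\to\beta_1(d)\int f^{(d-1)/d}$.

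The upgrade to almost sure convergence is by concentration: replacing any one of the $2n$ points moves $M_1$ by at most $\diam(\mathrm{supp}\,\mu)$, so McDiarmid's bounded-differences inequality gives $\dP\bigl(|M_1-\E\,M_1|\ge t\bigr)\le 2\exp(-c\,t^2/n)$; with $t=\e\,n^{1-\frac1d}$ this is $2\exp(-c\e^2 n^{1-\frac2d})$, which is summable in $n$ precisely because $d\ge3$, so by Borel--Cantelli $n^{\frac1d-1}(M_1-\E\,M_1)\to0$ almost surely, and the mean asymptotics conclude. The principal obstacle is structural, and it is what forces $d\ge3$: since $M_1$ is not subadditive one must work through the superadditive boundary functional and then prove it asymptotically indistinguishable from $M_1$, and one must show that the cardinality fluctuations $\sum_j|N_j^X-N_j^Y|$ across a partition are negligible -- both requiring $1-\frac1d>\frac12$. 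Of these, the closeness estimate $\E\,|M_1-M_1^B|=o(n^{1-\frac1d})$, which needs a quantitative grip on how many and which points an optimal boundary matching sends to the boundary, is the single most delicate point.
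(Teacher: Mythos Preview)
Your overall architecture matches the paper's: subadditivity gives the upper limit and the constant $\beta_L$, a superadditive boundary functional $M_1^B$ gives the lower limit with its own constant $\beta'_L$, concentration via bounded differences promotes convergence in mean to almost-sure convergence, and the singular part is handled separately. However, the step you yourself flag as ``the single most delicate point'' is a genuine gap, and it is precisely the gap the paper is unable to close.

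The claim $\E\,|M_1-M_1^B|=o(n^{1-1/d})$ on the unit cube is equivalent to $\beta_L=\beta'_L$, and the paper explicitly states that it cannot prove this (``We strongly suspect that $\beta_p(d)=\beta'_p(d)$ but we have not been able to solve this important issue''). Your heuristic --- that an optimal boundary matching sends to $\partial[0,1]^d$ only the $O(n^{1-1/d})$ points lying within distance $n^{-1/d}$ of the boundary --- is not justified. Nothing forces boundary-matched points to be near the boundary: if there is a colour imbalance in a subregion, the boundary functional may prefer to send \emph{interior} excess points to the nearest face rather than match them across the cube, and controlling the total effect of such multiscale imbalances is exactly what is missing. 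The paper therefore proves only the two-sided estimate $\beta'_L\int f^{(d-1)/d}\le\liminf\le\limsup\le\beta_L\int f^{(d-1)/d}$ (Theorem~\ref{th:mainMp}), with equality established only when $\mu$ is uniform on a set or when $f$ is a multiple of an indicator. In other words, the paper quotes Theorem~\ref{th:main} from \cite{DY} but notes a gap in the original proof for non-uniform $f$; your proposal reproduces that gap rather than filling it.

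A secondary issue: your treatment of the singular part in the \emph{lower} bound (``handled separately and contribute only $o(n^{1-1/d})$'') is not enough. Knowing that $\E L(n\mu_s)=o(n^{1-1/d})$ bounds the \emph{upper} limit, but for the lower limit one must show that adding the singular points cannot make the matching cheaper. The paper does this via an explicit \emph{inverse subadditivity} inequality for $M_p$, $p\le 1$ (Lemma~\ref{le:mino-point} and Lemma~\ref{lem:is}): from optimal matchings of $(\cX_1\cup\cX_2,\cY_1\cup\cY_2)$ and of $(\cX_2,\cY_2)$ one constructs a matching of $(\cX_1,\cY_1)$ by following alternating paths, yielding $L(\cX_1,\cY_1)\le L(\cX_1\cup\cX_2,\cY_1\cup\cY_2)+L(\cX_2,\cY_2)+\text{(fluctuation terms)}$. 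This is a non-obvious combinatorial argument that your sketch omits.
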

When $f$ is not the uniform measure on the unit cube, there is an issue in the proof of \cite{DY} that apparently cannot be easily fixed (the problem lies in their Lemma 4.2 which is used  for proving that the $\liminf$ is at least $ \beta_1(d) \int _{\R^d} f^{\frac{d-1}{d}}$). In any case, the proof of Dobri\'c and Yukich is very specific to the bipartite matching as it uses from 
the start the Kantorovich-Rubinstein 
dual representation of the optimal transportation cost. It is not adapted to 
a general treatment of bipartite functionals. The starting point of our work was recent paper
of Boutet de Monvel and Martin \cite{BM} which (independently of \cite{DY}) establishes the 
convergence of the bipartite matching for uniform variables on the unit cube, without using
the dual formulation of the transportation cost. Building on their approach we are able to 
propose a soft approach of bipartite functionals, based on appropriate notions of subadditivity
and regularity. These properties allow to establish upper estimates on upper limits. In order to 
deal with lower limits we adapt to the bipartite setting the ideas of boundary functionals
exposed in \cite{Y}. We are able to explicitly construct such functionals for a class of optimization
problems involving families of graphs with good properties, and to establish full convergence for 
absolutely continuous laws. Finally we introduce a new notion of inverse subadditivity which allows
to deal with singular parts.

This viewpoint sheds a new light on the result of Dobri\'c and Yukich, that we
extend in other respects, by considering power distance costs, and unbounded random variables 
satisfying certain tail assumptions.  Note that in the classical theory of Euclidean functionals, the analogous question for unbounded random variables was answered
in Rhee \cite{rhee} and generalized in \cite{Y}.

Let us illustrate our results in the case of the bipartite matching with power distance cost : given $p >0$ and two multi-subsets of $\R^d$, $\mathcal X=\{X_1,\ldots,X_n\}$ and
$\mathcal Y=\{Y_1,\ldots,Y_n\}$, define
  $$ M_p(\mathcal X,\mathcal Y)=\min_{\sigma\in \mathcal S_n} \sum_{i=1}^{n} |X_i-Y_{\sigma(i)}|^p,$$
  where the minimum runs over all permutations of $\{1,\ldots,n\}$. Note that we have
the same result for the bipartite travelling salesperson problem, and that our generic approach puts forward
key properties that allow to establish similar facts for other functionals.
As mentioned in the title, our results apply to relatively high dimension. More precisely, if the length  of 
edges are counted to a power $p$, our study applies to dimensions $d>2p$ only.

 \begin{theorem} \label{th:mainMp} Let $0 < 2p < d$. Let $\mu$ be a probability measure on $\mathbb R^d$ with absolutely continuous part $f(x)\, dx$. We assume  that  for some $\alpha >   \frac{4dp}{ d -2p}$,
$$
\int |x|^{\alpha} d \mu (x) < +\infty.
$$
 Consider mutually independent random variables $(X_i)_{i\ge 1}$ and $(Y_j)_{j\ge 1}$ having distribution $\mu$. Then
 there are positive constants    $\beta_p(d),\beta'_p(d) $ depending only on $(p,d)$ such that 
 the following convergence holds almost surely  
\begin{eqnarray*}
 \limsup_{n}  n^{\frac{p}{d}-1}  M_p \big(\{X_1,\ldots,X_n\}, \{Y_1,\ldots,Y_n\}\big) 
& \leq &\beta_p(d) \int _{\R^d} f^{1-\frac{p}{d}}, \\
 \liminf_{n}  n^{\frac{p}{d}-1}  M_p \big(\{X_1,\ldots,X_n\}, \{Y_1,\ldots,Y_n\}\big) 
& \geq & \beta'_p(d) \int _{\R^d} f^{1-\frac{p}{d}}. \nonumber
\end{eqnarray*}
Moreover 
$$
 \lim_{n}  n^{\frac{p}{d}-1}  M_p \big(\{X_1,\ldots,X_n\}, \{Y_1,\ldots,Y_n\}\big) 
=  \beta_p(d) \int _{\R^d} f^{1-\frac{p}{d}}
$$
provided one of the following hypothesis is verified:
\begin{itemize}
	\item $\mu$  is the uniform distribution over a bounded set $\Omega\subset \R^d$ with positive Lebesgue measure. 
	\item $d\in\{1,2\}$, $p\in(0,d/2)$ or $d\ge 3$, $p\in (0,1]$, and $f$ is up to a multiplicative constant the indicator function over a bounded set $\Omega\subset \R^d$  with positive Lebesgue measure. 
\end{itemize}	
\end{theorem}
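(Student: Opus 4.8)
The plan is to deduce Theorem~\ref{th:mainMp} from a general machinery of bipartite Euclidean functionals built on three ingredients: a suitable \emph{bipartite subadditivity}, a \emph{regularity/smoothness} estimate, and — for the lower bounds and the exact limits — a \emph{boundary functional} and a notion of \emph{inverse subadditivity}. Concretely, set $M_p(\cX,\cY)$ to be the functional in the statement, and check the structural axioms: $p$-homogeneity ($M_p(a+\lambda\cX,a+\lambda\cY)=\lambda^p M_p(\cX,\cY)$) is immediate, and the key point is that partitioning the cube $[0,1]^d$ into $m^d$ subcubes of side $1/m$ and matching within each subcube (after equalizing cardinalities by a cheap correction, whose cost is controlled because each subcube has small diameter $\sqrt d/m$) gives $M_p(\cX,\cY)\le \sum_{Q} M_p(\cX\cap Q,\cY\cap Q)+(\text{error})$, where the error is where the failure of the classical growth bound \eqref{eq:good-bound} must be absorbed; this is precisely the ``soft'' replacement for subadditivity alluded to in the introduction. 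One then runs the standard self-similarity argument: apply this recursively, take expectations, and use a Poissonization plus concentration (Azuma/Efron--Stein on the $2n$ independent points) to pass from $\E M_p$ to the almost sure statement.

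Next I would handle the $\limsup$ and the integral $\int f^{1-p/d}$. Writing $\mu = f\,dx + \mu_s$, approximate $f$ from above and below by step functions on a fine grid; on each grid cell where $f\approx c$, the two point sets restricted to that cell are (after Poissonization) like uniform samples of comparable size $\approx c\lambda$, so the local contribution is $\approx \beta_p(d)(c)^{-p/d}\cdot(c\,\mathrm{vol}(Q))\cdot(\text{scaling})$, and summing over cells produces the Riemann sum for $\beta_p(d)\int f^{1-p/d}$. The regularity/smoothness estimate is what guarantees that adding or deleting the $o(n)$ points that fall in cells with small $f$, or that come from the singular part $\mu_s$, changes $n^{p/d-1}M_p$ negligibly; this is where the moment hypothesis $\int|x|^\alpha\,d\mu<\infty$ with $\alpha>4dp/(d-2p)$ enters, to control the points at large scale (truncate at radius $R_n$, bound the cost of far-away points using $M_p(\cX,\cY)\le \mathrm{const}\cdot\mathrm{diam}^p\cdot\card$, and optimize $R_n$ against the tail). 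For the $\liminf$ with a generic absolutely continuous $f$, I would invoke the bipartite boundary functional $M_p^{\mathrm B}$: it is superadditive, it differs from $M_p$ by a smoothness-type error, and by the same grid argument its rescaled mean converges, yielding $\liminf\ge\beta'_p(d)\int f^{1-p/d}$ with a possibly different constant.

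For the \emph{exact} limit ($\beta_p=\beta'_p$) under the two special hypotheses, the point is to show the boundary functional and the ordinary functional share the same limiting constant. In the uniform-on-$\Omega$ case one tiles $\Omega$ from inside by translates of a small cube and uses that, for the uniform law on a cube, $\E M_p$ and $\E M_p^{\mathrm B}$ agree to leading order (a clean self-similarity on the cube, exactly as in Boutet de Monvel--Martin \cite{BM} for $p=1$, $d$ large); the ``new notion of inverse subadditivity'' is what lets one also handle the leftover boundary layer and, more importantly, conclude even when $\Omega$ is an arbitrary bounded set of positive measure rather than a cube, since inverse subadditivity bounds $M_p$ of a union \emph{from below} by a sum of local $M_p$'s, pinning the constant from both sides. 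In the second regime ($d\le 2$ with $p<d/2$, or $d\ge 3$ with $p\le 1$) the edge count is linear in $n$, so one has the extra leverage (triangle inequality / concavity of $t\mapsto t^p$ for $p\le 1$, and in low dimension direct combinatorial matching bounds) needed to upgrade the soft subadditivity to genuine two-sided control. \textbf{The main obstacle} I anticipate is exactly the one the introduction flags in \cite{DY}: making the local-to-global passage uniform over cells where $f$ is small or highly variable, i.e.\ proving that the error terms in the bipartite subadditivity and in the boundary approximation are genuinely $o(n^{1-p/d})$ rather than merely $O(n^{1-p/d})$ — this requires the regularity lemma to be quantitative enough that the constants do not blow up as $f\to 0$, and it is here that the precise exponent $\alpha>4dp/(d-2p)$ is calibrated.
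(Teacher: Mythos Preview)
Your overall architecture matches the paper: verify $(\mathcal H_p)$, $(\mathcal R_p)$, $(\mathcal S_p)$ for $M_p$, Poissonize, run a dyadic subadditivity argument on the cube to get $\beta_p(d)$, then grid-approximate $f$ for the $\limsup$, use a superadditive boundary functional for the $\liminf$ (constant $\beta'_p(d)$), and apply Azuma-type concentration to pass to almost sure statements. One calibration: the exponent $\alpha>4dp/(d-2p)$ is not primarily for the truncation in the upper bound (that needs only $\alpha>2dp/(d-2p)$) but for the Borel--Cantelli step in the concentration argument, where one conditions on $\max_i|Z_i|\le n^{1/\delta}$ and needs both the tail probability and the conditional Azuma bound to be summable.

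The genuine gap is your mechanism for the \emph{exact} limit. You plan ``to show the boundary functional and the ordinary functional share the same limiting constant'', i.e.\ $\beta_p(d)=\beta'_p(d)$; but the paper does \emph{not} prove this --- it is explicitly left open. Instead, for $\mu$ uniform on $\Omega$, the exact limit comes from an elementary complement trick (Theorem~\ref{th:lowerset}) that bypasses the boundary functional entirely: embed $\Omega\subset[0,1]^d$, apply $(\mathcal S_p)$ to the partition $\Omega\cup\Omega^c$ to get
\[
\E L(n\1_{[0,1]^d}) \le \E L(n\1_\Omega)+\E L(n\1_{\Omega^c})+o\big(n^{1-p/d}\big),
\]
and subtract: the left side has limit $\beta_L$ (cube case), and $\limsup n^{p/d-1}\E L(n\1_{\Omega^c})\le \beta_L\,\mathrm{Vol}(\Omega^c)$ by the already-established upper bound, so $\liminf n^{p/d-1}\E L(n\1_\Omega)\ge \beta_L(1-\mathrm{Vol}(\Omega^c))=\beta_L\,\mathrm{Vol}(\Omega)$, matching the $\limsup$. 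This works precisely because $\Omega\mapsto\mathrm{Vol}(\Omega)$ is additive, and it is exactly why the argument does \emph{not} extend to general densities (since $f\mapsto\int f^{1-p/d}$ is not additive). You also misplace the role of inverse subadditivity: it is not used for boundary layers or for passing from cubes to arbitrary $\Omega$, but solely to strip off the singular part $\mu_s$ in the second bullet. One takes $(\cX_1,\cY_1)\sim n\mu_{ac}$ and $(\cX_2,\cY_2)\sim n\mu_s$ in the inequality $L(\cX_1,\cY_1)\le L(\cX_1\cup\cX_2,\cY_1\cup\cY_2)+L(\cX_2,\cY_2)+\cdots$, so that the known limit for the indicator $\mu_{ac}$ and the vanishing of $n^{p/d-1}\E L(n\mu_s)$ give the lower bound for $\mu$. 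The proof of this inverse subadditivity runs via alternating paths in the union of two matchings and uses $(a+b)^p\le a^p+b^p$; \emph{that} is the reason for the restriction $p\le 1$, not your ``edge count is linear in $n$''.
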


Our constant $\beta'(d)$ has an explicit expression in terms of the cost of an optimal boundary matching for the uniform measure on $[0,1]^d$ (see Lemma \ref{lem:liminfBM}). We strongly suspect that $\beta_p(d) = \beta'_p(d) $ but we have not been able to solve this important issue. Also, assuming only $\alpha>\frac{2dp}{ d -2p}$, we can establish convergence in probability.
A basic concentration inequality implies that if $\mu$ has bounded support the convergence holds also in $L^q$ for all $q \geq 1$.

The paper is organized as follows: Section~\ref{sec:general} presents the key properties for
 bipartite functionals (homogeneity, subadditivity and regularity) and gathers
useful preliminary statements. Section~\ref{sec:cube} establishes the convergence
for uniform samples on the cube. Section \ref{sec:upper} proves upper bounds on the upper
limits. These two  sections essentially rely on classical subadditive methods, nevertheless
a careful analysis is needed to control the differences of cardinalities of the two 
samples in small domains. 
In Section \ref{sec:ex}, we introduce some examples of bipartite functionals. The lower limits are harder to prove and require a new notion of penalized boundary functionals.
It is however difficult to build an abstract theory there, so in Section~\ref{sec:lower}, we will first present the proof for bipartite
matchings with power distance cost, and put forward a few lemmas which will be useful for other
functionals. We then check that for a natural family of Euclidean combinatorial optimization functionals defined in \S \ref{subsec:combopt}, the lower limit also holds. This family includes the bipartite traveling salesman tour. Finally, Section~\ref{sec:final} mentions possible variants and extensions.

\section{A general setting} \label{sec:general}
Let $\mathcal M_d$ be the set of all finite multisets contained in $\R^d$. We consider a bipartite
functional:
$$L:\mathcal M_d\times \mathcal M_d\to \R^+.$$
Let $p>0$. We shall say that $L$ is $p$-homogeneous if for all multisets $\mathcal X,\mathcal Y$, all $a\in \R^d$
and all $\lambda \ge 0$,
\begin{equation*}\label{eq:Hp} 
 L(a+\lambda \mathcal X, a+\lambda \mathcal Y)=\lambda^p L(\mathcal X,\mathcal Y). \tag{$\mathcal H_p$} 
\end{equation*}
Here $a+\lambda\{x_1,\ldots,x_k\}$ is by definition $\{a+\lambda x_1,\ldots, a+\lambda x_k\}$.
For shortness, we call the above property $(\mathcal H_p)$. Note that a direct consequence is that $L(\emptyset,\emptyset)=0$.

The functional $L$ satisfies the regularity property $(\mathcal R_p)$ if there exists a number $C$ such that for all multisets $\mathcal X,\mathcal Y,
\mathcal X_1,\mathcal Y_1,\mathcal X_2,\mathcal Y_2$, denoting by $\Delta$ the diameter of their union, the following
inequality holds
\begin{equation*}\label{eq:Rp}
L(\mathcal X\cup \cX_1,\cY \cup\cY_1)\le L(\mathcal X\cup \cX_2,\cY \cup\cY_2)+C \Delta^p \big( \mathrm{card}(\cX_1)+
\mathrm{card}(\cX_2)+\mathrm{card}(\cY_1)+\mathrm{card}(\cY_2)\big).\tag{$\mathcal R_p$} 
\end{equation*} 
The above inequality implies in particular an easy size bound: $L(\cX,\cY)\le C \Delta^p (\mathrm{card}(\cX)+
\mathrm{card}(\cY))$ when $L(\emptyset,\emptyset)=0$.

Eventually, $L$ verifies the subbaditivity property $(\mathcal S_p)$ if there exists a number $C$ such that for every 
$k\ge 2$ and all multisets $(\cX_i,\cY_i)_{i=1}^k$, denoting by $\Delta$ the diameter of their union, the following
inequality holds
\begin{equation*}\label{eq:Sp}
L\Big( \bigcup_{i=1}^k \cX_i, \bigcup_{i=1}^k \cY_i\Big) \le \sum_{i=1}^k L(\cX_i,\cY_i)+ C\Delta^p \sum_{i=1}^k 
\Big( 1+\big|\mathrm{card}(\cX_i)-\mathrm{card}(\cY_i)\big|\Big).\tag{$\mathcal S_p$} 
\end{equation*}

\begin{remark}
A less demanding notion of "geometric subadditivity" could be introduced by requiring the above 
inequality only when the multisets $\cX_i\cup\cY_i$ lie in disjoint parallelepipeds (see \cite{Y}
where such a notion is used in order to encompass more complicated single sample functionals). It is 
clear from the proofs that some of our results hold assuming only geometric subadditivity (upper limit
for bounded absolutely continuous laws for example). We will not push this idea further in this paper.  
\end{remark}

We will see later on that suitable extensions of the bipartite matching, of the bipartite traveling salesperson
problem, and of the minimal bipartite spanning tree with bounded maximal degree satisfy all these properties. Our main generic result on  bipartite functionals is the following.

\begin{theorem}
\label{th:mainup}
Let $d>2p>0$ and let $L$ be a bipartite functional on $\R^d$ with the properties $(\mathcal H_p)$, $(\mathcal R_p)$ and $(\mathcal S_p)$. 
Consider  a probability measure $\mu$ on $\mathbb R^d$ such that there exists $\alpha>\frac{4dp}{d-2p}$ with
$$\int |x|^\alpha d\mu(x)<+\infty.$$  Consider mutually independent random variables $(X_i)_{i\ge 1}$ and $(Y_j)_{j\ge 1}$ having distribution $\mu$.
Let $f$  be a density function for the absolutely continuous part of $\mu$, 
then, almost surely, 
   \begin{equation*}
   \limsup_{n\to \infty} \frac{L(\{X_1 , \cdots, X_n\}, \{Y_1, \cdots, Y_n\} )}{n^{1-\frac{p}{d}}}\le \beta_L \int f^{1-\frac{p}{d}} ,
   \end{equation*}   
for some constant $\beta_L$ depending only on $L$.   Moreover, if $\mu$ is the uniform distribution over a bounded set $\Omega$  with positive Lebesgue measure, then there is equality: almost surely,
   \begin{equation*}
   \lim_{n\to \infty} \frac{L(\{X_1 , \cdots, X_n\}, \{Y_1, \cdots, Y_n\} )}{n^{1-\frac{p}{d}}} =  \beta_L \mathrm{Vol}(\Omega)^{\frac p d}.
   \end{equation*}   
\end{theorem}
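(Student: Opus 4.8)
The plan is to run the subadditive Euclidean functional machinery in its bipartite form, carefully tracking the extra error terms produced by the cardinality mismatches $|\card(\cX_i)-\card(\cY_i)|$ in $(\mathcal S_p)$; the hypothesis $d>2p$ enters precisely to make these errors of lower order than $n^{1-p/d}$. I would treat first the uniform law on the cube, then bounded absolutely continuous laws (together with the singular part), and finally remove the boundedness assumption.

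\emph{The cube.} For $\cX=\{U_1,\dots,U_n\}$, $\cY=\{V_1,\dots,V_n\}$ i.i.d.\ uniform on $[0,1]^d$ put $\phi(n)=\E L(\cX,\cY)$. By $(\mathcal R_p)$ with $\Delta\le\sqrt d$, changing one of the $2n$ sample points alters $L$ by $O(1)$, so McDiarmid's inequality gives $\dP(|L(\cX,\cY)-\phi(n)|>\e n^{1-p/d})\le 2\exp(-c\e^2 n^{1-2p/d})$, which is summable since $1-\tfrac{2p}{d}>0$; by Borel--Cantelli it suffices to show $\phi(n)/n^{1-p/d}\to\beta_L$. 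Partitioning $[0,1]^d$ into $m^d$ subcubes of side $1/m$, then using $(\mathcal S_p)$, equalizing cardinalities inside each subcube by $(\mathcal R_p)$, rescaling by $(\mathcal H_p)$ and taking expectations, one gets
$$\phi(n)\ \le\ m^{d-p}\,\phi\!\left(\lceil n m^{-d}\rceil\right)\ +\ C\bigl(m^d+m^{d/2}\sqrt n\,\bigr),$$
the $m^{d/2}\sqrt n$ term being the expected total mismatch $\sum_{\text{subcubes}}|\card(\cX_i)-\card(\cY_i)|\approx m^d\sqrt{n/m^d}$. Divided by $n^{1-p/d}$ the prefactor becomes $1$ and the remainder is $o(1)$ for fixed $m$ because $\tfrac pd<\tfrac12$, so a Beardwood--Halton--Hammersley/Steele scale-interpolation argument (again relying on $(\mathcal R_p)$) yields convergence of $\phi(n)/n^{1-p/d}$ to a finite $\beta_L$. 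For $\mu$ uniform on a bounded $\Omega$ with $|\Omega|>0$ one then obtains $\lim\le\beta_L|\Omega|^{p/d}=\beta_L\int f^{1-p/d}$ from the decomposition below, and the matching lower bound by applying $(\mathcal S_p)$ to an inner cube tiling $\bigcup_j Q_j\subset\Omega$ together with the boundary layer $B=\Omega\setminus\bigcup_jQ_j$: this gives $\sum_j L(\cX_{Q_j},\cY_{Q_j})\ge L(\cX,\cY)-L(\cX_B,\cY_B)-(\text{error})$, where the left side has expectation $(1+o(1))\beta_L\bigl|\bigcup_jQ_j\bigr|^{p/d}n^{1-p/d}$, the error is $o(n^{1-p/d})$ for fixed tiling, and $\E L(\cX_B,\cY_B)$ is negligible as $|B|\to0$ by the size estimate of the next paragraph; letting the mesh, then $|B|$, go to $0$ closes the gap.

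\emph{An averaged size estimate, and general bounded laws.} Unlike the one-sample theory there is no deterministic bound $L\le C\ell^p\,\card^{1-p/d}$ (witness all of $\cX$ at one corner of the cube and all of $\cY$ at the opposite one), so \eqref{eq:good-bound} has no bipartite analogue. Iterating $(\mathcal S_p)$ down to subcubes with $O(1)$ points, and using that for i.i.d.\ samples the mismatches at scale $\ell 2^{-i}$ are $O(\sqrt{\text{count}})$ on average, one proves instead the \emph{averaged} bound: for any finite measure $\nu$ supported in a cube $Q$ of side $\ell$ and i.i.d.\ $X_1,\dots,X_N,Y_1,\dots,Y_N\sim\nu/\nu(Q)$, $\E L(\{X_i\},\{Y_i\})\le C\ell^{\,p}N^{1-p/d}$ with $C=C(L,d)$, $d>2p$ being exactly what makes the geometric series of errors converge. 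Now for $\mathrm{supp}\,\mu\subset[-A,A]^d$, tile $[-A,A]^d$ by cubes $Q_j$ of side $2A/N$, apply $(\mathcal S_p)$ and equalize cardinalities (error $\lesssim (2A)^p(N^d+\sqrt{nN^d})=o(n^{1-p/d})$ for fixed $N$). Call $Q_j$ good if $f$ oscillates by at most $\e$ on it and $\mu_s(Q_j)\le\e|Q_j|$ ($\mu_s$ the singular part); by Lebesgue differentiation the bad cubes have total Lebesgue measure $o(1)$ as $N\to\infty$. On a good cube, couple the points with i.i.d.\ uniform ones on $Q_j$ differing on a relative $O(\e)$ fraction, pay via $(\mathcal R_p)$, and invoke the rescaled cube limit: $\E L(\cX_{Q_j},\cY_{Q_j})\le(1+o(1))\beta_L f(c_j)^{1-p/d}|Q_j|\,n^{1-p/d}$. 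On the bad cubes use the averaged estimate and concavity of $t\mapsto t^{1-p/d}$ to bound their total contribution by $C(2A)^p(\text{bad measure}/(2A)^d)^{p/d}n^{1-p/d}=o(n^{1-p/d})$ in the iterated limit. Summing over good cubes produces a Riemann sum tending to $\beta_L\int f^{1-p/d}$; letting $n\to\infty$, then $N\to\infty$, then $\e\to0$ gives $\limsup_n\phi(n)/n^{1-p/d}\le\beta_L\int f^{1-p/d}$, and McDiarmid/Borel--Cantelli upgrades this (and the uniform-case equality) to an almost sure statement.

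\emph{Unbounded support, and the main obstacle.} For general $\mu$ with $\int|x|^\alpha d\mu<\infty$, decompose $\R^d$ into $B_R$ and dyadic annuli $A_k=B_{2^kR}\setminus B_{2^{k-1}R}$ and apply $(\mathcal S_p)$ to the corresponding pieces of $\cX$ and $\cY$. The diameter of the configuration is $\rho_n=\max_i(|X_i|\vee|Y_i|)$, and $\dP(|X_1|>t)\le t^{-\alpha}\int|x|^\alpha d\mu$ gives $\rho_n\le n^{s}$ eventually a.s.\ for any $s>2/\alpha$; thus only $O(\log n)$ annuli are non-empty, $\sum_k|\card(\cX^{(k)})-\card(\cY^{(k)})|=O(\sqrt{n\log n})$ on average, and the decomposition error is $O(n^{sp}\sqrt{n\log n})=o(n^{1-p/d})$ as soon as $sp<\tfrac12-\tfrac pd$, which, since we only need $s>2/\alpha$, is possible exactly when $\alpha>\tfrac{4pd}{d-2p}$ — this is where the moment hypothesis is used, and replacing ``eventually a.s.'' by ``with high probability'' replaces $2/\alpha$ by $1/\alpha$, explaining the weaker exponent $\tfrac{2pd}{d-2p}$ for convergence in probability. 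The annular pieces are controlled by the averaged estimate and Jensen: $\E\sum_{k\ge1}L(\cX^{(k)},\cY^{(k)})\le C\sum_{k\ge1}(2^kR)^p(n\mu(A_k))^{1-p/d}\le C'n^{1-p/d}R^{\,p-\alpha(1-p/d)}$, which $\to0$ for $R=R_n\to\infty$ slowly since $p<\alpha(1-p/d)$; and the central piece $L(\cX\cap B_{R_n},\cY\cap B_{R_n})$ is handled by the bounded-case analysis applied to $\mu|_{B_{R_n}}$, with the number of subcubes chosen as a function of $(R_n,n)$ so the now $R_n$-dependent errors stay $o(n^{1-p/d})$, using $\int_{B_{R_n}}f^{1-p/d}\to\int f^{1-p/d}$. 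I expect the genuine difficulty to be the cube step — obtaining the true limit (not just a finite $\limsup$) of $\phi(n)/n^{1-p/d}$ from $(\mathcal H_p)$, $(\mathcal R_p)$, $(\mathcal S_p)$ alone, i.e.\ the bipartite incarnation of the subadditive Euclidean functional theorem with its scale-interpolation argument — together with the bookkeeping in the unbounded case needed to reach the sharp exponent $\tfrac{4pd}{d-2p}$.
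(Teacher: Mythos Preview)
Your overall architecture (cube $\to$ bounded densities $\to$ singular part $\to$ truncation for unbounded support, with concentration via bounded differences) matches the paper's, and your reading of the moment threshold $\alpha>4dp/(d-2p)$ is exactly right. There are, however, two genuine gaps.

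\textbf{The cube recursion is too weak.} From a single partition into $m^d$ subcubes, $(\mathcal S_p)$ gives the error $C(m^d+m^{d/2}\sqrt n)$ you wrote. But this does \emph{not} support a scale--interpolation argument: if you fix $k_0$ near the $\liminf$ and take $m\approx(n/k_0)^{1/d}$, then $Cm^d/n^{1-p/d}\approx Cn^{p/d}/k_0\to\infty$. Even after Poissonizing and replacing the ``$+1$'' contributions by $\dP(\text{cell nonempty})$, the remaining error is $Cm^{d/2}\sqrt n/n^{1-p/d}\approx Cm^p k_0^{p/d-1/2}$, which again blows up with $m$. The paper (following Boutet de Monvel--Martin) resolves this by \emph{iterated} dyadic partitions: refining from $[0,1]^d$ down to cubes of side $1/m$ through $\sim\log_2 m$ levels, with the diameter in $(\mathcal S_p)$ shrinking geometrically. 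Summing the level-$j$ errors $2^{-jp}\cdot 2^{jd/2}\sqrt n$ gives $D\sqrt n\,m^{d/2-p}$, i.e.\ $f(mt)\le f(t)+Dt^{p-d/2}$ for $f(u)=\bar L(u^d)/u^{d-p}$, from which $\lim f$ exists immediately. Your sketch needs precisely this dyadic refinement; a one-shot partition cannot deliver the extra $m^{-p}$ that makes the error $m$-free after normalisation.

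\textbf{The lower bound for uniform laws is applied in the wrong direction.} Your inequality $\sum_j L(\cX_{Q_j},\cY_{Q_j})\ge L(\cX,\cY)-L(\cX_B,\cY_B)-(\text{error})$ is just a rewriting of $(\mathcal S_p)$ and yields an \emph{upper} bound on $L(\cX,\cY)$; it cannot give the matching lower bound. The paper's trick is the complement: after rescaling so that $\Omega\subset[0,1]^d$, apply $(\mathcal S_p)$ to the partition $[0,1]^d=\Omega\cup\Omega^c$ with \emph{uniform} points on the whole cube. This gives
\[
\E L\big(n\1_{[0,1]^d}\big)\ \le\ \E L(n\1_\Omega)+\E L(n\1_{\Omega^c})+o\big(n^{1-p/d}\big).
\]
The left side has limit $\beta_L$ by the cube theorem, and $\limsup \E L(n\1_{\Omega^c})/n^{1-p/d}\le \beta_L\vol(\Omega^c)$ by the density upper bound, so $\liminf \E L(n\1_\Omega)/n^{1-p/d}\ge \beta_L(1-\vol(\Omega^c))=\beta_L\vol(\Omega)$. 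No inner tiling or superadditivity is needed; the lower bound comes from the \emph{known} limit on a larger set combined with the generic \emph{upper} bound on the complement.
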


Beyond uniform distributions, lower limits are harder to obtain. In Section~\ref{sec:lower}, we will state a matching lower bound for a subclass of bipartite functionals which satisfy the properties $(\mathcal H_p)$, $(\mathcal R_p)$ and $(\mathcal S_p)$ (see the forthcoming Theorem \ref{th:Llower} and, for the bipartite traveling salesperson tour, Theorem \ref{th:bTSP}).

\begin{remark}
Let $B(1/2) = \{x \in \dR^d: |x|\leq 1/2 \}$ be the Euclidean ball of radius $1/2$ centered at the origin. It is immediate that the functional $L$ satisfies the regularity property $(\mathcal R_p)$ if it satisfies property $(\mathcal H_p)$ and if  for all multisets $\mathcal X,\mathcal Y,
\mathcal X_1,\mathcal Y_1,\mathcal X_2,\mathcal Y_2$ in $B(1/2)$, 
\begin{equation*}\label{eq:R}
L(\mathcal X\cup \cX_1,\cY \cup\cY_1)\le L(\mathcal X\cup \cX_2,\cY \cup\cY_2)+C \big( \mathrm{card}(\cX_1)+
\mathrm{card}(\cX_2)+\mathrm{card}(\cY_1)+\mathrm{card}(\cY_2)\big). \tag{$\mathcal R$} 
\end{equation*} 
Similarly, $L$ will enjoy the subbaditivity property $(\mathcal S_p)$ if it satisfies property $(\mathcal H_p)$ and if for every 
$k\ge 2$ and all multisets $(\cX_i,\cY_i)_{i=1}^k$ in $B(1/2)$, 
\begin{equation*}\label{eq:S}
L\Big( \bigcup_{i=1}^k \cX_i, \bigcup_{i=1}^k \cY_i\Big) \le \sum_{i=1}^k L(\cX_i,\cY_i)+ C   \sum_{i=1}^k 
\Big( 1+\big|\mathrm{card}(\cX_i)-\mathrm{card}(\cY_i)\big|\Big). \tag{$\mathcal S$} \end{equation*}
The set of assumptions $(\mathcal H_p)$, $(\mathcal R_p)$, $(\mathcal S_p)$ is thus equivalent to the set of assumptions $(\mathcal H_p)$, $(\mathcal R)$, $(\mathcal S)$.  
\end{remark}

\subsection{Consequences of regularity}

\subsubsection{Poissonization}
For technical reasons, it is convenient to consider the poissonized
version of the above problem. 
Let $(X_i)_{i\ge 1}, (Y_i)_{i\ge 1}$ be mutually independent variables with distribution $\mu$.
Considering independent variables $N_1$,
$N_2$ with Poisson distribution $\mathcal P(n)$, the randoms sets
$\{X_1,\ldots, X_{N_1}\}$ and $\{Y_1,\ldots, Y_{N_2}\}$ are
independent Poisson point processes with intensity measures $n\mu$.
For shortness, we set
  $$ L(n\mu):= L\big(\{X_1,\ldots, X_{N_1}\},\{Y_1,\ldots, Y_{N_2}\}\big).$$
 When  $d\mu(x)=f(x)\, dx$ we write $L(nf)$ instead of $L(n\mu)$.
  Note that whenever we are dealing with Poisson processes, $n\in
  (0,+\infty)$ is not necessarily an integer.  More generally $L(\nu)$
  makes sense for any finite measure, as the value of the functional $L$ for 
   two independent Poisson point processes with intensity $\nu$.

Assume for a moment that  the measure $\mu$ has a bounded support, of diameter $\Delta$.
The regularity property ensures that
$$ \left| L(\{X_1,\ldots,X_n\} ,\{Y_1,\ldots,Y_n\} )-L(\{X_1,\ldots,X_{N_1}\} ,\{Y_1,\ldots,Y_{N_2}\} )\right|
\le C \Delta^p \big( |N_1-n|+|N_2-n|\big).$$
Note that  $\E |N_i-n|\le \big(\E(N_i-n)^2\big)^{1/2}= \mathrm{Var}(N_i)=\sqrt n$. Hence the difference between
$  \E L(\{X_i\}_{i=1}^n,\{Y_i\}_{i=1}^n)$ and $\E L(n\mu)$ is at most a constant times $\sqrt n =o(n^{1-p/d})$ when
$d>2p$. Hence in this case, the original quantity and the poissonized version are the same in average at the relevent scale $n^{1-p/d}$. 
The boundedness assumption can actually be relaxed.  To show this, we  need a lemma.

\begin{lemma}
\label{lem:ET}
Let $\alpha > 0$, $n > 0$ and let $\mu $ be a probability measure on $\R^d$ such that for all $t>0$,
$\mu\big(\{x;\; |x| \geq t\}\big)  \le c\, t ^{-\alpha}.$ Let $\cX$, $\cY$ be two independent Poisson point processes of intensity $n \mu$ and $T_n =  \max \{ | Z| : Z \in \cX \cup \cY  \}$.  Then, for all $0 <  \gamma < \alpha$ there
exists a constant $K=K(c,\alpha,\gamma)$ such that for all $n\ge 1$,
$$ \E [T_n^\gamma ]^{\frac1\gamma}\le K n^{ \frac{1}{\alpha}}.
$$ 
Moreover the same conclusion holds if  $\cX = \{X_1,\ldots,X_n\}$, $\cY = \{Y_1,\ldots,Y_n\}$ are two mutually independent sequences of $n$ variables with distribution $\mu$.  
\end{lemma}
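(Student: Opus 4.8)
The plan is to reduce everything to a tail bound on $T_n$ and then integrate. First I would exploit the Poisson structure: the number of points of $\cX$ falling in the region $\{x : |x| \geq t\}$ is a Poisson variable with parameter $n\mu(\{|x|\geq t\}) \leq cnt^{-\alpha}$, so the probability that $\cX$ contains at least one such point is $1 - \exp\!\big(-n\mu(\{|x|\geq t\})\big) \leq n\mu(\{|x|\geq t\}) \leq cnt^{-\alpha}$. A union bound over the two independent processes then gives
$$\dP(T_n \geq t) \leq \min\{1,\, 2cn\,t^{-\alpha}\} \qquad \text{for all } t>0.$$
In the binomial case $\cX=\{X_1,\dots,X_n\}$, $\cY=\{Y_1,\dots,Y_n\}$ the very same inequality follows directly from a union bound over the $2n$ random variables, so both situations are covered by this single estimate.

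Next I would write $\E[T_n^\gamma] = \int_0^\infty \gamma t^{\gamma-1}\,\dP(T_n > t)\,dt$ and split the integral at the threshold $t_0 := (2cn)^{1/\alpha}$ where the two terms of the minimum cross. On $[0,t_0]$ I bound the probability by $1$, which contributes $t_0^\gamma$; on $[t_0,\infty)$ I use the bound $2cn\,t^{-\alpha}$, and here the integral $\int_{t_0}^\infty \gamma t^{\gamma-1-\alpha}\,dt$ converges \emph{precisely because} $\gamma<\alpha$, contributing $\tfrac{\gamma}{\alpha-\gamma}\,2cn\,t_0^{\gamma-\alpha} = \tfrac{\gamma}{\alpha-\gamma}\,t_0^\gamma$ (using $2cn\,t_0^{-\alpha}=1$). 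Adding the two pieces yields $\E[T_n^\gamma] \leq \tfrac{\alpha}{\alpha-\gamma}\,(2cn)^{\gamma/\alpha}$, and taking $\gamma$-th roots gives the claim with $K = (2c)^{1/\alpha}\big(\alpha/(\alpha-\gamma)\big)^{1/\gamma}$, which depends only on $c$, $\alpha$, $\gamma$.

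I do not expect a serious obstacle: the argument is a routine tail-integration. The only points requiring a little attention are the convergence of the tail integral, which is exactly where the hypothesis $\gamma<\alpha$ enters, and checking that $K$ is genuinely independent of $n$ — this holds because the threshold $t_0$ scales like $n^{1/\alpha}$ while all prefactors are explicit and $n$-free. (One could equivalently integrate $\dP(T_n^\gamma > s)$ against $ds$; the computation is identical.)
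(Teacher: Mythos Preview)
Your proof is correct and follows essentially the same route as the paper: obtain the tail bound $\dP(T_n\ge t)\le\min(1,2cnt^{-\alpha})$, apply the layer-cake formula $\E[T_n^\gamma]=\gamma\int_0^\infty t^{\gamma-1}\dP(T_n\ge t)\,dt$, and split the integral at a threshold of order $n^{1/\alpha}$. The only cosmetic difference is that the paper writes $\dP(T_n<t)=e^{-2n\mu(A_t)}$ directly from independence of $\cX,\cY$ rather than using a union bound, and splits at $n^{1/\alpha}$ instead of $(2cn)^{1/\alpha}$; your version yields a slightly tidier explicit constant.
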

\begin{proof}
For $t \geq 0$, let $A_t = \{ x \in \R^d : |x| \geq t \}$ and $g(t) = \int_{ A_t } d\mu$. By assumption,  $\mu(A_t) \leq c t^{-\alpha}$. We start with the Poisson case. 
Since $\cX$, $\cY$ are independent, we have $\dP ( T_n < t) = \dP ( \cX \cap A_t = \emptyset ) ^2=e^{-2n\mu(A_t)}$.
 Therefore, using $1-e^{-u}\le \min(1,u)$,
\begin{eqnarray*}
\E  [T_n^\gamma] & = &   \gamma \int_0 ^\infty t^{\gamma -1} \dP ( T_n \geq  t) dt \\
  &=&  \gamma \int_0 ^\infty t^{\gamma -1}  (1 - e^{- 2 n \mu(A_t) }) dt \\
  & \leq &  \gamma  \int_0 ^{n ^{1/\alpha}} t^{\gamma -1} dt + \int_{n ^{1/\alpha}}^\infty   2 n c t^{\gamma - \alpha-1}  dt \\
  & =&   n ^{\gamma /\alpha}  +    \frac{2c }{\alpha -\gamma}  n^{\gamma/\alpha}, 
\end{eqnarray*}
For the second case, since $\dP ( T_n\ge  t)=1-(1-\mu(A_t))^{2n}\le \min(1, 2n\mu(A_t))$  the same conclusion holds. 
\end{proof}

\begin{proposition}\label{prop:poisson}
Let $d>2p>0$.
Let $\mu$ be a probability measure on $\R^d$ such that $\int |x|^\alpha \, d\mu(x)<+\infty$ for some $\alpha> \frac{2dp}{d-2p}$.
Let $(X_i)_{i\ge 1}, (Y_i)_{i\ge 1}$ be mutually independent variables with distribution $\mu$.
If $L$ satisfies the regularity property $(\mathcal R_p)$ then
$$ \lim_{n\to \infty}\frac{\E L(\{X_i\}_{i=1}^n,\{Y_i\}_{i=1}^n)-\E L(n\mu)}{n^{1-\frac{p}{d}}}=0.$$
\end{proposition}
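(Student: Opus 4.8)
The plan is to run the natural de-Poissonization coupling and to control the error through the regularity property $(\mathcal R_p)$ together with Lemma~\ref{lem:ET}. First I would note that both expectations in the statement are finite: the size bound recorded after $(\mathcal R_p)$ gives $L(\mathcal X,\mathcal Y)\le C\Delta^p(\mathrm{card}(\mathcal X)+\mathrm{card}(\mathcal Y))$ with $\Delta$ the diameter of $\mathcal X\cup\mathcal Y$, and since Markov's inequality turns the moment hypothesis into $\mu(\{|x|\ge t\})\le c\,t^{-\alpha}$ with $c=\int|x|^\alpha\,d\mu$, while $\alpha>\frac{2dp}{d-2p}>2p$, Lemma~\ref{lem:ET} applied with $\gamma=2p$ and the Cauchy--Schwarz inequality show that both $\E L(n\mu)$ and $\E L(\{X_i\}_{i=1}^n,\{Y_i\}_{i=1}^n)$ are finite. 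I would realize the Poisson processes on the same space as the sequences $(X_i),(Y_i)$ by choosing $N_1,N_2$ independent with law $\mathcal P(n)$, independent of the sequences, so that $\{X_1,\dots,X_{N_1}\}$ and $\{X_1,\dots,X_n\}$ share the prefix $\{X_1,\dots,X_{\min(n,N_1)}\}$, and likewise for the $Y$'s; this is exactly the coupling already set up before the proposition.

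Next I would bound the coupling error pointwise. Setting $\mathcal X=\{X_i\}_{i\le\min(n,N_1)}$ and $\mathcal Y=\{Y_i\}_{i\le\min(n,N_2)}$, exactly one of $\{X_i\}_{\min(n,N_1)<i\le n}$ and $\{X_i\}_{\min(n,N_1)<i\le N_1}$ is nonempty, and it has cardinality $|N_1-n|$; similarly for the $Y$'s. Applying $(\mathcal R_p)$ in both directions therefore yields
\begin{equation*}
\bigl|L(\{X_i\}_{i=1}^n,\{Y_i\}_{i=1}^n)-L(\{X_i\}_{i=1}^{N_1},\{Y_i\}_{i=1}^{N_2})\bigr|\le C\,\Delta^p\bigl(|N_1-n|+|N_2-n|\bigr),
\end{equation*}
where $\Delta$ is the diameter of $\{X_i\}_{i\le\max(n,N_1)}\cup\{Y_i\}_{i\le\max(n,N_2)}$, so $\Delta\le 2T$ with $T:=\max\{|Z|:Z\in\{X_i\}_{i\le\max(n,N_1)}\cup\{Y_i\}_{i\le\max(n,N_2)}\}$. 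Since the $\max(n,N_1)$-prefix of $(X_i)$ lies in the union of its $n$-prefix and its $N_1$-prefix, one has $T^{2p}\le A_n^{2p}+B^{2p}+(A_n')^{2p}+(B')^{2p}$, where $A_n=\max_{i\le n}|X_i|$ and $A_n'=\max_{i\le n}|Y_i|$ are maxima over the two $n$-samples, and $B=\max_{i\le N_1}|X_i|$ and $B'=\max_{i\le N_2}|Y_i|$ are maxima over the two Poisson point processes. Each of these four random variables is dominated by a quantity of the type handled in Lemma~\ref{lem:ET} (its $n$-sample part for $A_n,A_n'$, its Poisson part for $B,B'$), and since $2p<\alpha$ this gives $\E[T^{2p}]\le c_1\,n^{2p/\alpha}$ for some constant $c_1$ and all $n\ge1$.

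Then I would take expectations in the displayed inequality and apply the Cauchy--Schwarz inequality to each of $\E[\Delta^p|N_1-n|]$ and $\E[\Delta^p|N_2-n|]$ — this is what absorbs the dependence between $T$ and the counts $N_1,N_2$ — using $\E[(N_i-n)^2]^{1/2}=\sqrt n$, to obtain
\begin{equation*}
\bigl|\E L(\{X_i\}_{i=1}^n,\{Y_i\}_{i=1}^n)-\E L(n\mu)\bigr|\le c_2\,\E[T^{2p}]^{1/2}\,\sqrt n\le c_3\,n^{p/\alpha+1/2}.
\end{equation*}
Dividing by $n^{1-p/d}$ leaves $c_3\,n^{p/\alpha+p/d-1/2}$, and the exponent is negative precisely because $p/\alpha+\tfrac12<1-\tfrac pd$ is equivalent to $\alpha>\frac{2dp}{d-2p}$ (using $d>2p$); hence the ratio tends to $0$, which is the claim.

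I expect the only genuinely delicate point to be the handling of the diameter factor $\Delta$: the relevant extreme value $T$ is a maximum over $\max(n,N_i)$ points — neither exactly $n$ points nor a Poisson process — and it is correlated with $N_1,N_2$, so one cannot factor the expectation and must instead both decompose $T$ into pieces to which Lemma~\ref{lem:ET} applies and keep $T$ apart from $|N_i-n|$ by Cauchy--Schwarz; everything else is bookkeeping. It is worth noting that the moment exponent $\frac{2dp}{d-2p}$ needed here is exactly what the factor $\sqrt n$ coming from the fluctuations of the Poisson counts forces, and that this is weaker than the exponent $\frac{4dp}{d-2p}$ required for the almost-sure statements.
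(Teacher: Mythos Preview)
Your proposal is correct and follows essentially the same route as the paper: the same de-Poissonization coupling, the regularity property to bound the pointwise difference by a diameter factor times $|N_1-n|+|N_2-n|$, Cauchy--Schwarz to separate the two, and Lemma~\ref{lem:ET} to control the $2p$-th moment of the maximum. The only cosmetic difference is that the paper writes the diameter bound as $(T+S)^p$ with $T$ the Poisson-sample maximum and $S$ the $n$-sample maximum, whereas you take one overall maximum and decompose $T^{2p}$ into four pieces; your extra care about finiteness of the expectations and about the correlation between the maximum and the counts is well placed.
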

\begin{remark}
We have not yet proved the finiteness of the above integrals. This will be done later.
The proof below show that the expectations are finite at the same time. So the above statement is 
established with the convention $\infty-\infty=0$.
\end{remark}
\begin{proof}
Let
$N_1$ and $N_2$ be Poisson random variables with mean value $n$. Let $T =  \max \{ | Z| : Z \in \{X_1, \cdots, X_{N_1}\} \cup \{Y_1, \cdots, Y_{N_2}\}  \}$ and $S=  \max \{ | Z| : Z \in \{X_{1} , \cdots, X_{n}\} \cup \{Y_{1}, \cdots, Y_{n}\}  \}$, with the convention that the maximum over an empty set is $0$. The regularity property ensures
 that
$$
\left| L\big(\{X_1,\ldots,X_n\}, \{Y_1,\ldots,Y_n\}\big) -
   L\big(\{X_1,\ldots,X_{N_1}\}, \{Y_1,\ldots,Y_{N_2} \}\big) \right|
\leq C( T +S)^p   \left(|N_1 - n|+|N_2 - n |\right).
$$
Taking expectation gives, using Cauchy-Schwarz inequality and the bound $(a+b)^q \le \max(1,2^{q-1}) (a^q+b^q)$ valid
for $a,b,q>0$
\begin{align*}
& \left|\E  L\big(\{X_1,\ldots,X_n\}, \{Y_1,\ldots,Y_n\}\big) -
   L\big(\{X_1,\ldots,X_{N_1}\}, \{Y_1,\ldots,Y_{N_2} \}\big) \right| \\
& \leq c_p \Big(\E [ T^{2p}]+\E [ S^{2p}]\Big)^{\frac12} \Big(\E[|N_1 - n|^2]+\E[|N_2 - n|^2]\Big)^{\frac12}  \\
& =c_p  \sqrt{2n} \Big(\E [ T^{2p}]+\E [ S^{2p}]\Big)^{\frac12} 
\end{align*}
Since $\alpha > 2p$, by Lemma \ref{lem:ET}, for some $c >0$ and all $n \geq 1$,  $ \E [ T^{2p}]  \leq c n ^{2p/\alpha}$ and $ \E [ S^{2p}]  \leq c n ^{2p/\alpha}$. Hence the above difference of expectations is at most a constant times
$n^{\frac{p}{\alpha}+\frac12}$, which is negligeable with respect to $n^{1-\frac{p}{d}}$ since $\alpha$ is assumed 
to be large enough.
\end{proof}

\subsubsection{Approximations}
\begin{proposition}\label{prop:lipschitz}
Assume that a bipartite functional $L$ satisfies the regularity property $(\mathcal R_p)$.
Let $m,n>0$ and $\mu$ be a probability measure with support included in a set $Q$. Then
$$ \E L(n\mu)\le \E L(m\mu)+C \mathrm{diam}(Q)^p |m-n|.$$
\end{proposition}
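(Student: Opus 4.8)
The plan is to exploit the fact that two independent Poisson point processes of intensity $n\mu$ can be obtained from two independent Poisson point processes of intensity $m\mu$ (say with $m \le n$, the other case being symmetric) by superimposing two further independent Poisson processes of intensity $(n-m)\mu$. Concretely, write $\cX = \cX' \cup \cX''$ and $\cY = \cY' \cup \cY''$, where $\cX',\cY'$ are independent Poisson processes of intensity $m\mu$ and $\cX'',\cY''$ are independent Poisson processes of intensity $(n-m)\mu$, all four mutually independent; then $\cX,\cY$ are independent Poisson processes of intensity $n\mu$. The randomness lives on a common probability space, so I can compare $L(\cX,\cY)$ and $L(\cX',\cY')$ pathwise.

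The key step is the regularity property $(\mathcal R_p)$ applied with $\mathcal X \leftarrow \cX'$, $\mathcal Y \leftarrow \cY'$, $\cX_1 \leftarrow \cX''$, $\cY_1 \leftarrow \cY''$, and $\cX_2 = \cY_2 \leftarrow \emptyset$: since everything sits inside $Q$, the diameter of the union is at most $\mathrm{diam}(Q)$, and $(\mathcal R_p)$ gives
\begin{equation*}
L(\cX' \cup \cX'', \cY' \cup \cY'') \le L(\cX',\cY') + C\,\mathrm{diam}(Q)^p\big(\mathrm{card}(\cX'') + \mathrm{card}(\cY'')\big).
\end{equation*}
That is, $L(\cX,\cY) \le L(\cX',\cY') + C\,\mathrm{diam}(Q)^p(\mathrm{card}(\cX'') + \mathrm{card}(\cY''))$. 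Taking expectations and using that $\mathrm{card}(\cX'')$ and $\mathrm{card}(\cY'')$ are Poisson with mean $(n-m)$, so each has expectation $n-m = |m-n|$, yields $\E L(n\mu) \le \E L(m\mu) + 2C\,\mathrm{diam}(Q)^p|m-n|$. (If one wants the constant exactly as stated, one replaces $C$ by $C/2$ in the hypothesis, or simply absorbs the factor $2$; I would just note the constant may be adjusted.) For the case $m > n$, one runs the same coupling with the roles of $m$ and $n$ exchanged, using $(\mathcal R_p)$ to bound $L(\cX',\cY')$ in terms of $L(\cX,\cY)$ plus the cardinality of the deleted points; since the inequality to be proved is not symmetric in $m,n$ this still gives exactly $\E L(n\mu) \le \E L(m\mu) + C\,\mathrm{diam}(Q)^p|m-n|$.

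I do not foresee a serious obstacle here; the only points requiring a modicum of care are the superposition/coloring representation of Poisson processes (standard) and checking that $L(m\mu)$ is integrable so that the inequality is not vacuous — but this is exactly the situation covered by the ``$\infty - \infty = 0$'' convention flagged in the remark after Proposition \ref{prop:poisson}, and in fact the size bound following from $(\mathcal R_p)$ together with finiteness of $\E[\mathrm{card}(\cX)]$ already shows $\E L(m\mu) < \infty$ whenever $Q$ is bounded, which is the case of interest.
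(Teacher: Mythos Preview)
Your proof is correct and essentially identical to the paper's: both realize the larger-intensity Poisson process as the superposition of the smaller-intensity one with an independent remainder of intensity $|m-n|\mu$, then apply $(\mathcal R_p)$ with the extra points on one side and take expectations. The factor of $2$ you flag is indeed present in the paper's argument as well; the constant $C$ in the statement is simply not meant to be the exact same $C$ as in $(\mathcal R_p)$.
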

\begin{proof}
Assume $n<m$ (the other case is treated in the same way). Let $(X_i)_{i\ge 1}, (Y_i)_{i\ge 1},N_1,N_2,K_1,K_2$
be mutually independent random variables, such that for all $i\ge 1$,  $X_i$ and $Y_i$ have law $\mu$, and for $j\in \{1,2\}$, the law of $N_j$ is $\mathcal P(n)$ and the law of $K_j$ is $\mathcal P(m-n)$.
Then $M_i=N_i+K_i$ is $\mathcal P(m)$-distributed. Then $\{X_1,\ldots,X_{N_1}\}$ and $\{Y_1,\ldots,Y_{N_2}\}$
are independent Poisson point processes of intensity $n\mu$, while  Then $\{X_1,\ldots,X_{M_1}\}$ and $\{Y_1,\ldots,Y_{M_2}\}$
are independent Poisson point processes of intensity $m\mu$. 
By the regularity property,
$$ L\big(\{X_1,\ldots,X_{N_1}\},\{Y_1,\ldots,Y_{N_2}\}\big)\le L\big(\{X_1,\ldots,X_{N_1+K_1}\},\{Y_1,\ldots,Y_{N_2+K_2}\} \big)+ C \mathrm{diam}(Q)^p (K_1+K_2).$$
Taking expectations gives the claim.
 \end{proof}

Applying the above inequality for $m=0$ gives a weak size bound on $\E L(\nu)$.
\begin{corollary}\label{cor:trivialbound}
Assume that $L$ satisfies $(\mathcal R_p)$ and $L(\emptyset,\emptyset)=0$ (a consequence of e.g. $(\mathcal H_p)$),
then if $\nu$ is a finite measure with support included in a set $Q$,
$$ \E L(\nu)\le C \mathrm{diam(Q)}^p \,\nu(Q).$$
\end{corollary}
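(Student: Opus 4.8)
The plan is to deduce the bound directly from Proposition~\ref{prop:lipschitz} by normalizing $\nu$ and taking the endpoint $m=0$. First I would dispose of the degenerate case: if $\nu(Q)=0$ then $\nu$ is the null measure, so the two independent Poisson point processes of intensity $\nu$ are almost surely empty, and $L(\nu)=L(\emptyset,\emptyset)=0$ by hypothesis; the claimed inequality is then trivial. So assume $\nu(Q)>0$ and write $\nu=n\mu$ with $n:=\nu(Q)\in(0,\infty)$ and $\mu:=\nu/\nu(Q)$, a probability measure whose support is contained in $Q$. By definition of $L(\cdot)$ for finite measures we have $L(\nu)=L(n\mu)$.

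Next I would apply the inequality of Proposition~\ref{prop:lipschitz} to this $\mu$ with the above value of $n$ and with $m=0$. The only point needing a word of justification is that the proposition is valid at the endpoint $m=0$: its proof carries over verbatim if one interprets a Poisson random variable of parameter $0$ as the deterministic value $0$, so that the auxiliary configuration $\{X_1,\dots,X_{N_1}\}=\{Y_1,\dots,Y_{N_2}\}$ is empty and $L(0\cdot\mu)=L(\emptyset,\emptyset)=0$ (which holds under $(\mathcal H_p)$, or is assumed here). Concretely, realizing $\nu$ by two independent Poisson point processes $\mathcal X,\mathcal Y$ with $\mathrm{card}(\mathcal X),\mathrm{card}(\mathcal Y)$ of law $\mathcal P(\nu(Q))$ and applying $(\mathcal R_p)$ with $\mathcal X_1=\mathcal X$, $\mathcal Y_1=\mathcal Y$ and $\mathcal X_2=\mathcal Y_2=\emptyset$ gives pathwise
\[
L(\mathcal X,\mathcal Y)\le L(\emptyset,\emptyset)+C\,\mathrm{diam}(Q)^p\big(\mathrm{card}(\mathcal X)+\mathrm{card}(\mathcal Y)\big),
\]
and taking expectations yields $\E L(\nu)\le C\,\mathrm{diam}(Q)^p\,\big(\E\,\mathrm{card}(\mathcal X)+\E\,\mathrm{card}(\mathcal Y)\big)=2C\,\mathrm{diam}(Q)^p\,\nu(Q)$. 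Renaming the constant gives the announced bound, and incidentally shows $\E L(\nu)<\infty$, as promised in the remark following Proposition~\ref{prop:poisson}.

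There is no genuine obstacle here: the corollary is nothing but Proposition~\ref{prop:lipschitz} (equivalently, the regularity property $(\mathcal R_p)$) read with one endpoint at the empty configuration, the absolute constant being the one furnished by $(\mathcal R_p)$ up to the harmless factor $2$ coming from the two samples. The slight care required is purely bookkeeping: checking that a Poisson intensity $0$ is admissible and that $\nu$, having support in $Q$, has total mass $\nu(Q)$ so that the relevant diameter is $\mathrm{diam}(Q)$.
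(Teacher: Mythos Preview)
Your proof is correct and follows essentially the same approach as the paper: the paper's one-line proof simply says to apply Proposition~\ref{prop:lipschitz} with $m=0$, which is exactly what you do (with more care given to the endpoint $m=0$ and the degenerate case $\nu(Q)=0$). Your direct pathwise application of $(\mathcal R_p)$ with the empty configuration is in fact just unwinding the proof of Proposition~\ref{prop:lipschitz} in this special case, and the harmless factor~$2$ you observe is already implicit there.
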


Recall the total variation distance of two probability measures on $\R^d$ is defined as
$$
d_\mathrm{TV}(\mu,\mu') = \sup \{|\mu ( A) - \mu'(A)|  : A \hbox{ Borel set of $\R^d$}\}. 
$$
\begin{proposition}\label{prop:approx}
Assume that $L$ satisfies $(\mathcal R_p)$. Let $\mu,\mu'$ be two probability measures on $\R^d$ with bounded supports.
Set $\Delta$ be the diameter of the union of their supports. Then 
$$  \E L(n\mu)\le  \E L(n\mu') + 4C \Delta^p\, n \, d_\mathrm{TV}(\mu,\mu').$$
\end{proposition}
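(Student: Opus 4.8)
The plan is to realize $L(n\mu)$ and $L(n\mu')$ on a common probability space via a \emph{maximal coupling at the level of intensity measures}, so that the two relevant pairs of Poisson processes differ in only $O\!\big(n\,d_\mathrm{TV}(\mu,\mu')\big)$ points on average, and then to read off the inequality directly from the regularity property $(\mathcal R_p)$. First I would recall the elementary coupling decomposition of the two measures, then invoke the superposition property of Poisson processes, and finally apply $(\mathcal R_p)$ and take expectations.

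Decomposition step. Put $\delta = d_\mathrm{TV}(\mu,\mu')$ and let $\nu = \mu\wedge\mu'$ be the largest measure dominated by both $\mu$ and $\mu'$ (concretely, writing $\mu = g\,d\lambda$ and $\mu' = g'\,d\lambda$ with $\lambda = \mu+\mu'$, set $d\nu = \min(g,g')\,d\lambda$). Then $\nu(\R^d) = 1-\delta$, the measures $\mu_1 := \mu - \nu$ and $\mu_1' := \mu' - \nu$ are nonnegative of total mass $\delta$, and $\mu = \nu + \mu_1$, $\mu' = \nu + \mu_1'$. Moreover $\nu$ and $\mu_1$ are carried by $\mathrm{supp}\,\mu$ and $\mu_1'$ by $\mathrm{supp}\,\mu'$, so all three of $\nu,\mu_1,\mu_1'$ live in a set of diameter at most $\Delta$.

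Coupling and comparison step. Let $\Phi_0,\Phi_1,\Phi_1',\Psi_0,\Psi_1,\Psi_1'$ be mutually independent Poisson point processes with respective intensities $n\nu,\,n\mu_1,\,n\mu_1',\,n\nu,\,n\mu_1,\,n\mu_1'$. By the superposition theorem, $\Phi_0\cup\Phi_1$ and $\Psi_0\cup\Psi_1$ are independent Poisson processes of intensity $n\mu$, hence $L(\Phi_0\cup\Phi_1,\Psi_0\cup\Psi_1)$ has the law of $L(n\mu)$; likewise $L(\Phi_0\cup\Phi_1',\Psi_0\cup\Psi_1')$ has the law of $L(n\mu')$. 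Applying $(\mathcal R_p)$ with $(\cX,\cY,\cX_1,\cY_1,\cX_2,\cY_2)=(\Phi_0,\Psi_0,\Phi_1,\Psi_1,\Phi_1',\Psi_1')$, and noting that the union of these six point sets has diameter at most $\Delta$,
$$ L(\Phi_0\cup\Phi_1,\Psi_0\cup\Psi_1)\le L(\Phi_0\cup\Phi_1',\Psi_0\cup\Psi_1')+C\Delta^p\big(\card(\Phi_1)+\card(\Phi_1')+\card(\Psi_1)+\card(\Psi_1')\big). $$
Taking expectations and using $\E\,\card(\Phi_1)=\E\,\card(\Phi_1')=\E\,\card(\Psi_1)=\E\,\card(\Psi_1')=n\delta$ yields $\E L(n\mu)\le \E L(n\mu')+4C\Delta^p n\,d_\mathrm{TV}(\mu,\mu')$. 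All expectations involved are finite because the supports are bounded, by Corollary~\ref{cor:trivialbound}.

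I do not expect a genuine obstacle: the only points requiring care are the standard fact that the mismatch measures $\mu-\nu$, $\mu'-\nu$ have total mass exactly $d_\mathrm{TV}(\mu,\mu')$, and setting up the six independent Poisson processes on the correct product space so that the two superpositions have the advertised joint laws. The same idea, with a maximal coupling of the two i.i.d.\ sequences (whose number of mismatches is $\mathrm{Binomial}(n,\delta)$ with mean $n\delta$), would give the analogous bound for the de-poissonized functional; but only the Poisson statement is needed in the sequel.
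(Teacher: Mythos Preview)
Your argument is correct and uses the same underlying idea as the paper---a maximal coupling so that the two point configurations agree except on a set of expected size $n\,d_{\mathrm{TV}}(\mu,\mu')$, after which $(\mathcal R_p)$ does the work. The implementation differs slightly: the paper couples at the \emph{sample} level, drawing pairs $(X_i,X_i')$ from a coupling $\pi$ of $\mu,\mu'$ (with the same Poisson counts $N_1,N_2$ for both processes) and then counting mismatches $\sum_i \1_{X_i\neq X_i'}$; you instead decompose the \emph{intensity} as $\mu=\nu+\mu_1$, $\mu'=\nu+\mu_1'$ and use Poisson superposition. Both routes yield the same constant $4C$, and both are standard; your version has the minor advantage of making the common part $\Phi_0,\Psi_0$ literally identical rather than merely equal pointwise, while the paper's version avoids invoking the superposition theorem. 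One small quibble: your appeal to Corollary~\ref{cor:trivialbound} for finiteness uses $L(\emptyset,\emptyset)=0$, which is not part of $(\mathcal R_p)$ alone---but the inequality remains valid (and vacuous) if both sides are infinite, so this does not affect the proof.
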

\begin{proof}
  The difference of expectations is estimated thanks to a proper
   coupling argument. Let $\pi$ be a probability measure on
   $\R^d\times \R^d$ having $\mu$ as its first marginal and $\mu'$ as
   its second marginal. We consider mutually independent random
   variables $N_1,N_2, (X_i,X_i')_{i\ge 1}, (Y_i,Y_i')_{i\ge 1}$ such
   that $N_1, N_2$ are $\mathcal P(n)$ distributed and for all $i\ge
   1$, $ (X_i,X_i')$ and $(Y_i,Y_i')$ are distributed according to
   $\pi$.  Then the random multisets
 $$ \mathcal X=\{X_1,\ldots,X_{N_1}\} \quad \mathrm{and} \quad  \mathcal Y=\{Y_1,\ldots,Y_{N_2}\}$$
 are independent Poisson point processes with intensity measure
 $n\mu$. Similarly $ \mathcal X'=\{X'_1,\ldots,X'_{N_1}\}$ and $
 \mathcal Y'=\{Y'_1,\ldots,Y'_{N_2}\}$ are independent Poisson point
 processes with intensity measure $n\mu'$. 
  
  The regularity property ensures that 
  \begin{eqnarray*} 
  \lefteqn{ L\big(\{X_1,\ldots,X_{N_1}\},\{Y_1,\ldots,Y_{N_2}\}\big)}\\ 
  &\le& L\big(\{X'_1,\ldots,X'_{N_1}\},\{Y'_1,\ldots,Y'_{N_2}\}\big)+2C\Delta^p \left(\sum_{i=1}^{N_1} \1_{X_i\neq X'_i}+
  \sum_{j=1}^{N_2} \1_{Y_j\neq Y'_j} \right).
 \end{eqnarray*} 
 Taking expectations yields
 \begin{eqnarray*}
 \E L(n\mu)&\le&  \E L(n\mu') +2C\Delta^p \E \left(\sum_{i=1}^{N_1} \mathbb P (X_i\neq X'_i)+
  \sum_{j=1}^{N_2} \mathbb P(Y_j\neq Y'_j) \right) \\
   &=& \E L(n\mu') +4C\Delta^p\, n  \, \pi\big(\{(x,y)\in (\R^d)^2;\; x\neq y\}\big).
 \end{eqnarray*}
 Optimizing the later term on the coupling $\pi$ yields the claimed inequality involving the total variation distance.
\end{proof}

\begin{corollary}\label{cor:couplingaverage}
 Assume that the functional $L$ satisfies the regularity property $(\mathcal R_p)$.
   Let $m>0$, $Q\subset \R^d$ be measurable with positive Lebesgue
   measure and let $f$ be a nonnegative locally integrable function on
   $\R^d$. Let $\alpha=\int_Q f/\mathrm{vol}(Q)$ be the average value
   of $f$ on $Q$. It holds
  $$ \E L(m\,f \1_Q) \le \E L(m\alpha \1_Q) + 2C m\, \mathrm{diam}(Q)^p \, \int_Q|f(x)-\alpha|\, dx.$$
\end{corollary}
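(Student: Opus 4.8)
The plan is to deduce the statement directly from Proposition~\ref{prop:approx} after normalising the two finite measures $m f\1_Q$ and $m\alpha\1_Q$ to probability measures. Put $\beta=\int_Q f$. If $\beta=0$ then $\alpha=0$ and both measures vanish, so the inequality reduces to $\E L(0,0)\le \E L(0,0)$, which holds since $L(\emptyset,\emptyset)=0$. Assume therefore $\beta>0$; note $\beta<\infty$ because $f$ is locally integrable and the statement has content only when $Q$ is bounded. Introduce the probability measures $\mu=\frac1\beta f\1_Q$ and $\mu'=\frac1{\mathrm{vol}(Q)}\1_Q$, both supported in $Q$. Since $\alpha\,\mathrm{vol}(Q)=\beta$ by definition of $\alpha$, we have $m f\1_Q=(m\beta)\mu$ and $m\alpha\1_Q=(m\beta)\mu'$.

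The union of the supports of $\mu$ and $\mu'$ is contained in $Q$, hence has diameter at most $\mathrm{diam}(Q)$. Applying Proposition~\ref{prop:approx} with $n=m\beta$ therefore gives
$$\E L(m f\1_Q)=\E L\big((m\beta)\mu\big)\le \E L\big((m\beta)\mu'\big)+4C\,\mathrm{diam}(Q)^p\,(m\beta)\,d_\mathrm{TV}(\mu,\mu'),$$
and the first term on the right is exactly $\E L(m\alpha\1_Q)$.

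It remains to evaluate $d_\mathrm{TV}(\mu,\mu')$. As $\mu$ and $\mu'$ are both absolutely continuous with respect to Lebesgue measure, the total variation distance equals one half the $L^1$ distance of their densities, so
$$d_\mathrm{TV}(\mu,\mu')=\frac12\int_Q\Big|\frac{f(x)}{\beta}-\frac1{\mathrm{vol}(Q)}\Big|\,dx=\frac1{2\beta}\int_Q|f(x)-\alpha|\,dx.$$
Substituting this into the previous display, the factor $\beta$ cancels and we obtain
$$\E L(m f\1_Q)\le \E L(m\alpha\1_Q)+2C\,m\,\mathrm{diam}(Q)^p\int_Q|f(x)-\alpha|\,dx,$$
as claimed. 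There is no substantial obstacle: the only points requiring a little care are the degenerate case $\beta=0$ and the standard identification of the total variation distance with half the $L^1$ distance of densities, which is precisely what turns the constant $4C$ of Proposition~\ref{prop:approx} into the constant $2C$ of the corollary.
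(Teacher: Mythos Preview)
Your proof is correct and follows essentially the same approach as the paper: apply Proposition~\ref{prop:approx} with $n=m\int_Q f$, $\mu=\frac{f\1_Q}{\int_Q f}$ and $\mu'=\frac{\1_Q}{\mathrm{vol}(Q)}$, then compute the total variation distance as half the $L^1$ distance of the densities. Your handling of the degenerate case $\beta=0$ is a small addition not made explicit in the paper.
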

\begin{proof}
   We simply apply the total variation bound of the previous lemma
   with $n=m\int_Q f=m\alpha\, \mathrm{vol}(Q)$, $d\mu(x)= f(x)\1_Q(x)
   dx/\int_Q f $ and $d\mu'(x)= \1_Q(x)dx/\mathrm{vol}(Q)$. Note that
 $$ 2d_{TV}(\mu,\mu')=\int \Big| \frac{f(x) \1_Q(x)}{\int_Q f}-\frac{\1_Q(x)}{\mathrm{vol(Q)}}\Big|\, dx=
 \frac{\int_Q|f(x)-\alpha|\, dx}{\int_Q f}\cdot$$
\end{proof}

\subsubsection{Average is enough}
It is known since the works of Rhee and Talagrand that concentration inequalities often allow to deduce almost sure
convergence from convergence in average. This is the case in our general setting.

\begin{proposition}\label{prop:conc}
Let $L$ be a bipartite functional on multisets of $\R^d$, satisfying the regularity property $(\mathcal R_p)$.
Assume  $d>2p>0$. Let $\mu$ be a probability measure $\mu$ on $\R^d$ with
  $\int |x|^\alpha d\mu(x)<+\infty.$
Consider independent variables
$(X_i)_{i\ge 1}$ and $(Y_i)_{i\ge 1}$ with distribution $\mu$.

If $\alpha>2dp/(d-2p)$ then the following convergence holds in probability:
$$\lim_{n\to \infty} \frac{L\big(\{X_i\}_{i=1}^n,\{Y_i\}_{i=1}^n\big)-\E L\big(\{X_i\}_{i=1}^n,\{Y_i\}_{i=1}^n\big)}{n^{1-\frac{p}{d}}}=0.$$
Moreover if $\alpha>4dp/(d-2p)$, the  convergence happens almost surely, and if $\mu$ has bounded support, then it also holds in $L^q$ for any $q \geq 1$. 
\end{proposition}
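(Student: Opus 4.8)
The plan is to establish the concentration statement via a bounded-differences / Efron--Stein type argument combined with the Azuma--Hoeffding inequality, followed by a Borel--Cantelli step to upgrade to almost sure convergence when $\alpha$ is large enough. First I would set $Z_n = L(\{X_i\}_{i=1}^n,\{Y_i\}_{i=1}^n)$ and observe that, by the regularity property $(\mathcal R_p)$, resampling a single point $X_i$ (or $Y_j$) changes $Z_n$ by at most $C(2T_n)^p$, where $T_n = \max\{|Z| : Z \in \{X_i\}_{i=1}^n \cup \{Y_i\}_{i=1}^n\}$: indeed, removing $X_i$ and then adding the new point $X_i'$ each costs at most $C\Delta^p$ with $\Delta \le 2T_n$. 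On the event $\{T_n \le R\}$ for a deterministic $R$ to be chosen, the martingale $M_k = \E[Z_n \mid X_1,\dots,X_k,Y_1,\dots,Y_k]$ (more carefully, one conditions on the truncated variables $X_i\1_{|X_i|\le R}$, etc., to keep the increments bounded) has increments bounded by a constant times $R^p$, so Azuma gives
\begin{equation*}
\dP\big(|Z_n - \E Z_n| \ge t,\ T_n \le R\big) \le 2\exp\!\Big(-\frac{c\,t^2}{n R^{2p}}\Big).
\end{equation*}

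Next I would handle the tail event $\{T_n > R\}$. By Lemma~\ref{lem:ET}, $\E[T_n^\gamma] \le K n^{\gamma/\alpha}$ for any $\gamma < \alpha$, so $\dP(T_n > R) \le K n^{\gamma/\alpha} R^{-\gamma}$ by Markov. Also, on that event, one needs a crude bound on $|Z_n - \E Z_n|$; using the size bound from $(\mathcal R_p)$, namely $Z_n \le C (2T_n)^p \cdot 2n$, together with $\E Z_n < \infty$ (which follows from Corollary~\ref{cor:trivialbound} applied after truncation, or can be taken for granted at this point modulo the remark about finiteness), contributes a term controlled again by moments of $T_n$. The strategy is then to choose $R = R_n$ growing like a small power of $n$ — concretely $R_n \asymp n^{1/\alpha + \delta}$ for suitable small $\delta>0$ — so that the Azuma term is super-polynomially small while the tail term $\dP(T_n > R_n)$ decays like a negative power of $n$. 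To get convergence in probability under the weaker hypothesis $\alpha > 2dp/(d-2p)$, I only need each of these to tend to $0$ after dividing the deviation by $n^{1-p/d}$, i.e.\ to take $t = \e n^{1-p/d}$ and check that $nR_n^{2p} = o(n^{2(1-p/d)})$, which is exactly the condition $R_n^{2p} = o(n^{1-2p/d})$; since $R_n$ can be taken as slowly growing as $n^{1/\alpha + \delta}$ with $\delta \to 0$, this reduces to $2p/\alpha < 1 - 2p/d$, equivalently $\alpha > 2dp/(d-2p)$.

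For the almost sure statement under $\alpha > 4dp/(d-2p)$, I would apply Borel--Cantelli: the Azuma contribution at $t = \e n^{1-p/d}$ is summable in $n$ automatically (it is super-polynomially small once $R_n$ is a power of $n$ strictly below the critical exponent), and the tail contribution $\dP(T_n > R_n)$, which behaves like $n^{-\eta}$ for some $\eta>0$, can be made summable along the full sequence $n$ provided the margin between $\alpha$ and the critical value is large enough — this is where the factor $4$ rather than $2$ enters, since one must both beat the variance scale and have room for a summable polynomial tail. An alternative, cleaner route for the a.s.\ part is to prove the bound along a geometric subsequence $n_j = \lceil \rho^j\rceil$ (where summability is easy for any $\eta>0$), then interpolate between consecutive $n_j$ using regularity and the sublinear growth of $n \mapsto n^{1-p/d}$ together with the already-established control of $T_{n_{j+1}}$; this monotone-interpolation trick is standard in the Yukich--Steele framework and I would follow it. Finally, the $L^q$ convergence for bounded support is immediate: then $T_n$ is bounded by a constant, the tail event is empty, Azuma gives a genuine sub-Gaussian tail for $(Z_n - \E Z_n)/n^{1-p/d}$ with variance proxy $\asymp n^{2p/d-1} \to 0$, and sub-Gaussian tails control all $L^q$ norms. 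The main obstacle I anticipate is the bookkeeping in the martingale argument when $\mu$ is unbounded: one cannot directly apply Azuma to $M_k$ because the increments are not uniformly bounded, so the truncation $X_i \mapsto X_i \1_{|X_i| \le R_n}$ must be set up carefully, the difference between the truncated and untruncated functionals must be absorbed into the $T_n$-tail estimate via $(\mathcal R_p)$, and one must verify that conditioning on the truncated data still yields increments of order $R_n^p$ — balancing $R_n$ against $n$ so that all error terms are $o(n^{1-p/d})$ (in probability) or summable (a.s.) is the crux of the proof.
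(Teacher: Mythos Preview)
Your proposal is correct and follows essentially the same route as the paper: bound the martingale increments via $(\mathcal R_p)$, apply Azuma's inequality, handle unbounded support by restricting to $\{T_n \le s\}$ with $s$ a power of $n$ and bounding the complementary probability via the moment hypothesis, then invoke Borel--Cantelli. The paper's implementation is slightly cleaner in that it \emph{conditions} on $\{T_n \le s\}$ (under which the $X_i,Y_i$ become i.i.d.\ from $\mu_{|B(s)}/\mu(B(s))$, so Azuma applies directly with increments bounded by a constant times $s^p$) rather than truncating the variables, and it obtains summability over all $n$ directly---choosing $s=n^{1/\delta}$ with $\delta\in[2dp/(d-2p),\alpha/2]$---without needing your geometric-subsequence interpolation.
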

\begin{proof}
This is a simple consequence of Azuma's concentration inequality. It is convenient to $Z(n)=(X_1,\ldots,X_n,Y_1,\ldots,Y_n)$.
Assume first that the support of $\mu$ is bounded and let $\Delta$ denote its diameter. By the regularity property, modifying one point changes the 
value of the functional by at most a constant:
$$ |L(Z_1,\ldots,Z_{2n})-L(Z_1,\ldots, Z_{i-1}, Z'_i,Z_{i+1},\ldots,Z_{2n})|\le 2C \Delta^p.$$
By conditional integration, we deduce that the following martingale difference:
$$ d_i:=\E\big( L(Z(n))\,|\, Z_1,\ldots,Z_i\big) -\E\big( L(Z(n))\,|\, Z_1,\ldots,Z_{i-1}\big) $$
is also bounded $|d_i|\le 2C \Delta^p$ almost surely.
Recall that Azuma's inequality states that 
$$ \mathbb P \left( \big|\sum_{i=1}^{k}  d_i\big|>t\right) \le 2 e^{-\frac{t^2}{2 \sum_i \|d_i\|_\infty^2}}.$$
Therefore, we obtain that 
\begin{equation}\label{eq:conc}
 \mathbb P \Big( \big|L(\{X_i\}_{i=1}^n,
        \{Y_i\}_{i=1}^n)-\E L(\{X_i\}_{i=1}^n,
        \{Y_i\}_{i=1}^n)\big|>t\Big) \le 2 e^{-\frac{t^2}{16n C^2 \Delta^{2p}}},
 \end{equation}
and there is a number $C'$ (depending on $\Delta$ only) such that 
$$ \mathbb P \left( \frac{\big|L(\{X_i\}_{i=1}^n,
        \{Y_i\}_{i=1}^n)-\E L(\{X_i\}_{i=1}^n,
        \{Y_i\}_{i=1}^n)\big|}{n^{1-\frac{p}{d}}}>t\right) \le 2 e^{-C't^2 n^{1-\frac{2p}{d}}}.$$
When $d>2p$, we may conclude by the Borel-Cantelli lemma.

If $\mu$ is not assumed to be of bounded support, let $S:=\max\{|Z_i|;\; i\le 2n\}$. A conditioning argument
allows to use the above method.
Let $s > 0$ and $B(s)=\{x;\; |x|\le s\}$. Given $\{ S \leq s \}$, the variables $\{X_1, \cdots, X_n\}$ and $\{Y_1, \cdots, Y_n\}$ are mutually independent sequences with distribution $\mu_{| B(s)} / \mu(  B(s))$. 
Hence, applying \eqref{eq:conc} for $\mu_{| B(s)} / \mu(  B(s))$ instead of $\mu$ and $2s$ instead of $\Delta$,
 for any $t >0$,
$$
\dP \left( \left| \frac{L\big(\{X_i\}_{i=1}^n,
        \{Y_i\}_{i=1}^n\big)}{ n^{1-\frac{p}{d}}} - \frac { \E
        L\big(\{X_i\}_{i=1}^n, \{Y_i\}_{i=1}^n\big) }{
        n^{1-\frac{p}{d}}} \right| > t  \Bigm| S \leq s \right) \leq 2 \exp \left(
   -\frac{ n ^{ 1 - \frac{ 2p}{ d}} t ^2 }{c_p s^{2p} } \right).
$$ 
Hence for $\delta>0$ to be chosen later,
\begin{align*}
u_n:&= \dP \left( \left| \frac{L\big(\{X_i\}_{i=1}^n,
        \{Y_i\}_{i=1}^n\big)}{ n^{1-\frac{p}{d}}} - \frac { \E
        L\big(\{X_i\}_{i=1}^n, \{Y_i \}_{i=1}^n\big) }{
        n^{1-\frac{p}{d}}} \right| > t    \right) \\
  & \leq \dP( S > n^{\frac 1 \delta} ) + 2 \exp \left(  -\frac{ n ^{ 1 - \frac {2p}{ d} -   \frac{ 2p}{ \delta}  } t ^2 }{ c_p} \right).
\end{align*}
Since $\dP(S>u)=1-(1-\mu(B(s))^{2n}\le 2n \mu(B(s))\le 2n (\int |x|^\alpha d\mu(x))/u^\alpha$, we get that for 
some constant $c$ and any $\delta >0$, 
  $$ u_n \le c n^{1-\frac \alpha \delta } + 2 \exp \left(  -\frac{ n ^{ 1 - \frac{ 2p}{ d} -    \frac{ 2p}{ \delta}  } t ^2 }{ c_p} \right).$$
Since $\alpha > 2dp/(d-2p)$ we may choose $\delta\in [2 dp / ( d -2p),\alpha]$, which ensures that the latter quantities tend
to zero as $n$ increases. This shows the convergence in probability 
 to $0$ of $$ \frac{L\big(\{X_i\}_{i=1}^n,
  \{Y_i\}_{i=1}^n\big)}{ n^{1-\frac{p}{d}}} - \frac { \E
  L\big(\{X_i\}_{i=1}^n, \{Y_i \}_{i=1}^n\big) }{ n^{1-\frac{p}{d}}}.$$
If $\alpha > 4 dp / ( d -2p)$ we may choose we may choose $\delta\in [2 dp / ( d -2p),\alpha/2]$, which ensures
that $\sum_n u_n<+\infty$.  The Borel-Cantelli lemma yields the almost
sure convergence to $0$. 
\end{proof}

\subsection{Consequences of subadditivity}

 We start with a very general statement, which  is
however not very precise when the measures do not have disjoint
supports.
\begin{proposition}\label{cor:mu1mu2} Let $L$ satisfy $(\mathcal S_p)$.
   Let $\mu_1,\mu_2$ be finite measures on $\R^d$ with
   supports included in a set $Q$. Then
 $$ \E L( \mu_1+\mu_2 ) \le \E L(  \mu_1)+ \E L(  \mu_2)
+2C \mathrm{diam}(Q)^p \left(1+ \sqrt{\mu_1(Q)}+
    \sqrt{\mu_2(Q)}\right).$$
\end{proposition}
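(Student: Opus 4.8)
The plan is to place all the relevant Poisson point processes on one probability space by means of the superposition property, and then to apply the subadditivity property $(\mathcal S_p)$ with $k=2$ blocks.

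First I would introduce mutually independent point processes $\cX_1,\cX_2,\cY_1,\cY_2$ such that $\cX_1,\cY_1$ are Poisson with intensity $\mu_1$ and $\cX_2,\cY_2$ are Poisson with intensity $\mu_2$. By superposition of independent Poisson processes, $\cX_1\cup\cX_2$ and $\cY_1\cup\cY_2$ are each Poisson with intensity $\mu_1+\mu_2$; moreover, since $(\cX_1,\cX_2)$ is independent of $(\cY_1,\cY_2)$, the pair $(\cX_1\cup\cX_2,\cY_1\cup\cY_2)$ has exactly the joint law of the two independent Poisson processes defining $L(\mu_1+\mu_2)$. Consequently $\E L(\mu_1+\mu_2)=\E L(\cX_1\cup\cX_2,\cY_1\cup\cY_2)$ and $\E L(\mu_i)=\E L(\cX_i,\cY_i)$ for $i=1,2$, all these quantities being understood in $[0,+\infty]$ (so the target inequality is vacuous when the right-hand side is infinite).

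Next I would apply $(\mathcal S_p)$ to the two pairs $(\cX_1,\cY_1)$, $(\cX_2,\cY_2)$. All the points produced lie almost surely in (the closure of) $Q$, so the diameter $\Delta$ of $\bigcup_{i=1}^{2}(\cX_i\cup\cY_i)$ satisfies $\Delta\le\mathrm{diam}(Q)$, and $(\mathcal S_p)$ gives, almost surely,
$$ L(\cX_1\cup\cX_2,\cY_1\cup\cY_2)\le L(\cX_1,\cY_1)+L(\cX_2,\cY_2)+C\,\mathrm{diam}(Q)^p\sum_{i=1}^{2}\Big(1+\big|\mathrm{card}(\cX_i)-\mathrm{card}(\cY_i)\big|\Big). $$
Taking expectations and using the identifications above, it remains to estimate the fluctuation term. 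For each $i$, the variables $\mathrm{card}(\cX_i)$ and $\mathrm{card}(\cY_i)$ are independent Poisson with mean $\mu_i(Q)$, so $\mathrm{card}(\cX_i)-\mathrm{card}(\cY_i)$ is centered with variance $2\mu_i(Q)$, whence by Cauchy--Schwarz $\E\big|\mathrm{card}(\cX_i)-\mathrm{card}(\cY_i)\big|\le\sqrt{2\mu_i(Q)}\le 2\sqrt{\mu_i(Q)}$. Substituting and using $2+2\sqrt{\mu_1(Q)}+2\sqrt{\mu_2(Q)}=2\big(1+\sqrt{\mu_1(Q)}+\sqrt{\mu_2(Q)}\big)$ yields the announced bound.

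There is no serious obstacle; the only points requiring a little care are the correct matching of joint laws via Poissonian superposition (so that the expectations on both sides genuinely refer to $L$ evaluated on the intended pairs of processes) and the crude estimate $\sqrt 2\le 2$ used to absorb the Poisson fluctuation of the cardinality difference into the stated constant.
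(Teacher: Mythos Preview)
Your proof is correct and follows essentially the same approach as the paper's own proof: construct the four independent Poisson processes, use superposition to identify $(\cX_1\cup\cX_2,\cY_1\cup\cY_2)$ with the pair defining $L(\mu_1+\mu_2)$, apply $(\mathcal S_p)$ with $k=2$, and bound $\E|\mathrm{card}(\cX_i)-\mathrm{card}(\cY_i)|$ via the variance of a Poisson variable. The only cosmetic difference is that you make the crude bound $\sqrt 2\le 2$ explicit, whereas the paper leaves it implicit.
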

\begin{proof} Consider four independent Poisson point processes  $\cX_1,\cY_1,\cX_2,\cY_2 $ such that
 for $i\in \{1,2\}$, the intensity of $\cX_i$ and of $\cY_i$ is $\mu_i$.
   It is classical \cite{K} that the random
   multiset $\cX_1\cup \cX_2$ is a Poisson point process with intensity $\mu_1+\mu_2$. Also,
$\cY_1\cup\cY_2$ is an independent copy of the latter process.
Applying the subadditivity property,
\begin{eqnarray*}
\lefteqn{ L(\cX_1\cup\cX_2,\cY_1\cup \cY_2)} \\
& \le & L(\mathcal X_1,\mathcal Y_1)+  L(\mathcal X_2,\mathcal Y_2)+
C \mathrm{diam}(Q)^p \left(1+|\mathrm{card}(\mathcal X_1)
   -\mathrm{card}(\mathcal Y_1)|+ 1+|\mathrm{card}(\mathcal X_2)
   -\mathrm{card}(\mathcal Y_2)| \right).
\end{eqnarray*}    
 Since $\mathrm{card}(\mathcal X_i)$ and $\mathrm{card}(\mathcal Y_i)$ are independent with 
Poisson law of parameter $\mu_i(Q)$ (the total mass of $\mu_i)$,
$$  \E |\mathrm{card}(\mathcal X_i)-\mathrm{card}(\mathcal Y_i)|\le   \left(\E \big(\mathrm{card}(\mathcal X_i)-\mathrm{card}(\mathcal Y_i)\big)^2\right)^{\frac12}=
\sqrt{2\mathrm{var}(\mathrm{card}(\mathcal X_i))}=\sqrt{2\mu_i(Q)}.$$
Hence, taking expectations in the former estimate leads to the claimed inequality.\end{proof}

Partition techniques are essential in the probabilistic theory of Euclidean functionals. 
The next statement allows to apply them to bipartite functionals. In what follows, given a multiset $\cX$
and a set $P$, we set $\cX(P):=\mathrm{card}(\cX\cap P)$.
If $\mu$ is a measure and $f$ a nonnegative function, we write $f
\cdot \mu$ for the measure having density $f$ with respect to $\mu$.
\begin{proposition} \label{prop:partition}
Assume that the functional $L$ satisfies $(\mathcal S_p)$.
 Consider a finite partition   $Q=\cup_{P\in \mathcal P} P$ of a subset of $\R^d$ and
 let  $\nu$ be a measure on $\R^d$ with
 $\nu(Q)<+\infty$. Then
$$ \E L(\1_Q\cdot \nu) \le \sum_{P\in \mathcal P} \E L(\1_P\cdot \nu)+ 
 3C\mathrm{diam}(Q)^p \sum_{p\in \mathcal P}
\sqrt{\nu(P)}.$$
\end{proposition}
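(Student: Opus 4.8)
The plan is to apply the subadditivity property $(\mathcal S_p)$ directly to the pieces $\1_P\cdot\nu$, $P\in\mathcal P$, using the Poissonization machinery already set up. The key observation is the standard superposition property of Poisson point processes: if for each $P\in\mathcal P$ we take independent Poisson point processes $\cX_P,\cY_P$ with intensity $\1_P\cdot\nu$, then $\bigcup_{P\in\mathcal P}\cX_P$ and $\bigcup_{P\in\mathcal P}\cY_P$ are two independent Poisson point processes with intensity $\sum_{P}\1_P\cdot\nu=\1_Q\cdot\nu$ (this is exactly the coupling used in the proof of Proposition~\ref{cor:mu1mu2}, now applied to the whole partition rather than to two measures). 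Since all points lie in $Q$, the relevant diameter is $\mathrm{diam}(Q)$.

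First I would introduce these independent processes $(\cX_P,\cY_P)_{P\in\mathcal P}$ and note that $\card(\cX_P)$ and $\card(\cY_P)$ are independent Poisson variables of parameter $\nu(P)$. Then I would apply $(\mathcal S_p)$ with $k=\card(\mathcal P)$ to get, almost surely,
$$
L(\1_Q\cdot\nu)\le \sum_{P\in\mathcal P}L(\cX_P,\cY_P)+C\,\mathrm{diam}(Q)^p\sum_{P\in\mathcal P}\Big(1+\big|\card(\cX_P)-\card(\cY_P)\big|\Big).
$$
Taking expectations and using, as in Proposition~\ref{cor:mu1mu2}, that $\E|\card(\cX_P)-\card(\cY_P)|\le\big(\E(\card(\cX_P)-\card(\cY_P))^2\big)^{1/2}=\sqrt{2\nu(P)}$, we obtain
$$
\E L(\1_Q\cdot\nu)\le\sum_{P\in\mathcal P}\E L(\1_P\cdot\nu)+C\,\mathrm{diam}(Q)^p\sum_{P\in\mathcal P}\big(1+\sqrt{2\nu(P)}\big).
$$

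The only remaining point is bookkeeping the constant: the statement claims the bound $3C\,\mathrm{diam}(Q)^p\sum_{P}\sqrt{\nu(P)}$, with no additive $\sum_P 1$ term. This is where a little care is needed: one uses $1\le\sqrt{\nu(P)}+\1_{\nu(P)<1}$ and either absorbs the indicator terms by refining the partition so that each cell carries mass at least $1$ (merging or re-splitting cells, using $(\mathcal S_p)$ once more), or — more simply — observes that $1+\sqrt{2\nu(P)}\le 3\max(1,\sqrt{\nu(P)})$ and that cells with $\nu(P)<1$ can be grouped together; alternatively one just notes that in all later applications $\nu(P)\ge 1$. I expect the genuinely trivial part is the Poisson superposition and the Cauchy–Schwarz bound on $\E|\card(\cX_P)-\card(\cY_P)|$; the only mild obstacle is the constant-chasing needed to pass from $1+\sqrt{2\nu(P)}$ to $3\sqrt{\nu(P)}$, which is cosmetic and follows from the elementary inequality $1+\sqrt{2t}\le 3\sqrt{t}$ for $t\ge 1$ together with a harmless grouping of the small cells.
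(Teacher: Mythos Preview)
Your setup via Poisson superposition and the Cauchy--Schwarz bound on $\E|\card(\cX_P)-\card(\cY_P)|$ is exactly right, and matches the paper. The gap is precisely in the step you call ``cosmetic'': passing from $\sum_{P}(1+\sqrt{2\nu(P)})$ to $3\sum_P\sqrt{\nu(P)}$ is \emph{not} bookkeeping, because the inequality $1+\sqrt{2t}\le 3\sqrt t$ fails for small $t$, and nothing in the statement prevents many cells from having $\nu(P)$ small or even zero. Your suggested fixes do not work: grouping or merging cells changes the partition and hence changes the first sum $\sum_P \E L(\1_P\cdot\nu)$, so you would no longer be proving the stated inequality; and the claim that ``in all later applications $\nu(P)\ge 1$'' is false (look at the dyadic partitions in the proof of Theorem~\ref{th:upper}, where cells may carry arbitrarily small mass).

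The paper resolves this with one clean idea you are missing: apply $(\mathcal S_p)$ only to the \emph{random} sub-collection $\widetilde{\mathcal P}=\{P\in\mathcal P:\cX(P)+\cY(P)\neq 0\}$ of cells that actually contain at least one sample point. Empty cells contribute $L(\emptyset,\emptyset)=0$ anyway, so nothing is lost on the left. On the right, the ``$+1$'' per cell becomes $\1_{\cX(P)+\cY(P)\neq 0}$, whose expectation is $1-e^{-2\nu(P)}\le\min(1,2\nu(P))\le\sqrt{2\nu(P)}$. Adding this to $\E|\cX(P)-\cY(P)|\le\sqrt{2\nu(P)}$ gives $2\sqrt2\sqrt{\nu(P)}\le 3\sqrt{\nu(P)}$, which is the stated constant. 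This restriction to non-empty cells is the actual content of the proposition beyond the routine superposition argument.
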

\begin{proof}
   Consider $\mathcal X,\mathcal Y$ two independent Poisson point
   processes with intensity $\nu$. Note that $\mathcal X\cap P$ is a
   Poisson point process with intensity $\1_P\cdot \nu$, hence
   $\mathcal X(P)$ is a Poisson variable with parameter $\nu(P)$.
   We could apply the subadditivity property to $(\mathcal X\cap P)_{P\in\mathcal
     P}$, $(\mathcal Y\cap P)_{P\in \mathcal P}$, which  yields
  $$ L(\mathcal X \cap Q, \mathcal Y\cap Q) \le \sum_{P\in \mathcal P}  L(\mathcal X \cap P, \mathcal Y\cap P)
  + C \mathrm{diam}(Q)^p \sum_{p\in \mathcal P} \big( 1+| \mathcal X(P)-
  \mathcal Y(P)|\big).$$
  Nevertheless, doing this gives a contribution at least $ C \mathrm{diam}(Q)^p$ to 
  cells which do not intersect the multisets $\cX,\cY$.
  To avoid this rough estimate, we consider the cells which meet at least one of the multisets:
  $$\widetilde{\mathcal P}:=\{P\in \mathcal P;\; \cX(P)+\cY(P)\neq 0\}.$$
  We get that 
  \begin{eqnarray*}
    L(\mathcal X \cap Q, \mathcal Y\cap Q) &\le& \sum_{P\in \widetilde{\mathcal P}} 
     L(\mathcal X \cap P, \mathcal Y\cap P)
  + C \mathrm{diam}(Q)^p \sum_{p\in \widetilde{\mathcal P}} \big( 1+| \mathcal X(P)-
  \mathcal Y(P)|\big) \\
  &\le&  \sum_{P\in \mathcal P}  L(\mathcal X \cap P, \mathcal Y\cap P)
  + C \mathrm{diam}(Q)^p \sum_{p\in \mathcal P} \1_{\cX(P)+\cY(P)\neq 0}\big( 1+| \mathcal X(P)-
  \mathcal Y(P)|\big)\\
  &\le&  \sum_{P\in \mathcal P}  L(\mathcal X \cap P, \mathcal Y\cap P)
  + C \mathrm{diam}(Q)^p \sum_{p\in \mathcal P} \big(\1_{\cX(P)+\cY(P)\neq 0} +| \mathcal X(P)-
  \mathcal Y(P)|\big).
  \end{eqnarray*}
  Since $\cX(P)$ and $\cY(P)$ are independent Poisson variables with parameter $\nu(P)$,
  $$ \mathbb P\big(\cX(P)+\cY(P) \neq 0\big)=1-e^{-2\nu(P)} \mbox{ and } 
   \E \big|\mathcal X(P)-\mathcal Y(P)\big|\le \sqrt{2\nu(P)}.$$
   Hence, taking expectation and using the bound $1-e^{-t}\le \min(1,t)\le \sqrt{t}$,
   $$ \E L(\1_Q\cdot \nu) \le \sum_{P\in \mathcal P} \E L(\1_P\cdot \nu)+ 
   2 \sqrt 2 C\mathrm{diam}(Q)^p  \sum_{p\in \mathcal P} \sqrt{\nu(P)}.
   $$  
\end{proof}

The next statement deals with iterated  partitions, which  are very useful in the study 
of combinatorial optimisation problems, see e.g. \cite{S,Y}.
If $\mathcal P$ is a partition,
we set $\mathrm{diam}(\mathcal P)=\max_{P\in \mathcal P} \mathrm {diam}( P)$ (the maximal 
diameter of its cells).
 \begin{corollary}\label{cor:part}
 Assume that the functional $L$ satisfies $(\mathcal S_p)$.
 Let $Q\subset \R^d$ and $\mathcal Q_1, \ldots, \mathcal Q_k$ be a sequence of finer and finer
 finite partitions of $Q$. Let $\nu $ be a measure on $\R^d$ with $\nu(Q)<+\infty$. Then
 $$  \E L(\1_Q\cdot \nu) \le \sum_{q\in \mathcal Q_k} \E L(\1_q\cdot \nu)+
 3C \sum_{i=1}^k \mathrm{diam}(\mathcal Q_{i-1})^p \sum_{q \in \mathcal Q_i}
\sqrt{\nu(q)},$$
where by convention $\mathcal Q_0=\{Q\}$ is the trivial partition.
 \end{corollary}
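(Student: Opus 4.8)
The plan is to iterate Proposition~\ref{prop:partition} through the successive levels of the refinement, arguing by induction on $k$. The base case $k=0$ is the trivial inequality $\E L(\1_Q\cdot\nu)\le\E L(\1_Q\cdot\nu)$, since $\mathcal Q_0=\{Q\}$ and the right-hand side sum is empty.

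For the inductive step I would assume the bound for $k-1$ levels. Applying it to $\mathcal Q_1,\ldots,\mathcal Q_{k-1}$ gives
$$\E L(\1_Q\cdot\nu)\le\sum_{q\in\mathcal Q_{k-1}}\E L(\1_q\cdot\nu)+3C\sum_{i=1}^{k-1}\diam(\mathcal Q_{i-1})^p\sum_{q\in\mathcal Q_i}\sqrt{\nu(q)}.$$
Now fix $q\in\mathcal Q_{k-1}$. Because $\mathcal Q_k$ is finer than $\mathcal Q_{k-1}$, the cells of $\mathcal Q_k$ contained in $q$ form a finite partition of $q$; call it $\mathcal Q_k^{(q)}$. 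Apply Proposition~\ref{prop:partition} to the measure $\1_q\cdot\nu$ and the partition $\mathcal Q_k^{(q)}$ of $q$ (noting $\1_{q'}\cdot(\1_q\cdot\nu)=\1_{q'}\cdot\nu$ whenever $q'\subset q$):
$$\E L(\1_q\cdot\nu)\le\sum_{q'\in\mathcal Q_k^{(q)}}\E L(\1_{q'}\cdot\nu)+3C\diam(q)^p\sum_{q'\in\mathcal Q_k^{(q)}}\sqrt{\nu(q')}.$$

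Summing this over $q\in\mathcal Q_{k-1}$, using that $\bigl(\mathcal Q_k^{(q)}\bigr)_{q\in\mathcal Q_{k-1}}$ is exactly a regrouping of $\mathcal Q_k$, and bounding $\diam(q)^p\le\diam(\mathcal Q_{k-1})^p$ for every $q\in\mathcal Q_{k-1}$, yields
$$\sum_{q\in\mathcal Q_{k-1}}\E L(\1_q\cdot\nu)\le\sum_{q'\in\mathcal Q_k}\E L(\1_{q'}\cdot\nu)+3C\diam(\mathcal Q_{k-1})^p\sum_{q'\in\mathcal Q_k}\sqrt{\nu(q')}.$$
Substituting this into the induction hypothesis and identifying the new term as the $i=k$ summand closes the induction.

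The argument is essentially bookkeeping; the one point deserving care is that at level $i$ the diameter factor must appear at the \emph{coarser} scale $\diam(\mathcal Q_{i-1})$ rather than $\diam(\mathcal Q_i)$. This is precisely what Proposition~\ref{prop:partition} produces: when one subdivides a single cell $q\in\mathcal Q_{i-1}$, the relevant ambient set is $q$ itself, contributing $\diam(q)^p$, which is then bounded uniformly by $\diam(\mathcal Q_{i-1})^p$. No probabilistic input beyond Proposition~\ref{prop:partition} is required, since the terms $\E L(\1_q\cdot\nu)$ are carried along as black boxes.
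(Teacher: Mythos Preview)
Your proof is correct and follows essentially the same approach as the paper: iterate Proposition~\ref{prop:partition} through the successive refinements, each application to a cell $q\in\mathcal Q_{i-1}$ producing the factor $\diam(q)^p\le\diam(\mathcal Q_{i-1})^p$. The paper phrases this as a direct descending iteration (apply to $\mathcal Q_1$, then refine each cell via $\mathcal Q_2$, and so on), while you organize it as a formal induction on $k$; the content is the same.
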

\begin{proof}
We start with applying Proposition~\ref{prop:partition} to the partition $\mathcal Q_1$ of $Q$:
 $$  \E L(\1_Q\cdot \nu) \le \sum_{q\in \mathcal Q_1} \E L(\1_q\cdot \nu)+
 3C  \mathrm{diam}(\mathcal Q_{0})^p \sum_{q \in \mathcal Q_1}
\sqrt{\nu(q)}.$$
Next for each $q\in \mathcal Q_1$ we apply the proposition again for the partition of $q$ induced 
by $\mathcal Q_2$ and iterate the process $k-2$ times.
\end{proof}

\section{Uniform cube samples}\label{sec:cube}
 We introduce a specific
notation for $n\in(0,+\infty)$,
$$ \bar L(n):=\E L\big( n\1_{[0,1]^d}\big).$$
We point out the following easy consequence of the homogeneity properties of Poisson point processes.

\begin{lemma}\label{lem:homo} If $L$ satisfies the homogeneity property $(\mathcal H_p)$ then
   for all  $a\in \R^d$, $\rho>0$ and  $n>0$
    $$ \E L\big(n\1_{a+[0,\rho]^d}\big)=\rho^p \bar L \big(n \rho^d \big).$$
 \end{lemma}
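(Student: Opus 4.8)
The plan is to use the scaling properties of Poisson point processes together with the $p$-homogeneity assumption $(\mathcal H_p)$. The key observation is that if $\mathcal X$ is a Poisson point process on $\R^d$ with intensity measure $\nu$, then for any affine map $x\mapsto a+\rho x$, the image point process $a+\rho\mathcal X$ is again Poisson, now with intensity measure equal to the pushforward of $\nu$ under this map. I would record this as the main ingredient and cite \cite{K} as was done earlier in the excerpt.

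\begin{proof}
Let $(X_i)_{i\ge 1},(Y_j)_{j\ge 1}$ be mutually independent variables uniformly distributed on $[0,1]^d$, and let $N_1,N_2$ be independent Poisson variables with parameter $n\rho^d$, independent of everything else. Then $\mathcal X:=\{X_1,\ldots,X_{N_1}\}$ and $\mathcal Y:=\{Y_1,\ldots,Y_{N_2}\}$ are two independent Poisson point processes with intensity measure $n\rho^d\,\1_{[0,1]^d}$, so by definition
$$ \E L\big(\mathcal X,\mathcal Y\big)=\E L\big((n\rho^d)\1_{[0,1]^d}\big)=\bar L\big(n\rho^d\big).$$
Now apply the affine map $T:x\mapsto a+\rho x$ to both processes. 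Since $T$ sends $[0,1]^d$ bijectively onto $a+[0,\rho]^d$ and multiplies Lebesgue measure by $\rho^d$, the pushforward of the intensity measure $n\rho^d\,\1_{[0,1]^d}$ under $T$ is exactly $n\,\1_{a+[0,\rho]^d}$. By the mapping theorem for Poisson processes \cite{K}, the image multisets $T\mathcal X=a+\rho\mathcal X$ and $T\mathcal Y=a+\rho\mathcal Y$ are therefore two independent Poisson point processes with intensity $n\,\1_{a+[0,\rho]^d}$, so that
$$ \E L\big(a+\rho\mathcal X,\, a+\rho\mathcal Y\big)=\E L\big(n\1_{a+[0,\rho]^d}\big).$$
On the other hand, the homogeneity property $(\mathcal H_p)$ gives, for every realization,
$$ L\big(a+\rho\mathcal X,\, a+\rho\mathcal Y\big)=\rho^p\, L\big(\mathcal X,\mathcal Y\big).$$
Taking expectations and combining the three displays yields
$$ \E L\big(n\1_{a+[0,\rho]^d}\big)=\rho^p\, \E L\big(\mathcal X,\mathcal Y\big)=\rho^p\,\bar L\big(n\rho^d\big),$$
which is the claim.
\end{proof}

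There is essentially no obstacle here; the only point requiring a little care is making sure the intensity bookkeeping is correct, namely that scaling the \emph{domain} by $\rho$ forces one to start the unscaled process at intensity parameter $n\rho^d$ (rather than $n$) so that, after pushing forward, the per-unit-volume intensity comes out to be exactly $n$ on $a+[0,\rho]^d$. Translation invariance of the construction (the point $a$ plays no role beyond being carried along by $T$) and the fact that $(\mathcal H_p)$ is stated for arbitrary multisets and arbitrary $\lambda=\rho\ge 0$ make the rest automatic.
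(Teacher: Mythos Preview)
Your proof is correct and follows exactly the approach the paper intends: the lemma is stated without proof there, merely as ``an easy consequence of the homogeneity properties of Poisson point processes,'' and your argument via the mapping theorem for Poisson processes together with $(\mathcal H_p)$ is precisely that consequence spelled out.
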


 The following theorem is obtained by adapting to our abstract setting the line of reasoning of Boutet de Monvel and Martin in the paper \cite{BM} which was devoted to the bipartite matching:
 \begin{theorem}\label{th:cube-poisson}
    Let $d>2p$ be an integer. Let $L$ be a bipartite functional on $\R^d$ satisfying the properties $(\mathcal H_p)$, $(\mathcal R_p)$ and $(\mathcal S_p)$. Then there exists $\beta_L\ge 0$ such
    that
$$\lim_{n\to\infty} \frac{\bar L(n)}{n^{1-\frac{p}{d}}}=\beta_L.$$
\end{theorem}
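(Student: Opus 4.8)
The plan is to show that the sequence $a_n := \bar L(n)/n^{1-p/d}$ converges by combining the subadditivity property $(\mathcal S_p)$ with the regularity property $(\mathcal R_p)$ in a scaling argument, in the spirit of the standard subadditive approach to Euclidean functionals but carried out in the Poissonized bipartite setting. First I would fix an integer $m \ge 2$ and partition the unit cube $[0,1]^d$ into $m^d$ congruent subcubes of side $1/m$. A Poisson point process of intensity $n\mathbf 1_{[0,1]^d}$, when restricted to one subcube, is a Poisson point process of intensity $n$ times the indicator of that subcube; by Lemma~\ref{lem:homo} the expected value of $L$ on such a restricted process equals $m^{-p}\bar L(n m^{-d})$. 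Applying the subadditivity consequence Proposition~\ref{prop:partition} (or directly $(\mathcal S_p)$) to this partition gives
\[
\bar L(n) \le m^d \cdot m^{-p}\,\bar L\!\left(\frac{n}{m^d}\right) + 3C \sqrt d^{\,p}\, m^d \sqrt{n/m^d},
\]
since each of the $m^d$ cells contributes a $\sqrt{\nu(P)} = \sqrt{n/m^d}$ error term and $\mathrm{diam}([0,1]^d)^p = d^{p/2}$. Rearranging and dividing by $n^{1-p/d}$, and writing $n = m^{d k}$ along the geometric subsequence, one obtains that $a_{m^{dk}}$ is, up to an additive error of order $m^{d}\sqrt{n/m^d}\big/ n^{1-p/d} = (m^{d})^{p/d}\, n^{p/d - 1/2} \to 0$ (using $d > 2p$, so $p/d < 1/2$), almost nonincreasing along this subsequence; hence it converges to some limit, and a standard argument shows the limit does not depend on $m$. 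Let $\beta_L \ge 0$ denote this common limit.

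Next I would upgrade convergence along the sparse geometric subsequences $\{m^{dk}\}_k$ to convergence of the full sequence $\bar L(n)/n^{1-p/d}$. For a general real $n > 0$, pick $k$ with $m^{dk} \le n < m^{d(k+1)}$ and use the monotonicity-type estimate of Proposition~\ref{prop:lipschitz}, applied with the cube $Q = [0,1]^d$, to compare $\bar L(n)$ with $\bar L(m^{dk})$: the difference is at most $C d^{p/2}\,|n - m^{dk}| \le C d^{p/2}\, m^{dk}(m^d - 1)$, which after division by $n^{1-p/d} \ge m^{dk(1-p/d)}$ is of order $(m^{dk})^{p/d}$. That bound is not yet $o(1)$, so to make the interpolation work cleanly I would instead run the subadditive inequality for arbitrary (non-integer) first arguments and exploit that $\bar L$ is, by Proposition~\ref{prop:lipschitz}, Lipschitz with a controlled constant, so that the ratio varies by a vanishing amount over each dyadic-type block once the block length is measured on the correct scale; alternatively, a superadditive lower bound of the same shape—obtained by a reverse partition estimate—pins the $\liminf$ and $\limsup$ together. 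Concretely, the cleanest route is: establish both
\[
\bar L(n) \le m^{d-p}\,\bar L(n/m^d) + o\!\left(n^{1-p/d}\right)
\quad\text{and}\quad
\bar L(n/m^d) \le \text{(something)} ,
\]
use these to sandwich $\limsup_n a_n$ and $\liminf_n a_n$ between expressions that both tend to $\beta_L$ as $m \to \infty$, and conclude.

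The step I expect to be the main obstacle is precisely the passage from the subsequence limit to the full limit while keeping all error terms genuinely $o(n^{1-p/d})$: the naive interpolation via Proposition~\ref{prop:lipschitz} loses a factor of the block size, and the subadditivity error $\sum_{P} \sqrt{\nu(P)}$ is only small relative to $n^{1-p/d}$ because $d > 2p$—this inequality is exactly what makes the whole scheme work and must be tracked carefully. A secondary technical point is verifying that the limit $\beta_L$ is independent of the integer $m$ used in the partition; this follows by comparing the partitions induced by two different moduli $m_1, m_2$ via their common refinement (using Corollary~\ref{cor:part} for iterated partitions) and letting both tend to infinity. Finiteness and nonnegativity of $\beta_L$ are immediate: nonnegativity because $L \ge 0$, and finiteness from the size bound $\bar L(n) \le C d^{p/2}\, n$ contained in the remark following $(\mathcal R_p)$, which gives $a_n \le C d^{p/2}\, n^{p/d} $—wait, that grows; rather one uses the subadditive inequality itself to get $a_n$ bounded, since iterating it from a fixed scale yields $\bar L(n) \le \mathrm{const}\cdot n^{1-p/d} + o(n^{1-p/d})$.
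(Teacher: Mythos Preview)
Your overall strategy—partition the cube, apply $(\mathcal S_p)$ together with homogeneity, and analyze the resulting recursion for $f(t)=\bar L(t^d)/t^{d-p}$—is the right one, and your computation for a single $m^d$-partition is correct: Proposition~\ref{prop:partition} gives
\[
\bar L(n)\le m^{d-p}\,\bar L(n/m^d)+3C\,d^{p/2}\,m^{d/2}\sqrt n,
\qquad\text{i.e.}\qquad f(mt)\le f(t)+C'\,m^{p}\,t^{p-d/2}.
\]
From this you correctly extract convergence along each geometric subsequence $\{m^{k}t\}_k$.

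The gap you yourself flag, however, is real and is not closed by any of the patches you propose. The Lipschitz interpolation via Proposition~\ref{prop:lipschitz} indeed loses a full block factor, as you compute. The ``superadditive lower bound of the same shape'' you invoke does not exist under $(\mathcal H_p),(\mathcal R_p),(\mathcal S_p)$ alone; a reverse partition inequality requires extra structure (this is precisely why boundary functionals are introduced later in the paper, and only for specific $L$). Your ``concretely, the cleanest route'' leaves the needed second inequality blank. The underlying reason your recursion is not strong enough is that the error term $C'm^p t^{p-d/2}$ grows with $m$, so fixing a good base point $t$ only controls $f$ on a bounded multiplicative window, and a naive bootstrap degrades the constants at each step.

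The idea you are missing is that the single-partition error is wasteful: it charges $\mathrm{diam}([0,1]^d)^p$ to every one of the $m^d$ cells. If instead you reach the $m^d$ subcubes through roughly $\log_2 m$ successive dyadic refinements and, at each stage, pay only the diameter of the \emph{parent} cell, the error improves by exactly a factor $m^p$. This is Corollary~\ref{cor:part}: with $2^K\le m<2^{K+1}$ and $K{+}1$ dyadic steps one obtains
\[
\bar L(n)\le m^{d-p}\,\bar L(n/m^d)+D\,\sqrt n\,m^{d/2-p},
\qquad\text{i.e.}\qquad f(mt)\le f(t)+D\,t^{p-d/2},
\]
with an error \emph{independent of $m$}. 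Combined with the continuity of $f$ (from the Lipschitz property of $\bar L$, Proposition~\ref{prop:lipschitz}) and $t^{p-d/2}\to 0$, this yields $\lim_{t\to\infty}f(t)$ directly: for any $\eta>0$ pick $\tau$ large with $f(\tau)<\liminf f+\eta$ and $D\tau^{p-d/2}<\eta$; by continuity $f<\liminf f+2\eta$ on a neighbourhood of $\tau$, and every large $s$ can be written as $m u$ with integer $m$ and $u$ in that neighbourhood, giving $f(s)<\liminf f+3\eta$. No separate subsequence-to-full-sequence step is required. In short, you mentioned Corollary~\ref{cor:part} only as a secondary device for comparing different moduli, but it is in fact the primary tool that makes the argument close.
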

 \begin{proof}
    Let $m\ge 1$ be an integer. Let $K\in \mathbb N$ such that $2^K \le m< 2^{K+1}$. Set $Q_0=[0,a]^d$ where 
    $a:=2^{K+1}/m>1$. Let $\mathcal Q_0=\{Q_0\}$. We consider a sequence of finer and finer partitions 
    $\mathcal Q_j$, $j\ge 1$ where   $\mathcal Q_j$ is a partition 
    of $Q_0$ into $2^{jd}$ cubes of size $a 2^{-j}$ (throughout the paper, this means that the interior of 
     the cells are open cubes of such size, while their closure is a closed cube of the same size. We do not 
     describe precisely how the points in the boundaries of the cubes are partitioned, since it is not relevent
     for the argument). One often says that $\mathcal Q_j$, $j\ge 1$  is a sequence of dyadic partitions of $Q_0$.
    
      A direct application of Corollary~\ref{cor:part} for the partitions $\mathcal Q_1,\ldots, \mathcal Q_{K+1}$
     and the measure $n\1_{[0,1]^d}(x) \, dx$ gives 
      $$\bar L(n) =  \E L(n\1_{[0,1]^d}) \leq \sum_{q\in \mathcal Q_{K+1}} \E L(n \1_{q\cap[0,1]^d}) + 3C \sum_{j=1}^{K+1}
       \mathrm{diam}(\mathcal Q_{j-1})^p \sum_{q\in \mathcal Q_j} \sqrt{n \,\mathrm{Vol}(q\cap [0,1]^d)}.$$
    Note that $\mathcal Q_{K+1}$ is a partition into cubes of size $1/m$, so that its intersection
      with $[0,1]^d$ induces an (essential) partition of the unit cube into $m^d$ cubes of side-length $1/m$.
      Hence, in the first sum, there are $m^d$ terms which are equal, thanks to translation invariance and Lemma~\ref{lem:homo}
       to $\E L(n\1_{[0,m^{-1}]^d})=m^{-p} \bar L( nm^{-d})$. The remaining  terms of the first sum vanish. 
       In order to deal with the second
       sum of the above estimate, we simply use the fact that $\mathcal Q_j$ contains $2^{jd}$ cubical cells
       of size $a 2^{-j}=2^{K+1-j}/m \le 2^{1-j}$. Hence their indidual volumes are at most $2^{d(1-j)}$.
       These observations allow to rewrite the above estimate as
       \begin{eqnarray*}
         \bar L(n)&\le & m^{d-p} \bar L(nm^{-d}) +3C \sum_{j=1}^{K+1}
       \mathrm{diam}( [0,2^{2-j}]^d)^p  2^{jd} \sqrt{n \, 2^{d(1-j)}}\\
         &=&  m^{d-p} \bar L(nm^{-d}) +  3C \sqrt n \,\mathrm{diam}( [0,1]^d)^p \sum_{j=1}^{K+1}
       2^{p(2-j)+\frac{d}{2}(j+1)}.    
       \end{eqnarray*}
         Hence, there is a number  $D$ depending only on $p,d$ and $C$ such that 
         $$ \bar L(n) \le   m^{d-p} \bar L(nm^{-d}) + D \sqrt n \,2^{K (\frac{d}{2}-p)} \le  m^{d-p} \bar L(nm^{-d}) + D\sqrt n \, m^{\frac{d}{2}-p}.$$
         Let $t>0$. Setting, $n=m^d t^d$ and $f(u)=\bar L(u^d)/u^{d-p}$, the latter inequality reads as 
         $$f(mt)\le f(t)+ D t^{p-\frac{d}{2}},$$
         and is valid for all $t>0$ and $m\in \mathbb N^*$. Since $f$ is continuous (Proposition~\ref{prop:lipschitz} 
         shows that $u\mapsto \bar L(u)$ is Lipschitz) and $\lim_{t\to +\infty} t^{p-\frac{d}{2}}=0$, it follows that  $\lim_{t\to +\infty} f(t)$ exists 
         (we refer to \cite{BM} for details). 
 \end{proof}
 
 \begin{remark}
    The above constant $\beta_L$ is positive as soon as 
 $L$ satisfies the following natural condition: for all $x_1,\ldots,x_n,y_1,\ldots y_n$ in $\R^d$, $L(\{x_1,\ldots,x_n\},\{y_1,\ldots,y_n\}) \ge c \sum_i
  \mathrm{dist}(x_i,\{y_1,\ldots,y_n\})^p$.
To see this, one combines Proposition~\ref{prop:poisson} and the lower estimate given in \cite{T}.
 \end{remark}

\section{Upper bounds, upper limits} \label{sec:upper}
\subsection{A general upper bound}
\begin{lemma}
\label{lem:upperL} Let $d>2p$ and let $L$ be a bipartite functional satisfying $(\mathcal S_p)$, 
$(\mathcal R_p)$ and $L(\emptyset,\emptyset)=0$.
Then there exists a constant $D$ such that,  for all finite measures $\nu$, 
$$
 \E L ( \nu) \leq D \, \diam(Q)^p  \min\big( \nu(Q), \nu(Q) ^{1-\frac{p}{d}}\big),$$
where $Q$ contains the support of $\nu$. 
\end{lemma}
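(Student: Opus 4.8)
The plan is to reduce everything to the case where $\nu$ is supported in a cube and then run the iterated dyadic partition estimate of Corollary~\ref{cor:part}, tuning the number of scales. Write $\Delta=\diam(Q)$ and $N=\nu(Q)$. Projecting on each coordinate axis shows that a set of diameter $\Delta$ is contained in a cube $Q_0$ of side $\Delta$, so we may assume the support of $\nu$ lies in such a $Q_0$, with $\diam(Q_0)=\Delta\sqrt d$ and $\nu(Q_0)=N$. The estimate $\E L(\nu)\le D\Delta^p N$ is exactly Corollary~\ref{cor:trivialbound}; moreover when $N\le 2^d$ one has $N\le 2^p N^{1-p/d}$, so Corollary~\ref{cor:trivialbound} already gives $\E L(\nu)\le 2^pC\Delta^p N^{1-p/d}$ in that range. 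Hence it suffices to prove $\E L(\nu)\le D\Delta^p N^{1-p/d}$ when $N>2^d$.

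Fix an integer $k\ge 1$ and let $\mathcal Q_0=\{Q_0\}$, and for $1\le i\le k$ let $\mathcal Q_i$ be the partition of $Q_0$ into $2^{id}$ subcubes of side $\Delta 2^{-i}$. Corollary~\ref{cor:part} yields
$$\E L(\nu)\le \sum_{q\in\mathcal Q_k}\E L(\1_q\cdot\nu)+3C\sum_{i=1}^k\diam(\mathcal Q_{i-1})^p\sum_{q\in\mathcal Q_i}\sqrt{\nu(q)}.$$
I would bound the first sum cell by cell with Corollary~\ref{cor:trivialbound}: since each cell $q\in\mathcal Q_k$ has diameter $\Delta 2^{-k}\sqrt d$,
$$\sum_{q\in\mathcal Q_k}\E L(\1_q\cdot\nu)\le C(\sqrt d)^p\,\Delta^p\,2^{-kp}\sum_{q\in\mathcal Q_k}\nu(q)=C(\sqrt d)^p\,\Delta^p\,2^{-kp}N.$$
For the fluctuation sum, use $\diam(\mathcal Q_{i-1})=\Delta 2^{-(i-1)}\sqrt d$ together with Cauchy--Schwarz over the $2^{id}$ cells of $\mathcal Q_i$, namely $\sum_{q\in\mathcal Q_i}\sqrt{\nu(q)}\le 2^{id/2}\sqrt N$; since $d>2p$, the resulting geometric sum $\sum_{i=1}^k 2^{i(d/2-p)}$ is at most a constant depending only on $d,p$ times its last term $2^{k(d/2-p)}$. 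Collecting terms gives a constant $C'=C'(C,d,p)$ with
$$\E L(\nu)\le C'\Delta^p\Bigl(2^{-kp}N+N^{1/2}\,2^{k(d/2-p)}\Bigr).$$

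It then remains to balance the two terms by choosing $k=\lceil\frac1d\log_2 N\rceil$, which is $\ge 1$ because $N>2^d$, so that $N^{1/d}\le 2^k<2N^{1/d}$. With this choice $2^{-kp}N\le N^{1-p/d}$ and $N^{1/2}2^{k(d/2-p)}\le 2^{d/2-p}N^{1-p/d}$, hence $\E L(\nu)\le D_1\Delta^p N^{1-p/d}$ for a suitable $D_1=D_1(C,d,p)$. Taking $D=\max(C,2^pC,D_1)$ and combining with the cases already disposed of finishes the proof.

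The argument is essentially bookkeeping once Corollary~\ref{cor:part} is available; the one delicate point is controlling the fluctuation term $\sum_i\diam(\mathcal Q_{i-1})^p\sum_{q\in\mathcal Q_i}\sqrt{\nu(q)}$, and this is precisely where the hypothesis $d>2p$ enters — it makes the geometric series in $i$ dominated by its last term rather than divergent, and it makes the exponent of $N$ produced by the optimal choice of $k$ come out to exactly $1-\frac pd$. The minor extra point is that for small total mass $N\le 2^d$ one cannot profitably afford even a single level of subdivision, and one must fall back on the crude bound of Corollary~\ref{cor:trivialbound}.
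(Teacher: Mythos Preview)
Your proof is correct and follows essentially the same approach as the paper's: reduce to a cube, apply Corollary~\ref{cor:part} to a sequence of dyadic partitions, bound the leaf terms by Corollary~\ref{cor:trivialbound} and the fluctuation terms by Cauchy--Schwarz, then choose $k$ of order $\frac1d\log_2 N$. The only cosmetic differences are that the paper uses $k=\lfloor\frac1d\log_2\nu(Q)\rfloor$ rather than the ceiling, and dispatches the small-mass case in a single sentence.
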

\begin{proof}
Thanks to corollary~\ref{cor:trivialbound}, it is enough to deal with the case $\nu(Q)\ge 2^d$ (or any other
positive number).
First note that we may assume that $Q$ is a cube (given a set of diameter $\Delta$, one can find
a cube containing it, with diameter no more than $c$ times $\Delta$ where $c$ only depends on the
norm).
We consider a sequence of dyadic partitions of $Q$, $(\mathcal P_\ell)_{\ell\ge 0}$, where
 for $\ell \in\mathbb N$, 
  $\mathcal P_\ell$ divides $Q$  into $2^{\ell d}$ cubes of
   side-length $2^{-\ell}$ times the one of $Q$. Let $k\in \mathbb N^*$ to be chosen later.
 By Corollary~\ref{cor:part},  we have the following estimate
  \begin{equation}\label{eq:upper20} \E L( \nu )\le \sum_{P\in \mathcal
       P_k} \E L(\1_P\cdot \nu)+
     3C \sum_{\ell=1}^k \big(2^{-\ell+1}\mathrm{diam}(Q)\big)^p \sum_{P\in\mathcal P_\ell} \sqrt{\nu  (P) }.
  \end{equation}
Thanks to Corollary~\ref{cor:trivialbound}, 
the first term of the right-hand side of \eqref{eq:upper20} is at most
$$
\sum_{P\in \mathcal P_k} C\, \big(2^{-k}\mathrm{diam}(Q)\big)^p   \nu (P)
 =C\, 2^{-kp} \big(\mathrm{diam}(Q)\big)^p
   \nu (Q) . 
$$
  By the Cauchy-Schwarz inequality
  $$\sum_{P\in\mathcal P_\ell} \sqrt{ \nu  (P)} \le \left(2^{\ell d} \right)^{\frac12}
  \left( \sum_{P\in\mathcal P_\ell}  \nu  (P)  \right)^{\frac12}= 2^{\frac{\ell d}{2}} \sqrt{\nu(Q)}.$$
 Hence the  second term of the right-hand side of \eqref{eq:upper20} is at most
$$ 3C \big(2\mathrm{diam}(Q)\big)^p  \sum_{\ell=1}^k 2^{\ell \big(\frac{d}{2}-p\big)}  \sqrt{\nu(Q)}
 \le C'   2^{k\big(\frac{d}{2}-p\big)}  \big(\mathrm{diam}(Q)\big)^p \sqrt{\nu(Q)}.$$ 
This leads to 
$$ \E L(\nu)\le  \big(\mathrm{diam}(Q)\big)^p \Big( C  2^{-kp}  \nu(Q) + C'   2^{k\big(\frac{d}{2}-p\big)} 
\sqrt{\nu(Q)}\Big).$$
Choosing $ k = \big\lfloor \frac 1 d \log_2 \nu(Q)  \big\rfloor\ge 1$ completes the proof.
\end{proof}

\subsection{The upper limit for densities}

\begin{theorem}\label{th:upper}
   Let $d>2p$. Let $L$ be a bipartite functional on $\R^d$ satisfying the properties $(\mathcal H_p)$,
   $(\mathcal R_p)$, $(\mathcal S_p)$. Let $f:\R^d\to \R^+$ be an integrable function with
   bounded support. Then
   $$ \limsup_{n\to \infty } \frac{\E L(n\, f)}{n^{1-\frac{p}{d}}} \le \beta_L \int_{\R^d} f^{1-\frac{p}{d}},$$
   where $\beta_L$ is the constant appearing in
   Theorem~\ref{th:cube-poisson}.
\end{theorem}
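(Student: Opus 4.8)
The plan is to reduce to bounded densities and then run a partitioning argument at a scale that shrinks with $n$ in a way tuned to $f$, the subtlety being that the regularity $(\mathcal R_p)$ is only \emph{linear} in the number of points.

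First I reduce to bounded $f$. Writing $f=f\wedge M+(f-M)_+$ and splitting $\E L(nf)$ by the $k=2$ case of subadditivity (Proposition~\ref{cor:mu1mu2}), the interaction term is $O(\sqrt n)=o(n^{1-p/d})$, and by Lemma~\ref{lem:upperL} one has $\limsup_n \E L(n(f-M)_+)/n^{1-p/d}\le D\,\diam(Q)^p\big(\int (f-M)_+\big)^{1-p/d}$, which tends to $0$ as $M\to\infty$ by dominated convergence. Since $\int (f\wedge M)^{1-p/d}\uparrow\int f^{1-p/d}$, it suffices to treat bounded $f$, with support in a cube $Q$ of side $\ell$. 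I also record, for fixed $\e>0$, a uniform size bound $\bar L(t)\le(\beta_L+\e)t^{1-p/d}+B_\e$ valid for all $t>0$: this follows from Theorem~\ref{th:cube-poisson} when $t$ is large and from the linear size bound $\bar L(t)\le Cd^{p/2}t$ when $t$ is bounded.

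For the main estimate, fix a dyadic scale $s$. Applying Corollary~\ref{cor:part} to the dyadic partitions of $Q$ down to scale $s$ (using $d>2p$ to sum the geometric series of overheads) gives $\E L(nf)\le\sum_j\E L(nf\1_{Q_j})+R_1(n,s)$ over the cells $Q_j$ of side $s$, with $R_1(n,s)\le\kappa_1(\ell/s)^{d/2-p}\sqrt n$ for a constant $\kappa_1=\kappa_1(d,p,\ell,\int f)$. On each cell Corollary~\ref{cor:couplingaverage} replaces $f\1_{Q_j}$ by $\alpha_j\1_{Q_j}$, $\alpha_j=\frac1{s^d}\int_{Q_j}f$, at total cost $2Cd^{p/2}n\,s^p\|f-f_s\|_1$ where $f_s$ is the cell-average step function; and Lemma~\ref{lem:homo} together with the uniform bound give $\sum_j\E L(n\alpha_j\1_{Q_j})=\sum_j s^p\bar L(n\alpha_j s^d)\le(\beta_L+\e)n^{1-p/d}\int f_s^{1-p/d}+B_\e\ell^d s^{p-d}$. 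Altogether,
\[
\frac{\E L(nf)}{n^{1-\frac pd}}\le(\beta_L+\e)\int f_s^{1-\frac pd}+B_\e\,\ell^d\,\frac{(s\,n^{1/d})^p}{s^d\,n}+2Cd^{p/2}(s\,n^{1/d})^p\|f-f_s\|_1+\kappa_1\Big(\tfrac\ell s\Big)^{\frac d2-p}n^{\frac pd-\frac12}.
\]
Now choose $s=s(n)$ to be a dyadic value $\asymp c(n)\,n^{-1/d}$ with $c(n)\to\infty$: then $s\,n^{1/d}\asymp c(n)$ and $s^d n\asymp c(n)^d$, so the three error terms are $O(c(n)^{p-d})$, $O(c(n)^p\|f-f_{s(n)}\|_1)$ and $O(c(n)^{-(d/2-p)})$. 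The first and third vanish since $c(n)\to\infty$ and $d>2p$; for the second, since $\|f-f_s\|_1\to0$ as $s\to0$ one may let $c(n)\to\infty$ slowly enough (depending on $f$) that $c(n)^p\|f-f_{c(n)n^{-1/d}}\|_1\to0$ while still $c(n)=o(n^{1/d})$. Finally $\int f_{s(n)}^{1-p/d}\to\int f^{1-p/d}$ because $f_{s(n)}\to f$ in $L^1$ and $0\le f_{s(n)}\le\|f\|_\infty$. Letting $n\to\infty$ and then $\e\to0$ proves the claim for bounded $f$, and the first step concludes.

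The hard part is precisely this balancing of scales. Because $(\mathcal R_p)$ is linear in the cardinalities, passing from a restricted density to its cell-average costs a term of order $n$, hence order $n^{p/d}$ after normalization, which blows up at \emph{any} fixed partition scale; so the partition must be refined as $n\to\infty$. But refining it simultaneously inflates the accumulated subadditive overhead and the accumulated constant ($B_\e$ per cell, of which there are $(\ell/s)^d$). The argument only works in the narrow window $s\asymp n^{-1/d}\times(\text{slowly growing})$, where all three errors are negligible yet $ns^d\to\infty$ still forces the uniform-cube asymptotics of Theorem~\ref{th:cube-poisson} to control each cell; making the choice of $c(n)$ work against the arbitrary $L^1$ regularity of $f$ is the "careful analysis" that must be carried out.
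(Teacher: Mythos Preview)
Your proof is correct and follows the same partition-then-tune-the-scale strategy as the paper: apply Corollary~\ref{cor:part} down to a dyadic level $k=k(n)$, replace $f$ on each cell by its average via Corollary~\ref{cor:couplingaverage}, use homogeneity, and choose $k(n)$ so that $n^{1/d}2^{-k(n)}\to\infty$ slowly enough that $n^{1/d}2^{-k(n)}\big(\int|f-f_{k(n)}|\big)^{1/p}\to0$. The only difference is cosmetic: the paper avoids your preliminary truncation to bounded $f$ by keeping the bounded ratio $g(t)=\bar L(t)/t^{1-p/d}$ inside the integral $\int g(n2^{-kd}f_k)\,f_k^{1-p/d}$ and passing to the limit via uniform integrability (using $\int f_k=\int f$), whereas you substitute the cruder envelope $\bar L(t)\le(\beta_L+\e)t^{1-p/d}+B_\e$, which creates the extra per-cell error $B_\e\ell^d s^{p-d}$ but lets you finish with plain bounded convergence.
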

\begin{proof}
   By a scaling argument, we may assume that the support of $f$ is
   included in $[0,1]^d$ and $\int f =1$ (the case $\int f =0$ is
   trivial). We consider a sequence of dyadic partitions $(\mathcal P_\ell)_{\ell\in\mathbb N}$ of 
   $[0,1]^d$: for $\ell \in\mathbb N$,  $\mathcal P_\ell$ divides  $[0,1]^d$ into $2^{\ell d}$ cubes of
   side-length $2^{-\ell}$. Let $k\in \mathbb N^*$ to be chosen later.
   Corollary~\ref{cor:part} gives
  \begin{equation}\label{eq:upper} \E L(n\, f)\le \sum_{P\in \mathcal
       P_k} \E L(n\, f\1_P)+
     3C\sum_{\ell=1}^k\big( 2^{-\ell+1} \mathrm{diam}([0,1]^d)\big)^p
    \sum_{P\in\mathcal P_\ell} \sqrt{n\int_P f}.
  \end{equation}
  By the Cauchy-Schwarz inequality
  $$\sum_{P\in\mathcal P_\ell} \sqrt{\int_P f} \le \left(2^{\ell d} \right)^{\frac12}
  \left( \sum_{P\in\mathcal P_\ell} \int_P f \right)^{\frac12}=
  2^{\frac{\ell d}{2}} \left(\int f\right)^ \frac12=  2^{\frac{\ell d}{2}}.$$
 Hence the
  second term of the right-hand side of \eqref{eq:upper} is at most
$$ 3C\big( 2 \mathrm{diam}([0,1]^d)\big)^p \sqrt{n} \sum_{\ell=1}^k 2^{\ell \big(\frac{d}{2}-p\big)}
 \le c_d  n^{\frac12} 2^{k\big(\frac{d}{2}-p\big)}.$$
 Let $\alpha_P$ be the average of $f$ on $P$, then applying
Corollary~\ref{cor:couplingaverage} to the first terms of
\eqref{eq:upper} leads to
$$ \E L(n\, f)\le \sum_{P\in \mathcal P_k} \left( \E L(n\, \alpha_P \1_P)+2C\, n\, \mathrm{diam}(P)^p \int_P|f-\alpha_P| \right) +c_d  n^{\frac12} 2^{k\big(\frac{d}{2}-p\big)}.$$
 Each $P$ in the sum is a square of side length $2^{-k}$,
hence using homogeneity (see Lemma~\ref{lem:homo})
\begin{equation}\label{eq:upper2}
   \E L(n\, f)\le \sum_{P\in \mathcal P_k} \left(2^{-kp} M\big(n\, \alpha_P 2^{-kd}\big)+n\,c'_d\, 2^{-kp} \int_P|f-\alpha_P| \right) +c_d  n^{\frac12} 2^{k\big(\frac{d}{2}-p\big)}.
\end{equation}
Let us recast this inequality with more convenient notation. We set
$g(t)=\bar L (t)/t^{1-p/d}$ and we define the piecewise constant function
   $$ f_k=\sum_{P\in \mathcal P_k} \alpha_P \1_P
= \sum_{P\in \mathcal P_k} \frac{\int_P f(x)\, dx }{\mathrm{Vol}(P)} \1_P.$$
   It is plain that $\int f_k=\int f<+\infty$. Moreover, by Lebesgue's
   theorem, $\lim_{k\to \infty} f_k=f$ holds for almost every point
   $x$. Inequality \eqref{eq:upper2} amounts to
   \begin{eqnarray*}
      \frac{\E L(n\, f)}{n^{1-\frac{p}{d}}} &\le &
 \sum_{P\in \mathcal P_k} \left(g\big(n\,\alpha_P 2^{-kd}\big) \alpha_P^{1-\frac{p}{d}} 2^{-kd}+n^{\frac{p}{d}}\,
 c'_d\, 2^{-kp} \int_P|f-f_k| \right) +c_d  n^{\frac{p}{d}-\frac12} 2^{k\big(\frac{d}{2}-p\big)}\\
      &=&\sum_{P\in \mathcal P_k} \left( \int_P g\big(n\, f_k 2^{-kd}\big) f_k^{1-\frac{p}{d}}
 +n^{\frac{p}{d}}\, c'_d\, 2^{-kp} \int_P|f-f_k| \right) +c_d  n^{\frac{p}{d}-\frac12} 2^{k\big(\frac{d}{2}-p\big)}\\
      &=& \int  g\big(n\,2^{-kd} f_k \big) f_k^{1-\frac{p}{d}} + c'_d\,n^{\frac{p}{d}}\,  2^{-kp} \int |f-f_k|
      +c_d \big(n^{\frac1d}2^{-k}\big)^{p-\frac{d}{2}}.
   \end{eqnarray*}
   If there exists $k_0$ such that $f=f_{k_0}$ then we easily get the
   claim by setting $k=k_0$ and letting $n$ go to infinity (since $g$
   is bounded and converges to $\beta_L$ at infinity, see
   Lemma~\ref{lem:upperL} and Theorem~\ref{th:cube-poisson}). On the
   other hand, if $f_k$ never coincides almost surely with $f$, we use
   a sequence of numbers $k(n)\in \mathbb N$ such that
   \begin{equation}\label{eq:kn}
      \lim_n k(n)=+\infty,\quad \lim_n n^{\frac1d} 2^{-k(n)}=+\infty \quad\mbox{and}\quad 
      \lim_n n^{\frac1d} 2^{-k(n)}\left( \int |f-f_{k(n)}|\right)^{\frac{1}{p}}=0.
   \end{equation}
   Assuming its existence, the claim follows easily: applying the
   inequality for $k=k(n)$ and taking upper limits gives
$$    \limsup_{n } \frac{\E L(n\, f)}{n^{1-\frac{p}{d}}} \le \limsup_n \int  g\big(n\,2^{-k(n)d} f_{k(n)} \big) f_{k(n)}^{1-\frac{p}{d}} .$$
Since $\lim f_{k(n)}=f$ a.e., it is easy to see that the limit of the
latter integral is $\beta_L\int f^{1-\frac{p}{d}}$: first the
integrand converges almost everywhere to $\beta_L f^{1-\frac{p}{d}}$
(if $f(x)=0$ this follows from the boundedness of $g$; if $f(x)\neq 0$
then the argument of $g$ is going to infinity). Secondly, the sequence
of integrands is supported on the unit cube and is uniformly
integrable since
 $$\int  \left(g\big(n\,2^{-k(n)d} f_{k(n)} \big) f_k^{1-\frac{p}{d}} \right)^{\frac{d}{d-p}}
 \le (\sup g)^{\frac{d}{d-p}} \int f_{k(n)}= (\sup g)^{\frac{d}{d-p}}
 \int f <+\infty.$$
    
 It remains to establish the existence of a sequence of integers
 $(k(n))_n$ satisfying \eqref{eq:kn}.  Note that since $f_k\ge 0$,
 $\int f_k=\int f=1$ and a.e. $\lim f_k=f$, it follows from
 Scheff\'e's lemma that $\lim_k \int |f-f_k|=0$.  Hence
 $\varphi(k)=(\sup_{j\ge k} \int |f-f_j|)^{-d/p}$ is non-decreasing with
 an infinite limit.  We derive the existence of a sequence with the
 following stronger properties
 \begin{equation}\label{eq:kn2}
    \lim_n k(n)=+\infty,\quad \lim_n \frac{n }{ (2^d)^{ k(n)}}=+\infty \quad\mbox{and}\quad 
    \lim_n   \frac{ n}{(2^d )^{k(n)}\varphi (k(n))}=0
 \end{equation}
 as follows. Set $\gamma=2^d$. Since $\gamma^k \sqrt{\varphi(k-1)}$ is
 increasing with infinite limit
  $$ [\gamma\sqrt{\varphi(0)},+\infty)
=\cup_{k\ge 1} \big[\gamma^k\sqrt{\varphi(k-1)},\gamma^{k+1}\sqrt{\varphi(k)}\big).$$
  For $n\ge \gamma\sqrt{\varphi(0)}$, we define $k(n)$ as the integer such
  that
   $$ \gamma^{k(n)}\sqrt{\varphi(k(n)-1)}\le n < \gamma^{k(n)+1}\sqrt{\varphi(k(n))}.$$
   This defines a non-decreasing sequence.  It is clear from the above
   strict inequality that $\lim_n k(n)=+\infty$. Hence $n \gamma^{-k(n)}\ge
   \sqrt{\varphi(k(n)-1)}$ tends to infinity at infinity. Eventually
   $n/(\gamma^{k(n)}\varphi(k(n))\le \gamma/\sqrt{\varphi(k(n))}$ tends to zero
   as required. The proof is therefore complete.
\end{proof}

\subsection{Purely singular measures}

\begin{lemma}\label{le:sing} Let $d> 2p$. Let $L$ be a bipartite functional on $\R^d$ 
   with properties  $(\mathcal R_p)$ and $(\mathcal S_p)$.
Let $\mu$ be a finite  singular  measure on $\mathbb R^d$ having a  bounded support.
Then
$$\lim_{n\to \infty} \frac{\E L(n\mu)}{n^{1-\frac{p}{d}}}=0 . $$   
\end{lemma}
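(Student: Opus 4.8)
The plan is to exploit the singularity of $\mu$ via the partition machinery already developed, specifically Corollary~\ref{cor:part}, together with the crude size bound from Corollary~\ref{cor:trivialbound}. Since $\mu$ is singular, its support is essentially concentrated on a Lebesgue-null set, so a dyadic subdivision of a cube $Q$ containing the support will, at scale $2^{-\ell}$, have most of its mass sitting in cells of total volume tending to $0$; this smallness of the occupied volume is what will drive the limit to zero. First I would fix a cube $Q\supset\mathrm{supp}(\mu)$ of diameter $\Delta$, and for a sequence of dyadic partitions $(\mathcal Q_\ell)_{\ell\ge0}$ of $Q$, apply Corollary~\ref{cor:part} with the measure $n\mu$ to get, for any $k\ge 1$,
$$\E L(n\mu)\le \sum_{q\in\mathcal Q_k}\E L(\1_q\cdot n\mu)+3C\sum_{\ell=1}^k\big(2^{-\ell+1}\Delta\big)^p\sum_{q\in\mathcal Q_\ell}\sqrt{n\,\mu(q)}.$$

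Next I would bound the two terms separately. For the second (the ``partition overhead'') term, Cauchy--Schwarz on each level gives $\sum_{q\in\mathcal Q_\ell}\sqrt{\mu(q)}\le (2^{\ell d})^{1/2}$, exactly as in the proof of Lemma~\ref{lem:upperL}, so that term is at most a constant times $\sqrt n\,\Delta^p\,2^{k(d/2-p)}$; dividing by $n^{1-p/d}$ this is $O\big(n^{p/d-1/2}2^{k(d/2-p)}\big)$, which tends to $0$ as $n\to\infty$ for any fixed $k$ since $d>2p$. For the first term, I would bound each $\E L(\1_q\cdot n\mu)$ using Lemma~\ref{lem:upperL} (or directly Corollary~\ref{cor:trivialbound}) by $D\,\mathrm{diam}(q)^p\,(n\mu(q))^{1-p/d}=D\,(2^{-k}\Delta)^p(n\mu(q))^{1-p/d}$, and then apply Hölder's inequality with exponents $\frac{d}{d-p}$ and $\frac{d}{p}$:
$$\sum_{q\in\mathcal Q_k}\mu(q)^{1-\frac pd}\le\Big(\sum_{q\in\mathcal Q_k}\mu(q)\Big)^{1-\frac pd}\Big(\mathrm{card}\{q:\mu(q)>0\}\Big)^{\frac pd}\le \mu(Q)^{1-\frac pd}\,\big(2^{kd}\big)^{\frac pd}=\mu(Q)^{1-\frac pd}2^{kp}.$$
This gives first term $\le D\Delta^p 2^{-kp}n^{1-p/d}\mu(Q)^{1-p/d}2^{kp}=D\Delta^p\mu(Q)^{1-p/d}n^{1-p/d}$, which is useless as it stands — it does not gain anything from $k$. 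The fix, and the real point, is that singularity lets us discard most cells: given $\e>0$, since $\mu$ is singular there is an open set $U_\e$ with $\mathrm{Vol}(U_\e)<\e$ and $\mu(\R^d\setminus U_\e)=0$ (or $<\e$); for $k$ large, the cells of $\mathcal Q_k$ meeting $\mathrm{supp}(\mu)$ that carry all but $\e$ of the mass have total volume $<2\e$, hence their number is at most $2\e\,(2^{-k}\Delta/\sqrt d)^{-d}\cdot c_d = c_d'\,\e\,2^{kd}\Delta^{-d}$. Replacing $2^{kd}$ by $c_d'\e 2^{kd}\Delta^{-d}$ in the count above, and adding the $O(\e)$ contribution of the light cells via Corollary~\ref{cor:trivialbound}, the first term divided by $n^{1-p/d}$ becomes $\le D'\e^{p/d}+o_\e(1)$.

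Putting it together: for each fixed $k$, $\limsup_n \E L(n\mu)/n^{1-p/d}\le D'\e^{p/d}+($light-cell term$)$, where the light-cell term can be made $\le \e$ by choosing the exhausting sets appropriately and $k$ large. Since $\e>0$ is arbitrary, the limsup is $0$, and as $\E L(n\mu)\ge 0$ the limit is $0$. I expect the main obstacle to be the bookkeeping in the ``discard most cells'' step: one must choose, for each level $k$, a subcollection $\mathcal Q_k'\subset\mathcal Q_k$ of cells carrying mass $\ge 1-\e$ with $\mathrm{card}(\mathcal Q_k')=o(2^{kd})$ uniformly as $k\to\infty$, which is where singularity (via outer regularity of Lebesgue measure and mutual singularity with the mass of $\mu$) is genuinely used; and one must be careful that the partition-overhead term is controlled \emph{before} sending $k\to\infty$, i.e.\ the order of limits is: fix $k$ (large, depending on $\e$), send $n\to\infty$ to kill the overhead term, then send $\e\to0$. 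A clean way to organize this is to first prove the statement for $\mu$ supported on a compact set of Lebesgue measure zero (which suffices, since any finite singular measure is, up to arbitrarily small mass, of this form by inner regularity), so that one can take $\mathcal Q_k'=\{q\in\mathcal Q_k:q\cap\mathrm{supp}(\mu)\neq\emptyset\}$ and use that a null compact set is covered by dyadic cubes of total volume $\to0$.
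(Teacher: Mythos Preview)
Your approach is correct and takes a genuinely different route from the paper's. Both arguments start from the same dyadic-partition inequality (Corollary~\ref{cor:part}), which splits $\E L(n\mu)$ into a cell-sum term $\sum_{q\in\mathcal Q_k}\E L(\1_q\cdot n\mu)$ and a partition-overhead term involving $\sum_\ell 2^{-\ell p}\sum_{q}\sqrt{n\mu(q)}$; the difference lies in \emph{which} term is attacked with the singularity hypothesis. The paper bounds the cell-sum term crudely (linearly in $n$, via Corollary~\ref{cor:trivialbound}) and exploits singularity in the overhead term: writing $\sum_{q\in\mathcal Q_\ell}\sqrt{\mu(q)}$ as a constant times $2^{\ell d/2}\int\sqrt{g_\ell}$ with $g_\ell=\sum_q\frac{\mu(q)}{\lambda(q)}\1_q$, the Lebesgue differentiation theorem gives $g_\ell\to 0$ a.e., hence $\int\sqrt{g_\ell}\to 0$, and a Cesaro argument plus a carefully tuned $k=k(n)\to\infty$ then balances both terms. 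Your route is the mirror image: you bound the overhead term crudely by Cauchy--Schwarz (for any \emph{fixed} $k$ it vanishes after division by $n^{1-p/d}$, since $d>2p$) and put the singularity to work on the cell-sum term, via the sharper bound of Lemma~\ref{lem:upperL}, H\"older, and the covering fact that $\mu$ sits inside an open set of volume $<\e$, so at level $k$ only $O(\e\,2^{kd})$ cells lie entirely inside it while the rest carry mass tending to $0$. Your order of limits (fix $\e$, pick $k$ large enough, send $n\to\infty$, then $\e\to 0$) is simpler than the paper's simultaneous $k(n)$ choice and avoids the differentiation theorem altogether; the price is the heavy/light cell bookkeeping and the care needed to pass from ``cells meeting $U$'' to ``cells contained in $U$'' (which, as you note, is cleanest after reducing to $\mu$ supported on a compact null set).
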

\begin{proof}
Let $Q$ be a cube which contains the support of $\mu$.
We consider a sequence of dyadic partitions of $Q$, $(\mathcal P_\ell)_{\ell\in\mathbb N}$.
For $\ell\in\mathbb N$, $\mathcal P_{\ell}$ divides $Q$ into 
  $2^{\ell d}$ cubes of  side length $2^{-\ell}$ times the one of $Q$.
As in the proof of Lemma~\ref{lem:upperL}, a direct application of Corollary~\ref{cor:part}
gives for  $k\in \mathbb N^*$:
\begin{equation}\label{eq:upper3}
  \E L(n\mu)\le \sum_{P\in \mathcal
       P_k} \E L(n\1_P\cdot \mu)+
     3C \sum_{\ell=1}^k \big(2^{-\ell+1} \mathrm{diam}(Q)\big)^p \sum_{P\in\mathcal P_\ell} \sqrt{n\mu(P)}.
\end{equation}
The terms of the first sum are estimated again thanks to the easy bound of Corollary~\ref{cor:trivialbound}:
since each $P$ in $\mathcal P_k$  is a cube of side length $2^{-k}$ times the one of $Q$, it holds
 $$  \sum_{P\in \mathcal
       P_k} \E L(n\1_P\cdot \mu) \le  \sum_{P\in \mathcal
       P_k} C \big(2^{-k} \mathrm{diam}(Q) \big)^p n\mu(P) =c_{p,Q}\,  2^{-kp}  n|\mu|.$$
Here $|\mu|$ is the total mass of $\mu$. We rewrite the second term in \eqref{eq:upper3} in terms 
of the function
$$ g_\ell=\sum_{P\in \mathcal P_\ell} \frac{\mu(P)}{\lambda(P)} \1_P
,$$
where $\lambda$ stands for  Lebesgue's measure. Since $\lambda(P)=2^{-\ell d} \lambda(Q)$, we get that 
\begin{eqnarray*}
    \E L(n\mu)&\le&c_{p,Q}\,  2^{-kp}  n|\mu|   + 
     3C\, \big(2 \mathrm{diam}(Q)\big)^p \sqrt{n}\,\sum_{\ell=1}^k 2^{-\ell p} 
  \sum_{P\in\mathcal P_\ell}  2^{\frac{\ell d}{2}} \lambda(Q)^{-\frac12} \lambda(P)\sqrt{\frac{\mu(P)}{\lambda(P)}} \\
    &=& c_{p,Q}\,  2^{-kp}  n|\mu|  +   3C\, \big(2 \mathrm{diam}(Q)\big)^p   \lambda(Q)^{-\frac12}
    \sqrt{n}\,\sum_{\ell=1}^k 2^{\ell\big(\frac{d}{2}- p\big)} 
    \int\sqrt{g_\ell}  .
\end{eqnarray*}
By the differentiability theorem, for Lebesgue-almost every $x$, $g_\ell(x)$ tends to zero when $\ell$ tends
to infinity (since $\mu$ is singular with respect to Lebesgue's measure). Moreover, $g_\ell$ is supported on 
the unit cube and $\int (\sqrt{g_\ell})^2=\int g_\ell=|\mu|<+\infty$. Hence the sequence of functions 
$\sqrt{g_\ell}$ is uniformly integrable and we can conclude that $\lim_{\ell \to \infty}\int \sqrt{g_\ell}=0$.
By Cesaro's theorem, the sequence 
 $$\varepsilon_k=\frac{ \sum_{\ell=1}^{k} 2^{\ell\left( \frac{d}{2}-p\right)} \int \sqrt{g_\ell}}
{ \sum_{\ell=1}^{k} 2^{\ell\left( \frac{d}{2}-p\right)}}$$
also converges to zero, using here that $d> 2p$. By an obvious upper bound of the latter denominator, 
we obtain that there exists a number $c$ which does not depend on $(k,n)$ (but depends on $C,p,d,Q,|\mu|$)
 such that for all $k\ge 1$
$$  \E L(n\mu) \le c\left( n 2^{-kp}  + \sqrt{n }\, 2^{k\left(\frac{d}{2}-p\right)} \varepsilon_k\right),$$
where $\varepsilon_k\ge 0$ and $\lim_k \varepsilon_k=0$. We may also assume that $(\varepsilon_k)$ is non-increasing (the inequality remains valid if one replaces $\varepsilon_k$ by $\sup_{j\ge k} \varepsilon_j$).
It remains to choose $k$ in terms of $n$ in a proper way. Define
$$ \varphi(n)=\sqrt{\varepsilon_{\lfloor \frac1d \log_2 n \rfloor}} ^{\frac{-1}{\frac{d}{2}-p}}.$$
Obviously $\lim_n \varphi(n)=+\infty$. For $n$ large enough, define  $k(n)\ge 1$ as the unique integer
such that 
 $$ 2^{k(n)} \le n^{\frac1d}\varphi(n) < 2^{k(n)+1}.$$
Setting $k=k(n)$, our estimate on the cost of the optimal matching yields
$$ \frac{\E L(n\mu)}{n^{1-\frac{p}{d}}} \le c(d) \left(\frac{2 }{\varphi(n)^p}+\varepsilon_{k(n)} 
  \varphi(n)^{\frac{d}{2}-p} \right) .$$
It is easy to check that the right hand side tends to zero as $n$ tends to infinity. Indeed,
$\lim_n \varphi(n)=+\infty$, hence for $n$ large enough
$$ k(n) \ge \left\lfloor \log_2\left(n^{\frac1d} \varphi(n)/2 \right)\right\rfloor \ge 
  \left\lfloor \frac1d \log_2 n\right\rfloor.$$
Since the sequence $(\varepsilon_k)$ is non-increasing, it follows that 
$$ \varepsilon_{k(n)}   \varphi(n)^{\frac{d}{2}-p} \le  \varepsilon_{  \left\lfloor \frac1d \log_2 n\right\rfloor}   \varphi(n)^{\frac{d}{2}-p} = \sqrt{ \varepsilon_{  \left\lfloor \frac1d \log_2 n\right\rfloor} }$$
tends to zero when $n\to \infty$. The proof is therefore complete.
\end{proof}

\subsection{General upper limits}

The first statement of Theorem \ref{th:mainup} is a consequence of Propositions \ref{prop:poisson}, \ref{prop:conc}, and the following result. 

\begin{theorem}\label{th:up-poisson-general} Let $d>2p>0$. Let $L$ be a bipartite functional
on $\R^d$ with the properties $(\mathcal H_p)$, $(\mathcal R_p)$ and $(\mathcal S_p)$. 
Consider  a finite  measure $\mu$ on $\mathbb R^d$ such that there exists $\alpha>\frac{2dp}{d-2p}$ with
$$\int |x|^\alpha d\mu(x)<+\infty.$$ Let $f$ 
be a density function for the absolutely continuous part of $\mu$,
then
   \begin{equation}\label{eq:glupper}
   \limsup_{n\to \infty} \frac{\E L(n\mu)}{n^{1-\frac{p}{d}}}\le \beta_L \int f^{1-\frac{p}{d}} \cdot 
   \end{equation}   
\end{theorem}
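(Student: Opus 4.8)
The plan is to split $\mu$ into three parts according to whether mass is close to the origin, whether it is absolutely continuous, and whether it is singular, and then to combine the three corresponding estimates via subadditivity (Proposition~\ref{cor:mu1mu2}). First I would fix a large radius $R$ and write $\mu = \mu\mathbf 1_{B(R)} + \mu\mathbf 1_{B(R)^c}$, where $B(R)=\{x:|x|\le R\}$. By the moment assumption, $\mu(B(R)^c)\le R^{-\alpha}\int|x|^\alpha\,d\mu$, so the tail mass is small in a quantitative way; the contribution of the tail will eventually be controlled by a separate argument (truncating as in the proof of Proposition~\ref{prop:conc}, choosing $R=R(n)$ growing slowly, and using the weak size bound of Corollary~\ref{cor:trivialbound} together with Lemma~\ref{lem:ET} to bound the diameter). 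On the bounded part, apply the Lebesgue decomposition: $\mu\mathbf 1_{B(R)} = f\mathbf 1_{B(R)}\cdot\lambda + \sigma_R$ where $\lambda$ is Lebesgue measure and $\sigma_R$ is the restriction of the singular part of $\mu$ to $B(R)$, both compactly supported.

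Next I would apply Proposition~\ref{cor:mu1mu2} to these two summands, which have supports contained in $B(R)$: up to an error of order $\mathrm{diam}(B(R))^p(1+\sqrt{n\|f\mathbf 1_{B(R)}\|_1}+\sqrt{n\,\sigma_R(B(R))})=O(R^p\sqrt n)$, which is $o(n^{1-p/d})$ for fixed $R$ since $d>2p$, one gets
$$\E L(n\mu\mathbf 1_{B(R)}) \le \E L(n\,f\mathbf 1_{B(R)}) + \E L(n\sigma_R) + O\!\left(R^p\sqrt n\right).$$
Now Theorem~\ref{th:upper} (the upper limit for densities, applied to the integrable compactly supported function $f\mathbf 1_{B(R)}$) gives $\limsup_n n^{p/d-1}\E L(n\,f\mathbf 1_{B(R)}) \le \beta_L\int f^{1-p/d}$, and Lemma~\ref{le:sing} gives $\lim_n n^{p/d-1}\E L(n\sigma_R)=0$ since $\sigma_R$ is singular with bounded support. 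Combining these yields, for each fixed $R$,
$$\limsup_{n\to\infty}\frac{\E L(n\mu\mathbf 1_{B(R)})}{n^{1-\frac pd}} \le \beta_L\int f^{1-\frac pd}.$$

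The remaining and, I expect, the main obstacle is to absorb the unbounded tail $\mu\mathbf 1_{B(R)^c}$ and let $R\to\infty$, since Proposition~\ref{cor:mu1mu2} and the preceding estimates all carry factors $\mathrm{diam}(Q)^p$ that blow up with $R$. The idea is not to take $R$ fixed but rather $R=R(n)\to\infty$ slowly, and to split off a countable annular decomposition of the complement, $B(R)^c = \bigsqcup_{j\ge 0} A_j$ with $A_j = \{2^jR \le |x| < 2^{j+1}R\}$; apply $(\mathcal S_p)$ (through Proposition~\ref{cor:mu1mu2} iterated, or directly) to bound $\E L(n\mu)$ by $\E L(n\mu\mathbf 1_{B(R)})$ plus $\sum_j \E L(n\mu\mathbf 1_{A_j})$ plus the interface errors, and estimate each annular term by Corollary~\ref{cor:trivialbound}: $\E L(n\mu\mathbf 1_{A_j}) \le C\,(2^{j+1}R)^p\, n\,\mu(A_j) \le C\,(2^{j+1}R)^p\, n\,(2^jR)^{-\alpha}\int|x|^\alpha d\mu$. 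The series $\sum_j (2^jR)^{p-\alpha}$ converges since $\alpha>2dp/(d-2p)>p$, giving a bound of order $n R^{p-\alpha}$ for the whole tail; choosing $R=R(n)$ so that $R^{\alpha-p}\gg n^{p/d}$, e.g. $R(n)=n^{\eta}$ for a small $\eta>0$ admissible under the hypothesis $\alpha>\frac{2dp}{d-2p}$, makes the tail contribution $o(n^{1-p/d})$ while keeping $R(n)^p\sqrt n=o(n^{1-p/d})$ as well. One must be slightly careful that the interface error terms $\sqrt{n\mu(A_j)}$ summed against $\mathrm{diam}$ factors also stay negligible, but this follows from the same convergent-series bookkeeping. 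Letting $n\to\infty$ after these choices, and using that $\int_{B(R(n))} f^{1-p/d}\to\int f^{1-p/d}$, yields \eqref{eq:glupper}.
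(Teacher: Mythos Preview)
Your treatment of the bounded-support case is correct and coincides with the paper's: split $\mu\mathbf 1_{B(R)}$ into its absolutely continuous and singular parts via Proposition~\ref{cor:mu1mu2}, then invoke Theorem~\ref{th:upper} and Lemma~\ref{le:sing}. The difficulty is entirely in the passage to unbounded support, and there your plan has a genuine gap.

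The problem is the moving truncation $R=R(n)$. Theorem~\ref{th:upper} and Lemma~\ref{le:sing} are asymptotic statements about a \emph{fixed} measure; they give no control of $\E L(n\mu\mathbf 1_{B(R(n))})$ when the measure itself varies with $n$. Your final sentence simply asserts the desired conclusion for the main term without justification. You are forced into this moving $R(n)$ because you bound the annular contributions with Corollary~\ref{cor:trivialbound}, which gives only $\E L(n\mu\mathbf 1_{A_j})\le C\,\mathrm{diam}(A_j)^p\, n\,\mu(A_j)$; after dividing by $n^{1-p/d}$ this still carries a factor $n^{p/d}$, so there is no $n$-independent majorant and you cannot keep $R$ fixed.

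The missing ingredient is Lemma~\ref{lem:upperL}, whose point is precisely the improved bound $\E L(\nu)\le D\,\mathrm{diam}(Q)^p\,\nu(Q)^{1-p/d}$. With it, the paper uses a \emph{fixed} dyadic annular decomposition $A_\ell=B(2^{\ell+1})\setminus B(2^\ell)$ (so each piece is a fixed bounded set to which the compact-support case applies), obtains the $n$-free domination
\[
n^{\frac{p}{d}-1}\,\E L(n\mu\mathbf 1_{A_\ell})\;\le\; c_d\,2^{\ell p}\,\mu(A_\ell)^{1-\frac{p}{d}},
\]
checks that this is summable in $\ell$ from the moment hypothesis (using $\alpha(1-p/d)>p$), and then passes the $\limsup$ through the series by dominated convergence. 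The interface error is handled by applying $(\mathcal S_p)$ at the level of the random point sets with the random diameter $T$ (not cell-by-cell diameters), and controlling $\E[T^{2p}]$ via Cauchy--Schwarz and Lemma~\ref{lem:ET}; note that Proposition~\ref{cor:mu1mu2} itself is not directly applicable here since $\mathrm{diam}(\mathrm{supp}\,\mu)$ may be infinite. Replacing Corollary~\ref{cor:trivialbound} by Lemma~\ref{lem:upperL} in your annular estimate, and keeping $R$ fixed, would repair your argument along these lines.
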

\begin{remark}
Observe that the hypotheses ensure the finiteness of $ \int f^{1-\frac{p}{d}}$. Indeed H\"older's inequality
gives 
$$ \int_{\R^d} f^{1-\frac{p}{d}} \le \left(\int_{\R^d} (1+|x|^\alpha)f(x) dx \right)^{1-\frac{p}{d}} 
\left(\int_{\R^d} (1+|x|^\alpha)^{1-\frac{d}{p}} \right)^{\frac{p}{d}}$$
where the latter integral converges since $\alpha > \frac{2dp}{d-2p}>\frac{dp}{d-p}.$
\end{remark}
\begin{proof} 
Assume first that $\mu$ has a bounded support. Write $\mu=\mu_{ac}+\mu_s$ where $\mu_s$ is the singular part and $d\mu_{ac}(x)=f(x)\, dx$.
Applying Proposition~\ref{cor:mu1mu2} to $\mu_{ac}$ and $\mu_s$, dividing by $n^{1-p/d}$, passing to the
limit and using Theorem~\ref{th:upper} and Lemma~\ref{le:sing} gives
$$ \limsup_n \frac{\E L(n\mu)}{n^{1-\frac{p}{d}}} \le  \limsup_n \frac{\E L(n\mu_{ac})}{n^{1-\frac{p}{d}}} 
  +  \limsup_n \frac{\E L(n\mu_s)}{n^{1-\frac{p}{d}}} \le \beta_L \int f^{1-\frac{p}{d}}.$$
  Hence the theorem is established for measures with bounded supports.
  
  Now, let us consider the general case.
Let $B (t) = \{x \in \R^d : |x | \leq t\}$. Let $A_0 = B (2)$ and for integer $\ell \geq 1$, $A_\ell = B (2^{\ell+1})\backslash B (2^{\ell})$.  Now, let $\cX = \{ X_1, \cdots, X_{N_1}\}$,  $\cY = \{ Y_1, \cdots, Y_{N_2}\}$ be two independent Poisson process of intensity $n\mu$, and $T =  \max \{ | Z| : Z \in \cX \cup \cY  \}$.
Applying the subadditivity property like in the proof of Proposition~\ref{prop:partition}, we obtain 
\begin{eqnarray}
L ( \cX , \cY ) & \leq & \sum_{\ell \geq  0}   L ( \cX \cap A_\ell ,  \cY \cap A_\ell ) + C T^p   \sum_{\ell \geq  0}     \1_{\cX  (A_\ell) +\cY( A_\ell)\neq 0}\big(1+|\cX  (A_\ell) -\cY( A_\ell)|\big) .   \label{eq:UP1} 
\end{eqnarray}
Note that the above sums have only finitely many non-zero terms, since $\mu$ is finite.
We first deal with the first sum in the above inequality. By Fubini's Theorem,
$$
\E   \sum_{\ell \geq 0 }  \frac{L ( \cX \cap A_\ell ,  \cY \cap A_\ell )  }{n^{1-\frac{p}{d}}} =  \sum_{\ell \geq 0}  \E \frac{L ( \cX \cap A_\ell ,  \cY \cap A_\ell )  }{n^{1-\frac{p}{d}}}.
$$
Applying  \eqref{eq:glupper} to the compactly supported measure $\mu_{|A_\ell}$ for every integer $\ell$ gives  
\begin{equation}\label{eq:limsupAl}
\limsup_n \E \frac{L ( \cX \cap A_\ell ,  \cY \cap A_\ell )  }{n^{1-\frac{p}{d}}} \le   \beta_L \int_{A_l}  f^{1-\frac{p}{d}}. 
\end{equation}
By  Lemma \ref{lem:upperL}, for some constant $c_d$, 
$$
\E \frac{L ( \cX \cap A_\ell ,  \cY \cap A_\ell )  }{n^{1-\frac{p}{d}}} \leq c_d 2^{\ell p} \mu (A_\ell)^{1-\frac{p}{d}}. 
$$ 
From Markov inequality, with $m_\alpha = \int |x|^\alpha d \mu (x)$,
$$
\mu (A_\ell) \leq \mu ( \R^d \backslash B (2^{\ell})) \leq  2^{- \ell \alpha} m_\alpha. 
$$
Thus, since $\alpha > 2pd/(d-2p)>dp/(d-p)$, the series $\sum_{\ell}  2^{\ell p} \mu (A_\ell)^{1-\frac{p}{d}}$ is convergent. We may then apply the dominated convergence theorem, we get from \eqref{eq:limsupAl} that 
$$
\limsup_n \E   \sum_{\ell \geq 0 }  \frac{L (  \cX \cap A_\ell  , \cY \cap A_\ell  ) }{n^{1-\frac{p}{d}}}  \le
 \beta_L \int f^{1-\frac{p}{d}}. 
$$
For the expectation of the second term on the right hand side of (\ref{eq:UP1}), we use Cauchy-Schwartz inequality, 
\begin{eqnarray*}
\lefteqn{\E  \left[T^p   \sum_{\ell \geq  0}     \1_{\cX  (A_\ell) +\cY( A_\ell)\neq 0}\big(1+|\cX  (A_\ell) -\cY( A_\ell)|\big) \right] }\\
  & \leq  &    \sum_{\ell \geq  0}    \sqrt {\E[T^{2p} ]} \sqrt{ \E \big( \1_{\cX  (A_\ell) +\cY( A_\ell)\neq 0}\big(1+|\cX  (A_\ell) -\cY( A_\ell)|\big)^2\big)  } \\
  &\le & \sqrt2  \sqrt {\E[T^{2p} ]}  \sum_{\ell \geq  0} \sqrt{ \mathbb P(\cX  (A_\ell) +\cY( A_\ell)\neq 0)
   + \E  \big[|\cX  (A_\ell) -\cY( A_\ell)|^2\big]  } \\
   &=& \sqrt2  \sqrt {\E[T^{2p} ]}  \sum_{\ell \geq  0} \sqrt{ 1-e^{-2n\mu(A_\ell)} + 2 n\mu(A_\ell)} \\
   &\le & 2 \sqrt {\E[T^{2p} ]}\, \sqrt{n}  \sum_{\ell \geq  0} \sqrt{\mu(A_\ell)},
\end{eqnarray*} 
where we have used $1-e^{-u}\le u$.
As above, Markov inequality leads to
$$
 \sum_{\ell \geq  0}  \sqrt{   \mu  (A_\ell)  }  \leq \sqrt{m_\alpha} \sum_{\ell \geq 0} 2^{-\ell \frac{\alpha}{2} } 
 <+\infty.
$$
Eventually we apply Lemma \ref{lem:ET} with $\gamma:=2p < 2pd/(d-2)<\alpha$ to upper bound $\E[T^{2p}]$. We get 
that  for some constant $c >0$ and all $n >0$, 
$$
n^{-1+\frac{p}{d} }  \E   \left[ T^p   \sum_{\ell \geq  0}     \1_{\cX  (A_\ell) +\cY( A_\ell)\neq 0}\big(1+|\cX  (A_\ell) -\cY( A_\ell)|\big) \right]  \leq   c n^{ - \frac 1 2 + \frac{p}{d} + \frac{p}{ \alpha}}.
$$
Since $\alpha > 2 dp / ( d -2p)$, the later and former terms tend to zero as $n$ tends to infinity.
The upper bound (\ref{eq:glupper}) is proved. \end{proof}

\section{Examples of bipartite functionals}
\label{sec:ex}

The minimal bipartite matching is an instance
of a bipartite Euclidean functional $M_1(\cX,\cY)$ over the multisets
$\mathcal X = \{X_1,\ldots,X_n\}$ and $\mathcal Y =
\{Y_1,\ldots,Y_n\}$. We may mention at least two other interesting
examples: the bipartite traveling salesperson problem over $\mathcal
X$ and $\mathcal Y$ is the shortest cycle on the multiset $\cX \cup
\cY$ such that the image of $\cX$ is $\cY$. Similarly, the bipartite
minimal spanning tree is the minimal edge-length spanning tree on $\cX
\cup \cY$ with no edge between two elements of $\cX$ or two elements
of $\cY$. 

\subsection{Minimal bipartite  matching}\label{sec:preparation}
  Fix $p>0$. Given two multi-subsets of $\R^d$
with the same cardinality, $\mathcal X=\{X_1,\ldots,X_n\}$ and
$\mathcal Y=\{Y_1,\ldots,Y_n\}$, the $p$-cost of the minimal bipartite
matching of $\mathcal X$ and $\mathcal Y$ is defined as
  $$ M_p(\mathcal X,\mathcal Y)=\min_{\sigma\in \mathcal S_n} \sum_{i=1}^{n} |X_i-Y_{\sigma(i)}|^p,$$
  where the minimum runs over all permutations of $\{1,\ldots,n\}$.
  It is useful to extend the definition to sets of different
  cardinalities, by matching as many points as possible: if $\mathcal
  X=\{X_1,\ldots,X_m\}$ and $\mathcal Y=\{Y_1,\ldots,Y_n\}$ and $m\le
  n$ then
   $$ M_p(\mathcal X,\mathcal Y)=\min_{\sigma} \sum_{i=1}^{m} |X_i-Y_{\sigma(i)}|^p,$$
   where the minimum runs over all injective maps from
   $\{1,\ldots,m\}$ to $\{1,\ldots,n\}$. When $n\le m$ the symmetric
   definition is chosen $ M_p(\mathcal X,\mathcal Y):=M_p(\mathcal Y,\mathcal X) $.

 The bipartite functional $M_p$ is obviously homogeneous of degree $p$, i.e. it satisfies $(\mathcal H_p)$.
The next lemma asserts that it is also verifies the  subadditivity
property $(\mathcal S_p)$. In the case $p=1$, this is the starting point of the paper \cite{BM}. 
\begin{lemma}\label{lem:partition}
For any $p >0$, the functional $M_p$ satisfies property \eqref{eq:Sp} with constant $C = 1/2$. More precisely, if $\mathcal X_1, \ldots,\mathcal X_k$ and $\mathcal Y_1,\ldots,\mathcal Y_k$ are multisets in a bounded subset $Q \subset \R^d$, then
  $$ M_p\Big(\bigcup_{i=1}^k\mathcal X_i , \bigcup_{i=1}^k\mathcal Y_i\Big) \le \sum_{i=1}^k  M_p(\mathcal X_i , \mathcal Y_i)
  + \frac{\mathrm{diam}(Q)^p}{2} \sum_{i=1}^k | \mathrm{card}(\mathcal
  X_i)- \mathrm{card}(\mathcal Y_i) |.$$
\end{lemma}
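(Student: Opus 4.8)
The plan is to paste together the optimal partial matchings inside each block and then repair the mismatch in cardinalities with a small number of cheap edges. Write $a_i=\card(\cX_i)$, $b_i=\card(\cY_i)$, $a=\sum_i a_i$, $b=\sum_i b_i$. Since both sides of the claimed inequality are symmetric under exchanging the families $(\cX_i)$ and $(\cY_i)$, and since $M_p$ is defined symmetrically on sets of unequal size, I may assume $a\le b$. For each $i$, fix an injective map realizing $M_p(\cX_i,\cY_i)$; it matches $\min(a_i,b_i)$ points of $\cX_i$ to points of $\cY_i$ and leaves $(a_i-b_i)^+$ points of $\cX_i$ and $(b_i-a_i)^+$ points of $\cY_i$ unmatched, where $x^+=\max(x,0)$.

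The next step is to match the leftover points across blocks. The total number of unmatched $\cX$-points is $\sum_i (a_i-b_i)^+$, and the total number of unmatched $\cY$-points is $\sum_i(b_i-a_i)^+$; their difference equals $\sum_i(a_i-b_i)=a-b\le 0$, so there are at least as many unmatched $\cY$-points as unmatched $\cX$-points. Hence I can choose an injection from the set of all unmatched $\cX$-points into the set of all unmatched $\cY$-points, creating $\sum_i(a_i-b_i)^+$ new edges, each joining two points of $Q$ and therefore of $p$-cost at most $\diam(Q)^p$.

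Superimposing the block matchings and the repair edges produces a map defined on all of $\bigcup_i\cX_i$ and injective into $\bigcup_i\cY_i$: a point of $\cX_i$ is used exactly once, since $\min(a_i,b_i)+(a_i-b_i)^+=a_i$, and the repair edges use only $\cY$-points left free by the block matchings. Its total $p$-cost is at most $\sum_i M_p(\cX_i,\cY_i)+\diam(Q)^p\sum_i(a_i-b_i)^+$, so it is an admissible competitor in the definition of $M_p(\bigcup_i\cX_i,\bigcup_i\cY_i)$. Finally I would invoke the elementary identity $\sum_i(a_i-b_i)^+=\tfrac12\big(\sum_i|a_i-b_i|+(a-b)\big)\le\tfrac12\sum_i|a_i-b_i|$, valid because $a\le b$, to conclude. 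No step here is a genuine obstacle; the only delicate points are checking that every point of $\bigcup_i\cX_i$ is matched exactly once and the sign condition that makes the last inequality go through, and it is precisely this inequality that yields the constant $C=1/2$.
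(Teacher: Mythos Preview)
Your argument is correct and follows essentially the same approach as the paper: glue together the optimal block matchings, then repair the leftover points with at most $\tfrac12\sum_i|a_i-b_i|$ additional edges of cost $\le\diam(Q)^p$. Your write-up is in fact more careful than the paper's, which simply observes that among the unmatched points the ``less numerous species'' has cardinality at most $\tfrac12\sum_i|a_i-b_i|$, whereas you make the injection and the identity $\sum_i(a_i-b_i)^+=\tfrac12(\sum_i|a_i-b_i|+(a-b))$ explicit.
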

\begin{proof}
   It is enough to upper estimate of the cost of a particular matching
   of $\bigcup_{i=1}^k\mathcal X_i $ and $ \bigcup_{i=1}^k\mathcal
   Y_i$.  We build a matching of these multisets as follows. For each
   $i$ we choose the optimal matching of $\mathcal X_i$ and $\mathcal
   Y_i$. The overall cost is $ \sum_{i} M_p(\mathcal X_i , \mathcal
   Y_i)$, but we have left $ \sum_{i} | \mathrm{card}(\mathcal X_i)-
   \mathrm{card}(\mathcal Y_i)|$ points unmatched (the number of
   excess points).  Among these points, the less numerous species
   (there are two species: points from $\mathcal X_i$'s, and points
   from $\mathcal Y_i$'s) has cardinality at most $\frac{1}{2}
   \sum_{i} | \mathrm{card}(\mathcal X_i)- \mathrm{card}(\mathcal
   Y_i)|$. To complete the definition of the matching, we have to
   match all the points of this species in the minority.  We do this
   in an arbitrary manner and simply upper bound the distance between
   matched points by the diameter of $Q$.
\end{proof}

The regularity property is established next.
\begin{lemma}\label{lem:Rp}
For any $p >0$, the functional $M_p$ satisfies property \eqref{eq:Rp} with constant $C = 1$. 
\end{lemma}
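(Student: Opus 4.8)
The plan is to split the required inequality into two one-sided estimates and to prove each one by exhibiting an explicit, generally sub-optimal, partial matching. Write $a_i=\mathrm{card}(\cX_i)$ and $b_i=\mathrm{card}(\cY_i)$ for $i=1,2$, and let $\Delta$ be the diameter of the union of the six multisets. It suffices to establish
$$M_p(\cX\cup\cX_1,\cY\cup\cY_1)\le M_p(\cX,\cY)+\Delta^p(a_1+b_1)$$
together with
$$M_p(\cX,\cY)\le M_p(\cX\cup\cX_2,\cY\cup\cY_2)+\Delta^p(a_2+b_2),$$
since chaining the two bounds gives property $(\mathcal R_p)$ with $C=1$.

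For the first estimate, start from an optimal partial matching of $\cX$ and $\cY$; it pairs up $\min(\mathrm{card}\,\cX,\mathrm{card}\,\cY)$ points at total cost $M_p(\cX,\cY)$, and it is in particular a (non-maximal) partial matching of the larger multisets $\cX\cup\cX_1$ and $\cY\cup\cY_1$. We extend it to a maximal partial matching of these larger multisets by matching, in an arbitrary way, $\min(\mathrm{card}\,\cX+a_1,\mathrm{card}\,\cY+b_1)-\min(\mathrm{card}\,\cX,\mathrm{card}\,\cY)$ further pairs; each new edge joins two points of the union, and so costs at most $\Delta^p$. The elementary inequality $\min(x+a,y+b)\le\min(x,y)+\max(a,b)$ bounds the number of new edges by $\max(a_1,b_1)\le a_1+b_1$, which yields the claim.

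For the second estimate, start from an optimal partial matching of $\cX\cup\cX_2$ and $\cY\cup\cY_2$ and delete every edge incident to a point of $\cX_2\cup\cY_2$. Each deleted edge uses up at least one of the $a_2+b_2$ points of $\cX_2\cup\cY_2$, and each of these points lies on at most one edge, so at most $a_2+b_2$ edges are removed; since all edge costs are nonnegative, the surviving edges form a partial matching of $\cX$ and $\cY$ of cost at most $M_p(\cX\cup\cX_2,\cY\cup\cY_2)$, pairing up at least $\min(\mathrm{card}\,\cX+a_2,\mathrm{card}\,\cY+b_2)-(a_2+b_2)$ points. We complete it into a maximal partial matching of $\cX$ and $\cY$ by matching arbitrarily the remaining deficit, at cost at most $\Delta^p$ per new edge. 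Using $\min(\mathrm{card}\,\cX,\mathrm{card}\,\cY)\le\min(\mathrm{card}\,\cX+a_2,\mathrm{card}\,\cY+b_2)$, the number of added pairs is at most $a_2+b_2$, which proves the estimate.

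The only subtlety is the bookkeeping forced by the convention that $M_p$ matches "as many points as possible" when $\mathrm{card}\,\cX\neq\mathrm{card}\,\cY$: one has to keep track of exactly how many pairs each intermediate partial matching realizes and verify the two elementary counting inequalities on cardinalities used above. There is no analytic content beyond that, since all distances are Euclidean and every edge we add by hand joins two points of a set of diameter $\Delta$, hence costs at most $\Delta^p$.
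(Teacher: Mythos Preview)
Your proof is correct and follows essentially the same approach as the paper: start from an optimal matching of $\cX\cup\cX_2$ and $\cY\cup\cY_2$, delete the at most $a_2+b_2$ edges touching $\cX_2\cup\cY_2$, and then complete arbitrarily at cost $\le\Delta^p$ per added edge. The only difference is organizational: you factor through the intermediate quantity $M_p(\cX,\cY)$, which splits the counting into two easy inequalities, whereas the paper goes in one step and must verify the combined bound $(x+x_1)\wedge(y+y_1)-\big((x+x_2)\wedge(y+y_2)-x_2-y_2\big)_+\le x_1+x_2+y_1+y_2$ directly.
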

\begin{proof}
Let $\cX,\cX_1,\cX_2,\cY,\cY_1,\cY_2$ be finite multisets contained in $Q = B(1/2)$. Denote by $x,x_1,x_2,y,y_1,y_2$ the cardinalities of the multisets and $a\wedge b$ for $\min(a,b)$. We start with an  optimal matching 
for $ M_p(\cX\cap \cX_2,\cY\cap \cY_2)$. It comprises $(x+x_2)\wedge (y+y_2)$ edges. We remove the ones
which have a vertex in $\cX_2$ or in $\cY_2$. There are at most $x_2+y_2$ of them, so we are left with
at least $\big((x+x_2)\wedge (y+y_2)-x_2-y_2\big)_+$ edges connecting points of $\cX$ to points of $\cY$.
We want to use this partial matching  in order to build a (suboptimal) matching of $\cX\cap \cX_1$ and
$\cY\cap \cY_1$. This requires to have globally $(x+x_1)\wedge (y+y_1)$ edges. Hence we need to add at most 
$$ (x+x_1)\wedge (y+y_1)-\big((x+x_2)\wedge (y+y_2)-x_2-y_2\big)_+$$
new edges. We do this in an arbitrary way, and simply upper bound their length by the diameter of $Q$.
To prove the claim it is therefore sufficient to prove the following inequalities for non-negative numbers:
\begin{equation}\label{eq:xy}
(x+x_1)\wedge (y+y_1)-\big((x+x_2)\wedge (y+y_2)-x_2-y_2\big)_+\le x_1+x_2+y_1+y_2.
\end{equation}
This is obviously equivalent to 
\begin{eqnarray*}
&&x+x_1 \le  x_1+x_2+y_1+y_2+\big((x+x_2)\wedge (y+y_2)-x_2-y_2\big)_+\\
& \mathrm{or} & y+y_1\le  x_1+x_2+y_1+y_2+ \big((x+x_2)\wedge (y+y_2)-x_2-y_2\big)_+.
\end{eqnarray*}
After simplification, and noting that $y_1\ge 0$ appears only on the right-hand side of the first inequation
(and the same for $x_1$ in the second one), it is enough to show that 
$$ x\wedge y \le  x_2+y_2+ \big((x+x_2)\wedge (y+y_2)-x_2-y_2\big)_+.$$
This is obvious, as by definition of the positive part, $ x\wedge y \le  x_2+y_2+ \big((x\wedge y)-x_2-y_2\big)_+.$
\end{proof}

 \subsection{Bipartite traveling salesperson tour}

 Fix $p>0$. Given two multi-subsets of $\R^d$ with the same cardinality, $\mathcal X=\{X_1,\ldots,X_n\}$ and
$\mathcal Y=\{Y_1,\ldots,Y_n\}$, the $p$-cost of the minimal bipartite
traveling salesperson tour of $(\mathcal X,\mathcal Y)$ is defined as
  $$T_p(\mathcal X,\mathcal Y)=\min_{(\sigma,\sigma') \in S_n ^2  } \sum_{i=1}^{n} |X_{\sigma(i)} -Y_{\sigma'(i)}|^p +  \sum_{i=1}^{n-1} |Y_{\sigma'(i)} -X_{\sigma(i+1)}|^p + |Y_{\sigma'(n)} -X_{\sigma(1)}|^p ,$$
  where the minimum runs over all pairs of permutations of $\{1,\ldots,n\}$.
  We extend the definition to sets of different cardinalities, by completing the longest possible bipartite tour : if $\mathcal
  X=\{X_1,\ldots,X_m\}$ and $\mathcal Y=\{Y_1,\ldots,Y_n\}$ and $m\le
  n$ then
   $$ T_p(\mathcal X,\mathcal Y)=\min_{(\sigma,\sigma')}  \sum_{i=1}^{m} |X_{\sigma(i)} -Y_{\sigma'(i)}|^p +  \sum_{i=1}^{m-1} |Y_{\sigma'(i)} -X_{\sigma(i+1)}|^p + |Y_{\sigma'(m)} -X_{\sigma(1)}|^p $$
   where the minimum runs over all pairs $(\sigma, \sigma')$, with $\sigma \in S_m$ and $\sigma'$ is an injective maps from
   $\{1,\ldots,m\}$ to $\{1,\ldots,n\}$. When $n\le m$ the symmetric
   definition is chosen $ T_p(\mathcal X,\mathcal Y):=T_p(\mathcal Y,\mathcal X) $. This traveling salesperson functional is an instance of a larger class of functionals that we now describe.

 \subsection{Euclidean combinatorial optimization over bipartite graphs}
 
 \label{subsec:combopt}
 
For integers $m,n$, we define $[n] = \{1, \cdots n\}$ and $[n]_m = \{m +1 , \cdots , m + n\}$.  Let $\mathcal B_n$ be the set of bipartite graphs with common vertex set $([n], [n]_n)$ : if $G \in \mathcal B_n$, the edge set of $G$ is contained is the set of pairs $\{i , n+j\}$, with $i, j \in [n]$. 

We should introduce some graph definitions. If $G_1 \in \mathcal B_n$ and $G_2 \in \mathcal B_m$ we define $G_1 + G_2$ as the graph in $\mathcal B_{n+m}$ obtained by the following rule : if $\{i , n + j\}$ is an edge of $G_1$ then $\{i , n +m+ j\}$ is an edge of $G_1 + G_2$, and if $\{i , m + j\}$ is an edge of $G_2$ then $\{n+i , 2n +m+ j\}$ is an edge of $G_1 + G_2$.  Finally, if $G \in \mathcal B_{n+m}$, the restriction $G'$ of $G$ to $\mathcal B_n$ is the element of $\mathcal B_n$ defined by the following construction rule:
 if $\{i , n + m + j\}$ is an edge of $G$ and $(i,j) \in [n]^2$ then add $\{i, n+ j \}$ as an edge of $G'$.

We consider a collection of subsets $\mathcal G_n \subset \mathcal B_n$ with the following properties, there exist constants $\kappa_0, \kappa \geq 1$ such that for all integers $n,m$, 

\begin{itemize}

\item[(A1)] {\em (not empty)} If $n \geq \kappa_0$, $\mathcal G_n$ is not empty. 

\item[(A2)]
{\em  (isomorphism)} If $G \in \mathcal G_n$ and $G'  \in \mathcal B_n$ is isomorphic to $G$ then $G' \in \mathcal G_n$.

\item[(A3)] {\em (bounded degree)} If $G \in \mathcal G_n$, the degree of any vertex is at most $\kappa$.

\item[(A4)]
{\em (merging)} If $G \in \mathcal G_n$ and $G' \in \mathcal G_m$, there exists $G'' \in \mathcal G_{n+m}$ such that $G + G'$ and $G''$ have all but at most $\kappa$ edges in common. For $1 \leq m <  \kappa_0$, it also holds if $G'$ is the empty graph of $\mathcal B_m$.

\item[(A5)]
{\em (restriction)}  Let $G \in \mathcal G_n$ and $\kappa_0+1\leq n$ and $G'$ be  the restriction of $G$ to $\mathcal B_{n-1} $. Then  there exists $G'' \in \mathcal G_{n-1}$  such that  $G'$  and $G''$ have all but at most $\kappa$ edges in common.
\end{itemize}

 If $|\mathcal X |= |\mathcal Y| = n$, we define 
 $$
 L ( \mathcal X , \mathcal Y ) = \min_{G \in \mathcal G_n} \sum_{(i,j)\in [n]^2: \{i , n+ j \} \in  G } | X_i - Y_{j} |^p . 
 $$
With the convention that the minimum over an empty set is $0$. Note that the isomorphism property implies that $ L ( \mathcal X , \mathcal Y )  =  L (  \mathcal Y , \mathcal X )$.   If $m = |\mathcal X |  \leq |\mathcal Y| = n$, we define 
 \begin{equation}
 \label{eq:combopt}
 L ( \mathcal X , \mathcal Y ) = \min_{(G,\sigma)} \sum_{(i,j)\in [m]^2: \{i , m+j \} \in  G } | X_i - Y_{\sigma(j)} |^p,
\end{equation}
where the miminum runs over all pairs $(G, \sigma)$, $G  \in \mathcal G_m$ and $ \sigma$ is an injective maps from
   $\{1,\ldots,m\}$ to $\{1,\ldots,n\}$. When $n\le m$ the symmetric
   definition is chosen $ L(\mathcal X,\mathcal Y):=L(\mathcal Y,\mathcal X) $.

 \medskip
 
The case of bipartite matchings is recovered by choosing $\mathcal G_n$ as the set of graphs in $\mathcal B_n$ where all vertices have degree $1$. We then have $\kappa_0 =1$ and $\mathcal G_n$ satisfies the merging  property with $\kappa = 0$. It also satisfies the restriction property with $\kappa = 1$.  The case of the traveling salesperson tour is obtained by choosing $\mathcal G_n$ as the set of connected graphs in $\mathcal B_n$ where all vertices have degree $2$, this set is non-empty for $n \geq \kappa_0 = 2$. Also this set $\mathcal G_n$ satisfies the merging property with $\kappa = 4$ (as can be checked by edge switching). The restriction property follows by merging strings into a cycle. 

For the minimal bipartite spanning tree, we choose $\mathcal G_n$ as the set of connected trees of $[2n]$ in $\mathcal B_n$. It satisfies  the restriction property and the merging property with $\kappa = 1$. For this choice, however, the maximal degree is not bounded uniformly in $n$.  We could impose artificially  this condition by defining $\mathcal G_n$ as the set of connected graphs in $\mathcal B_n$ with maximal degree bounded by $\kappa \geq 2$. We would then get the minimal bipartite spanning tree with maximal degree bounded by $\kappa $. It is not hard to verify that the corresponding functional satisfies all the above properties.

Another interesting example is the following. Fix an integer $r \geq 2$. Recall that a graph is $r$-regular if the degree of all its vertices is equal to $r$. We may define $\mathcal G_n$ as the set of $r$-regular connected graphs in $\mathcal B_n$. This set is not empty for $n \geq \kappa_ 0 = r$. It satisfies  the first part of the merging property (A4) with $\kappa = 4$. Indeed, consider two $r$-regular graphs $G$, $G'$, and take any edge $e = \{x,y\} \in G$ and $e' = \{x',y'\} \in G'$. The merging property holds with $G''$, the graph obtained from $G+G'$ by switching $(e,e')$ in $(\{x,y'\},\{x',y\})$. Up to increasing the value of $\kappa$, the second part of the merging property is also satisfied. Indeed, if $n$ is large enough, it is possible to find $r m < r \kappa_0 = r^2$ edges $e_{1}, \cdots, e_{rm}$ in $G$ with no-adjacent vertices. Now, in $G''$, we add $m$ points from each species, and replace the edge $e_{r i + q} = \{x , n + y\}$, $1 \leq i \leq m$, $0 \leq q < r$, by two edges : one between $x$ and the $i$-th point of the second species, and one between $y$ and the $i$-th point of the first species. $G''$ is then a connected $r$-regular graph in $\mathcal B_{n+m}$ with all but at most $2r^2$ edges in common with $G$. Hence, by taking $\kappa$ large enough, the second part of the merging property holds.

Checking the restriction property (A5) for $r$-regular graphs requires a little more care. Let $r = \kappa_0 +1\leq n$ and consider the restriction $G_1$ of $G \in \mathcal B_n$ to $\mathcal B_{n-1}$. Our goal is to show that by modifying a small number of edges of $G_1$, one can obtained a connected $r$-regular bipartite
graph on $\mathcal B_{n-1}$. We first explain how to turn $G_1$ into a possibly non-connected $r$-regular graph. Let us observe that $G_1$ was obtained from
$G$ by deleting one vertex of each spieces and the edges to which these points belong. Hence $G_1$ has vertices of degree $r$, and vertices of degree $r-1$
($r$ blue and $r$ red vertices if the removed points did not share an edge, only $r-1$ points of each spieces if the removed points shared an edge). 
In any case $G_1$ has at most $2r$ connected components and $r$ vertives of each color with one edge missing.
The simplest way to turn $G_1$ into a $r$ regular graph is to connect each blue vertex missing an edge with a red vertex missing an edge. However this is 
not always possible as these vertices may already be neighbours in $G_1$ and we do not allow multiple edges. However given a red vertex $v_R$ and a blue vertex
$v_B$ of degree
$r-1$ and provided $n-1 > 2r^2$ there exists a vertex $v$ in $G_1$ which is at graph distance at least 3 from $v_B$ and $v_R$. Then open up an edge to which
$v$ belongs and connect its end-points to $v_R$ and $v_B$ while respecting the bipartite structure. In the new graph $v_B$ and $v_R$ have degree $r$.
Repeating this operation no more than $r$ times turns $G_1$ into a $r$ regular graphs with at most as many connected components (and the initial and the final 
graph differ by at most $3r$ edges). Next we apply the merge operation at most $2r-1$ times in order to glue together the connected componented (this leads
to modifying at most $4(2r-1)$ edges. As a conclusion, provided we choose $\kappa_0>2r^ 2$, the restriction property holds for $\kappa=11r$.

\medskip

We now come back to the general case. From the definition, it is clear that $L$ satisfies the property $\eqref{eq:Hp}$. We are going to check that it also satisfies properties \eqref{eq:Sp} and  \eqref{eq:Rp}. 
\begin{lemma}\label{le:Sp}
Assume (A1-A4). For any $p >0$, the functional $L$ satisfies property \eqref{eq:Sp} with constant $C  = ( 3 + \kappa_0 ) \kappa /2 $. 
\end{lemma}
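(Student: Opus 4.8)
The plan is to reduce the subadditivity of $L$ over $k$ cells to a repeated application of the merging property (A4), taking care of cells whose point sets are too small or unbalanced. Fix multisets $(\cX_i,\cY_i)_{i=1}^k$ lying in a common set of diameter $\Delta$, and write $m_i=\mathrm{card}(\cX_i)$, $n_i=\mathrm{card}(\cY_i)$. For each $i$, I first choose a near-optimal structure: if $\min(m_i,n_i)\ge\kappa_0$ pick an optimal graph $G_i\in\mathcal G_{\min(m_i,n_i)}$ together with the optimizing injection, realizing $L(\cX_i,\cY_i)$ on $\min(m_i,n_i)$ matched points of each species; if $\min(m_i,n_i)<\kappa_0$ take $G_i$ to be the empty graph, in which case $L(\cX_i,\cY_i)=0$ but we have declared all $\min(m_i,n_i)<\kappa_0$ points of the minority species unmatched. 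In all cases the number of points of $\cX_i\cup\cY_i$ that remain unmatched after this step is at most $|m_i-n_i|+2\kappa_0$: the $|m_i-n_i|$ genuine excess points, plus at most $2\kappa_0$ more when $\min(m_i,n_i)<\kappa_0$.

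Next I build a graph on the union by iterating (A4). Starting from $G_1$, I merge in $G_2$, then $G_3$, and so on: each merge of the current graph (an element of $\mathcal G$ on the already-processed vertices) with the next $G_i$ produces an element of $\mathcal G_{(\cdot)+(\cdot)}$ that differs from the disjoint union $(\text{current})+G_i$ by at most $\kappa$ edges, and the second clause of (A4) handles the case where $G_i$ is the empty graph of $\mathcal B_{m_i}$ with $m_i<\kappa_0$. After $k-1$ merges we obtain an admissible graph $G\in\mathcal G_N$ on the matched vertices (where $N$ is the total number of matched points of the majority-less species across cells), at the cost of destroying at most $\kappa(k-1)$ of the edges present in $\bigsqcup_i G_i$. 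Each destroyed edge had length at most $\Delta$, so removing them costs at most $\kappa(k-1)\Delta^p$ in the functional; but actually I must be careful — removing an edge from an admissible graph may make it inadmissible, so the merges must be organised so that the final object is in $\mathcal G_N$ by construction, and the "$\le\kappa$ edges changed" bookkeeping only tracks how much the \emph{cost} can increase relative to $\sum_i L(\cX_i,\cY_i)$. Each merge can add at most $\kappa$ new edges, each of length $\le\Delta^p$, contributing $\le\kappa(k-1)\Delta^p$ total.

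Finally I must account for the unmatched points. The graph $G$ so far matches, within cell $i$, only $\min(m_i,n_i)$ (or fewer, in the small-cell case) points of each species to the global structure; I still have to incorporate the leftover points so as to produce a legitimate admissible graph on \emph{all} of $\bigcup_i\cX_i$ and $\bigcup_i\cY_i$. Here I use (A4) once more, or rather the same iterative merging, absorbing the leftover points cell by cell — adding $m_i$ excess points from one species (and the corresponding structural patching, again costing $\le\kappa$ changed edges per insertion) — which is exactly the regime covered by the "empty graph" clause and by the merging estimate; the number of such insertion steps is controlled by $\sum_i\big(1+|m_i-n_i|\big)$ since each small or unbalanced cell contributes at most $1+|m_i-n_i|$ to the count, and $\kappa_0$ is absorbed into the constant. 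Each inserted edge again has length $\le\Delta$, so its $p$-cost is $\le\Delta^p$. Collecting the three sources of extra cost — merging the $G_i$'s, patching for admissibility, and attaching the at most $(1+|m_i-n_i|)(\kappa_0+\text{const})$ leftover points per cell — gives a bound of the form $\sum_i L(\cX_i,\cY_i)+C\Delta^p\sum_i(1+|m_i-n_i|)$ with $C=(3+\kappa_0)\kappa/2$, which is $(\mathcal S_p)$.

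The main obstacle I expect is the bookkeeping in the second and third paragraphs: making sure that after all the merges and the insertion of leftover points the final graph genuinely lies in some $\mathcal G_N$ (so that it is a competitor in the definition of $L(\bigcup\cX_i,\bigcup\cY_i)$), while simultaneously keeping a sharp count of how many edges of length $\le\Delta$ were added, so that the constant comes out to exactly $(3+\kappa_0)\kappa/2$ rather than something larger. In particular the factor $3+\kappa_0$ suggests that each unbalanced or undersized cell triggers at most three merge-type operations (one to merge its optimal $G_i$, and up to $\kappa_0$-many single-point insertions amortised into the "$+1$" term, plus a patching step), and getting this amortisation exactly right — rather than losing a factor — is the delicate point.
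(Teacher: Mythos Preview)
Your overall strategy---choose optimal graphs $G_i$ on each cell, merge them via repeated use of (A4), then absorb the leftover points---is exactly the paper's. Where you diverge is in the treatment of the leftovers, and there is a genuine gap there.

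First, a small but important misreading: the final competitor does \emph{not} need to sit on all of $\bigcup_i\cX_i$ and $\bigcup_i\cY_i$. By the definition of $L$, the admissible graph lies in $\mathcal G_n$ with $n=\min\big(\sum_i\card(\cX_i),\sum_i\card(\cY_i)\big)$; the global excess of the majority species is simply discarded. So what you must do is add $n-N$ points of \emph{each} species to the graph $G$ you built on $N$ vertices per side.

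Second, and more serious, the ``empty graph'' clause of (A4) cannot do what you ask of it. The empty graph of $\mathcal B_m$ has $m$ vertices of each species; merging with it adds $m$ balanced pairs, not $m$ excess points of one species. So ``absorbing the leftover points cell by cell---adding $m_i$ excess points from one species'' is not covered by (A4) as stated.

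The paper sidesteps both issues at once: rather than inserting leftovers cell by cell, it gathers \emph{all} leftover points into a single pool, observes that the minority species in that pool has some cardinality $m_0\le \frac12\sum_i(\kappa_0+|x_i-y_i|)$, and then (if $m_0\ge\kappa_0$) invokes (A1) to place an admissible graph $G''\in\mathcal G_{m_0}$ on $m_0$ points of each species from the pool. One final merge of $G'$ with $G''$ via (A4) gives a competitor in $\mathcal G_n$. The cost of $G''$ is at most $\kappa m_0\,\Delta^p$ by (A3), and the single merge costs at most $\kappa\,\Delta^p$; combined with the $\kappa k\,\Delta^p$ from merging the $G_i$'s this yields the constant $(3+\kappa_0)\kappa/2$. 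If $m_0<\kappa_0$, one instead applies the empty-graph clause once (now legitimately, since we are adding $m_0<\kappa_0$ balanced pairs). Your intuition that the crux is making the final object lie in some $\mathcal G_N$ while keeping a sharp edge count is correct; the missing idea is to treat the leftovers globally via (A1) rather than locally via (A4).
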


\begin{proof} The proof of is an extension of the proof of Lemma \ref{lem:partition}.  We can assume without loss of generality $k \geq 2$. Let $\mathcal X_1, \ldots,\mathcal X_k$ and $\mathcal Y_1,\ldots,\mathcal Y_k$ be multisets in $Q  = B(1/2)$.  For ease of notation, let $x_i = |\mathcal X_i|$,  $y_i = |\mathcal Y_i|$ and   $n =  \sum_{i=1}^k x_i  \wedge \sum_{i=1}^k y_i $. If $n < \kappa_0$, then from the bounded degree property (A3),  
$$L \Big(\bigcup_{i=1}^k\mathcal X_i , \bigcup_{i=1}^k\mathcal Y_i\Big) \leq  n \kappa   \leq \kappa \kappa_0. $$
If $n \geq \kappa_0$,  it is enough to upper bound the cost for an element $G$ in $\mathcal G_n$. For each $1 \leq i \leq k$, if $n_i = x_i \wedge y_i \geq \kappa_0$, we consider the element $G_i$ in $\mathcal G_{n_i}$ which reaches the minimum cost of $L(\mathcal X_i, \mathcal Y_i)$.  From the merging property (A4), there exists  $G'$ in $\mathcal G_{\sum_{i} \ind_{n_i \geq \kappa_0} n_i}$ whose total cost is at most 
$$
L' :=  \sum_{i} L(\mathcal X_i , \mathcal Y_i) + \kappa k . 
$$
It remains at most $ \sum_{i} \kappa_0 +  |x_i - y_i|$ vertices that have been left aside. The less numerous species has cardinal  $ m_0 \leq m  = ( \sum_{i} \kappa_0 +  |x_i - y_i| ) / 2 $. If $m_0 \geq \kappa_0$, from the non-empty property (A1), there exists a graph $G'' \in \mathcal G_{m_0}$ that minimizes the cost of the vertices that have been left aside. From the merging and bounded degree properties, we get 
$$
L \Big(\bigcup_{i=1}^k\mathcal X_i , \bigcup_{i=1}^k\mathcal Y_i\Big) \leq L'  +      \kappa   +   \kappa  m   \leq \sum_{i} L(\mathcal X_i , \mathcal Y_i)  + \frac{\kappa}{2} \sum_i \left( 3    +  \kappa_0 +  |x_i - y_i | \right). 
$$
If $ m_0 < \kappa_0$, we apply to $G'$ the merging property with the empty graph :  there exists an element $G$ in $\mathcal G_n$ whose total cost is at most  
$$
L \Big(\bigcup_{i=1}^k\mathcal X_i , \bigcup_{i=1}^k\mathcal Y_i\Big) \leq  L'  +    \kappa  \leq   \sum_{i} L(\mathcal X_i , \mathcal Y_i) +   (k +1 ) \kappa .
$$
We have proved that property \eqref{eq:Sp} is satisfied for $ C =( 3 + \kappa_0 )  \kappa  /2$. 
\end{proof}

\begin{lemma}
\label{le:Rp}
Assume (A1-A5). For any $p >0$, the functional $L$ satisfies property \eqref{eq:Rp} with constant $C =C(\kappa, \kappa_0)  $. 
\end{lemma}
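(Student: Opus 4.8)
The plan is to adapt the argument of Lemma \ref{lem:Rp}, which handled the special case of the matching. By the Remark following Theorem \ref{th:mainup}, it suffices to verify the scale-free version $(\mathcal R)$: for multisets $\cX,\cX_1,\cX_2,\cY,\cY_1,\cY_2$ contained in $B(1/2)$, bound $L(\cX\cup\cX_1,\cY\cup\cY_1)$ by $L(\cX\cup\cX_2,\cY\cup\cY_2)$ plus a constant (depending on $\kappa,\kappa_0$) times the total cardinality $x_1+x_2+y_1+y_2$ of the ``perturbing'' multisets. Since every edge has length at most $\diam(B(1/2))^p = 1$, it is enough to exhibit a feasible configuration for the left-hand optimization problem that reuses most of an optimal configuration for the right-hand one, while introducing only $O_{\kappa,\kappa_0}(x_1+x_2+y_1+y_2)$ new edges (each of which we bound trivially by $1$).

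The key steps, in order, are as follows. First, fix a graph $G_2\in\mathcal G_{m_2}$ (with $m_2 = (x+x_2)\wedge(y+y_2)$) achieving $L(\cX\cup\cX_2,\cY\cup\cY_2)$, together with the injective map that places the matched species inside the larger one; if $m_2<\kappa_0$ we instead bound everything trivially by $\kappa\kappa_0$ using (A3), exactly as in the proof of Lemma \ref{le:Sp}. Second, delete from $G_2$ all vertices lying in $\cX_2$ or $\cY_2$ together with their incident edges: by the bounded-degree property (A3) this removes at most $\kappa(x_2+y_2)$ edges, and what remains is a bipartite graph on a subset of $(\cX,\cY)$. Third, I iteratively apply the restriction property (A5) to bring this graph back into the family $\mathcal G$ after each vertex deletion — restriction removes one vertex of each species at a time, so after at most $x_2+y_2$ applications we land on some $\widehat G\in\mathcal G_{\widehat m}$ for $\widehat m\ge (x\wedge y) - O(x_2+y_2)$, having modified at most $\kappa(x_2+y_2)$ further edges in total. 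Fourth, I need to grow $\widehat G$ up to a graph in $\mathcal G_{m_1}$, $m_1=(x+x_1)\wedge(y+y_1)$, covering the extra points coming from $\cX_1,\cY_1$ and the points of $\cX,\cY$ that restriction may have shed: I form a graph in $\mathcal G$ on these leftover points (using (A1), handling the sub-$\kappa_0$ case trivially via (A3)) and merge it with $\widehat G$ via (A4), costing at most $\kappa$ more edges. Collecting everything, the number of edges of $G_2$ reused and the number of newly added edges differ from $L(\cX\cup\cX_2,\cY\cup\cY_2)$'s edge set by at most a constant multiple of $x_1+x_2+y_1+y_2$; bounding each new edge by $1$ gives $(\mathcal R)$ with $C=C(\kappa,\kappa_0)$. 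Running the whole argument with the roles of $(\cX_1,\cY_1)$ and $(\cX_2,\cY_2)$ symmetric (as the statement of $(\mathcal R_p)$ is symmetric in the two) completes the proof.

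The main obstacle I anticipate is the careful bookkeeping in the third and fourth steps: the restriction property only guarantees that $G'$ (the combinatorial restriction of $G$) agrees with some element of $\mathcal G_{n-1}$ up to $\kappa$ edges, so iterating it $x_2+y_2$ times and simultaneously tracking how many of the \emph{original} matched points get ejected (and hence must be re-merged in step four) requires a clean invariant — something like ``after $j$ deletions, the current graph lies in $\mathcal G$, spans all but at most $(\kappa+1)j$ of the surviving $\cX\cup\cY$ points, and differs from the restricted $G_2$ by at most $(\kappa+1)j$ edges.'' One must also check the degenerate regimes (either $m_1$ or $m_2$ below $\kappa_0$) separately, exactly as in Lemma \ref{le:Sp}, so that the family $\mathcal G$ is guaranteed to be non-empty at every cardinality we invoke. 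None of this is deep, but it is the place where constants genuinely depending on both $\kappa$ and $\kappa_0$ (rather than $\kappa$ alone) enter, which is why the statement only claims $C=C(\kappa,\kappa_0)$.
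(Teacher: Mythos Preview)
Your approach is correct and uses the same toolkit as the paper (iterated (A5) for removal, (A4) for addition, (A3) to bound each modified edge, and separate handling of cardinalities below $\kappa_0$). The paper's organization differs in one useful way: rather than transforming an optimal graph for $L(\cX\cup\cX_2,\cY\cup\cY_2)$ directly into a feasible one for $L(\cX\cup\cX_1,\cY\cup\cY_1)$, it first splits $(\mathcal R_p)$ into the two one-sided estimates
\[
L(\cX\cup\cX_1,\cY\cup\cY_1)\le L(\cX,\cY)+C(x_1+y_1)
\quad\text{and}\quad
L(\cX,\cY)\le L(\cX\cup\cX_2,\cY\cup\cY_2)+C(x_2+y_2),
\]
and proves each by induction, reducing to adding or removing a \emph{single} point. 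This makes the bookkeeping you correctly flag as the main obstacle essentially vanish: each inductive step is one clean application of (A4) (for addition, via the already-proved subadditivity) or of (A5) (for removal), with one sub-case handled even more cheaply by (A2) alone---when there is an unused vertex of the same species, swap the point to be deleted with it via isomorphism instead of invoking restriction. Your all-at-once modification works too, but the invariant you propose must simultaneously track how many good $\cX\cup\cY$ points get ejected by the paired restrictions when $x_2\ne y_2$ and ensure the intermediate graph never drops below size $\kappa_0$; the paper's decomposition sidesteps both of these.
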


\begin{proof}
  Let $\cX,\cX_1,\cX_2,\cY,\cY_1,\cY_2$ be finite multisets contained in $B(1/2) = Q$. Denote by $x,x_1,x_2,y,y_1,y_2$ the cardinalities of the multisets.
  As a first step, let us prove that   
  \begin{equation}\label{eq:RpsansXY2}
L(\cX\cup\cX_1 ,\cY\cup\cY_1) \leq L(\cX,\cY) + C ( x_1 + y_1).
\end{equation}
By induction, it is enough to deal with the cases $(x_1,y_1)=(1,0)$ and $(x_1,y_1)=(0,1)$.
Because of our symmetry assumption, our task is to prove that 
 \begin{equation}\label{eq:RpsansY1XY2}
  L(\cX\cup\{a\} ,\cY) \leq L(\cX,\cY) + C .
  \end{equation}
If   $ \card(\cY)\le \card(\cX)$, then the latter is obvious: choose an optimal graph for $L(\cX,\cY)$ and use it to upper estimate 
$ L(\cX\cup\{a\} ,\cY)$.
Assume on the contrary that $\card(\cY)\ge \card(\cX)+1$. Then there exists $\cY'\subset \cY$ with  $ \card(\cY')= \card(\cX)$ and
$L(\cX,\cY')$. Let $b\in \cY\setminus \cY'$. In order to establish \eqref{eq:RpsansY1XY2}, it is enough to show that
$$  L(\cX\cup\{a\} ,\cY'\cup\{b\}) \leq L(\cX,\cY') + C ,$$
but this is just an instance of the subadditivity property. Hence \eqref{eq:RpsansXY2} is established.

\medskip
   In order to prove the regularity property, it remains to show  that 
\begin{equation}\label{eq:RpsansXY1}
L(\cX ,\cY) \leq L(\cX\cup \cX_2,\cY\cup \cY_2) + C ( x_2 + y_2).
\end{equation}
Again, using induction and symmetry, it is sufficient to establish
\begin{equation}\label{eq:RpsansY2XY1}
  L(\cX,\cY) \leq L(\cX\cup\{a\},\cY) + C .
  \end{equation}
If $\card(\cX)\wedge\card(cY)<\kappa_0$, then by the bounded degree property $ L(\cX,\cY) \leq \kappa\kappa_0 \diam(Q)^p$  and we are done.
Assume next that  $\card(\cX),\card(\cY)\ge \kappa_0$.
Let us consider an optimal graph for $  L(\cX\cup\{a\},\cY) $. If $a$ is not a vertex of this graph (which forces
$\card(\cX)\ge \card(\cY)$) then one can use the same graph to upper estimate $ L(\cX,\cY)$ and obtain \eqref{eq:RpsansY2XY1}.
Assume on the contrary that $a$ is a vertex of this optimal graph.  Let us distinguish two cases:
if $\card(\cX)\ge \card(\cY)$, then in the optimal graph for $  L(\cX\cup\{a\},\cY) $, at least a point $b\in \cX$ is not used.
Consider the isomorphic graph obtained by replacing $a$ by $b$ while the other points remain fixed 
(this leads to the deformation of the edges out of $a$. There are at most $\kappa$ of them by the bounded degree assumption).
This graph can be used to upper estimate $ L(\cX,\cY)$, and gives
$$ L(\cX,\cY)\le  L(\cX\cup\{a\},\cY) + \kappa \,\mathrm{diam}(Q)^p .$$
The second case is when $a$ is used but $\card(\cX)+1\le \card(\cY)$. Actually, the optimal graph for $ L(\cX\cup\{a\},\cY)$
uses all the points of $\cX\cup\{a\}$ and of a subset of same cardinality $\cY'\subset \cY$.
Choose an element $b$ in $\cY'$. Then $\cY''=\cY'\setminus\{b\}$ has the same cardinality as $\cX$.
Obviously $L(\cX\cup\{a\},\cY)=L(\cX\cup\{a\},\cY''\cup\{b\})$. Consider the corresponding optimal bipartite graph.
By the restriction property, if we erase $a$ and $b$ and their edges, we obtain a bipartite graph on $(\cX,\cY'')$ 
which differs from an admissible graph of our optimization problem by at most $\kappa$ edges. Using this new graphs
yields
$$ L(\cX,\cY)\le \kappa\, \mathrm{diam}(Q)^p + L(\cX\cup\{a\},\cY''\cup\{b\})= \kappa\, \mathrm{diam}(Q)^p + L(\cX\cup\{a\},\cY).$$
This concludes the proof.\end{proof}

\section{Lower bounds, lower limits}\label{sec:lower}

\subsection{Uniform distribution on a set}

In order to motivate the sequel, we start with the simple case where
$f$ is an indicator function. The lower bound is then a direct
consequence of Theorem~\ref{th:cube-poisson} and
Theorem~\ref{th:upper}.

\begin{theorem} \label{th:lowerset} 
 Let $d>2p>0$. Let $L$ be a bipartite functional on $\R^d$ satisfying the properties $(\mathcal H_p)$,
   $(\mathcal R_p)$, $(\mathcal S_p)$.
 Let $\Omega\subset \R^d$ be a bounded set with positive Lebesgue measure.  Then
$$ \lim_{n\to \infty} \frac{\E L(n\1_\Omega)}{n^{1-\frac{p}{d}}}= \beta_L \mathrm{Vol}(\Omega).$$
\end{theorem}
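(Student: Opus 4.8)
The plan is to establish matching upper and lower bounds and combine them. For the upper bound there is nothing to do beyond invoking Theorem~\ref{th:upper} with $f=\1_\Omega$: this function is integrable with bounded support, and since $\1_\Omega^{1-p/d}=\1_\Omega$ pointwise, Theorem~\ref{th:upper} gives directly
$$\limsup_{n\to\infty}\frac{\E L(n\1_\Omega)}{n^{1-\frac pd}}\le \beta_L\int_{\R^d}\1_\Omega^{1-\frac pd}=\beta_L\,\mathrm{Vol}(\Omega).$$

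For the lower bound I would enclose $\Omega$ in a cube $Q$ and split the uniform measure on $Q$ as $n\1_Q=n\1_\Omega+n\1_{Q\setminus\Omega}$. Applying the subadditivity consequence Proposition~\ref{cor:mu1mu2} with $\mu_1=n\1_\Omega$ and $\mu_2=n\1_{Q\setminus\Omega}$ (both supported in $Q$) yields
$$\E L(n\1_Q)\le \E L(n\1_\Omega)+\E L(n\1_{Q\setminus\Omega})+2C\,\mathrm{diam}(Q)^p\Big(1+\sqrt{n\,\mathrm{Vol}(\Omega)}+\sqrt{n\,\mathrm{Vol}(Q\setminus\Omega)}\Big).$$
Equivalently one may apply Proposition~\ref{prop:partition} to the two-cell partition $\{\Omega,Q\setminus\Omega\}$ of $Q$, which produces the same type of inequality. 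Next I would divide by $n^{1-p/d}$ and pass to the limit. On the left, $Q$ being a cube, Lemma~\ref{lem:homo} together with Theorem~\ref{th:cube-poisson} gives $\E L(n\1_Q)/n^{1-p/d}\to\beta_L\,\mathrm{Vol}(Q)$; on the right, the upper-limit Theorem~\ref{th:upper} applied to $\1_{Q\setminus\Omega}$ gives $\limsup_n\E L(n\1_{Q\setminus\Omega})/n^{1-p/d}\le\beta_L\,\mathrm{Vol}(Q\setminus\Omega)$; and the remainder term is $O(n^{1/2})/n^{1-p/d}=O(n^{p/d-1/2})\to 0$ precisely because $d>2p$. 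Using the elementary inequality $\liminf(a_n+b_n)\le\liminf a_n+\limsup b_n$, this gives
$$\beta_L\,\mathrm{Vol}(Q)\le\liminf_{n\to\infty}\frac{\E L(n\1_\Omega)}{n^{1-\frac pd}}+\beta_L\,\mathrm{Vol}(Q\setminus\Omega),$$
and since $\Omega\subset Q$ this rearranges to $\liminf_n\E L(n\1_\Omega)/n^{1-p/d}\ge\beta_L\,\mathrm{Vol}(\Omega)$, which together with the upper bound closes the argument.

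I do not expect a genuine obstacle here: the statement is essentially a bookkeeping exercise combining the cube asymptotics, the density upper limit, and subadditivity. The only points needing some care are the homogeneity rescaling turning $\bar L$ on $[0,1]^d$ into $\E L(n\1_Q)$ for a cube of arbitrary side length, the verification that the cardinality-mismatch remainder is truly $o(n^{1-p/d})$ (this is exactly where the hypothesis $d>2p$ is used), and getting the direction of the $\liminf$/$\limsup$ manipulation right.
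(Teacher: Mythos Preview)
Your argument is correct and follows the same approach as the paper: the upper bound comes straight from Theorem~\ref{th:upper}, and the lower bound is obtained by splitting the uniform measure on an enclosing cube into the pieces on $\Omega$ and its complement via Proposition~\ref{prop:partition} (or, equivalently, Proposition~\ref{cor:mu1mu2}), then combining the exact limit on the cube (Theorem~\ref{th:cube-poisson} with Lemma~\ref{lem:homo}) and the upper limit on the complement. The only cosmetic difference is that the paper first rescales so that $\Omega\subset[0,1]^d$, whereas you keep a general cube $Q$ and invoke Lemma~\ref{lem:homo} explicitly; both are fine.
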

\begin{proof}
   Theorem~\ref{th:upper} gives directly $\limsup \E
   L(n\1_\Omega)/n^{1-\frac{p}{d}}\le \beta_L \mathrm{Vol}(\Omega)$.
   By translation and dilation invariance, we may assume without loss
   of generality that $\Omega\subset [0,1]^d$.  Let $\Omega_c:=
   [0,1]^d \setminus \Omega$. Applying Proposition~\ref{prop:partition} for
   the partition $[0,1]^d=\Omega \cup \Omega_c$, gives after division
   by $n^{1-p/d}$
$$ \frac{\E L\big(n\1_{[0,1]^d}\big)}{n^{1-\frac{p}{d}}}- \frac{\E L\big(n\1_{\Omega_c}\big)}{n^{1-\frac{p}{d}}}
\le \frac{\E L\big(n\1_{\Omega}\big)}{n^{1-\frac{p}{d}}} + 3C \mathrm{diam}([0,1]^d)
n^{\frac{p}{d}-\frac12}
\Big(\mathrm{Vol}(\Omega)^{\frac12}+\mathrm{Vol}(\Omega_c)^{\frac12}\Big).
$$
Since $d>2p$, letting $n$ go to infinity gives
\begin{eqnarray*}
   \liminf_n \frac{\E L\big(n\1_{\Omega}\big)}{n^{1-\frac{p}{d}}} &\ge& \lim_n  \frac{\E L\big(n\1_{[0,1]^d}\big)}{n^{1-\frac{p}{d}}}- \limsup_n \frac{\E L\big(n\1_{\Omega_c}\big)}{n^{1-\frac{p}{d}}}\\
   &\ge&\beta_L-\beta_L \mathrm{Vol}(\Omega_c)=\beta_L \mathrm{Vol}(\Omega),
\end{eqnarray*}
where we have used Theorem~\ref{th:cube-poisson} for the limit and
Theorem~\ref{th:upper} for the upper limit.
\end{proof}
The argument of the previous proof relies on the fact that the
quantity $\lim n^{1-p/d}\E L(n\1_\Omega)
=\beta_L\mathrm{Vol}(\Omega)$ is in a sense additive in $\Omega$.
This line of reasoning does not pass to functions since $f\mapsto
\int f^{1-p/d}$ is additive only for functions with disjoint
supports. The lower limit result requires more work for general
densities.

\subsection{Lower limits for matchings}

In order to establish a tight estimate on the lower limit, it is
natural to try and reverse the partition inequality given in
Proposition~\ref{prop:partition}. This is usually more difficult and there does not exist a general method to perform this lower bound. We shall first restrict our attention to the case of the matching functional $M_p$ with $p >0$, we define in this subsection
$$
L = M_p.
$$

\subsubsection{Boundary functional}

Given a matching on the unit
cube, one needs to infer from it matchings on the subcubes of a dyadic
partition and to control the corresponding costs. The main difficulty
comes from the points of a subcube that are matched to points of
another subcube. In other words some links of the optimal matching
cross the boundaries of the cells.  As in the book by Yukich \cite{Y},
a modified notion of the cost of a matching is used in order to
control the effects of the boundary of the cells of a partition. Our
argument is however more involved, since the good bound
\eqref{eq:good-bound} used by Yukich is not available for the
bipartite matching.  \medskip

    We define 
    $$
    q =  2^{p-1} \wedge 1.
    $$
    
Let $S\subset \mathbb R^d$ and $\varepsilon \ge 0$. Given multisets
$\cX=\{X_1,\ldots,m\}$ and $\cY=\{Y_1,\ldots,Y_n\}$ included in $S$ we
define the penalized boundary-matching cost as follows
\begin{eqnarray}\label{eq:Lboundary}
   \lefteqn{L_{\partial S,\varepsilon}(X_1,\ldots,X_m;Y_1,\ldots,Y_n)} \\
   &=& \min_{A,B,\sigma}\left\{ 
      \sum_{i\in A} |X_i-Y_{\sigma(i)}|^p + \sum_{i\in A^c}q  \Big( d(X_i,\partial S)^p +\varepsilon^p \Big) 
      +\sum_{j\in B^c} q \Big( d(Y_j,\partial S)^p +\varepsilon^p \Big) \right\}, \nonumber
\end{eqnarray}
where the minimum runs over all choices of subsets
$A\subset\{1,\ldots, m\}$, $B\subset\{1,\ldots, n\}$  with the same
cardinality and all bijective maps $\sigma:A\to B$.  When $\varepsilon=0$
we simply write $L_{\partial S}$. Notice that in our
definition, and contrary to the definition of optimal matching, all
points are matched even if $m\neq n$.  If $\cX$ and $\cY$ are
independent Poisson point processes with intensity $\nu$ supported in
$S$ and with finite total mass, we write $L_{\partial S,
  \varepsilon}(\nu)$ for the random variable $L_{\partial S,
  \varepsilon}(\cX,\cY)$.

The main interest of the notion of boundary matching is that it allows
to bound from below the matching cost on a large set in terms of
contributions on cells of a partition. The following Lemma establishes a superadditive property of $L_{\partial S}$ and it can be
viewed as a counterpart to the upper bound provided by
Proposition~\ref{prop:partition}.
     
     \begin{lemma}\label{prop:part2}
   Assume $L = M_p$.     Let $\nu$ be a finite measure on $\mathbb R^d$ and consider a
        partition $Q=\cup_{P\in\mathcal P} P$ of a subset of $\mathbb
        R^d$. Then
       $$ \mathrm{diam}(Q)^p \sqrt{2\nu(\mathbb R^d)} + \E L(\nu)\ge  \E L_{\partial Q} (\ind_{Q} \cdot \nu ) \geq \sum_{P\in\mathcal P} \E L_{\partial P} (\1_P\cdot
       \nu).$$
    \end{lemma}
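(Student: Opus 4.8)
The plan is to establish the two inequalities separately. The right-hand inequality, $\E L_{\partial Q}(\ind_Q\cdot\nu)\ge\sum_{P\in\mathcal P}\E L_{\partial P}(\ind_P\cdot\nu)$, should follow by taking an optimal configuration realizing the boundary-matching cost $L_{\partial Q}(\cX,\cY)$ for the Poisson processes $\cX,\cY$ of intensity $\ind_Q\cdot\nu$, and \emph{splitting} it across the cells of $\mathcal P$. For each cell $P$, the edges of the optimal $Q$-boundary matching that join two points both lying in $P$ give a partial matching of $\cX\cap P$ with $\cY\cap P$; the points of $\cX\cap P$ (resp. $\cY\cap P$) that are either unmatched in the global configuration or matched to a point outside $P$ are declared unmatched for the $P$-cell problem. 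The key pointwise estimate is that the contribution charged to such a point in the $Q$-problem dominates what it is charged in the $P$-problem: a point $X_i\in P$ matched across the boundary to some $Y_j\notin P$ pays $|X_i-Y_j|^p$ in the $Q$-functional, and since the segment $[X_i,Y_j]$ must cross $\partial P$, one has $|X_i-Y_j|\ge d(X_i,\partial P)$, so $|X_i-Y_j|^p\ge q\,d(X_i,\partial P)^p$ (here $q=2^{p-1}\wedge1\le1$), which is at least the penalty $q\,d(X_i,\partial P)^p$ assigned in $L_{\partial P}$ with $\varepsilon=0$; a point already paying $q\,d(X_i,\partial Q)^p$ in the $Q$-problem pays at least $q\,d(X_i,\partial P)^p$ in the $P$-problem because $\partial P\supset\partial Q\cap P$ forces $d(X_i,\partial P)\le d(X_i,\partial Q)$. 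Wait — one must be careful that an edge entirely inside $P$ in the $Q$-configuration might be split if its endpoints lie in different cells; but by construction we keep an edge for the $P$-problem only when both endpoints are in $P$, and edges with endpoints in two different cells are simply dropped, their endpoints becoming unmatched and thus paying boundary penalties which, by the crossing argument above, are dominated. Summing over cells and taking expectations yields the claimed lower bound; the role of $q$ and the inequality $(a+b)^p\le 2^{p-1}(a^p+b^p)$ (for $p\ge1$) or $(a+b)^p\le a^p+b^p$ (for $p\le1$), i.e. the definition $q=2^{p-1}\wedge1$, is exactly to make such triangle-inequality splittings go through when a crossing edge needs to be compared with two boundary-distance terms.

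For the left-hand inequality, $\diam(Q)^p\sqrt{2\nu(\mathbb R^d)}+\E L(\nu)\ge\E L_{\partial Q}(\ind_Q\cdot\nu)$, the idea is the reverse: take an optimal (ordinary, possibly partial) matching $\sigma$ realizing $M_p(\cX,\cY)=L(\cX,\cY)$ for $\cX,\cY$ of intensity $\ind_Q\cdot\nu$, and use it as a \emph{feasible} configuration for the boundary functional $L_{\partial Q}$. All edges of $\sigma$ are kept; the points left unmatched by $\sigma$ — there are $|\card(\cX)-\card(\cY)|$ of them, all of the same species — are declared unmatched for the boundary functional too, and they incur penalties $q\,d(\cdot,\partial Q)^p\le\diam(Q)^p$ each (recall $\varepsilon=0$ here and $q\le 1$, $d(\cdot,\partial Q)\le\diam(Q)$). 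Hence $L_{\partial Q}(\cX,\cY)\le L(\cX,\cY)+\diam(Q)^p\,|\card(\cX)-\card(\cY)|$. Taking expectations, and using that $\card(\cX)$ and $\card(\cY)$ are independent Poisson variables with mean $\nu(\mathbb R^d)$, so that $\E|\card(\cX)-\card(\cY)|\le\big(\E(\card(\cX)-\card(\cY))^2\big)^{1/2}=\sqrt{2\,\var(\card(\cX))}=\sqrt{2\nu(\mathbb R^d)}$, gives precisely $\E L_{\partial Q}(\ind_Q\cdot\nu)\le\E L(\nu)+\diam(Q)^p\sqrt{2\nu(\mathbb R^d)}$.

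The main obstacle is the splitting argument for the right-hand (superadditivity) inequality: one must verify carefully that \emph{every} point of $\cX\cup\cY$ inside a cell $P$ is charged in the cell-problem no more than it was charged in the $Q$-problem, handling uniformly the three cases (edge kept inside $P$, edge dropped because it crossed a cell boundary, point already unmatched in the $Q$-configuration), and checking that the resulting cell-configurations are genuinely feasible for $L_{\partial P}$ (equal cardinalities of the matched index sets, bijectivity of the restricted $\sigma$). The geometric crux — that a straight segment between a point in $P$ and a point outside $P$ has length at least the distance from that point to $\partial P$ — is elementary, but its interaction with the exponent $p$ through the constant $q$ is where the hypothesis $L=M_p$ (linear cost structure, no combinatorial graph constraints) is genuinely used, which is why the lemma is stated only for the matching.
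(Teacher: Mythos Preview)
Your plan for the second (superadditivity) inequality matches the paper's proof: take an optimal triplet for $L_{\partial Q}(\cX,\cY)$, keep edges lying inside a single cell, and for a crossing edge use $|X_i-Y_j|\ge d(X_i,\partial P(X_i))+d(Y_j,\partial P(Y_j))$ together with $(a+b)^p\ge q(a^p+b^p)$ to cover \emph{both} boundary penalties. Note that your write-up states only $|X_i-Y_j|^p\ge q\,d(X_i,\partial P)^p$, which by itself is insufficient since the same edge must also pay for $Y_j$'s penalty in the cell $P(Y_j)$; you acknowledge this later, but make sure the actual proof uses the two-term lower bound from the outset.

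For the first inequality there is a genuine gap. The lemma is stated for an arbitrary finite measure $\nu$ on $\R^d$, and $\E L(\nu)$ means the expected matching cost for two Poisson processes of intensity $\nu$ on all of $\R^d$, whereas $\E L_{\partial Q}(\1_Q\cdot\nu)$ only sees the restriction to $Q$. Your argument takes $\cX,\cY$ of intensity $\1_Q\cdot\nu$ and compares $L_{\partial Q}(\cX,\cY)$ to $L(\cX,\cY)$; this yields $\E L_{\partial Q}(\1_Q\cdot\nu)\le \E L(\1_Q\cdot\nu)+\diam(Q)^p\sqrt{2\nu(Q)}$, not the stated inequality with $\E L(\nu)$. (Your own computation is inconsistent here: you say $\card(\cX)$ has mean $\nu(\R^d)$, but with intensity $\1_Q\cdot\nu$ the mean is $\nu(Q)$.) The distinction matters: the lemma is invoked later precisely in the situation where $\nu$ has mass outside $Q$ (when reducing unbounded densities to large cubes).

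The fix is to follow the paper and work with the \emph{full} processes: let $\cX\cup\cX'$, $\cY\cup\cY'$ be Poisson of intensity $\nu$ with $\cX,\cY$ the parts in $Q$ and $\cX',\cY'$ the parts in $Q^c$. Take an optimal matching for $L(\cX\cup\cX',\cY\cup\cY')$. Any edge joining $X_i\in Q$ to a point in $Q^c$ has length at least $d(X_i,\partial Q)$, so it already dominates the boundary penalty $q\,d(X_i,\partial Q)^p$; edges entirely inside $Q$ are kept; the at most $|\card(\cX\cup\cX')-\card(\cY\cup\cY')|$ unmatched points in $Q$ are charged $\diam(Q)^p$. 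This produces a feasible configuration for $L_{\partial Q}(\cX,\cY)$ and gives the pointwise bound
\[
L_{\partial Q}(\cX,\cY)\le L(\cX\cup\cX',\cY\cup\cY')+\diam(Q)^p\,\big|\card(\cX\cup\cX')-\card(\cY\cup\cY')\big|,
\]
after which taking expectations and bounding the absolute difference by $\sqrt{2\nu(\R^d)}$ finishes the job.
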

    \begin{proof}
       Let $\mathcal X=\{X_1,\ldots,X_m\},\mathcal
       Y=\{Y_1,\ldots,Y_n\}$ be multisets included in $Q$ and $\cX'=\{X_{m+1},\ldots,X_{m+m'}\}$, $\cY'=\{Y_{n+1},\ldots,Y_{n+n'}\}$ be multisets included in $Q^c$. By considering 
an optimal matching of $\cX\cup \cX'$ and $\cY \cup \cY'$, we have the lower bound 
      $$     \mathrm{diam}(Q)^p   |m+m'-n - n'|+ L(\cX\cup \cX',\cY\cup \cY') \geq L_{\partial Q}  (\cX,\cY). $$  
Indeed, if $1 \leq i \leq m$ and a pair $(X_i, Y_{n+j})$, is matched then $|X_i -  Y_{n+j}| \geq d ( X_i , \partial Q)$ and similarly for a pair $(X_{m+i}, Y_{j})$, with $1 \leq j \leq n$, $|X_{m+i} -  Y_{j}| \geq d ( Y_j , \partial Q)$. The term $\mathrm{diam}(Q)^p   |m+m'-n - n'| $ takes care of the points of $\cX \cup \cY$ that are not matched in the optimal matching of   $\cX\cup \cX'$ and $\cY \cup \cY'$.  We apply the above inequality to $\cX$, $\cY$ independent Poisson processes of intensity $\ind_Q \cdot \nu$, and $\cX'$, $\cY'$, two independent Poisson processes of intensity $\ind_{Q^c} \cdot \nu$, independent of $(\cX,\cY)$. Then $\cX \cup \cX'$, $\cY \cup \cY'$ are independent Poisson processes of intensity $\nu$. Taking expectation and
         bounding the average of the difference of cardinalities in
         the usual way, we obtain the first inequality.

Now, the second inequality will follow from the superadditive property of the boundary functional: 
     \begin{equation} \label{eq:SLb}    L_{\partial Q} (\cX,\cY) \geq  \sum_{P \in\mathcal P}  L_{\partial P} ( \cX \cap P,\cY \cap P).
     \end{equation} 
     This is proved as follows. Let $(A,B,\sigma)$ be an optimal triplet for $L_{\partial Q} (\cX,\cY)$:
     $$
     L_{\partial Q} (\cX,\cY)  = \sum_{i\in A} |X_i-Y_{\sigma(i)}|^p + \sum_{i\in A^c}q  d(X_i,\partial Q)^p  
      +\sum_{j\in B^c} q  d(Y_j,\partial Q)^p.
     $$ 
 If $x \in Q$, we denote by $P(x)$ the unique $P \in \cP$ that contains $x$. If $P(X_i)=P(Y_{\sigma(i)})$ we leave the term
      $|X_i-Y_{\sigma(i)}|$ unchanged. On the other hand if $P(X_i)\neq P(Y_{\sigma(i)})$, from H\"older's inequality, 
       $$ |X_i-Y_{\sigma(i)}|^p \ge q \, d(X_i,\partial P(X_i))^p  + q \, d(Y_{\sigma(i)}, \partial P(Y_{\sigma(i)}))^p .$$
       Eventually, we apply the inequality
         $$ d(x,\partial Q) \geq  d(x,\partial P(x))$$
         in order to take care of the points in $A^c \cup B^c$. Combining
         these inequalities and grouping the terms according to the
         cell $P\in\mathcal P$ containing the points, we obtain that
         \begin{eqnarray*}
   L_{\partial Q} (\cX,\cY) &\ge &  \sum_{P\in\mathcal P}\left(\sum_{i\in A;\; X_i\in P, Y_{\sigma(i)}\in P} |X_i-Y_{\sigma(i)}| ^p  + \sum_{i\in A;\; X_i\in P, Y_{\sigma(i)}\notin P}q \, d(X_i,\partial P)^p   \right.\\
  && \left.            +   \sum_{i \in A^c;\; X_i\in P} q \, d(X_i,\partial P)^p +
               \sum_{j\in B ;\; Y_j \in P,\; j\not\in \sigma(\{i; \;X_i\in P\})} q \, d(Y_j,\partial P) ^p  +  \sum_{j \in B^c;\; Y_j\in P} q \, d(Y_j,\partial P)^p\right)\\
            &\ge & \sum_{P\in\mathcal P} L_{\partial P}(\cX\cap P,\cY\cap P),
         \end{eqnarray*}
      and  we have obtained the inequality \eqref{eq:SLb}.
      \end{proof}

      The next lemma will be used to reduce to uniform distributions
      on squares.

\begin{lemma}\label{lem:coupling2}
   Assume $L = M_p$. Let $\mu,\mu'$ be two probability measures on $\R^d$ with supports in $Q$ and $ n >0$. Then 
$$   \E L_{\partial Q}(n\mu)\le  \E L_{\partial Q}(n\mu') + 4  n \,\mathrm{diam}(Q)^p\, d_\mathrm{TV}(\mu,\mu').$$
Consequently, if $f$ is a nonnegative locally integrable function on
   $\R^d$, setting $\alpha=\int_Q f/\mathrm{vol}(Q)$, it holds
  $$ \E L_{\partial Q}(nf \1_Q) \le \E L_{\partial Q}(n\alpha \1_Q) + 2  n\, \mathrm{diam}(Q)^p \, \int_Q|f(x)-\alpha|\, dx.$$
  \end{lemma}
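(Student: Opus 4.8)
The plan is to run a coupling argument in the spirit of Proposition~\ref{prop:approx}, the only extra work being that $L_{\partial Q}$ is not one of the functionals covered by $(\mathcal R_p)$, so I would first record a regularity estimate for $L_{\partial Q}$ itself. Precisely, I would show that for multisets $\cX,\cY$ in $Q$ and any $a\in Q$, inserting or deleting the point $a$ changes the boundary cost by at most $\diam(Q)^p$:
\[
\big| L_{\partial Q}(\cX\cup\{a\},\cY)-L_{\partial Q}(\cX,\cY)\big|\le \diam(Q)^p ,
\]
and likewise with the roles of $\cX$ and $\cY$ exchanged. For the inequality ``$\le$'', take an optimal triplet $(A,B,\sigma)$ for $L_{\partial Q}(\cX,\cY)$ and simply leave $a$ unmatched; the extra cost is $q\,d(a,\partial Q)^p$, which is at most $\diam(Q)^p$ since $q=2^{p-1}\wedge 1\le 1$ and $d(a,\partial Q)\le\diam(Q)$ for $a\in Q$. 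For ``$\ge$'', take an optimal triplet for $L_{\partial Q}(\cX\cup\{a\},\cY)$: if $a$ is unmatched, dropping it only lowers the cost; if $a$ is matched to some $Y_j$, remove $a$ and declare $Y_j$ unmatched, which replaces the nonnegative term $|a-Y_j|^p$ by $q\,d(Y_j,\partial Q)^p\le\diam(Q)^p$. Iterating one point at a time, a replacement of a single point of $\cX$ (resp.\ $\cY$) by another point of $Q$ costs at most $2\diam(Q)^p$, giving a Lipschitz-type bound $|L_{\partial Q}(\cX,\cY)-L_{\partial Q}(\cX',\cY')|\le 2\diam(Q)^p(\#\{X_i\ne X'_i\}+\#\{Y_j\ne Y'_j\})$ whenever $\cX,\cX',\cY,\cY'\subset Q$ have matching cardinalities.

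With this in hand, the first inequality is obtained exactly as in Proposition~\ref{prop:approx}: pick a coupling $\pi$ of $\mu$ and $\mu'$ (both supported in $Q$), build from it two pairs of Poisson processes $(\cX,\cY)$ of intensity $n\mu$ and $(\cX',\cY')$ of intensity $n\mu'$ sharing the same Poisson cardinalities $N_1,N_2$, apply the Lipschitz bound pointwise and take expectations; the expected number of disagreements is $2n\,\pi(\{x\ne y\})$, and optimising over $\pi$ produces the factor $4n\,\diam(Q)^p\,d_{\mathrm{TV}}(\mu,\mu')$. The second assertion then follows by the rescaling already used in Corollary~\ref{cor:couplingaverage}: assuming $\alpha>0$ (the case $\alpha=0$ being trivial), write $L_{\partial Q}(nf\1_Q)=L_{\partial Q}(\lambda\mu)$ and $L_{\partial Q}(n\alpha\1_Q)=L_{\partial Q}(\lambda\mu')$ with $\lambda=n\int_Q f$, $\mu=(f\1_Q)/\int_Q f$ and $\mu'=\1_Q/\mathrm{vol}(Q)$, observe that $2\,d_{\mathrm{TV}}(\mu,\mu')=\int_Q|f-\alpha|\,dx\,/\int_Q f$ so that $\lambda\,d_{\mathrm{TV}}(\mu,\mu')=\tfrac12\, n\int_Q|f-\alpha|\,dx$, and apply the first inequality with $n$ replaced by $\lambda$.

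I do not expect a genuine obstacle here: the argument is essentially Proposition~\ref{prop:approx} plus bookkeeping. The one point requiring a little care is the regularity estimate for $L_{\partial Q}$, where one must keep in mind that in the boundary functional every point is ``used'' — either matched, or charged a penalty $q\,d(\cdot,\partial Q)^p$ — so that deleting a matched point forces its partner into the penalised set; checking that this move is uniformly cheap is exactly what makes the bound $\diam(Q)^p$ per point work, and the inequality $q\le 1$ is precisely what is needed for the penalty term to be dominated by $\diam(Q)^p$.
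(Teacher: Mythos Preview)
Your proposal is correct and follows essentially the same route as the paper: the paper also first establishes a modified $(\mathcal R_p)$ property for $L_{\partial Q}$ (formulated in block form rather than point-by-point, but with the same mechanism of sending points to the boundary at cost at most $\diam(Q)^p$ each), and then invokes verbatim the coupling argument of Proposition~\ref{prop:approx} and the rescaling of Corollary~\ref{cor:couplingaverage}. Your one-point insertion/deletion analysis is just an unwinding of the paper's block inequality and yields the same constant.
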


\begin{proof}
The functional $ L_{\partial Q}$ satisfies a slight modification of property $(\mathcal R_p)$ :  for all multisets $\mathcal X,\mathcal Y,
\mathcal X_1,\mathcal Y_1,\mathcal X_2,\mathcal Y_2$ in $Q$, it holds 
\begin{equation*}
L_{\partial Q}(\mathcal X\cup \cX_1,\cY \cup\cY_1)\le L_{\partial Q}(\mathcal X\cup \cX_2,\cY \cup\cY_2) +  \mathrm{diam}(Q) ^p \big( \mathrm{card}(\cX_1)+
\mathrm{card}(\cX_2)+\mathrm{card}(\cY_1)+\mathrm{card}(\cY_2)\big).
\end{equation*}  
Indeed, we start from an optimal boundary matching of $L_{\partial Q}(\mathcal X\cup \cX_2,\cY \cup\cY_2)$, we match to the boundary the points of $(\cX, \cY)$ that are matched to a point in $(\cX_2, \cY_2)$. There are at most $\mathrm{card}(\cX_2) + \mathrm{card}(\cY_2)$ such points.  Finally we match all points of $(\cX_1,\cY_1)$ to the boundary and we obtain a suboptimal boundary matching of $L_{\partial Q}(\mathcal X\cup \cX_1,\cY \cup\cY_1)$. This establishes the above inequality. The statements follow then from the proofs of Proposition \ref{prop:approx} and Corollary \ref{cor:couplingaverage}. 
   \end{proof}

We will need an asymptotic for the boundary matching for the uniform
distribution on the unit cube. Let $Q=[0,1]^d$ and denote $$\bar L_{\partial Q}(n)=\E L_{\partial Q}(n\1_Q).$$
\begin{lemma}
   \label{lem:liminfBM}
   Assume $L = M_p$ and $0 < p<d/2$, then
$$ \lim_{n\to \infty} \frac{\bar L_{\partial Q}(n)}{n^{1-\frac{p}{d}}} = \beta'_L,$$
where $\beta'_L >0$ is a constant depending on $p$ and $d$.
\end{lemma}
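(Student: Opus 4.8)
The plan is to follow the scheme of the proof of Theorem~\ref{th:cube-poisson}, exchanging the roles of subadditivity and superadditivity: the appropriately rescaled functional will turn out to be \emph{nondecreasing} under integer dilations, and the remaining work is (i) to produce an a priori upper bound on it --- which is exactly where the assumption $p<d/2$ enters, via Theorem~\ref{th:cube-poisson} --- (ii) a continuity statement, and (iii) the positivity $\beta'_L>0$.

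\emph{Existence of the limit.} Write $Q=[0,1]^d$. Since $L=M_p$, the functional $L_{\partial S}$ (with $\varepsilon=0$) is $p$-homogeneous: $L_{\partial(a+\lambda S)}(a+\lambda\cX,a+\lambda\cY)=\lambda^p L_{\partial S}(\cX,\cY)$; combined with the scaling of Poisson intensities, this gives $\E L_{\partial P}(n\1_P)=m^{-p}\,\bar L_{\partial Q}(n m^{-d})$ for any subcube $P\subset Q$ of side $1/m$. Partitioning $Q$ into the $m^d$ subcubes of side $1/m$ and invoking the superadditive inequality of Lemma~\ref{prop:part2} (with $\nu$ equal to $n$ times Lebesgue measure), we obtain, for all integers $m\ge1$ and all $n>0$,
$$\bar L_{\partial Q}(n)\ \ge\ m^{d-p}\,\bar L_{\partial Q}(n m^{-d}).$$
Putting $h(t)=\bar L_{\partial Q}(t^d)/t^{d-p}$, this reads $h(mt)\ge h(t)$ for all integers $m\ge1$ and all $t>0$. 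The function $h$ is continuous on $(0,\infty)$: a Lipschitz estimate for $u\mapsto\bar L_{\partial Q}(u)$ follows as in Proposition~\ref{prop:lipschitz} from the regularity-type inequality for $L_{\partial Q}$ recorded in the proof of Lemma~\ref{lem:coupling2}. Moreover $h$ is bounded above: taking the trivial partition $\{Q\}$ in Lemma~\ref{prop:part2} yields $\bar L_{\partial Q}(n)\le\bar L(n)+\mathrm{diam}(Q)^p\sqrt{2n}$, hence
$$h(t)\ \le\ \frac{\bar L(t^d)}{t^{d-p}}+\sqrt 2\,\mathrm{diam}(Q)^p\,t^{p-\frac d2},$$
which converges to $\beta_L$ as $t\to\infty$ by Theorem~\ref{th:cube-poisson} and because $p<d/2$; in particular $M:=\limsup_{t\to\infty}h(t)\le\beta_L<\infty$. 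Finally, any continuous $h:(0,\infty)\to\R^+$ which is bounded above and satisfies $h(mt)\ge h(t)$ for all integers $m\ge1$ has a limit at infinity: given $\e>0$, choose $t_0$ with $h(t_0)>M-\e$; continuity gives $\delta\in(0,t_0)$ with $h>M-2\e$ on $[t_0-\delta,t_0+\delta]$, hence $h>M-2\e$ on $\bigcup_{m\ge1}[m(t_0-\delta),m(t_0+\delta)]$, and since these intervals have lengths growing linearly in $m$ while being equally spaced, they eventually overlap and cover a half-line; therefore $\liminf_{t\to\infty}h(t)\ge M-2\e$, and letting $\e\downarrow0$ gives $\lim_{t\to\infty}h(t)=M=:\beta'_L$. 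Equivalently $\bar L_{\partial Q}(n)/n^{1-p/d}=h(n^{1/d})\to\beta'_L$, and $\beta'_L\le\beta_L$.

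\emph{Positivity.} In the definition of $L_{\partial Q}(\cX,\cY)$ (recall $\varepsilon=0$), each point $X_i\in\cX$ either is matched to some $Y_{\sigma(i)}$, contributing $|X_i-Y_{\sigma(i)}|^p\ge d(X_i,\cY)^p$, or is matched to the boundary, contributing $q\,d(X_i,\partial Q)^p$; since $q=2^{p-1}\wedge1\le1$, in either case the contribution is at least $q\min\bigl(d(X_i,\cY),d(X_i,\partial Q)\bigr)^p$. Keeping only the points $X_i$ in the central subcube $R=[\tfrac14,\tfrac34]^d$, where $d(X_i,\partial Q)\ge\tfrac14$,
$$L_{\partial Q}(\cX,\cY)\ \ge\ q\sum_{X_i\in\cX\cap R}\min\bigl(d(X_i,\cY)^p,4^{-p}\bigr).$$
Let $\cX,\cY$ be independent Poisson processes of intensity $n$ on $Q$. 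For $x\in Q$ one has $\dP(d(x,\cY)>t)=e^{-n\,\mathrm{vol}(B(x,t)\cap Q)}\ge e^{-n\omega_d t^d}$, with $\omega_d=\mathrm{vol}(B(0,1))$, and therefore
\begin{align*}
\E\bigl[\min(d(x,\cY)^p,4^{-p})\bigr]
&\ge \int_0^{1/4}p\,t^{p-1}e^{-n\omega_d t^d}\,dt
= n^{-p/d}\int_0^{n^{1/d}/4}p\,s^{p-1}e^{-\omega_d s^d}\,ds \\
&\ge \tfrac12\,c_{p,d}\,n^{-p/d}
\end{align*}
for all $n$ large enough, with $c_{p,d}:=\int_0^\infty p\,s^{p-1}e^{-\omega_d s^d}\,ds>0$. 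By Campbell's formula, integrating over $x\in R$ (of Lebesgue measure $2^{-d}$),
$$\bar L_{\partial Q}(n)=\E L_{\partial Q}(\cX,\cY)\ \ge\ q\,n\!\int_R\E\bigl[\min(d(x,\cY)^p,4^{-p})\bigr]dx\ \ge\ q\,2^{-d-1}\,c_{p,d}\,n^{1-\frac pd}$$
for $n$ large, whence $\beta'_L\ge q\,2^{-d-1}c_{p,d}>0$.

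\emph{Main obstacle.} The crux is the combination in the first part: the superadditive inequality only gives that $h$ is nondecreasing along integer dilations, so one genuinely needs the a priori bound $\limsup h\le\beta_L<\infty$ --- for which the hypothesis $p<d/2$ is essential, since otherwise the comparison with $\bar L$ is useless --- together with continuity of $h$, in order to upgrade monotonicity-along-dilations into convergence. The positivity, by contrast, is a routine nearest-neighbour lower bound.
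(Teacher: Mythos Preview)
Your proof is correct and follows essentially the same route as the paper: establish the superadditive inequality $\bar L_{\partial Q}(n)\ge m^{d-p}\bar L_{\partial Q}(nm^{-d})$ via Lemma~\ref{prop:part2} and scaling, then argue as in Theorem~\ref{th:cube-poisson}. Your version is in fact more complete than the paper's, which simply says ``the proof is then done as in Theorem~\ref{th:cube-poisson}'': you make explicit (i) that in the superadditive direction one needs an a priori upper bound on $h$ to conclude (and you obtain it from the first inequality of Lemma~\ref{prop:part2} together with Theorem~\ref{th:cube-poisson}, which is exactly where $p<d/2$ is used), and (ii) the positivity $\beta'_L>0$, which the paper asserts but does not prove --- your nearest-neighbour lower bound handles this cleanly.
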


\begin{proof}
Let $m\ge 1$ be an integer. We consider a dyadic partition $\mathcal P$ of $Q$ into $m^d$ cubes of size $1/m$. Then, Lemma \ref{prop:part2}  applied  to the measure $n\1_{[0,1]^d}(x) \, dx$ gives 
      $$\bar L_{\partial Q} (n) \geq \sum_{q\in \mathcal P} \E L_{\partial q} (n \1_{q\cap[0,1]^d}).$$
   However by scale and translation invariance, for any $q \in \mathcal P$ we have $\E L_{\partial q} (n \1_{q\cap[0,1]^d}) = m^{-p} \E L_{\partial Q}  (n m^{-d} \1_Q)$. It follows that 
   $$
   \bar L_{\partial Q} (n) \geq m^{d-p}  \bar L_{\partial Q}  (n m^{-d}). 
   $$
 The proof is then done as in Theorem \ref{th:cube-poisson} where superadditivity here replaces subadditivity there. 
\end{proof}

\subsubsection{General absolutely continuous measures}

We are ready to state and prove
\begin{theorem} \label{th:lower}    Assume $L = M_p$ and $0 < p<d/2$. Let $f: \R^d \to \R^+$
   be an integrable function.  Then $$ \liminf_{n}
   \frac{\E L(n f )}{n^{1 - \frac{p}{d}}}\geq \beta'_L \int_{\R^d}
   f^{1 - \frac{p}{d}}.$$
   \end{theorem}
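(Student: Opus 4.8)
The plan is to reverse, at the level of expectations, the dyadic-partition argument of the upper bound (Theorem~\ref{th:upper}): the subadditivity of $L$ is replaced by the superadditivity of the boundary functional (Lemma~\ref{prop:part2}) and $\limsup$ by $\liminf$. First I would reduce to a bounded cube: it suffices to prove that for every cube $Q\subset\R^d$,
$$\liminf_{n} \frac{\E L(nf)}{n^{1-\frac pd}} \ \ge\ \beta'_L \int_Q f^{\,1-\frac pd},$$
since letting $Q\uparrow\R^d$ and using monotone convergence then gives the full statement, including the case $\int f^{1-p/d}=+\infty$. Fix such a $Q$, of side length $s$; note $\int_Q f^{1-p/d}\le(\int f)^{1-p/d}\,\mathrm{vol}(Q)^{p/d}<\infty$ by H\"older. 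Let $\mathcal P_k$ be the partition of $Q$ into $2^{kd}$ subcubes of side $s2^{-k}$. Applying the first inequality of Lemma~\ref{prop:part2} with $\nu=nf\,dx$, then its second inequality with the partition $\mathcal P_k$, and noting $\mathrm{diam}(Q)^p\sqrt{2n\int f}=o(n^{1-p/d})$ because $p/d<\tfrac12$, one gets, for each fixed $k$,
$$\frac{\E L(nf)}{n^{1-\frac pd}} \ \ge\ \sum_{P\in\mathcal P_k} \frac{\E L_{\partial P}(nf\1_P)}{n^{1-\frac pd}} \ -\ o(1).$$

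Next I would localise on the dyadic cells. Put $\alpha_P=\mathrm{vol}(P)^{-1}\int_P f$ for $P\in\mathcal P_k$, and $f_k=\sum_{P\in\mathcal P_k}\alpha_P\1_P$. The estimate of Lemma~\ref{lem:coupling2} (which is two-sided, being based on a coupling) gives $\E L_{\partial P}(nf\1_P)\ge\E L_{\partial P}(n\alpha_P\1_P)-2n\,\mathrm{diam}(P)^p\int_P|f-\alpha_P|$, while $p$-homogeneity of the boundary functional gives $\E L_{\partial P}(n\alpha_P\1_P)=(s2^{-k})^p\,\bar L_{\partial}\big(n\alpha_P(s2^{-k})^d\big)$, where $\bar L_{\partial}(t):=\E L_{\partial[0,1]^d}(t\1_{[0,1]^d})$ is the quantity studied in Lemma~\ref{lem:liminfBM}. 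Writing $g(t)=\bar L_{\partial}(t)\,t^{\frac pd-1}$, summing over $P\in\mathcal P_k$ and regrouping the first sum as an integral against the piecewise-constant $f_k$, this yields, for all $k\ge1$ and all $n$,
$$\frac{\E L(nf)}{n^{1-\frac pd}} \ \ge\ \int_Q g\big(n(s2^{-k})^d f_k\big)\, f_k^{\,1-\frac pd} \ -\ c\,\big(n^{\frac1d}2^{-k}\big)^p\!\int_Q|f-f_k| \ -\ o(1),$$
with $c=c(d,s,p)$.

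It remains to pass to the limit, exactly as in Theorem~\ref{th:upper}. Here $g\ge0$, $g$ is continuous with $g(0^+)=0$ (because $\bar L_\partial$ is Lipschitz, via the variant of $(\mathcal R_p)$ noted in the proof of Lemma~\ref{lem:coupling2}), and $g(t)\to\beta'_L$ as $t\to\infty$ by Lemma~\ref{lem:liminfBM}; in particular $g$ is bounded. If $f=f_{k_0}$ a.e.\ for some $k_0$ we take $k=k_0$: the correction term is zero and letting $n\to\infty$ gives the result (finitely many cells, $g\to\beta'_L$ on each with $\alpha_P>0$). Otherwise, Scheff\'e's lemma gives $\int_Q|f-f_k|\to0$, and one constructs $k(n)\to\infty$ with $n^{1/d}2^{-k(n)}\to\infty$ and $\big(n^{1/d}2^{-k(n)}\big)^p\int_Q|f-f_{k(n)}|\to0$, by the same explicit device as in the proof of Theorem~\ref{th:upper}. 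Inserting $k=k(n)$, the correction term and the $o(1)$ vanish; the integrand $g\big(n(s2^{-k(n)})^d f_{k(n)}\big) f_{k(n)}^{1-p/d}$ converges a.e.\ to $\beta'_L f^{1-p/d}$ (where $f>0$ the argument of $g$ tends to $+\infty$ since $n^{1/d}2^{-k(n)}\to\infty$ and $f_{k(n)}\to f$ a.e., and where $f=0$ the factor $f_{k(n)}^{1-p/d}$ tends to $0$ while $g$ is bounded), and it is uniformly integrable on $Q$, being bounded in $L^{d/(d-p)}(Q)$ with $d/(d-p)>1$. Hence the integral tends to $\beta'_L\int_Q f^{1-p/d}$, and taking $\liminf_n$ in the displayed inequality proves the bound for this $Q$; letting $Q\uparrow\R^d$ finishes.

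The main obstacle is the joint calibration of two antagonistic scales as $n\to\infty$: the dyadic cells must shrink ($k\to\infty$) so that the averaged densities $f_k$ converge to $f$, yet each cell must still carry a macroscopically large number of points ($n^{1/d}2^{-k}\to\infty$) so that the rescaled boundary cost on it attains its limiting value $\beta'_L$. This is resolved by the diagonal choice $k=k(n)$ imported from Theorem~\ref{th:upper}, and the residual subtlety is the uniform-integrability step that allows the limit to be moved inside the Riemann-sum-type integral; all the other ingredients (superadditivity, the coupling bound, the unit-cube asymptotics) are already available from Lemmas~\ref{prop:part2}, \ref{lem:coupling2} and \ref{lem:liminfBM}.
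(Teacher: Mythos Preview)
Your proof is correct and follows essentially the same route as the paper: pass to the boundary functional via Lemma~\ref{prop:part2}, localise on dyadic cells, replace $f$ by its averages via Lemma~\ref{lem:coupling2}, use the unit-cube asymptotics of Lemma~\ref{lem:liminfBM}, and choose $k=k(n)$ as in Theorem~\ref{th:upper}. The only notable difference is that you invoke uniform integrability to pass to the limit in the integral, whereas the paper uses Fatou's lemma, which is slightly leaner here since only a lower bound is required; both arguments are valid.
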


\begin{proof}
Assume first that the support of $f$ is bounded.
By a scaling argument, we may assume that the support of $f$ is
included in $Q=[0,1]^d$.  The proof is now similar to the one of
Theorem~\ref{th:upper}.  For $\ell \in\mathbb N$, we consider the
partition $\mathcal P_\ell$ of $[0,1]^d$ into $2^{\ell d}$ cubes of
side-length $2^{-\ell}$. Let $k\in \mathbb N^*$ to be chosen later.
For $P\in \mathcal P_k$, $\alpha_P$ denotes the average of $f$ over
$P$.  Applying Lemma~\ref{prop:part2}, Lemma~\ref{lem:coupling2} and
homogeneity, we obtain
\begin{eqnarray*}
2 d^{\frac p 2}   \sqrt{n\int f}+\E L(nf)\;  \geq \; \E L_{\partial Q} (nf) &\ge& \sum_{P\in \mathcal P_k} \E L_{\partial P} (nf \1_P) \\
   &\ge& \sum_{P\in \mathcal P_k}\left(  \E L_{\partial P} (n \alpha_P \1_P)- 2 n d^{\frac p 2}   \, 2^{-kp} \int_{P} |f-\alpha_P| \right)\\
   &=& \sum_{P\in \mathcal P_k}\left(2^{-kp}  \E L_{\partial Q} (n \alpha_P  2^{-kd}\1_Q)- 2 nd^{\frac p 2} \, 2^{-kp} \int_{P}|f-\alpha_P| \right).
\end{eqnarray*}
Setting as before $f_k= \sum_{P\in \mathcal P_k} \alpha_P\1_P$ and
$h(t)=\bar L_{\partial Q} (t)/t^{\frac{d-1}{d}}$ where $\bar L_{\partial Q}(t)=\E L_{\partial Q}
(t\1_Q)$, the previous inequality reads as
$$  2 n^{\frac{p}{d}-\frac12} d^{\frac p 2}  \sqrt{ \int f} + \frac{\E L(nf)}{n^{1 - \frac{p}{d}}} \geq \E L_{\partial Q} (nf) \ge \int h(n2^{-kd} f_k)f_k^{1 - \frac{p}{d}}- 2 d ^{\frac p 2 }\, n^{\frac{p}{d}}2^{-k p }
\int|f-f_k|.$$ As in the proof of Theorem~\ref{th:upper} we may choose
$k=k(n)$ depending on $n$ in such a way that $\lim_n k(n)=+\infty$,
$\lim_n n^{1/d}2^{-k(n)}=+\infty$ and $\lim_n n^{\frac{1}{d}}2^{-k(n)}
\big( \int|f-f_{k(n)}| \big)^{\frac 1 p}=0$. For such a choice, since $\liminf_{t\to +\infty }
h(t)\ge\beta'_L$ by Lemma~\ref{lem:liminfBM} and a.e. $\lim_k f_k=f$,
Fatou's lemma ensures that
$$\liminf_n  \int h(n2^{-k(n)d} f_{k(n)})f_{k(n)}^{1 - \frac{p}{d}} \ge \liminf_n  
\int_{\{f>0\}} h(n2^{-k(n)d} f_{k(n)})f_{k(n)}^{1 - \frac{p}{d}} \ge
\beta'_L \int f^{1 - \frac{p}{d}}.$$ Our statement  easily follows.

\medskip
Now, let us address the general case where the support is not bounded. Let $ \ell \geq 1$ and $Q = [-\ell, \ell]^d$. By Lemma~\ref{prop:part2},
$$
2 \diam(Q)^p \sqrt{  n \int f} + \E L( n f   ) \geq \E L_{\partial Q}( n f \ind_Q ). 
$$
Also, the above argument has shown that
$$
 \liminf_{n} \frac{ \E L_{\partial Q}( n f \ind_Q ) }{n ^{1 - \frac{p}{d}}}  \geq   \beta'_L \int_Q f^{1 - \frac{p}{d}} .
$$
We deduce that for any $Q = [-\ell, \ell]^d$,
$$
\liminf_{n} \frac{ \E L( n f  ) }{n ^{1 - \frac{p}{d}}} \geq \beta'_L \int_Q f^{1 - \frac{p}{d}} .
$$
Taking $\ell$ arbitrary large we obtain the claimed lower bound. 
\end{proof}

\subsubsection{Dealing with the singular component}\label{sec:singular}

In this section we explain how to extend Theorem~\ref{th:lower} from measures with densities
to general measures. Given a measure $\mu$, we consider its decomposition $\mu=\mu_{ac}+\mu_s$
into an absolutely continuous part and a singular part.

Our starting point is the following lemma, which can be viewed as an inverse subbadditivity property.
\begin{lemma}\label{le:mino-point}
Let $p\in (0,1]$ and $L=M_p$. Let $\cX_1,\cX_2,\cY_1,\cY_2$ be four finite  multisets included in a bounded 
set $Q$. Then 
$$L(\cX_1,\cY_1)\le L(\cX_1\cup\cX_2,\cY_1\cup\cY_2)+L(\cX_2,\cY_2)+\mathrm{diam}(Q)^p
 \Big(|\cX_1(Q)-\cY_1(Q)|+ 2 |\cX_2(Q)-\cY_2(Q)|\Big).$$
\end{lemma}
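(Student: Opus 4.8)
The plan is to use the standard device of overlaying two optimal matchings, which here yields a reverse (super‑additive) bound because $0<p\le 1$ makes $t\mapsto t^p$ subadditive, so that $|x-y|^p\le\sum_{i=1}^{r}|z_i-z_{i-1}|^p$ along any chain $x=z_0,z_1,\dots,z_r=y$. I would fix an optimal matching $\sigma$ realizing $M_p(\cX_1\cup\cX_2,\cY_1\cup\cY_2)$ and an optimal matching $\tau$ realizing $M_p(\cX_2,\cY_2)$, and regard the edges of $\sigma$ together with the edges of $\tau$ as a multigraph $H$ on the vertex set $\cX_1\cup\cX_2\cup\cY_1\cup\cY_2$ (if $\sigma$ and $\tau$ happen to use the same pair $\{x,y\}$ this only creates a harmless $2$-cycle). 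Since $H$ is bipartite with sides $\cX_1\cup\cX_2$ and $\cY_1\cup\cY_2$, and since every vertex of $\cX_1\cup\cY_1$ has degree at most $1$ while every vertex of $\cX_2\cup\cY_2$ has degree at most $2$, the graph $H$ is a disjoint union of paths, cycles and isolated vertices; along any path the edges alternate between $\sigma$-edges and $\tau$-edges, and the vertices of $\cX_1\cup\cY_1$ appear only as path endpoints or as isolated vertices.

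From $H$ I would read off a matching of $\cX_1$ with $\cY_1$ as follows. Call a path \emph{good} if one of its endpoints lies in $\cX_1$ and the other in $\cY_1$, and let $k$ be the number of good paths. Good paths are pairwise vertex-disjoint, hence edge-disjoint, so declaring the two endpoints of each good path to be matched and applying the subadditivity inequality along the path produces a partial matching of $\cX_1$ with $\cY_1$ whose total $p$-cost is at most $\sum_{e\in\sigma}\mathrm{len}(e)^p+\sum_{e\in\tau}\mathrm{len}(e)^p=M_p(\cX_1\cup\cX_2,\cY_1\cup\cY_2)+M_p(\cX_2,\cY_2)$. Completing this to a full matching of $\min(|\cX_1|,|\cY_1|)$ pairs by adding $\min(|\cX_1|,|\cY_1|)-k$ arbitrary extra pairs, each of $p$-cost at most $\diam(Q)^p$, the whole statement reduces to the combinatorial estimate
$$ \min\big(|\cX_1|,|\cY_1|\big)-k \le |\cX_1(Q)-\cY_1(Q)|+2\,|\cX_2(Q)-\cY_2(Q)|. $$

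For this estimate I would first record a parity observation: both terminal edges of a path having an endpoint in $\cX_1\cup\cY_1$ are $\sigma$-edges (these are the only edges incident to such vertices), so such a path has odd length and therefore its two endpoints lie on opposite sides of the bipartition; in particular no path has both endpoints in $\cX_1$ and none has both in $\cY_1$. Consequently every $\cX_1$-vertex that is not an endpoint of a good path is either isolated in $H$ — hence unmatched by $\sigma$ — or an endpoint of a path whose other endpoint lies in $\cX_2\cup\cY_2$ and has degree $1$ in $H$ — hence unmatched by $\sigma$ or by $\tau$. Sending each such ``bad'' $\cX_1$-vertex to that other endpoint (to itself in the isolated case) defines an injection into the set of vertices that $\sigma$ or $\tau$ leaves unmatched, whose cardinality is at most $\big|(|\cX_1|+|\cX_2|)-(|\cY_1|+|\cY_2|)\big|+\big||\cX_2|-|\cY_2|\big|\le|\cX_1(Q)-\cY_1(Q)|+2|\cX_2(Q)-\cY_2(Q)|=:R$. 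Since the number of $\cX_1$-vertices on good paths is exactly $k$, this gives $k\ge|\cX_1|-R$, whence $\min(|\cX_1|,|\cY_1|)-k\le|\cX_1|-k\le R$, which is the required inequality; adding the three cost contributions then yields the statement.

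The steps I expect to require the most care are the parity argument excluding $\cX_1$--$\cX_1$ and $\cY_1$--$\cY_1$ paths — it is exactly what makes the map into the $\sigma/\tau$-unmatched vertices injective, and it is easy to get wrong by miscounting parities — and the leftover-point bookkeeping, in particular checking that the coefficient $2$ in front of $|\cX_2(Q)-\cY_2(Q)|$ suffices: it comes once from the $\tau$-unmatched vertices and once from the triangle inequality $\big|(|\cX_1|+|\cX_2|)-(|\cY_1|+|\cY_2|)\big|\le|\cX_1(Q)-\cY_1(Q)|+|\cX_2(Q)-\cY_2(Q)|$ applied to the $\sigma$-unmatched vertices.
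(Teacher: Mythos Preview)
Your proof is correct and follows essentially the same approach as the paper: both overlay an optimal matching for $(\cX_1\cup\cX_2,\cY_1\cup\cY_2)$ with one for $(\cX_2,\cY_2)$, observe that the resulting bipartite multigraph decomposes into alternating paths, and use subadditivity of $t\mapsto t^p$ along each good $\cX_1$--$\cY_1$ path while charging the remaining $\cX_1$-vertices to points left unmatched by one of the two matchings. Your write-up makes the parity argument (ruling out $\cX_1$--$\cX_1$ paths) and the injection into $U_\sigma\cup U_\tau$ explicit, whereas the paper folds these into its case analysis of the ``other endpoint'', but the arguments are the same.
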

\begin{proof}
 Let us start with an optimal matching achieving  $L(\cX_1\cup\cX_2,\cY_1\cup\cY_2)$ and an optimal matching achieving $L(\cX_2,\cY_2)$. Let us view them as bipartite graphs $G_{1,2}$ and $G_2$ on the vertex sets $(\cX_1\cup\cX_2,\cY_1\cup\cY_2)$ and $(\cX_2,\cY_2)$ respectively (note that if a point appears more than
 once, we consider its instances as different graph vertices).
Our goal is to build a possibly suboptimal matching of $\cX_1$ and $\cY_1$. Assume without loss of generality that $\cX_1(Q)\le \cY_1(Q)$. Hence we need to build an injection from $\sigma:\cX_1\to \cY_1$ and to upper bound
its cost $\sum_{x\in\cX_1} |x-\sigma(x)|^p$.

  To do this, let us consider the graph $G$ obtained as the union of $G_{1,2}$ and $G_2$ (allowing multiple edges when
  two points are neighbours in both graphs).
  It is clear that in $G$ the points from $\cX_1$ and $\cY_1$ have degree at most one, while the points
  from $\cX_2$ and $\cY_2$ have degree at most 2.
  For each $x\in \cX_1$, let us consider its connected component $C(x)$ in $G$. Because of the above degree 
  considerations (and since no point is connected to itself in a bipartite graph) it is obvious that
  $C(x)$ is a path. 
  
   It could be that $C(x)=\{x\}$, in the case when $x$ is a leftover point in the matching corresponding 
    to  $G_{1,2}$. This means that $x$ is a point in excess and there are at most
    $|\cX_1(Q)+\cX_2(Q)-(\cY_1(Q)+\cY_2(Q))|$ of them.
   
\begin{figure}[htb]
 \begin{center}
\begin{psfrags}
\psfrag{x}{\Large $x$}
\psfrag{y}{\Large $y $}
  \includegraphics[angle=0,width = 5cm]{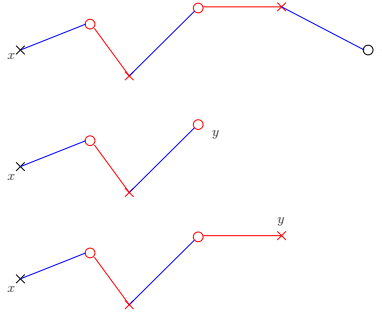}
 \end{psfrags} 
 \caption{\label{fig:matchcolor}The three possibilities for the path $C(x)$. In blue, $G_{1,2}$, in red $G_2$, the points in $\cX_1 \cup \cX_2$ are represented by a cross, points in $\cY_1 \cup \cY_2$ by a circle.}
\end{center}\end{figure}

   Consider now the remaining case, when  $C(x)$ is a non trivial path. Its first edge belongs to $G_{1,2}$. If there
   is a second edge, it has to be from $G_2$ (since $G_{1,2}$ as degree at most one). Repeating the argument,
   we see that the edges of the path are alternately from $G_{1,2}$ and from $G_2$. Note also 
   that the successive vertices are alternately from $\cX_1\cup\cX_2$ and from  $\cY_1\cup\cY_2$ (see Figure \ref{fig:matchcolor}). There are three possibilities:
\begin{itemize}
	\item The other end of the path is a point $y\in \cY_1$. In this case we are done, we have associated a point
	 $y\in \cY_1$ to our point $x\in \cX_1$. By the triangle inequality and since $(a+b)^p\le a^p +b^p$ due to the 
	 assumption $p\le 1$, $|x-y|^p$ is upper bounded by the sum of the $p$-th powers of the length of the edges
	 in $C(x)$.
	 \item The other end of the path is a point $y\in \cY_2$. The last edge is from $G_{1,2}$. So necessarily,
	 $y$ has no neighbour in $G_2$. This means that it is not matched. There are at most $|\cX_2(Q)-\cY_2(Q)|$ 
	  such points in the matching $G_2$.
	\item   The other end of the path is a point $x'\in \cX_2$. The last edge is from $G_{2}$. So necessarily,
	 $x'$ has no neighbour in $G_{1,2}$. This means that it is not matched in $G_{1,2}$. As already 
	 mentionend there are at most $|\cX_1(Q)+\cX_2(Q)-(\cY_1(Q)+\cY_2(Q))|$ 
	  such points.
\end{itemize}
Eventually we have found a way to match the points from $\cX_1$, apart maybe $|\cX_2(Q)-\cY_2(Q)|+|\cX_1(Q)+\cX_2(Q)-(\cY_1(Q)+\cY_2(Q))|$ of them. We match the latter points arbitrarily
to (unused) points in $\cY_1$ and upper bound the distances between matched points by $\mathrm{diam}(Q)$.
\end{proof}
As a direct consequence, we obtain:

\begin{lemma}\label{lem:is}
Let $\mu_1$ and $\mu_2$ be two finite measures supported in a bounded set $Q$.
Let $p\in (0,1]$ and $L=M_p$ be the bipartite matching functional. Then
$$ \E L(\mu_1)\le \E L(\mu_1+\mu_2) +\E L( \mu_2)+ 3 \, \diam(Q)^p \big(\sqrt{\mu_1(Q)}+\sqrt{\mu_2(Q)}\big).$$
\end{lemma}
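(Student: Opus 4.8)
The plan is to deduce the inequality from Lemma~\ref{le:mino-point} by a Poissonization argument, in the spirit of the proof of Proposition~\ref{cor:mu1mu2}. First I would introduce four mutually independent Poisson point processes $\cX_1,\cY_1,\cX_2,\cY_2$ on $\R^d$, where for $i\in\{1,2\}$ both $\cX_i$ and $\cY_i$ have intensity measure $\mu_i$. By the superposition property of Poisson processes, $\cX_1\cup\cX_2$ and $\cY_1\cup\cY_2$ are then independent Poisson point processes with intensity $\mu_1+\mu_2$, while the pairs $(\cX_1,\cY_1)$ and $(\cX_2,\cY_2)$ remain independent. All four processes are supported in $Q$ and have finite total intensity, so they are a.s. finite multisets and Corollary~\ref{cor:trivialbound} ensures that all expectations appearing below are finite.

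Next I would apply Lemma~\ref{le:mino-point} pathwise to these four multisets, which gives
$$L(\cX_1,\cY_1)\le L(\cX_1\cup\cX_2,\cY_1\cup\cY_2)+L(\cX_2,\cY_2)+\mathrm{diam}(Q)^p \Big(|\cX_1(Q)-\cY_1(Q)|+ 2\,|\cX_2(Q)-\cY_2(Q)|\Big).$$
Taking expectations and using the definition of $\E L(\nu)$ for a finite measure $\nu$ — namely $\E L(\cX_1,\cY_1)=\E L(\mu_1)$, $\E L(\cX_1\cup\cX_2,\cY_1\cup\cY_2)=\E L(\mu_1+\mu_2)$, and $\E L(\cX_2,\cY_2)=\E L(\mu_2)$ — reduces the task to bounding the expectations $\E|\cX_i(Q)-\cY_i(Q)|$.

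Finally, since $\cX_i(Q)$ and $\cY_i(Q)$ are independent Poisson variables of parameter $\mu_i(Q)$, the difference has mean zero and variance $2\mu_i(Q)$, so by Jensen's inequality $\E|\cX_i(Q)-\cY_i(Q)|\le\big(\E(\cX_i(Q)-\cY_i(Q))^2\big)^{1/2}=\sqrt{2\mu_i(Q)}$. Substituting this in and using $\sqrt2\le 3$ and $2\sqrt2\le 3$ yields $\E L(\mu_1)\le \E L(\mu_1+\mu_2)+\E L(\mu_2)+\diam(Q)^p\big(\sqrt{2\mu_1(Q)}+2\sqrt{2\mu_2(Q)}\big)$, which is stronger than the claimed bound with constant $3$. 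I do not expect any genuine obstacle here: Lemma~\ref{le:mino-point} carries all the combinatorial content, and the only steps needing a line of justification are the superposition identity for Poisson processes and the elementary second-moment estimate for the difference of two independent Poisson counts. The one mild point to be careful about is arranging the coupling so that $(\cX_1,\cY_1)$ and $(\cX_2,\cY_2)$ are independent while $\cX_1\cup\cX_2$ and $\cY_1\cup\cY_2$ have exactly the desired joint law; this is the same device already used in Proposition~\ref{cor:mu1mu2}.
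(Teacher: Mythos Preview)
Your proposal is correct and follows essentially the same route as the paper's own proof: introduce four independent Poisson point processes, apply Lemma~\ref{le:mino-point} pathwise, take expectations using the superposition property, and bound $\E|\cX_i(Q)-\cY_i(Q)|$ by $\sqrt{2\mu_i(Q)}$ via Cauchy--Schwarz. The only cosmetic difference is that the paper crudely replaces the coefficients $1$ and $2$ from Lemma~\ref{le:mino-point} by a common $2$ before applying the variance bound, whereas you keep track of them separately; either way one lands below the stated constant~$3$.
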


\begin{proof}
Let $\cX_1,\cX_2,\cY_1,\cY_2$ be four independent Poisson point processes. Assume that for $i\in\{1,2\}$,
$\cX_i$ and $\cY_i$ have intensity measure $\mu_i$. Consequently $\cX_1\cup\cX_2$ and $\cY_1\cup \cY_2$
are independent Poisson point processes with intensity $\mu_1+\mu_2$. Applying the preceeding lemma~\ref{le:mino-point}
and taking expectations yields
$$ \E L(\mu_1)\le \E L(\mu_1+\mu_2)+\E L(\mu_2)+ 2\diam(Q)^p \big(\E |\cX_1(Q)-\cY_1(Q)|+  \E |\cX_2(Q)-\cY_2(Q)|\big).$$ 
As usual, we conclude using that
$$ \E |\cX_i(Q)-\cY_i(Q)|\le \sqrt{\E\big( (\cX_i(Q)-\cY_i(Q))^2\big)}=\sqrt{2\mathrm{var}(\cX_i(Q))}=\sqrt{2\mu_i(Q)}.$$
\end{proof}

\begin{theorem}\label{th:poisson-general}
Assume that $d\in\{1,2\}$ and $p\in(0,d/2)$, or that $d\ge 3$ and $p\in(0,1]$.
Let $L=M_p$ be the bipartite matching functional. Let $\mu$ be a finite measure on $\R^d$ with bounded support.
Let $f$ be the density of the absolutely continuous part of $\mu$. Assume that there exists $\alpha>\frac{2dp}{d-2p}$ such that $\int |x|^\alpha d\mu(x) <+\infty$. Then 
$$\liminf_n \frac{\E L(n\mu)}{n^{1-\frac{p}{d}}} \geq \beta'_L  \int_{\R^d} f^{1-\frac{p}{d}}.$$
Moreover if $f$ is proportional to the indicator function of a bounded set with positive Lebesgue measure 
$$
\lim_n \frac{\E L(n\mu)}{n^{1-\frac{p}{d}}} =  \beta_L  \int_{\R^d} f^{1-\frac{p}{d}}.
$$
\end{theorem}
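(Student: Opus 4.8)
The plan is to reduce to the purely absolutely continuous situation, which is already covered by Theorem~\ref{th:lower}, and to show that the singular part of $\mu$ contributes nothing at the scale $n^{1-\frac p d}$. The crucial tool for the lower bound is the inverse subadditivity Lemma~\ref{lem:is}, since ordinary subadditivity points the wrong way. First I would write $\mu=\mu_{ac}+\mu_s$ with $d\mu_{ac}=f\,dx$ and $\mu_s$ singular; both are finite and supported in a bounded set $Q$, since $\mu$ is. Note that the dimensional hypotheses give $d>2p$, that $p\le 1$ (so that Lemma~\ref{lem:is} applies) and that $p<d/2$ (so that Theorem~\ref{th:lower} applies): indeed for $d\in\{1,2\}$ this is assumed, while for $d\ge 3$ we have $p\le 1<3/2\le d/2$. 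Recall also that $M_p$ has the properties $(\mathcal H_p)$, $(\mathcal R_p)$, $(\mathcal S_p)$, by Lemmas~\ref{lem:partition} and~\ref{lem:Rp}.

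For the first assertion, I would apply Lemma~\ref{lem:is} to the finite measures $n\mu_{ac}$ and $n\mu_s$, whose sum is $n\mu$ and each of whose total mass is at most $n\,\mu(\R^d)$; this yields
$$\E L(nf)=\E L(n\mu_{ac})\le \E L(n\mu)+\E L(n\mu_s)+c_Q\sqrt n,$$
with $c_Q$ depending only on $Q$ and the total mass of $\mu$. Dividing by $n^{1-\frac p d}$ and passing to the liminf, the last term is $O\big(n^{\frac p d-\frac12}\big)\to0$ because $d>2p$, and $\E L(n\mu_s)/n^{1-\frac p d}\to0$ by Lemma~\ref{le:sing} applied to the singular bounded-support measure $\mu_s$. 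Hence
$$\liminf_n\frac{\E L(n\mu)}{n^{1-\frac p d}}\ge \liminf_n\frac{\E L(nf)}{n^{1-\frac p d}}\ge \beta'_L\int_{\R^d}f^{1-\frac p d},$$
the last step being Theorem~\ref{th:lower} for the integrable function $f$.

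For the second assertion, suppose $f=c\,\1_\Omega$ with $c>0$ and $\Omega$ bounded of positive Lebesgue measure. By homogeneity $(\mathcal H_p)$ and Theorem~\ref{th:lowerset}, substituting $m=nc$,
$$\lim_n\frac{\E L(nf)}{n^{1-\frac p d}}=c^{1-\frac p d}\lim_m\frac{\E L(m\1_\Omega)}{m^{1-\frac p d}}=c^{1-\frac p d}\beta_L\,\vol(\Omega)=\beta_L\int_{\R^d}f^{1-\frac p d}.$$
Feeding this into the last display of the previous paragraph gives $\liminf_n\E L(n\mu)/n^{1-\frac p d}\ge \beta_L\int f^{1-\frac p d}$. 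For the matching upper bound I would instead apply the ordinary subadditivity consequence Proposition~\ref{cor:mu1mu2} to $n\mu_{ac}$ and $n\mu_s$, which gives $\E L(n\mu)\le \E L(nf)+\E L(n\mu_s)+c'_Q\sqrt n$; dividing by $n^{1-\frac p d}$ and taking $\limsup$, the error and singular terms vanish exactly as before, so $\limsup_n\E L(n\mu)/n^{1-\frac p d}\le \lim_n\E L(nf)/n^{1-\frac p d}=\beta_L\int f^{1-\frac p d}$. The two bounds together give the claimed limit.

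The only genuinely new mechanism here is the use of Lemma~\ref{lem:is} to carry a lower bound across the decomposition $\mu=\mu_{ac}+\mu_s$; everything else is routine bookkeeping with the scaling relation and the $O(\sqrt n)$ error terms, which are harmless because $d>2p$. The one point that deserves care is that the constant in the ``moreover'' part is $\beta_L$ and not merely $\beta'_L$: this is not a gap, since for an indicator function Theorem~\ref{th:lowerset} already yields the exact limit $\beta_L\,\vol(\Omega)$, and the inverse-subadditivity argument simply transports this exact value from $f$ to $\mu$.
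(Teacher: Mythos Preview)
Your proof is correct and follows essentially the same route as the paper: decompose $\mu=\mu_{ac}+\mu_s$, invoke the inverse subadditivity Lemma~\ref{lem:is} with $\mu_1=n\mu_{ac}$, $\mu_2=n\mu_s$ to transfer the lower bound, kill the singular contribution (you cite Lemma~\ref{le:sing}, the paper cites Theorem~\ref{th:up-poisson-general}, both valid here), and for the ``moreover'' part replace Theorem~\ref{th:lower} by Theorem~\ref{th:lowerset}. The only cosmetic difference is that for the upper bound in the indicator case you appeal to Proposition~\ref{cor:mu1mu2}, whereas the paper simply invokes the already-established Theorem~\ref{th:up-poisson-general}; either works.
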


\begin{proof}  Note that in any case, $p\le 1$ is assumed. Let us write $\mu=\mu_{ac}+\mu_s$ where $d\mu_{ac}(x)=f(x)dx$ is the absolutely continuous
part and $\mu_s$ is the singular part of $\mu$.

The argument is very simple if $\mu$ has a bounded support:
 apply the previous lemma with $\mu_1=n\mu_{ac}$ and $\mu_2=n\mu_s$. When $n$ tends to infinity, observing
that $\sqrt n$ is negligible with respect to $n^{1-\frac{p}{d}}$, we obtain that 
$$ \liminf_n \frac{\E L(n\mu_{ac})}{n^{1-\frac{p}{d}}}\le \liminf_n \frac{\E L(n\mu )}{n^{1-\frac{p}{d}}}+
 \limsup_n \frac{\E L(n\mu_{s})}{n^{1-\frac{p}{d}}}.$$
Observe that the latter upper limit is equal to zero thanks to  Theorem~\ref{th:up-poisson-general} applied to a purely singular measures. Eventually
$\liminf_n \frac{\E L(n\mu_{ac})}{n^{1-\frac{p}{d}}} \geq \beta'_L  \int_{\R^d} f^{1-\frac{p}{d}}$
by Theorem~\ref{th:lower} about absolutely continuous measures.

If $f$ is proportional to  an indicator function, we simply use scale invariance and Theorem \ref{th:lowerset} in place of Theorem~\ref{th:lower}.

Let us consider the general case of unbounded support. Let $Q=[-\ell,\ell]^d$ where $\ell>0$ is arbitrary. Let $\cX_1,\cY_1,\cX_2,\cY_2$ be four independent Poisson point processes, such that $\cX_1$ and $\cY_1$
have intensity measure $n \1_Q \cdot \mu_{ac}$, and  $\cX_2$ and $\cY_2$
have intensity measure $n (\mu_s+ 1_{Q^c}\cdot \mu_{ac})$. It follows that
$\cX_1\cup \cX_2$ and $\cY_1\cup\cY_2$ are independent Poisson point processes with intensity $n\mu$.
Set $T:= \max\{ |z|;\; z\in \cX_1\cup \cX_2\cup \cY_1\cup\cY_2\}$. Applying Lemma~\ref{le:mino-point} gives
$$ L(\cX_1,\cY_1) \le  L(\cX_1\cup\cX_2,\cY_1\cup\cY_2)+ L(\cX_2,\cY_2)+c_p T^p
 \big(|\mathrm{card}(\cX_1)-\mathrm{card}(\cY_1)|-|\mathrm{card}(\cX_2)-\mathrm{card}(\cY_2)| \big).$$
Taking expectations, applying the Cauchy-Schwarz inequality twice and Lemma~\ref{lem:ET} (note that
$\alpha>2p$) gives
\begin{eqnarray*}
   \E L(n f \1_Q)& \le& \E L(n\mu)+ \E L\big(n(\mu_s+\1_{Q^c}\mu_{ac})\big)+ c_p \sqrt{E\big[T^{2p}\big]} \left(
 \sqrt{2n \mu_{ac}(Q)}+\sqrt{2n (\mu_s(\R^d)+\mu_{ac}(Q^c))} \right)\\
  &\le &  \E L(n\mu)+ \E L\big(n(\mu_s+\1_{Q^c}\mu_{ac})\big)+ c'_p n^{\frac{p}{\alpha}+\frac{1}{2}}. 
\end{eqnarray*}
Since $\alpha>\frac{2dp}{d-2p}$ we obtain
$$ \liminf_n \frac{\E L(n\mu)}{n^{1-\frac{p}{d}}} \ge  \liminf_n \frac{\E L(n f \1_Q)}{n^{1-\frac{p}{d}}}
 -  \limsup_n \frac{\E L(n(\mu_s+ \1_Q^c\cdot \mu_{ac}))}{n^{1-\frac{p}{d}}}
  \ge \beta'_L  \int_Q f^{1-\frac{p}{d}}- \beta_L\int_{Q^c} f^{1-\frac{p}{d}},$$
where we have used Theorem~\ref{th:lower} for the lower limit  for bounded absolutely continuous measures
 and Theorem~\ref{th:up-poisson-general} for the upper limit. Recall that $Q=[-\ell,\ell]^d$. It remains
to let $\ell$ tend to infinity.
\end{proof}

Actually, using  classical duality techniques (which are specific to the bipartite matching) we can derive 
the following improvement of Lemma~\ref{lem:is}, which can be seen as an average monotonicity property:

\begin{lemma}\label{le:mino}
Let $p\in(0,1]$ and $L=M_p$. 
    Let $\mu_1$ and $\mu_2$ be two finite  measures supported on a bounded subset $Q\subset \R^d$. Then
  $$ \E L( \mu_1)  \le   \E L (\mu_1+\mu_2) +3 \mathrm{diam}(Q)^p \big( \sqrt{\mu_1(Q)}+ \sqrt{\mu_2(Q)} \big).$$
\end{lemma}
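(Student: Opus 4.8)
Throughout write $\Delta=\diam(Q)$ and $\rho(x,y)=|x-y|^p$. Since $p\le 1$, the map $t\mapsto t^p$ is concave and vanishes at $0$, hence subadditive, so $\rho$ obeys the triangle inequality: it is a genuine metric on $\R^d$, and $M_p$ is the optimal (partial) transport cost for $\rho$ between the empirical measures. Call $\phi\colon\R^d\to\R$ \emph{admissible} if it is $1$-Lipschitz for $\rho$ and $0\le\phi\le\Delta^p$ on $Q$. I will use two elementary facts, valid for all finite multisets $\cX,\cY\subset Q$. First, there is an admissible $\phi$, depending only on $(\cX,\cY)$, with
$$M_p(\cX,\cY)\ \le\ \sum_{x\in\cX}\phi(x)-\sum_{y\in\cY}\phi(y)+\Delta^p\big|\mathrm{card}(\cX)-\mathrm{card}(\cY)\big| .$$
Second, for \emph{every} admissible $\phi$,
$$M_p(\cX,\cY)\ \ge\ \sum_{x\in\cX}\phi(x)-\sum_{y\in\cY}\phi(y)-\Delta^p\big|\mathrm{card}(\cX)-\mathrm{card}(\cY)\big| .$$
The second fact follows by bounding, along an optimal injective partial matching $\sigma$, each matched pair by $|x-\sigma(x)|^p\ge\phi(x)-\phi(\sigma(x))$ and the $|\mathrm{card}(\cX)-\mathrm{card}(\cY)|$ unmatched points (all on the larger side) by $0\le\phi\le\Delta^p$ --- first replacing $\phi$ by the still-admissible $\Delta^p-\phi$ if the larger multiset is $\cX$. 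The first fact is Kantorovich--Rubinstein duality: assuming $\mathrm{card}(\cX)\le\mathrm{card}(\cY)$, an optimal partial matching $\sigma$ of $\cX$ into $\cY$ is also an optimal \emph{balanced} matching of $\cX$ onto $\sigma(\cX)$, so balanced duality gives a $\rho$-Lipschitz $\phi$ with $\sum_\cX\phi-\sum_{\sigma(\cX)}\phi=M_p(\cX,\cY)$; translating $\phi$ so that $\min_Q\phi=0$ makes it admissible (oscillation over $Q$ being $\le\Delta^p$) without changing this sum (equal index sets), and estimating the remaining $\mathrm{card}(\cY)-\mathrm{card}(\cX)$ values $\phi(y)\in[0,\Delta^p]$ gives the claim; the case $\mathrm{card}(\cX)>\mathrm{card}(\cY)$ follows by symmetry and passing to $\Delta^p-\phi$.

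Now let $\cX_1,\cY_1,\cX_2,\cY_2$ be mutually independent Poisson point processes, $\cX_i$ and $\cY_i$ of intensity $\mu_i$, so that $\cX_1\cup\cX_2$ and $\cY_1\cup\cY_2$ are independent Poisson processes of intensity $\mu_1+\mu_2$. Write $N_i=\mathrm{card}(\cX_i)$, $N_i'=\mathrm{card}(\cY_i)$. Apply the first fact to $(\cX_1,\cY_1)$ to obtain an admissible $\phi^{*}$, depending only on $(\cX_1,\cY_1)$, with $L(\cX_1,\cY_1)\le\sum_{\cX_1}\phi^{*}-\sum_{\cY_1}\phi^{*}+\Delta^p|N_1-N_1'|$; then apply the second fact to $(\cX_1\cup\cX_2,\cY_1\cup\cY_2)$ with the test function $\phi^{*}$ (admissible, and all multisets lie in $Q$). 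Splitting the two sums along the two processes and combining,
$$L(\cX_1\cup\cX_2,\cY_1\cup\cY_2)\ \ge\ L(\cX_1,\cY_1)+\Big(\sum_{x\in\cX_2}\phi^{*}(x)-\sum_{y\in\cY_2}\phi^{*}(y)\Big)-\Delta^p|N_1-N_1'|-\Delta^p|N_1+N_2-N_1'-N_2'| .$$

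It remains to take expectations. The decisive point --- and the reason the extra term $\E L(\mu_2)$ of Lemma~\ref{lem:is} is not needed --- is that, conditionally on $(\cX_1,\cY_1)$, the function $\phi^{*}$ is deterministic while $\cX_2$ and $\cY_2$ are two independent Poisson processes of the \emph{same} intensity $\mu_2$, independent of $(\cX_1,\cY_1)$; so Campbell's formula gives $\E[\sum_{x\in\cX_2}\phi^{*}(x)\mid\cX_1,\cY_1]=\int_Q\phi^{*}\,d\mu_2=\E[\sum_{y\in\cY_2}\phi^{*}(y)\mid\cX_1,\cY_1]$, and the cross term has zero mean. The cardinality terms are centred, so exactly as in the proofs of Propositions~\ref{cor:mu1mu2} and~\ref{prop:partition}, $\E|N_1-N_1'|\le\sqrt{2\mu_1(Q)}$ and $\E|N_1+N_2-N_1'-N_2'|\le\sqrt{2\mu_1(Q)+2\mu_2(Q)}\le\sqrt{2\mu_1(Q)}+\sqrt{2\mu_2(Q)}$. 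Putting everything together, $\E L(\mu_1)\le\E L(\mu_1+\mu_2)+\Delta^p\big(2\sqrt2\,\sqrt{\mu_1(Q)}+\sqrt2\,\sqrt{\mu_2(Q)}\big)$, which yields the stated inequality since $2\sqrt2<3$.

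The only genuinely fiddly part of this scheme is the cardinality bookkeeping in the two facts above: unequal cardinals force one to keep track of the sign and the normalization of the Kantorovich potential, which is precisely what the constraint $0\le\phi\le\Delta^p$ on $Q$ encodes. Everything else reduces to the standard Poisson second-moment estimate together with the observation that the cross term $\sum_{\cX_2}\phi^{*}-\sum_{\cY_2}\phi^{*}$ vanishes in expectation --- the conceptual heart of this ``average monotonicity'' improvement, and the place where the duality specific to the bipartite matching is used.
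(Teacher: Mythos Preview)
Your proof is correct and follows essentially the same approach as the paper: both arguments rest on Kantorovich--Rubinstein duality for the metric $|x-y|^p$ together with the key observation that, since $\cX_2$ and $\cY_2$ are independent Poisson processes of the \emph{same} intensity $\mu_2$ (and independent of $(\cX_1,\cY_1)$), the cross term $\sum_{\cX_2}\phi-\sum_{\cY_2}\phi$ has zero conditional expectation. The only cosmetic difference is that the paper phrases this via the inequality $\E\sup_f\ge\sup_f\E$ applied conditionally on $(\cX_1,\cY_1)$, whereas you select an explicit optimal potential $\phi^*$ for $(\cX_1,\cY_1)$ and use it as a test function for the combined process; these are two equivalent ways of exploiting the same duality, and your normalization $0\le\phi\le\Delta^p$ on $Q$ plays the same role as the paper's class $\mathrm{Lip}_{1,0}$.
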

\begin{proof}
Since $p\in(0,1]$, the unit cost $c(x,y):=|x-y|^p$ is a distance on $\R^d$. The Kantorovich-Rubinstein
dual representation of the minimal matching cost (or optimal transportation cost) is particularly simple in this case (see e.g. \cite{rachev, villani,T}): for $\{x_1,\ldots,x_n\}$, $\{y_1,\ldots,y_n\}$ two multisets in $Q$,
$$ L\big(\{x_1,\ldots,x_n\} ,  \{y_1,\ldots,y_n\}\big)=\sup_{f\in \mathrm{Lip}_{1,0}} \sum_i f(x_i)-f(y_i),$$
where $\mathrm{Lip}_{1,0}$ denotes the set of function $f:Q\to \R$ which are 1-Lipschitzian for the distance
$c(x,y)$ (hence they are $p$-H\"olderian for the Euclidean distance) and vanish at a prescribed point $x_0\in Q$.
Observe that any function in  $\mathrm{Lip}_{1,0}$ is bounded by $\mathrm{diam}(Q)^p$ pointwise.

Let $\cX=\{X_1,\ldots,X_{N_1}\}$ and $\cY=\{Y_1,\ldots,Y_{N_2}\}$
be independent Poisson point processes with intensity $\mu$ of finite mass and supported on a set $Q$ 
of diameter $D<+\infty$.
 By definition, on the event $\{N_1\le N_2\}$,
 \begin{eqnarray*}
    L(\cX,\cY)&=& \inf_{A\subset \{1,\ldots, N_2\}; \mathrm{ card}(A)=N_1} L\big( \{X_i,1\le  i\le N_1\},
 \{Y_j,j\in A\}\big) \\
  &=&  \inf_{A\subset \{1,\ldots, N_2\}; \mathrm{ card}(A)=N_1} \sup_{f\in \mathrm{Lip}_{1,0}} 
     \left( \sum_{i\le N_1} f(X_i)-\sum_{j\in A} f(Y_j)\right) \\
  &\ge&  \sup_{f\in \mathrm{Lip}_{1,0}} 
     \left( \sum_{i\le N_1} f(X_i)-\sum_{j\le N_2} f(Y_j)\right) -D^p |N_1-N_2|
 \end{eqnarray*}
where we have used Kantorovich-Rubinstein duality to express the optimal matching of two samples of the same size
and  used that every $f\in \mathrm{Lip_{1,0}}$ satisfies $|f|\le D^p$  pointwise on $Q$.
A similar lower bound is valid when $N_1\ge N_2$. Hence, taking expectation and bounding
$E|N_1-N_2|$ in terms of the variance of the number of points in one process, one gets
\begin{equation}\label{eq:dual1} 
 \E L(\mu) \ge \E \sup_{f\in \mathrm{Lip}_{1,0}} \left( \sum_{i\le N_1} f(X_i)-\sum_{j\le N_2} f(Y_j)\right)
     -D^p \sqrt{2 |\mu|}.
\end{equation}
A similar argument also gives the following upper bound
\begin{equation}\label{eq:dual2}
 \E L(\mu) \le \E \sup_{f\in \mathrm{Lip}_{1,0}} \left( \sum_{i\le N_1} f(X_i)-\sum_{j\le N_2} f(Y_j)\right)
     +D^p \sqrt{2 |\mu|}.
\end{equation}

Let $\cX_1,\cX_2,\cY_1,\cY_2$ be four independent Poisson point processes. Assume that for $i \in \{1,2\}$,
 $\cX_i$ and  $\cY_i$ have intensity $ \mu_i$. As already mentioned, $\cX_1\cup \cX_2$ and $\cY_1\cup\cY_2$
are independent with common intensity $ \mu_1+\mu_2 $. Given a compact set $Q$ containing the 
supports of both measures, and $x_0\in Q$ we define the set $\mathrm{Lip}_{1,0}$.
Using \eqref{eq:dual1},
\begin{eqnarray*}
 \lefteqn{  \E L(\mu_1+\mu_2) =  \E L(\cX_1\cup \cX_2, \cY_1\cup\cY_2)} \\
  &\ge & \E \sup_{f\in \mathrm{Lip}_{1,0}} \left( \sum_{x_1\in \cX_1} f(x_1) - \sum_{y_1\in \cY_1} f(y_1)
    + \sum_{x_2\in \cX_2} f(x_2) - \sum_{y_2\in \cY_2} f(y_2) \right) -D^p \sqrt{2|\mu_1+\mu_2|}
\end{eqnarray*}
Now we use the easy inequality $\E \sup \ge \sup \E$ when $\E$ is
 the conditional expectation given $\cX_1,\cY_1$. Since $(\cX_2,\cY_2)$ are independent from 
 $(\cX_1,\cY_1)$, we obtain
 \begin{eqnarray*}
\lefteqn{ \E L(\mu_1+\mu_2)+D^p \sqrt{2|\mu_1+\mu_2|}}\\
  &\ge&  \E \sup_{f\in \mathrm{Lip}_{1,0}} \left( \sum_{x_1\in \cX_1} f(x_1) - \sum_{y_1\in \cY_1} f(y_1)
    +\E\Big( \sum_{x_2\in \cX_2} f(x_2) - \sum_{y_2\in \cY_2} f(y_2) \Big) \right)\\
  &=&  \E \sup_{f\in \mathrm{Lip}_{1,0}} \left( \sum_{x_1\in \cX_1} f(x_1) - \sum_{y_1\in \cY_1} f(y_1)
    \right)\\
  &\ge & \E L(\mu_1) - D^p \sqrt{2 |\mu_1|},
\end{eqnarray*}
where we have noted that the inner expectation vanishes and used \eqref{eq:dual2}.
The claim easily follows. \end{proof}

\subsection{Euclidean combinatorial optimization}

Our proof for the lower bound for matchings extends to some combinatorial optimization functionals $L$ defined by \eqref{eq:combopt}. In this paragraph, we explain how to adapt the above argument at the cost of ad-hoc assumptions on the collection of graphs $(\mathcal G_n)_{n \in \mathbb N}$. As motivating example, we will treat completely the case of the  bipartite traveling salesperson tour.

\subsubsection{Boundary functional}
\label{subsubsec:BFECO}

Let $S \subset \mathbb R^d$ and $\varepsilon , p\geq 0$. Set $q=2^{p-1}\wedge 1$. In what follows, $p$ is fixed and will be omitted in most places
where it would appear as an index. Given multisets
$\cX=\{X_1,\ldots,X_n\}$ and $\cY=\{Y_1,\ldots,Y_n\}$ included in $\mathbb R^d$, we first set
$$
   L^0_{\partial S,\varepsilon}(\cX , \cY)
   = \min_{G \in \mathcal G_n}\left\{ 
      \sum_{(i,j)\in [n]^2 : \{i , n+j \} \in G } d_{S,\varepsilon,p}(X_i,Y_{j})  
\right\}, 
$$
where 
\begin{equation}
  d_{S,\varepsilon,p}(x,y)  =\left\{ 
  \begin{array}{ccc}
     |x-y|^p & \mathrm{if}& x,y\in S,\\
     0  & \mathrm{if}& x,y\not\in S,\\
     q \big(\mathrm{dist}(x,S^c)^p+\varepsilon^ p\big) & \mathrm{if}& x\in S,\, y\not\in S\\
     q \big(\mathrm{dist}(y,S^c)^p+\varepsilon^ p\big) & \mathrm{if}& y\in S,\, x\not\in S
  \end{array}  
  \right.
\end{equation}
Now, if $\cX$ and $\cY$ are in $S$, we define the penalized boundary functional as
\begin{equation}
 L_{\partial S,\varepsilon}(\cX , \cY) = \min_{ A , B \subset S^c  }{L^0_{\partial S,\varepsilon}(\cX \cup A , \cY \cup B)}  , \label{eq:LLboundary}
\end{equation}
where the minimum is over all  multisets  $A$ and $B$  in $S^c$ such that $\mathrm{card}(\cX\cup A)    = \mathrm{card}(\cY\cup B) \geq \kappa_0$.
 When $\varepsilon=0$ we simply write $L_{\partial S}$. The main idea of this definition is to consider all possible configurations outside 
 the set $S$ but not to count the distances outside of $S$ (from a metric view point, all of $S^c$ is identified to a point which is at distance 
 $\varepsilon$ from $S$).
 
 The existence of the minimum  in \eqref{eq:LLboundary}  is due to the fact that $L^0_{\partial S}(\cX \cup A , \cY \cup B)$ can only take finitely many values less than any positive value (the quantities involved are just sums of distances between points of $\cX,\cY$ and of their distances
 to $S^c$). Notice that definition \eqref{eq:LLboundary} is consistent with the definition of the boundary functional for the matching functional $M_p$, given by \eqref{eq:Lboundary}. If $\cX$ and $\cY$ are
independent Poisson point processes with intensity $\nu$ supported in
$S$ and with finite total mass, we write $L_{\partial S,
  \varepsilon}(\nu)$ for the random variable $L_{\partial S,
  \varepsilon}(\cX,\cY)$.  
Also note that $d_{S,0,p}(x,y)\le |x-y|^p$. Consequently if $\card(\cX)=\card(\cY)$ then
\begin{equation}\label{eq:LLd}
L^0_{\partial S}(\cX,\cY)\le L(\cX,\cY).
\end{equation}

The next lemma will be used to reduce to uniform distributions  on squares.

\begin{lemma}\label{lem:Lcoupling2}
Assume (A1-A5). Let $\mu,\mu'$ be two probability measures on $\R^d$ with supports in $Q$ and $ n >0$. Then, for some constant $c$ depending only on $\kappa, \kappa_0$, 
$$   \E L_{\partial Q}(n\mu)\le  \E L_{\partial Q}(n\mu') + 2 c n \,\mathrm{diam}(Q)^p\, d_\mathrm{TV}(\mu,\mu').$$
Consequently, if $f$ is a non-negative locally integrable function on
   $\R^d$, setting $\alpha=\int_Q f/\mathrm{vol}(Q)$, it holds
  $$ \E L_{\partial Q}(nf \1_Q) \le \E L_{\partial Q}(n\alpha \1_Q) + c n\, \mathrm{diam}(Q)^p \, \int_Q|f(x)-\alpha|\, dx.$$
  \end{lemma}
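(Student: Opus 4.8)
The plan is to follow the same route as the matching case of Lemma~\ref{lem:coupling2}. There, the two displayed estimates were deduced from the proofs of Proposition~\ref{prop:approx} and Corollary~\ref{cor:couplingaverage}, once one knew that $L_{\partial Q}$ satisfies a mild variant of the regularity property $(\mathcal R_p)$. So the heart of the matter is to prove that there is a constant $c=c(\kappa,\kappa_0)$ such that for all multisets $\cX,\cY,\cX_1,\cY_1,\cX_2,\cY_2$ in $Q$,
$$ L_{\partial Q}(\cX\cup\cX_1,\cY\cup\cY_1)\le L_{\partial Q}(\cX\cup\cX_2,\cY\cup\cY_2)+c\,\mathrm{diam}(Q)^p\big(\mathrm{card}(\cX_1)+\mathrm{card}(\cX_2)+\mathrm{card}(\cY_1)+\mathrm{card}(\cY_2)\big). $$
As in the proof of Lemma~\ref{le:Rp}, by induction and chaining it is enough to treat the two one-sided statements: (i) adding points, $L_{\partial Q}(\cX\cup\cX_1,\cY\cup\cY_1)\le L_{\partial Q}(\cX,\cY)+c\,\mathrm{diam}(Q)^p(\mathrm{card}(\cX_1)+\mathrm{card}(\cY_1))$; and (ii) removing points, $L_{\partial Q}(\cX,\cY)\le L_{\partial Q}(\cX\cup\cX_2,\cY\cup\cY_2)+c\,\mathrm{diam}(Q)^p(\mathrm{card}(\cX_2)+\mathrm{card}(\cY_2))$.

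For (ii): from the definition \eqref{eq:LLboundary}, pick an optimal configuration for $L_{\partial Q}(\cX\cup\cX_2,\cY\cup\cY_2)$, that is, virtual multisets $A_2,B_2\subset Q^c$ with $\mathrm{card}(\cX\cup\cX_2\cup A_2)=\mathrm{card}(\cY\cup\cY_2\cup B_2)=:n_2\ge\kappa_0$ and a graph $G_2\in\mathcal G_{n_2}$ realizing $L^0_{\partial Q}$. Since $\mathcal G_{n_2}$ depends only on the cardinal $n_2$ and not on the positions, we may relocate every point of $\cX_2$ and $\cY_2$ to a fixed point of $Q^c$, turning them into virtual points; this leaves $n_2$ and the admissibility of $G_2$ unchanged, and by the bounded degree property (A3) it alters the $d_{Q,0,p}$-cost of at most $\kappa$ edges per relocated point. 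Using that $d_{Q,0,p}(x,y)\le\mathrm{diam}(Q)^p$ whenever at least one of $x,y$ lies in $Q$, the total cost increase is at most $\kappa\,\mathrm{diam}(Q)^p(\mathrm{card}(\cX_2)+\mathrm{card}(\cY_2))$, and the new configuration is admissible for $L_{\partial Q}(\cX,\cY)$; this gives (ii) with $c=\kappa$.

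For (i): take an optimal configuration $(A_1,B_1,G_1)$, $G_1\in\mathcal G_{n_1}$, $n_1\ge\kappa_0$, for $L_{\partial Q}(\cX,\cY)$, and set $m=\mathrm{card}(\cX_1)+\mathrm{card}(\cY_1)$ (if $m=0$ there is nothing to do). If $m\ge\kappa_0$ choose $G'\in\mathcal G_m$ by (A1), otherwise let $G'$ be the empty graph of $\mathcal B_m$; in either case the merging property (A4) yields $G''\in\mathcal G_{n_1+m}$ having all but at most $\kappa$ edges in common with $G_1+G'$. Assign the vertices of the ``$G_1$-block'' of $G''$ to $\cX\cup A_1$ and $\cY\cup B_1$, and the $m$ vertices of the ``$G'$-block'' on the $X$-side to $\cX_1$ together with $\mathrm{card}(\cY_1)$ new virtual points of $Q^c$, and on the $Y$-side to $\cY_1$ together with $\mathrm{card}(\cX_1)$ new virtual points; a cardinality count shows both sides have $n_1+m\ge\kappa_0$ vertices, so this is an admissible configuration for $L_{\partial Q}(\cX\cup\cX_1,\cY\cup\cY_1)$. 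Its cost is bounded by the cost of the kept $G_1$-edges, which is at most $L_{\partial Q}(\cX,\cY)$, plus the cost of the $G'$-edges (at most $\kappa m$ edges, each of $d_{Q,0,p}$-cost $\le\mathrm{diam}(Q)^p$ since every endpoint lies in $Q$ or $Q^c$), plus the at most $\kappa$ edges of $G''$ outside $G_1+G'$ (each $\le\mathrm{diam}(Q)^p$); since $m\ge1$ this yields (i). Chaining (i) and (ii) gives the modified regularity with $c=c(\kappa,\kappa_0)$. This step is the main obstacle: the bookkeeping of (i), combining (A4) with the re-assignment of the new vertices between true points of $\cX_1\cup\cY_1$ and freshly created boundary points — including the degenerate range $1\le m<\kappa_0$ handled through the empty-graph clause of (A4) — is where the work lies.

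Granted the modified regularity, the two assertions follow verbatim from the arguments of the matching case. For the first, one repeats the coupling of Proposition~\ref{prop:approx}: with $\pi$ an optimal coupling of $\mu$ and $\mu'$ (both supported in $Q$), one builds coupled Poisson processes of intensities $n\mu$ and $n\mu'$ from i.i.d.\ $\pi$-distributed pairs and a common Poisson count; the modified regularity bounds the difference of the two boundary costs by $c\,\mathrm{diam}(Q)^p$ times the number of coordinates where the two processes differ, and taking expectations and optimizing over $\pi$ gives the announced bound $2cn\,\mathrm{diam}(Q)^p\,d_\mathrm{TV}(\mu,\mu')$. For the second, apply the first part with $\mu=f\1_Q/\int_Q f$, $\mu'=\1_Q/\mathrm{vol}(Q)$ and $n\alpha\,\mathrm{vol}(Q)$ in place of $n$, exactly as in Corollary~\ref{cor:couplingaverage}, using $2d_\mathrm{TV}(\mu,\mu')=\int_Q|f-\alpha|\,dx/\int_Q f$.
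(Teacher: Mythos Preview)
Your proposal is correct and follows essentially the same route as the paper: establish a modified $(\mathcal R_p)$ property for $L_{\partial Q}$, then invoke verbatim the coupling arguments of Proposition~\ref{prop:approx} and Corollary~\ref{cor:couplingaverage}. The only difference is presentational: the paper reduces (i) and (ii) further to adding or removing a \emph{single} point at a time (as in Lemma~\ref{le:Rp}), which replaces your block construction in (i) by a one-line application of the empty-graph clause of (A4) with $m=1$, placing the new real point on one side and a single virtual boundary point on the other; your relocation argument for (ii) is exactly the paper's.
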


\begin{proof}
The functional $ L_{\partial Q}$ satisfies a slight modification of property $(\mathcal R_p)$ :  for all multisets $\mathcal X,\mathcal Y,
\mathcal X_1,\mathcal Y_1,\mathcal X_2,\mathcal Y_2$ in $Q$, it holds 
\begin{equation}
L_{\partial Q}(\mathcal X\cup \cX_1,\cY \cup\cY_1)\le L_{\partial Q}(\mathcal X\cup \cX_2,\cY \cup\cY_2) + C \mathrm{diam}(Q) ^p \big( \mathrm{card}(\cX_1)+
\mathrm{card}(\cX_2)+\mathrm{card}(\cY_1)+\mathrm{card}(\cY_2)\big), \label{eq:regLbound}
\end{equation}  
with $C = C(\kappa , \kappa_0 )$. The above inequality is established as in the proof of Lemma \ref{le:Rp}. Indeed, by linearity and symmetry we should check \eqref{eq:RpsansY1XY2} and \eqref{eq:RpsansY2XY1} for $L_{\partial Q}$. To prove \eqref{eq:RpsansY1XY2}, we consider an optimal triplet $(G,A,B)$ for $(\cX, \cY)$ and apply the merging property (A4) to $G$ with the empty graph and $m=1$ : we obtain a graph $G''$ and get a triplet $(G'',A,B \cup \{b\})$ for $(\cX \cup \{a\}, \cY)$, where $b$ is any point in $\partial Q$. To prove \eqref{eq:RpsansY2XY1}, we now consider an optimal triplet $(G,A,B)$ for $(\cX \cup \{a\}, \cY)$ and move the point $a$ to the $a'$ in $\partial Q$ in order to obtain a triplet $(G,A \cup\{a'\},B)$ for $(\cX, \cY)$. 

With \eqref{eq:regLbound} at hand, the statements  follow from the proofs of Proposition \ref{prop:approx} and Corollary \ref{cor:couplingaverage}. 
   \end{proof}

The next lemma gives a lower bound on $L$ in terms of its boundary functional and states an important superadditive property of $L_{\partial S}$. 
     
     \begin{lemma}\label{prop:Lpart2}
Assume (A1-A5). Let $\nu$ be a finite measure on $\mathbb R^d$ and consider a
        partition $Q=\cup_{P\in\mathcal P} P$ of a bounded subset of $\mathbb
        R^d$. Then, if $c = 4 \kappa ( 1 + \kappa_0)$, we have
       \begin{eqnarray*}
        c     \sqrt{\nu(\mathbb R^d)} \,  \mathrm{diam}(Q)^p  +  \E L( \nu)  \ge   \E L_{\partial Q} (\1_Q \cdot \nu)   \geq     \sum_{P\in\mathcal P} \E L_{\partial P} (\1_P\cdot \nu).
       \end{eqnarray*}
    \end{lemma}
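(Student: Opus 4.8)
The plan is to transpose the proof of Lemma~\ref{prop:part2} to the present setting, replacing the matching-specific manipulations by the structural axioms (A1-A5). The statement splits into the right-hand inequality, a clean superadditivity of the boundary functional along the partition, and the left-hand inequality, comparing $L$ with $L_{\partial Q}$ up to an $O(\sqrt{\nu(\mathbb R^d)})$ error term.

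For the superadditivity, I would take $\cX,\cY$ two independent Poisson point processes of intensity $\1_Q\cdot\nu$ and an optimal configuration $(A,B,G)$ in the definition of $L_{\partial Q}(\cX,\cY)$, so that $A,B\subset Q^c$, $\mathrm{card}(\cX\cup A)=\mathrm{card}(\cY\cup B)=:N\ge\kappa_0$, $G\in\mathcal G_N$ lives on $(\cX\cup A,\cY\cup B)$, and $L_{\partial Q}(\cX,\cY)=\sum_{\{x,y\}\in G}d_{Q,0,p}(x,y)$. The key point is that for each cell $P\in\mathcal P$ the very same graph $G$ can be recycled: putting $A_P:=A\cup(\cX\setminus P)$ and $B_P:=B\cup(\cY\setminus P)$, which lie in $P^c$ since $\cX,\cY\subset Q$, one has $(\cX\cap P)\cup A_P=\cX\cup A$ and $(\cY\cap P)\cup B_P=\cY\cup B$, so $G$ is admissible for $L^0_{\partial P}$ and hence $L_{\partial P}(\cX\cap P,\cY\cap P)\le\sum_{\{x,y\}\in G}d_{P,0,p}(x,y)$. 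Summing over $P$ and exchanging the two sums, the inequality $\sum_P L_{\partial P}(\cX\cap P,\cY\cap P)\le L_{\partial Q}(\cX,\cY)$ reduces to the pointwise bound $\sum_{P\in\mathcal P}d_{P,0,p}(x,y)\le d_{Q,0,p}(x,y)$. I would check this by inspecting the finitely many positions of $x$ and $y$ relative to $Q$ and to the cells; the only substantial case, $x\in P_1$ and $y\in P_2$ with $P_1\ne P_2$, follows from $\mathrm{dist}(x,P_1^c)+\mathrm{dist}(y,P_2^c)\le|x-y|$ combined with the elementary inequality $q(u^p+v^p)\le(u+v)^p$ (which is exactly why $q=2^{p-1}\wedge1$ was chosen), just as in Lemma~\ref{prop:part2}, while every other case reduces to the monotonicity $\mathrm{dist}(\cdot,P^c)\le\mathrm{dist}(\cdot,Q^c)$ valid for $P\subset Q$. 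Taking expectations, and using that the restrictions of a Poisson process to the disjoint cells are independent Poisson processes of intensities $\1_P\cdot\nu$, gives the right-hand inequality.

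For the left-hand inequality I would introduce four independent Poisson processes $\cX_{\mathrm{in}},\cY_{\mathrm{in}}$ of intensity $\1_Q\cdot\nu$ and $\cX_{\mathrm{out}},\cY_{\mathrm{out}}$ of intensity $\1_{Q^c}\cdot\nu$, so that $\cX_{\mathrm{in}}\cup\cX_{\mathrm{out}}$ and $\cY_{\mathrm{in}}\cup\cY_{\mathrm{out}}$ are independent Poisson processes of intensity $\nu$ and $\E L(\nu)=\E L(\cX_{\mathrm{in}}\cup\cX_{\mathrm{out}},\cY_{\mathrm{in}}\cup\cY_{\mathrm{out}})$. Fixing a realisation and, by the symmetry of $L$, assuming $M:=\mathrm{card}(\cX_{\mathrm{in}}\cup\cX_{\mathrm{out}})\le\mathrm{card}(\cY_{\mathrm{in}}\cup\cY_{\mathrm{out}})$ with $M\ge\kappa_0$, I would take an optimal pair $(G,\sigma)$ realising $L(\cX_{\mathrm{in}}\cup\cX_{\mathrm{out}},\cY_{\mathrm{in}}\cup\cY_{\mathrm{out}})$, with $G\in\mathcal G_M$ joining $\cX_{\mathrm{in}}\cup\cX_{\mathrm{out}}$ to the matched submultiset $\cY''\subset\cY_{\mathrm{in}}\cup\cY_{\mathrm{out}}$. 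Since $d_{Q,0,p}(x,y)\le|x-y|^p$ in all cases, regarding $\cX_{\mathrm{out}}$ and $\cY''\cap\cY_{\mathrm{out}}$ as phantom points in $Q^c$ makes $G$ into a configuration whose $d_{Q,0,p}$-cost is at most $L(\cX_{\mathrm{in}}\cup\cX_{\mathrm{out}},\cY_{\mathrm{in}}\cup\cY_{\mathrm{out}})$; to make it an admissible configuration for $L_{\partial Q}(\cX_{\mathrm{in}},\cY_{\mathrm{in}})$ one must still adjoin, on the $\cY$ side, the $n_1-n_1'$ points of $\cY_{\mathrm{in}}$ not covered by $\cY''$ (here $n_1=\mathrm{card}\,\cY_{\mathrm{in}}$, $n_1'=\mathrm{card}(\cY_{\mathrm{in}}\cap\cY'')$) and equally many new phantoms on the $\cX$ side, which by the merging axiom (A4) can be done inside $\mathcal G_{M+n_1-n_1'}$ while modifying at most $\kappa'(n_1-n_1')$ edges for some $\kappa'=\kappa'(\kappa,\kappa_0)$, each of $d_{Q,0,p}$-cost at most $\mathrm{diam}(Q)^p$. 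Thus $L_{\partial Q}(\cX_{\mathrm{in}},\cY_{\mathrm{in}})\le L(\cX_{\mathrm{in}}\cup\cX_{\mathrm{out}},\cY_{\mathrm{in}}\cup\cY_{\mathrm{out}})+\kappa'\,\mathrm{diam}(Q)^p\,(n_1-n_1')$, and since $\mathrm{card}\,\cY''=M$ forces $n_1-n_1'\le\bigl|\mathrm{card}(\cY_{\mathrm{in}}\cup\cY_{\mathrm{out}})-\mathrm{card}(\cX_{\mathrm{in}}\cup\cX_{\mathrm{out}})\bigr|$, I would take expectations and bound this last quantity in $L^1$ by $\sqrt{2\nu(\mathbb R^d)}$ for the two independent Poisson counts. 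The remaining regime $M<\kappa_0$, on which $L=0$, I would dispatch with the crude deterministic bound $L_{\partial Q}(\cX_{\mathrm{in}},\cY_{\mathrm{in}})\le\kappa\,\mathrm{diam}(Q)^p(\mathrm{card}\,\cX_{\mathrm{in}}+\mathrm{card}\,\cY_{\mathrm{in}})$, whose expectation on the rare event $\{M<\kappa_0\}$ is again $O(\sqrt{\nu(\mathbb R^d)}\,\mathrm{diam}(Q)^p)$ by independence; a short computation then shows that the constant $c=4\kappa(1+\kappa_0)$ dominates all these contributions.

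The main obstacle is the left-hand inequality. For the matching one could simply delete the edges crossing $\partial Q$ and charge a boundary cost, but here every admissible object must itself be a graph of the prescribed family, so after discarding vertices outside $Q$ and reinterpreting others as phantom points one has to rebuild a legitimate element of $\mathcal G_N$; this is exactly what forces the use of (A4), and makes the bookkeeping of the uncovered vertices (the term $n_1-n_1'$) and of the small-sample regime $M<\kappa_0$ the delicate steps. By contrast the superadditive half is essentially automatic once one notices that the phantom mechanism in the definition of $L_{\partial P}$ lets the single optimal graph be reused cell by cell.
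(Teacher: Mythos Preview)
Your proposal is correct and follows essentially the same route as the paper. The superadditive half is identical: the paper also recycles the single optimal triple $(G,A,B)$ for $L_{\partial Q}$ in each cell by setting $A'=(\cX\cup A)\cap P^c$, $B'=(\cY\cup B)\cap P^c$ (your $A_P,B_P$) and reduces to the pointwise inequality $\sum_P d_{P,0,p}\le d_{Q,0,p}$ just as you do. For the comparison with $L$, the paper likewise takes an optimal graph for $L(\cX\cup\cX',\cY\cup\cY')$, observes via $d_{Q,0,p}\le|\cdot|^p$ that it is an admissible configuration for $L_{\partial Q}(\cX_0,\cY_0)$ where $\cX_0,\cY_0$ are the in-$Q$ vertices actually used, and then adds back the at most $|m+m'-n-n'|$ leftover in-$Q$ points. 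The only cosmetic difference is that the paper does this last step by quoting the regularity inequality \eqref{eq:regLbound} for $L_{\partial Q}$ (already proved in Lemma~\ref{lem:Lcoupling2}), whereas you redo that step by hand via repeated applications of (A4); since \eqref{eq:regLbound} is itself established from (A4)--(A5), the two arguments are equivalent. Your explicit treatment of the degenerate regime $M<\kappa_0$ is a small bonus that the paper glosses over.
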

    \begin{proof}
We start with the first inequality.  Let $\mathcal X=\{X_1,\ldots,X_m\},\mathcal
       Y=\{Y_1,\ldots,Y_n\}$ be multisets included in $Q$ and $\cX'=\{X_{m+1},\ldots,X_{m+m'}\}$, $\cY'=\{Y_{n+1},\ldots,Y_{n+n'}\}$ be multisets included in $Q^c$. First, let us show that 
     \begin{equation}\label{eq:LRbQ}
         c_1   |m+m'-n - n'| \mathrm{diam}(Q)^p + L(\cX\cup \cX',\cY\cup \cY') \geq L_{\partial Q}  (\cX,\cY), 
         \end{equation}  
      with $c_1 =  \kappa ( 1 + \kappa_0)$. To do so, let us consider  an optimal graph $G$ for $L(\cX\cup \cX',\cY\cup \cY')$.
      It uses all the points but $|m+m'-n-n'|$ points in excess.
       We consider the subsets $\cX_0 \subset \cX$ and $\cY_0 \subset \cY$ of points that are used in $G$ and belong to $Q$.
     By definition there exist subsets $A,B\subset Q^ c$ such that $\card(\cX_0\cup A)=\card(\cY_0\cup B)$      and
      $L(\cX\cup \cX',\cY\cup \cY')=L(\cX_0\cup A,\cY_0\cup B)$.
      By definition of the boundary functional and using \eqref{eq:LLd},
      $$ L_{\partial Q}(\cX_0,\cY_0)\le L^0_{\partial Q}(\cX_0\cup A,\cY_0\cup B) \le L (\cX_0\cup A,\cY_0\cup B)= L(\cX\cup \cX',\cY\cup \cY').$$
Finally, since there are at most  $|n +n' - m - m'|$ points in $\cX\cup \cY$ which are not in $\cX_0\cup \cY_0$ (i.e. points of $Q$ not used for the optimal
$G$), the modified \eqref{eq:Rp} property 
 given by Equation \eqref{eq:regLbound} yields \eqref{eq:LRbQ}.
 We apply the latter inequality to $\cX$, $\cY$ independent Poisson processes of intensity $\ind_Q \cdot \nu$, and $\cX'$, $\cY'$, two independent Poisson processes of intensity $\ind_{Q^c} \cdot \nu$, independent of $(\cX,\cY)$. Then $\cX \cup \cX'$, $\cY \cup \cY'$ are independent Poisson processes of intensity $\nu$. Taking expectation, we obtain the first inequality, with $c = 4 c_1$.

We now prove the second inequality. As above, let $\mathcal X=\{X_1,\ldots,X_m\},\mathcal
       Y=\{Y_1,\ldots,Y_n\}$ be multisets included in $Q$.  Let $G \in \mathcal G_k$ be an optimal graph for $L_{\partial Q}( \cX, \cY)$ and  $A = \{X_{m+1}, \cdots , X_{k}\}$, $B = \{Y_{n+1}, \cdots , Y_{k}\}$ be optimal sets in  $ Q^ c$.
Given this graph $G$ and a set $S$, we denote by $E^0_S$ the set of edges $\{i,k+j\}$ of $G$ such that $X_i \in S$ and $Y_j \in S$, by $E^1_S$ the set of edges $\{i,k+j\}$ of $G$ such that $X_i \in S$ and $Y_j \in S^c$, and by $E^2_S$ the set of edges $\{i,k+j\}$ of $G$ such that $X_i \in S^c$ and $Y_j \in S$. Then by definition of the boundary functional       
       \begin{eqnarray*}
L_{\partial Q}  ( \cX , \cY ) & =&  L^0_{\partial Q}  ( \cX \cup A , \cY \cup B) \\
&=& \sum_{\{ i , k+j\}  \in E^0_Q} |X_i-Y_{j}| ^p +     \sum_{ \{ i , k+j\} \in E^1_Q} q \, d(X_i,Q^c)^p +
               \sum_{ \{ i , k+j\} \in E^2_Q} q\, d(Y_j,Q^c) ^p .
      \end{eqnarray*}  
Next, we bound these sums from below by considering the cells of the partition $\mathcal P$. 
If $x \in Q$, we denote by $P(x)$ the unique $P \in \mathcal P$ that contains $x$. 

If an edge  $e=\{i , k+j \} \in G $   is such that
$X_i,Y_j$ belong to the same cell $P$, we observe that $e\in E^0 _P$ and we leave the quantity $|X_i-Y_j|^p$ unchanged.       

If on the contrary, $X_i$ and $Y_j$ belong to different cells,
from H\"older inequality, 
       $$ |X_i-Y_{j}|^p \ge q \, d(X_i, P(X_i)^c)^p  + q \, d(Y_j, P(Y_{j})^c)^p .$$

Eventually, for any boundary edge in $E^1_Q$, we lower bound the contribution $d(X_i,Q^c)^p$ by $d(X_i,P(X_i)^c)^p$ and we do the same for 
 $E^2_Q$. 
       Combining  these inequalities and grouping the terms according to the
         cell $P\in\mathcal P$ to which the points belong,
             \begin{eqnarray*}
   L_{\partial Q}  ( \cX , \cY ) &\ge &  \sum_{P\in\mathcal P}\left( \sum_{\{ i , k+j\}  \in E^0_P} |X_i-Y_{j}| ^p +     \sum_{ \{ i , k+j\} \in E^1_P} q \, d(X_i,\partial P)^p +
               \sum_{ \{ i , k+j\} \in E^2_P} q\, d(Y_j,\partial P) ^p \right).
         \end{eqnarray*}
For a given cell $P$, set $A' = ( \cX  \cup A) \cap P^c$ and $B' =  ( \cY  \cup B) \cap P^c$. We get 
\begin{eqnarray*}
&&   \sum_{\{ i , k+j\}  \in E^0_P} |X_i-Y_{j}| ^p +            \sum_{ \{ i , k+j\} \in E^1_P} q \, d(X_i,\partial P)^p +
               \sum_{ \{ i , k+j\} \in E^2_P} q\, d(Y_j,\partial P) ^p \\
                 &   =  &  L^0_{P^c} (  ( \cX \cap P ) \cup A'  , ( \cY \cap P) \cup B' )  
                \geq  L_{\partial P} ( \cX \cap P , \cY \cap P ). \end{eqnarray*}
So applying these inequalities to $\cX$ and $\cY$ two independent Poisson point processes with intensity $\nu \ind_Q$ and taking expectation, we obtain the claim.
      \end{proof}

Let $Q=[0,1]^d$ and denote $$\bar L_{\partial Q}(n)=\E L_{\partial Q}(n\1_Q).$$
\begin{lemma}
   \label{lem:LliminfBM}
Assume (A1-A5). Let $Q\subset \R^d$ be a cube of side-length 1.
 If $0< 2p < d$, then
$$ \lim_{n\to \infty} \frac{\bar L_{\partial Q}(n)}{n^{1-\frac{p}{d}}} =  \beta'_L,$$
where $\beta'_L >0$ is a constant depending on $L$, $p$ and $d$.
\end{lemma}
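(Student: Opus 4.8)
The plan is to mimic the proof of Theorem~\ref{th:cube-poisson}, but using the superadditivity of the boundary functional (the second inequality of Lemma~\ref{prop:Lpart2}) in place of the subadditivity that drove that earlier argument. First I would fix an integer $m\ge 1$, write $2^K\le m<2^{K+1}$, and (as in Theorem~\ref{th:cube-poisson}) introduce the enlarged cube $Q_0=[0,a]^d$ with $a=2^{K+1}/m$ together with the dyadic partitions $\mathcal Q_j$ of $Q_0$ into $2^{jd}$ subcubes. However, here it is cleaner to work directly with $Q=[0,1]^d$ and a dyadic partition $\mathcal P$ of $Q$ into $m^d$ cubes of side-length $1/m$: applying the second inequality of Lemma~\ref{prop:Lpart2} to the measure $n\1_{[0,1]^d}(x)\,dx$ gives
\begin{equation*}
\bar L_{\partial Q}(n)=\E L_{\partial Q}(n\1_Q)\ \ge\ \sum_{q\in\mathcal P}\E L_{\partial q}(n\1_q).
\end{equation*}
By scale and translation invariance of the boundary functional (which follows from $(\mathcal H_p)$ applied to the defining formula, exactly as in Lemma~\ref{lem:homo}), each term equals $m^{-p}\,\bar L_{\partial Q}(nm^{-d})$, so that
\begin{equation*}
\bar L_{\partial Q}(n)\ \ge\ m^{d-p}\,\bar L_{\partial Q}(nm^{-d}).
\end{equation*}
Setting $h(t)=\bar L_{\partial Q}(t^d)/t^{d-p}$, this reads $h(mt)\ge h(t)$ for all $t>0$ and $m\in\mathbb N^*$; since $\bar L_{\partial Q}$ is Lipschitz (the boundary analogue of Proposition~\ref{prop:lipschitz}, using the regularity-type inequality \eqref{eq:regLbound} for $L_{\partial Q}$), $h$ is continuous, and as in \cite{BM} one concludes that $\lim_{t\to\infty}h(t)=:\beta'_L$ exists in $[0,\infty]$.

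Next I would establish finiteness of $\beta'_L$, i.e.\ the upper bound $\bar L_{\partial Q}(n)\le c\,n^{1-p/d}$. This follows from the first inequality of Lemma~\ref{prop:Lpart2} (with the trivial partition $\mathcal P=\{Q\}$), namely $\E L_{\partial Q}(n\1_Q)\le c\sqrt n\,\mathrm{diam}(Q)^p+\E L(n\1_Q)$, combined with the general upper bound $\E L(n\1_Q)\le D\,\mathrm{diam}(Q)^p\,n^{1-p/d}$ of Lemma~\ref{lem:upperL} and the hypothesis $2p<d$ which makes $\sqrt n=o(n^{1-p/d})$. Hence $\beta'_L<\infty$.

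It remains to show $\beta'_L>0$, which I expect to be the main obstacle, since the boundary penalties could a priori make $L_{\partial Q}$ much smaller than $L$. The idea is that the penalized cost still dominates, up to a constant, the sum over matched points of $d(X_i,\partial Q)^p$, which for a Poisson process of intensity $n$ on $[0,1]^d$ is of order $n^{1-p/d}$ in expectation when $d>2p$: indeed $\E L_{\partial Q}(n\1_Q)\ge q\,\E\big[\sum_i (d(X_i,\partial Q)^p+\dots)\big]$ minus the contributions of genuinely matched pairs, but one can instead argue more robustly. Restrict to the sub-cube $Q'=[1/4,3/4]^d$, on which $d(x,\partial Q)\ge 1/4$; then any point of $\cX\cap Q'$ either is matched to a point of $\cY$ at distance $\ge$ something, or is sent to the boundary at cost $q(1/4)^p$. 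More precisely, one can use the natural lower bound $L(\cX,\cY)\ge c\sum_i \mathrm{dist}(X_i,\cY)^p$ (the condition invoked in the remark after Theorem~\ref{th:cube-poisson}, which holds for all the functionals of \S\ref{subsec:combopt} since every vertex has degree $\ge 1$) together with the analogous pointwise lower bound $L^0_{\partial Q,0}(\cX\cup A,\cY\cup B)\ge c\sum_{i:X_i\in Q'} \min\big(\mathrm{dist}(X_i,\cY)^p,\ (1/4)^p\big)$ valid uniformly over $A,B\subset Q^c$ (each $X_i\in Q'$ contributes either its true edge length or a boundary term $\ge q(1/4)^p$). Taking expectations, $\bar L_{\partial Q}(n)\ge c'\,\E\sum_{i:X_i\in Q'}\min(\mathrm{dist}(X_i,\cY)^p,c'')$, and a standard geometric-probability estimate — the expected number of Poisson points in $Q'$ whose nearest neighbour in an independent intensity-$n$ process lies within distance $r$ is $O(n\cdot n r^d)$ for $r\le n^{-1/d}$, so a positive fraction of the $\asymp n$ points of $\cX\cap Q'$ have $\mathrm{dist}(X_i,\cY)\gtrsim n^{-1/d}$ — yields $\bar L_{\partial Q}(n)\gtrsim n\cdot (n^{-1/d})^p=n^{1-p/d}$. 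Dividing by $n^{1-p/d}$ and letting $n\to\infty$ gives $\beta'_L>0$, completing the proof.
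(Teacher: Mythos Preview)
Your argument is correct and follows the same path as the paper: superadditivity of the boundary functional (the second inequality of Lemma~\ref{prop:Lpart2}) together with scaling gives $h(mt)\ge h(t)$, and continuity then yields existence of the limit, exactly as in Lemma~\ref{lem:liminfBM}/Theorem~\ref{th:cube-poisson}. You are in fact more thorough than the paper's terse proof in explicitly checking finiteness (via the first inequality of Lemma~\ref{prop:Lpart2} combined with Lemma~\ref{lem:upperL}) and positivity---note only that your positivity step tacitly assumes each graph in $\mathcal G_n$ has minimum degree at least $1$, which holds for all the concrete examples of \S\ref{subsec:combopt} but is not strictly implied by (A1)--(A5) alone (the paper is equally silent on this point).
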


\begin{proof}
The proof is the same than the proof of Lemma \ref{lem:liminfBM}, with Lemma \ref{prop:Lpart2} replacing Lemma \ref{prop:part2}. 
\end{proof}

\subsubsection{General absolutely continuous measures with unbounded support}

\begin{theorem} \label{th:Llower} Assume (A1-A5) and that $0 < 2 p < d$. Let $f: \R^d \to \R^+$
   be an integrable function.  Then $$ \liminf_{n}
   \frac{\E L(n f )}{n^{1 - \frac{p}{d}}} \geq  \beta'_L \int_{\R^d}
   f^{1 - \frac{p}{d}}.$$
\end{theorem}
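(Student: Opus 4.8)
The plan is to mimic the proof of Theorem~\ref{th:lower}, replacing the matching-specific ingredients by their general analogues established in \S\ref{subsubsec:BFECO}. First I would reduce to the case of bounded support: if $f$ has support in $[-\ell,\ell]^d=:Q$, then by the first inequality of Lemma~\ref{prop:Lpart2} applied with $\nu=nf$ and the partition $\{Q\}$ (or rather, applied to a large cube containing the relevant cells),
$$
c\sqrt{n\textstyle\int f}\,\diam(Q)^p + \E L(nf) \;\ge\; \E L_{\partial Q}(nf\1_Q),
$$
so a lower bound of the form $\liminf_n n^{-(1-p/d)}\E L_{\partial Q}(nf\1_Q)\ge \beta'_L\int_Q f^{1-p/d}$ transfers to $L$ (the $\sqrt n$ term is negligible since $d>2p$), and then letting $\ell\to\infty$ gives the full statement. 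So it suffices to treat $f$ supported in $Q=[0,1]^d$, say, with $\int f$ finite.

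For bounded support the argument is the dyadic-partition scheme of Theorem~\ref{th:upper}/Theorem~\ref{th:lower} run in reverse. I would fix the dyadic partitions $\mathcal P_\ell$ of $[0,1]^d$ into $2^{\ell d}$ cubes of side $2^{-\ell}$, choose $k\in\mathbb N^*$ later, and apply the superadditivity inequality of Lemma~\ref{prop:Lpart2} (second inequality) to get $\E L_{\partial Q}(nf)\ge \sum_{P\in\mathcal P_k}\E L_{\partial P}(nf\1_P)$. On each cell $P$ I replace $f$ by its average $\alpha_P$ over $P$ using the $L^1$/total-variation bound of Lemma~\ref{lem:Lcoupling2}, paying an error $c\,n\,2^{-kp}\int_P|f-\alpha_P|$; then by homogeneity (property $(\mathcal H_p)$ passes to $L_{\partial S}$ by construction, exactly as for $M_p$) $\E L_{\partial P}(n\alpha_P\1_P)=2^{-kp}\E L_{\partial Q}(n\alpha_P 2^{-kd}\1_Q)=2^{-kp}\bar L_{\partial Q}(n\alpha_P2^{-kd})$. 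Writing $h(t)=\bar L_{\partial Q}(t)/t^{1-p/d}$ and $f_k=\sum_P\alpha_P\1_P$, this rearranges (dividing by $n^{1-p/d}$, summing, and using $\int_P\alpha_P^{1-p/d}2^{-kd}=\int_P f_k^{1-p/d}$) to
$$
\frac{\E L(nf)}{n^{1-\frac pd}} + (\text{negligible}) \;\ge\; \int h\!\left(n2^{-kd}f_k\right) f_k^{1-\frac pd} \;-\; c'\,n^{\frac pd}2^{-kp}\int|f-f_k|.
$$
Now choose $k=k(n)\to\infty$ with $n^{1/d}2^{-k(n)}\to\infty$ and $n^{1/d}2^{-k(n)}(\int|f-f_{k(n)}|)^{1/p}\to 0$ (possible by Scheff\'e, exactly as in Theorem~\ref{th:upper}); then the error term vanishes, $\liminf_{t\to\infty}h(t)\ge\beta'_L$ by Lemma~\ref{lem:LliminfBM}, and $f_k\to f$ a.e., so Fatou's lemma (restricting the integral to $\{f>0\}$, where $n2^{-k(n)d}f_{k(n)}\to\infty$) gives $\liminf_n\int h(n2^{-k(n)d}f_{k(n)})f_{k(n)}^{1-p/d}\ge \beta'_L\int f^{1-p/d}$, which is the desired bound.

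The only points requiring care — and the place I expect to spend real effort — are verifying that the boundary functional $L_{\partial S}$ for this general class genuinely satisfies the two properties I am quoting black-box: the homogeneity $\E L_{\partial P}(n\alpha_P\1_P)=2^{-kp}\bar L_{\partial Q}(n\alpha_P 2^{-kd})$ under translation and scaling of the cube (immediate from the definition \eqref{eq:LLboundary}, since $d_{S,\varepsilon,p}$ scales correctly, but one must check the constraint $\card\ge\kappa_0$ is respected — it is, since we only scale), and the convergence $\bar L_{\partial Q}(n)/n^{1-p/d}\to\beta'_L>0$ from Lemma~\ref{lem:LliminfBM}. Both are already in hand from \S\ref{subsubsec:BFECO}, so the proof is essentially a transcription. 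The one genuinely new subtlety compared to the matching case is that $L_{\partial P}$ is defined with the auxiliary sets $A,B\subset P^c$ constrained so that the completed cardinalities agree and are $\ge\kappa_0$; for small cells or cells with few points this lower-order constraint contributes only $O(\kappa_0)$ per nonempty cell, which is absorbed into the $3C\diam(Q)^p\sum\sqrt{\nu(P)}$-type errors already accounted for in Lemma~\ref{prop:Lpart2}, so it causes no problem at the scale $n^{1-p/d}$.

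\begin{proof}
It suffices to treat the case where $f$ has bounded support: indeed, if the bounded-support case is known, then for $Q=[-\ell,\ell]^d$ the first inequality of Lemma~\ref{prop:Lpart2} (applied to $\nu=nf$) gives
$$
c\,\sqrt{n\textstyle\int f}\;\diam(Q)^p + \E L(nf) \;\ge\; \E L_{\partial Q}(nf\1_Q),
$$
and since $d>2p$ the left-hand correction is $o(n^{1-p/d})$, so
$$
\liminf_n \frac{\E L(nf)}{n^{1-\frac pd}} \;\ge\; \liminf_n \frac{\E L_{\partial Q}(nf\1_Q)}{n^{1-\frac pd}} \;\ge\; \beta'_L\int_Q f^{1-\frac pd},
$$
where the last step is the bounded-support case applied to $f\1_Q$ together with the same chain of inequalities (the bounded-support proof below in fact lower-bounds $\E L_{\partial Q}(nf\1_Q)$). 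Letting $\ell\to+\infty$ yields the claim.

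So assume $f$ is supported in $Q=[0,1]^d$ and $\int f<+\infty$ (the case $\int f=0$ is trivial). For $\ell\in\mathbb N$ let $\mathcal P_\ell$ be the dyadic partition of $[0,1]^d$ into $2^{\ell d}$ cubes of side-length $2^{-\ell}$, and let $k\in\mathbb N^*$ be chosen later. For $P\in\mathcal P_k$ let $\alpha_P$ be the average of $f$ over $P$. Applying the two inequalities of Lemma~\ref{prop:Lpart2}, then Lemma~\ref{lem:Lcoupling2} on each cell, then homogeneity (which for $L_{\partial S}$ follows directly from its definition in \S\ref{subsubsec:BFECO}, exactly as Lemma~\ref{lem:homo} for $L$), we obtain, for a constant $c$ depending only on $L$,
\begin{eqnarray*}
c\,\sqrt{n\textstyle\int f}\;d^{\frac p2} + \E L(nf) \;\ge\; \E L_{\partial Q}(nf) &\ge& \sum_{P\in\mathcal P_k}\E L_{\partial P}(nf\1_P) \\
&\ge& \sum_{P\in\mathcal P_k}\Big(\E L_{\partial P}(n\alpha_P\1_P) - c\,n\,2^{-kp}\!\int_P|f-\alpha_P|\Big) \\
&=& \sum_{P\in\mathcal P_k}\Big(2^{-kp}\,\bar L_{\partial Q}(n\alpha_P 2^{-kd}) - c\,n\,2^{-kp}\!\int_P|f-\alpha_P|\Big).
\end{eqnarray*}
Set $f_k=\sum_{P\in\mathcal P_k}\alpha_P\1_P$ and $h(t)=\bar L_{\partial Q}(t)/t^{1-\frac pd}$. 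Since $\int_P f_k^{1-\frac pd}=\alpha_P^{1-\frac pd}2^{-kd}$ and $\sum_P\int_P|f-\alpha_P|=\int|f-f_k|$, dividing by $n^{1-\frac pd}$ and summing gives
$$
\frac{\E L(nf)}{n^{1-\frac pd}} + c\,n^{\frac pd-\frac12}d^{\frac p2}\sqrt{\textstyle\int f} \;\ge\; \int h\big(n2^{-kd}f_k\big)\,f_k^{1-\frac pd} \;-\; c\,n^{\frac pd}2^{-kp}\int|f-f_k|.
$$
As in the proof of Theorem~\ref{th:upper}, since $f_k\ge 0$, $\int f_k=\int f$ and $f_k\to f$ a.e., Scheff\'e's lemma gives $\int|f-f_k|\to 0$, and one can choose a sequence $k(n)\in\mathbb N$ with
$$
\lim_n k(n)=+\infty,\qquad \lim_n n^{\frac1d}2^{-k(n)}=+\infty,\qquad \lim_n n^{\frac1d}2^{-k(n)}\Big(\int|f-f_{k(n)}|\Big)^{\frac1p}=0.
$$
With $k=k(n)$, the error term $c\,n^{\frac pd}2^{-k(n)p}\int|f-f_{k(n)}| = c\big(n^{\frac1d}2^{-k(n)}(\int|f-f_{k(n)}|)^{\frac1p}\big)^p$ tends to $0$, and the term $c\,n^{\frac pd-\frac12}d^{\frac p2}\sqrt{\int f}$ tends to $0$ since $d>2p$. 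Hence
$$
\liminf_n \frac{\E L(nf)}{n^{1-\frac pd}} \;\ge\; \liminf_n \int h\big(n2^{-k(n)d}f_{k(n)}\big)\,f_{k(n)}^{1-\frac pd}.
$$
By Lemma~\ref{lem:LliminfBM}, $\liminf_{t\to+\infty}h(t)\ge\beta'_L$. For a.e.\ $x$ with $f(x)>0$ we have $f_{k(n)}(x)\to f(x)>0$, hence $n2^{-k(n)d}f_{k(n)}(x)\to+\infty$, so the integrand converges a.e.\ on $\{f>0\}$ to a limit $\ge\beta'_L f^{1-\frac pd}$. Since $h\ge 0$, Fatou's lemma yields
$$
\liminf_n \int h\big(n2^{-k(n)d}f_{k(n)}\big)\,f_{k(n)}^{1-\frac pd} \;\ge\; \liminf_n \int_{\{f>0\}} h\big(n2^{-k(n)d}f_{k(n)}\big)\,f_{k(n)}^{1-\frac pd} \;\ge\; \beta'_L\int f^{1-\frac pd}.
$$
This proves the bounded-support case and therefore, by the reduction above, the theorem.
\end{proof}
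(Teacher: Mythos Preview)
Your proof is correct and follows essentially the same approach as the paper: the paper's own proof of Theorem~\ref{th:Llower} simply states that it is formally identical to the proof of Theorem~\ref{th:lower}, replacing Lemmas~\ref{lem:coupling2}, \ref{prop:part2}, \ref{lem:liminfBM} by Lemmas~\ref{lem:Lcoupling2}, \ref{prop:Lpart2}, \ref{lem:LliminfBM}, which is exactly what you have written out in detail. Your reduction to bounded support via the first inequality of Lemma~\ref{prop:Lpart2}, and your remark that the bounded-support chain actually lower-bounds $\E L_{\partial Q}(nf\1_Q)$, mirror the paper's treatment of the unbounded case in Theorem~\ref{th:lower}.
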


\begin{proof}
 The proof is now formally the same than the proof of Theorem \ref{th:lower}, invoking Lemmas \ref{lem:Lcoupling2}, \ref{prop:Lpart2} and \ref{lem:LliminfBM} in place of Lemmas  \ref{lem:coupling2}, \ref{prop:part2} and \ref{lem:liminfBM} respectively. 
\end{proof}

\begin{remark} Finding good lower bounds for a general bipartite functional $L$  on $\mathbb R^d$ satisfying the properties $(\mathcal H_p)$, $(\mathcal R_p)$, $(\mathcal S_p)$ could be significantly more difficult. It is far from obvious to define a proper boundary functional $L_{\partial Q}$ at this level of generality. However if there exists a bipartite functional $L_{\partial Q}$ on $\mathbb R^d$ indexed on sets $Q \subset \R^d$ such that for any $t >0$, $\E L_{\partial (t Q) } (n \ind_{t Q}) = t^{p} \E L_{\partial Q} (n t^d \ind_{Q})$ and such that the statements of Lemmas \ref{prop:Lpart2}, \ref{lem:Lcoupling2}, \ref{lem:LliminfBM} hold, then the statement of Theorem \ref{th:lower} also holds for the functional $L$. Thus, the caveat of this kind of techniques lies in the good definition of a boundary functional $L_{\partial Q}$. 
\end{remark}

\subsubsection{Dealing with the  singular component. Example of the traveling salesperson problem.}
Let $p\in(0,1]$. We shall say that a bipartite functional $L$ on $\R^d$ satisfies the inverse subadditivity property
$(\mathcal I_p)$ if there is a constant $C$ such that for all finite multisets $\cX_1,\cY_1,\cX_2,\cY_2$ included in a bounded set $Q\subset \R^d$,
$$ L(\cX_1,\cY_1) \le L(\cX_1\cup\cX_2,\cY_1 \cup\cY_2)+ L(\cX_2,\cY_2)  + C \mathrm{diam}(Q)^p \big(1+ |\cX_1(Q)-\cY_1(Q)|
  + |\cX_2(Q)-\cY_2(Q)|\big).$$
Although it makes sense for all $p$, we have been able to check this property on examples only for 
$p\in(0,1]$. Also we could have added a constant in front of $L(\cX_2,\cY_2)$.

It is plain that the argument of Section~\ref{sec:singular} readily adapts to a functional satisfying
$(\mathcal I_p)$, for which one already knows a general upper limit result and a limit result for 
absolutely continuous laws. It therefore provides a limit result for general laws. In the remainder of this section, we show that the  traveling salesperson bipartite tour functional $L=T_p$, 
$p \in(0, 1]$  enjoys the inverse subadditivity property.
This allows to prove the following result:

\begin{theorem} \label{th:bTSP} Assume that either $d\in \{1,2\}$ and $0 < 2 p < d$, or $d\ge 3$ and $p\in (0,1]$.
Let $L = T_p$ be the traveling salesperson bipartite tour functional. Let $\mu$ be a finite measure such that  for some $\alpha >   \frac{2dp}{ d -2p}$,
$
\int |x|^{\alpha} d \mu  < +\infty.
$
  Then, if $f$ is a density function for the absolutely continuous part of $\mu$, 
   $$ \liminf_{n}
   \frac{\E L(n \mu  )}{n^{1 - \frac{p}{d}}} \geq \beta'_L \int_{\R^d}
   f^{1 - \frac{p}{d}}.$$
   Moreover if $f$ is proportional to the indicator function of a bounded set with positive Lebesgue measure 
$$
\lim_n \frac{\E L(n\mu)}{n^{1-\frac{p}{d}}} =  \beta_L  \int_{\R^d} f^{1-\frac{p}{d}}.
$$
\end{theorem}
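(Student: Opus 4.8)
The plan is to reduce Theorem~\ref{th:bTSP} to four facts: that $L=T_p$ satisfies $(\mathcal H_p)$, $(\mathcal R_p)$ and $(\mathcal S_p)$, which follows from Lemmas~\ref{le:Sp} and~\ref{le:Rp} since the collection $(\mathcal G_n)$ attached to the bipartite traveling salesperson tour satisfies (A1)--(A5); that $\limsup_n n^{\frac pd-1}\E L(n\mu)\le\beta_L\int f^{1-\frac pd}$ under the moment hypothesis, by Theorem~\ref{th:up-poisson-general}; that $\liminf_n n^{\frac pd-1}\E L(nh)\ge\beta'_L\int h^{1-\frac pd}$ for every integrable $h\ge0$, by Theorem~\ref{th:Llower} (note that $0<2p<d$ holds in every case of the statement); and that for uniform measures on bounded sets the limit equals the cube constant, by Theorem~\ref{th:lowerset}. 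The one missing ingredient, which I would establish first, is the inverse subadditivity property $(\mathcal I_p)$ for $L=T_p$ with $p\in(0,1]$ (and indeed $p\le1$ holds in every case of the statement). Once $(\mathcal I_p)$ is known, the rest follows the template of Section~\ref{sec:singular}: Lemma~\ref{lem:is} and Theorem~\ref{th:poisson-general}.

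To prove that $T_p$ satisfies $(\mathcal I_p)$ I would follow the strategy of Lemma~\ref{le:mino-point}. Fix multisets $\cX_1,\cY_1,\cX_2,\cY_2$ in a bounded set $Q$, take an optimal bipartite tour $G$ for $L(\cX_1\cup\cX_2,\cY_1\cup\cY_2)$ and an optimal bipartite tour $G_2$ for $L(\cX_2,\cY_2)$, and look at the multigraph $G\cup G_2$, in which every vertex of $\cX_1\cup\cY_1$ has degree two (such a vertex occurs only in $G$), while vertices of $\cX_2\cup\cY_2$ have degree at most four. From this graph I would extract a single bipartite cycle visiting all but a bounded number of the points of $\cX_1\cup\cY_1$: remove the $\cX_2\cup\cY_2$ vertices, which breaks $G$ into arcs through $\cX_1\cup\cY_1$ whose endpoints were, in $G$, neighbours of removed points, and reconnect consecutive arcs by short-cutting across the removed stretches, splicing in a well-chosen edge of $G_2$ whenever the parity of a stretch would force two arc-endpoints onto the same side of the bipartition. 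Since $p\le1$, the triangle inequality and subadditivity of $t\mapsto t^p$ bound the $p$-cost of each short-cut by the sum of the $p$-costs of the $G$- and $G_2$-edges it replaces; as these stretches are disjoint, the new edges cost in total at most $L(\cX_1\cup\cX_2,\cY_1\cup\cY_2)+L(\cX_2,\cY_2)$, and matching the at most $1+|\cX_1(Q)-\cY_1(Q)|+|\cX_2(Q)-\cY_2(Q)|$ leftover points arbitrarily costs at most $\diam(Q)^p$ each. This gives $(\mathcal I_p)$ with a constant depending only on $p$ and on $\kappa_0=2$ (the latter to cover cardinalities too small for $\mathcal G_n$ to be nonempty).

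With $(\mathcal I_p)$ in hand I would Poissonize it exactly as in Lemma~\ref{lem:is}: four independent Poisson processes of intensities $\mu_1,\mu_1,\mu_2,\mu_2$ and the bound $\E|\cX_i(Q)-\cY_i(Q)|\le\sqrt{2\mu_i(Q)}$ give $\E L(\mu_1)\le\E L(\mu_1+\mu_2)+\E L(\mu_2)+C\diam(Q)^p\big(1+\sqrt{\mu_1(Q)}+\sqrt{\mu_2(Q)}\big)$. For $\mu$ with bounded support, writing $\mu=\mu_{ac}+\mu_s$ with $d\mu_{ac}=f\,dx$ and applying this with $\mu_1=n\mu_{ac}$, $\mu_2=n\mu_s$, the error is $O(\sqrt n)=o(n^{1-\frac pd})$; since $n^{\frac pd-1}\E L(n\mu_s)\to0$ by Theorem~\ref{th:up-poisson-general} applied to the singular measure $\mu_s$ (whose absolutely continuous part is $0$) and $\liminf_n n^{\frac pd-1}\E L(n\mu_{ac})\ge\beta'_L\int f^{1-\frac pd}$ by Theorem~\ref{th:Llower}, the first assertion follows in the bounded-support case. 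If moreover $f$ is proportional to the indicator of a bounded set $\Omega$, Theorem~\ref{th:lowerset} (after a scaling) replaces Theorem~\ref{th:Llower} and yields $n^{\frac pd-1}\E L(n\mu_{ac})\to\beta_L\int f^{1-\frac pd}$; combined with the upper bound from Theorem~\ref{th:up-poisson-general} and the vanishing singular contribution this gives $\lim_n n^{\frac pd-1}\E L(n\mu)=\beta_L\int f^{1-\frac pd}$.

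For $\mu$ with unbounded support I would run the truncation of the last part of the proof of Theorem~\ref{th:poisson-general}: fix $Q=[-\ell,\ell]^d$, couple $\cX_1\cup\cX_2$ and $\cY_1\cup\cY_2$ as independent Poisson processes of intensity $n\mu$ with $\cX_1,\cY_1$ of intensity $n\1_Q\cdot\mu_{ac}$ and $\cX_2,\cY_2$ of intensity $n(\mu_s+\1_{Q^c}\cdot\mu_{ac})$, apply $(\mathcal I_p)$ with $\diam(Q)$ replaced by $T=\max\{|z|\}$ over all points, take expectations, and bound $\E[T^{2p}]$ by Lemma~\ref{lem:ET} (valid since $\alpha>2p$) and the cardinality imbalances by Cauchy--Schwarz, so the error is $O(n^{\frac12+\frac p\alpha})=o(n^{1-\frac pd})$ because $\alpha>\frac{2dp}{d-2p}$. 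This yields $\liminf_n n^{\frac pd-1}\E L(n\mu)\ge\beta'_L\int_Q f^{1-\frac pd}-\beta_L\int_{Q^c}f^{1-\frac pd}$ (Theorem~\ref{th:Llower} on $Q$, Theorem~\ref{th:up-poisson-general} on its complement), and letting $\ell\to\infty$ finishes the general lower bound; for the indicator refinement one takes $Q\supset\Omega$, uses Theorem~\ref{th:lowerset} on $Q$, and combines with Theorem~\ref{th:up-poisson-general}. The main obstacle is the first step: organizing the surgery on $G\cup G_2$ so that the alternation constraint and connectivity of a bipartite tour are preserved while only a bounded number of points are sacrificed --- the cycle structure of $T_p$, as opposed to the disjoint edges of the matching, is exactly what makes this delicate and is presumably why inverse subadditivity has only been verified for $p\le1$.
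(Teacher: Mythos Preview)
Your overall architecture is exactly that of the paper: the statement follows once one establishes the inverse subadditivity property $(\mathcal I_p)$ for $T_p$ with $p\in(0,1]$ (the paper's Lemma~\ref{le:euler}), after which the bounded-support case, the singular-part elimination via Theorem~\ref{th:up-poisson-general}, the use of Theorem~\ref{th:Llower} (resp.\ Theorem~\ref{th:lowerset}) for the lower limit, and the truncation argument for unbounded support all proceed verbatim as in Theorem~\ref{th:poisson-general}. So the reduction is correct and complete.

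The one place where your sketch is genuinely looser than the paper is the combinatorial surgery for $(\mathcal I_p)$. Your plan is to break the optimal tour $G$ into arcs by deleting $\cX_2\cup\cY_2$ and then reconnect consecutive arcs, ``splicing in a well-chosen edge of $G_2$'' when parity goes wrong. As stated this is not quite a proof: an edge of $G_2$ joins two points of $\cX_2\cup\cY_2$, so it is not obvious how inserting such an edge repairs the bipartite alternation between two arc-endpoints that lie in $\cX_1\cup\cY_1$, nor why the reconnection stays connected and uses each $G_2$-edge at most once. The paper handles this cleanly with Euler's circuit theorem: after a short preprocessing (shortcutting $G_{1,2}$ to a bipartite cycle $G'_{1,2}$ on the points actually used, and trimming the cardinality imbalance so that $G'_{1,2}\cup G_2$ is a connected bipartite multigraph in which every vertex has even degree), one takes an Eulerian circuit with the property that every $G_2$-edge is immediately followed by a $G'_{1,2}$-edge. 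Shortcutting this circuit to the points of $(\cX''_1,\cY''_1)$ then automatically produces a \emph{bipartite} cycle, and the triangle inequality (here using $p\le1$) controls its cost by $L(\cX_1\cup\cX_2,\cY_1\cup\cY_2)+L(\cX_2,\cY_2)$. Your ``arc-reconnection'' idea is morally the same operation, but the Eulerian-circuit formulation is what makes the alternation and the single use of each edge transparent; without it the step you flag as ``the main obstacle'' remains a genuine gap.
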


All we have to do is to check property $(\mathcal I_p)$. More precisely:
\begin{lemma}\label{le:euler}
Assume $p\in(0,1]$ and $L = T_p$. For any set $\cX_1, \cX_2, \cY_1, \cY_2$ in a bounded set $Q$ 
\begin{eqnarray*}
L ( \cX_1 , \cY_1 ) & \leq & L ( \cX_1 \cup \cX_2 , \cY_1 \cup \cY_2 ) + L ( \cX_2, \cY_2 ) \\ 
& & \quad + \; 2 \, \mathrm{diam}(Q)^p\left( 1 + | \mathrm{\card}(\cX_1)- \mathrm{\card}(\cY_1)|+ | \mathrm{\card}(\cX_2)   - \mathrm{\card}(\cY_2)|\right). 
\end{eqnarray*}
\end{lemma}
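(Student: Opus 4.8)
The plan is to mimic the structure of the proof of Lemma~\ref{le:mino-point}, but using tours instead of matchings, and exploiting the well-known fact that a tour is an Eulerian structure on a cycle. First I would fix optimal bipartite tours: let $\Gamma_{12}$ be an optimal bipartite tour realizing $T_p(\cX_1\cup\cX_2,\cY_1\cup\cY_2)$, and $\Gamma_2$ an optimal bipartite tour realizing $T_p(\cX_2,\cY_2)$. As in Lemma~\ref{le:mino-point}, I treat repeated points as distinct vertices. Each of $\Gamma_{12}$ and $\Gamma_2$ is a cycle (after restricting to the matched points), so the multigraph $\Gamma := \Gamma_{12}\cup\Gamma_2$ has all vertices of even degree at most $4$, and every point of $\cX_2\cup\cY_2$ has degree exactly $4$ while every point of $\cX_1\cup\cY_1$ has degree $2$ in $\Gamma$, except for the $\big|\card(\cX_1)+\card(\cX_2)-\card(\cY_1)-\card(\cY_2)\big|$ points of $\cX_1\cup\cY_1$ and the $|\card(\cX_2)-\card(\cY_2)|$ points of $\cX_2\cup\cY_2$ left unmatched by the respective optimal tours, which have smaller degree.

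Next I would invoke the standard Euler-tour / shortcutting argument. On each connected component of $\Gamma$ containing points of $\cX_1\cup\cY_1$, take an Eulerian closed walk (each component has all degrees even once we discard the low-degree leftover vertices, or pair them up cheaply by the triangle-type inequality below). Then shortcut this walk, keeping only the vertices of $\cX_1$ and $\cY_1$ and skipping over all vertices of $\cX_2\cup\cY_2$. Since $p\le 1$, the cost $c(x,y)=|x-y|^p$ satisfies the triangle inequality, so each shortcut edge has $p$-cost at most the sum of the $p$-costs of the edges of $\Gamma$ it bypasses; summing over all components, the total $p$-cost of the resulting bipartite structure on $(\cX_1,\cY_1)$ is at most the total $p$-cost of $\Gamma$, i.e. $T_p(\cX_1\cup\cX_2,\cY_1\cup\cY_2)+T_p(\cX_2,\cY_2)$. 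One must check that the shortcut preserves the bipartite alternation $\cX\to\cY$; this holds because the vertices along the Eulerian walk alternate between the two species (edges of both $\Gamma_{12}$ and $\Gamma_2$ go between the $X$-side and the $Y$-side), so deleting a maximal run of $\cX_2\cup\cY_2$ vertices between two kept vertices leaves them in the correct alternating order when the run has even length, and introduces at most one extra same-species adjacency otherwise, which is absorbed into the $\mathrm{diam}(Q)^p$ error terms.

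Finally I would collect the defects into the additive error. The shortcutting produces a disjoint union of closed bipartite walks on $(\cX_1,\cY_1)$ rather than a single tour using all of $\cX_1,\cY_1$; I splice these $O(1)$-many walks together into one cycle and absorb the leftover/unmatched points of $\cX_1\cup\cY_1$ and the parity corrections, each at $p$-cost at most $\mathrm{diam}(Q)^p$, into the term $2\,\mathrm{diam}(Q)^p\big(1+|\card(\cX_1)-\card(\cY_1)|+|\card(\cX_2)-\card(\cY_2)|\big)$; here the "$1+$" accounts for the cost of concatenating the bounded number of components into one cycle, and the two cardinality-difference terms bound the number of unmatched points on the $\cX_1\cup\cY_1$ side and the number of "dead-end" vertices in $\cX_2\cup\cY_2$ where an Eulerian path terminates. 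The resulting object is an admissible bipartite tour on $(\cX_1,\cY_1)$, so its $p$-cost bounds $T_p(\cX_1,\cY_1)$ from above, giving the claim. The main obstacle is the bookkeeping in the second and third steps: verifying that after shortcutting and re-gluing one genuinely obtains a single cycle with the correct bipartite alternation, and that the number of error edges is bounded linearly by the stated cardinality discrepancies (rather than, say, by the number of components of $\Gamma$, which could a priori be large — but it is $O(1)$ once one notes each component of $\Gamma$ other than the trivial ones must contain at least one of the finitely many defect vertices).
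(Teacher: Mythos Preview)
Your high-level strategy matches the paper's: form the union multigraph of an optimal tour $\Gamma_{12}$ on $(\cX_1\cup\cX_2,\cY_1\cup\cY_2)$ and an optimal tour $\Gamma_2$ on the type-2 points, take an Eulerian circuit, and shortcut to the $(\cX_1,\cY_1)$ vertices using $|x-y|^p\le\sum|\cdot|^p$ for $p\le 1$. However, there is a genuine gap in your treatment of the bipartite alternation after shortcutting.

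You assert that an odd-length run of $\cX_2\cup\cY_2$ vertices ``introduces at most one extra same-species adjacency \ldots\ absorbed into the $\diam(Q)^p$ error terms''. But nothing in your argument bounds the \emph{number} of such odd runs. Concretely, with all four sets of size $n$ (so your error budget is only $2\,\diam(Q)^p$), one can arrange $\Gamma_{12}$ so that the $(\cX_1,\cY_1)$ vertices occur along it in the cyclic order $x_1,x_2,\ldots,x_n,y_1,\ldots,y_n$, each pair separated by a single type-2 vertex. For a generic Eulerian circuit the shortcut then produces roughly $2n-2$ same-species adjacencies, which cannot be repaired within an $O(1)$ budget. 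Your claim that the parity defects are ``absorbed'' is simply not justified. (Your degree assertion is also incorrect as stated: a point of $\cX_2\cup\cY_2$ used by only one of the two tours has degree $2$, not $4$.)

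The paper closes this gap with two ingredients you are missing. First, it preprocesses so that the two cycles share exactly the same type-2 vertex set $(\cX''_2,\cY''_2)$: it takes $G_2$ on the type-2 points \emph{already used by $G_{1,2}$}, then for each type-2 point left over by $G_2$ it deletes from $(\cX'_1,\cY'_1)$ the next type-1 vertex of the appropriate species along the oriented cycle $G_{1,2}$, and shortcuts $G_{1,2}$ to a \emph{bipartite} cycle $G'_{1,2}$ on the remaining points. After this step every surviving type-2 vertex has degree exactly $4$ in $G=G'_{1,2}\cup G_2$. Second --- and this is the decisive point --- the paper does not take an arbitrary Eulerian circuit: it chooses one in which every $G_2$-edge is immediately followed by a $G'_{1,2}$-edge. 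Combined with the degree structure, this forces the pairing at each degree-$4$ vertex to be mixed (one edge of each kind per visit), so the edge labels along any excursion between consecutive type-1 vertices alternate $G'_{1,2},G_2,G'_{1,2},\ldots$ and hence have odd length; consequently the two type-1 endpoints lie on opposite sides and the shortcut is automatically a bipartite cycle with \emph{no} parity defects to absorb. Both steps are needed for your outline to become a proof.
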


\begin{proof}
We may assume without loss of generality that $\mathrm{\card} ( \cX_1) \wedge \mathrm{\card}(\cY_1) \geq 2$, otherwise, $L ( \cX_1, \cY_1 ) = 0$ and there is nothing to prove.  Consider an optimal cycle $G_{1,2}$ for  $L(\cX_1 \cup \cX_2 , \cY_1 \cup \cY_2)$. In $G_{1,2}$, $m = | \mathrm{\card}(\cX_1)+ \mathrm{\card}(\cY_1) -  \mathrm{\card}(\cX_2)   - \mathrm{\card}(\cY_2)| \leq | \mathrm{\card}(\cX_1)- \mathrm{\card}(\cY_1)|+ | \mathrm{\card}(\cX_2)   - \mathrm{\card}(\cY_2)|$ points have been left aside. We shall build a bipartite tour $G_1$ on $(\cX'_1, \cY'_1)$, the points of $(\cX_1, \cY_1)$ that have not been left aside by $G_{1,2}$. 

\begin{figure}[htb]
 \begin{center}
  \includegraphics[angle=0,width = 6cm]{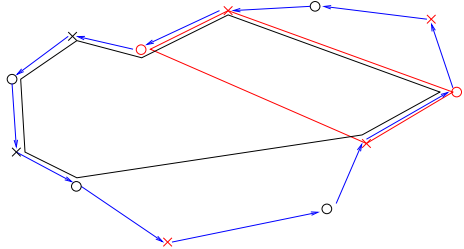}
 \caption{\label{fig:matchcolor} In blue, the oriented cycle $G_{1,2}$, in red $G_2$, in black $G'_{1,2}$. The points in $\cX_1 \cup \cX_2$ are represented by a cross, points in $\cY_1 \cup \cY_2$ by a circle.}
\end{center}\end{figure}

We consider an optimal cycle $G_2$ for $L( \cX'_2 ,  \cY'_2)$, where $(\cX'_2, \cY'_2)$ are the points of $(\cX_2, \cY_2)$ that have not been left aside by $G_{1,2}$. We define $(\cX''_2, \cY''_2) \subset (\cX'_2, \cY'_2)$ as the sets of points that are in $G_2$. Since $\mathrm{\card} ( \cX'_1) + \mathrm{\card} ( \cX'_2) = \mathrm{\card} ( \cY'_1) + \mathrm{\card} ( \cY'_2)$, we get $\mathrm{\card} ( \cX'_1) - \mathrm{\card} ( \cY'_1)  =  -  \mathrm{\card} ( \cX'_2) + \mathrm{\card} ( \cY'_2)$. It implies that the same number of points from the opposite type need to be removed in $( \cX'_1,  \cY'_1)$ and $( \cX'_2 ,  \cY'_2)$ in order to build a bipartite tour. We fix an orientation on $G_{1,2}$. Assume for example that $\mathrm{\card} ( \cX'_2) \geq \mathrm{\card} ( \cY'_2)$, if a point $x \in \cX'_2 \backslash \cX''_2$, we then remove the next point $y$ on the oriented cycle $G_{1,2}$ of $\cY'_1$. Doing so, this defines a couple of sets $(\cX''_1, \cY''_1) \subset (\cX'_1, \cY'_1)$ of cardinality $\mathrm{\card} ( \cX'_1) \wedge \mathrm{\card}(\cY'_1)$ and
$$
L  (\cX'_1, \cY'_1)  \leq L ( \cX''_1, \cY''_1).
$$
 We define $G'_{1,2}$ as the cycle on $(\cX''_1 \cup \cX''_2, \cY''_1\cup \cY''_2)$ obtained from $G_{1,2}$ by saying that the point after $x \in(\cX''_1 \cup \cX''_2, \cY''_1\cup \cY''_2)$ in the oriented cycle $G'_{1,2}$ is the next point $y \in(\cX''_1 \cup \cX''_2, \cY''_1\cup \cY''_2)$ in $G_{1,2}$. By construction, $G'_{1,2}$ is a bipartite cycle. Also, since $p \in (0, 1]$, we may use the triangle inequality : the distance between two successive points in the circuit $G'_{1,2}$ is bounded by the sum of the length of the intermediary edges in $G_{1,2}$. We get 
$$
L (\cX''_1 \cup \cX''_2, \cY''_1\cup \cY''_2) \leq L(\cX_1 \cup \cX_2 , \cY_1 \cup \cY_2).
$$

Now consider the (multi) graph $G = G'_{1,2} \cup G_2$ obtained by adding all edges of $G'_{1,2}$ and $G_2$. This graph is bipartite, connected, and points in $(\cX''_1, \cY''_1)$ have degree $2$  while those  in $(\cX''_2, \cY''_2)$ have degree $4$. Let $k$ be the number of edges in $G$, we recall that an eulerian circuit in $G$ is a sequence $E = (e_1, \cdots, e_{k})$ of adjacent edges in $G$ such that $e_{k}$ is also adjacent to $e_1$ and all edges of $G$ appears exactly once in the sequence $E$. By the Euler's circuit theorem, there exists an eulerian circuit in $G$. Moreover, this eulerian circuit can be chosen so that if $e_i = \{u_{i-1}, u_i\} \in G_2$  then $e_{i+1} = \{u_{i+1} , u_i\} \in G'_{1,2}$ with the convention that $e_{k+1} =e_1$.

This sequence $E$ defines an oriented circuit of points. Now we define an oriented circuit on $(\cX''_1, \cY''_1)$, by connecting a point $x$ of $(\cX''_1, \cY''_1)$ to the next point $y$ in $(\cX''_1, \cY''_1)$ visited by the oriented circuit $E$. Due to the property that  $e_i \in G_2$ implies $e_{i+1} \in G$, if $x \in \cX''_1$ then $y \in \cY''_1$ and conversely, if $x \in \cY''_1$ then $y \in \cX''_1$. Hence, this oriented circuit defines a bipartite cycle $G_1$ in $(\cX''_1, \cY''_1)$. 

By the triangle inequality, the distance between two successive points in the circuit $G_1$ is bounded by the sum of the length of the intermediary edges in $E$. Since each edge of $G$ appears exactly once in $E$, it follows that 
$$
L( \cX'_1, \cY'_1) \leq L(\cX_1 \cup \cX_2 , \cY_1 \cup \cY_2) + L(\cX'_2 , \cY'_2).
$$
To conclude, we merge  arbitrarily  to the cycle $G_1$ the remaining points of $(\cX_1,\cY_1)$, there are at most $m$ of them (regularity $(\mathcal R_p)$ property). \end{proof}

\section{Variants and final comments}\label{sec:final}

As a conclusion, we  briefly discuss variants and possible extensions of Theorem~\ref{th:mainMp}. For $d > 2p$ and when $\mu$ is
the uniform distribution on the cube $[0,1]^d$, there exists a
constant $\beta_p(d) >0$ such that almost surely
 $$ \lim_{n\to \infty } n^{\frac{p}{d}-1}  M_p \big(\{X_1,\ldots,X_n\}, \{Y_1,\ldots,Y_n\}\big)=\beta_p(d).$$
A natural question is to understand what happens below the critical line $d = 2p$, i.e. when $d \leq 2p$. For example for $d =2$ and $p =1$, a similar convergence is also expected in dimension 2 with scaling $\sqrt{n \ln n}$, but this is a
 difficult open problem. The main result in this direction goes back
 to Ajtai, Koml\'os and Tusn\'ady \cite{AKT}. See also the improved upper bound of Talagrand and Yukich in \cite{TY}. In dimension 1, there is
 no such stabilization to a constant.

Recall that $$\left(\frac 1 n  M_p \big( \{X_i \}_{i = 1}^n , \{Y_i \}_{i = 1}^n \big) \right)^{\frac 1 p} = W_p \left( \frac 1 n \sum_{i =1}^n \delta_{X_i} ,  \frac 1 n \sum_{i =1}^n \delta_{Y_i} \right).$$ 
where $W_p$ is the $L_p$-Wasserstein distance. A variant of Theorem~\ref{th:mainup} can be obtained along the same lines,
concerning the convergence of
 $$ n^{\frac{1}{d}} W_p \left( \frac{ 1 } {n }\sum_{i=1}^{n} \delta_{X_i},\, \mu\right),$$
where $\mu$ is the common distribution of the $X_i$'s. Such results are of fundamental 
importance in statistics. Also note that combining the triangle inequality and Jensen 
inequality, it is not hard to see that
$$ \E  W_1 \Big( \frac1n \sum_{i=1}^{n} \delta_{X_i},\,\mu  \Big)
\leq  \E W_1\Big( \frac1n \sum_{i=1}^{n} \delta_{X_i},\,\frac 1 n \sum_{i =1}^n \delta_{Y_i} \Big)
 \le 2 \E W_1\Big( \frac1n \sum_{i=1}^{n} \delta_{X_i},\, \mu \Big),$$
(similar inequalities hold for $p \geq 1$). Hence it is clear
that the behaviour of this functional is quite close to the one
of the two-sample optimal matching. However, the extension of Theorem~\ref{th:mainMp} would require some care in the definition of the boundary functional.

Finally, it is worthy to note that the case of uniform distribution for $L = M_p$ has a connection with stationary matchings of two independent Poisson point processes of intensity $1$, see Holroyd, Pemantle, Peres and Schramm  \cite{HPPS}. Indeed,  consider mutually independent random variables $(X_i)_{i\ge 1}$ and $(Y_j)_{j\ge 1}$ having uniform distribution on $Q = [-1/2,1/2]^d$. It is well known that for any $x$ in the interior of $Q$, the pair of point processes 
$$\left( \frac 1 n  \sum_{i = 1} ^n  \delta_{n^{\frac 1 d} ( X_i - x )  } , \frac 1 n  \sum_{i = 1} ^n  \delta_{n^{\frac 1 d} ( Y_i - x)  } \right)$$ converges weakly for the topology of vague convergence to 
$
(\Xi_1 , \Xi_2 ) 
$, where $\Xi_1$ and $\Xi_2$ are two independent Poisson point processes of intensity $1$. Also, we may write 
$$
n^{\frac p d - 1} \E M_p ( \{X_i \}_{i = 1} ^n , \{Y_i \}_{i = 1} ^n ) =  \frac 1 n \E \sum_{i = 1} ^n   \left| n^{\frac 1 d} ( X_i  - x) - n^{\frac 1 d} ( Y_{\sigma_n ( i )}- x)  \right| ^p.
$$
where $ \sigma_n$ is an optimal matching. Now, the fact that for $0 < p < 2d$, $\lim_n n^{\frac p d - 1} \E M_p ( \{X_i \}_{i = 1} ^n , \{Y_i \}_{i = 1} ^n ) = \beta_p ( d)$ implies the tightness of the sequence of matchings $\sigma_n$ and it can be used to define a stationary matching $\sigma$ on $(\Xi_1 , \Xi_2 )$, see the proof of Theorem 1 (iii)  in \cite{HPPS} for the details of such an argument. In particular, this matching $\sigma$ will enjoy a local notion of minimality for the $L_p$-norm, as defined by Holroyd in \cite{H} (for the $L_1$-norm). 
See also related work of Huesmann and Sturm \cite{Hu}. 

\section*{Acknowledgments}

We are indebted to Martin Huesmann for pointing an error in the proof of a previous version of Theorem \ref{th:mainMp}. This is also a pleasure to thank for its hospitality the Newton Institute where part of this work has been done (2011 Discrete Analysis programme).

\medskip
\noindent

\bigskip
\noindent
Address: Institut de Math\'ematiques (CNRS UMR 5219). Universit\'e
Paul Sabatier. 31062 Toulouse cedex 09. FRANCE.

\noindent
Email: barthe@math.univ-toulouse.fr, bordenave@math.univ-toulouse.fr


\begin{thebibliography}{10}

\bibitem{AKT}
M.~Ajtai, J.~Koml{\'o}s, and G.~Tusn{\'a}dy.
\newblock On optimal matchings.
\newblock {\em Combinatorica}, 4(4):259--264, 1984.

\bibitem{BHH}
J.~Beardwood, J.~H. Halton, and J.~M. Hammersley.
\newblock The shortest path through many points.
\newblock {\em Proc. Cambridge Philos. Soc.}, 55:299--327, 1959.

\bibitem{BM}
J.~Boutet~de Monvel and O.~Martin.
\newblock Almost sure convergence of the minimum bipartite matching functional
  in {E}uclidean space.
\newblock {\em Combinatorica}, 22(4):523--530, 2002.



\bibitem{DY}
 V.  Dobri{\'c} and J. E.   Yukich.
\newblock  Asymptotics for transportation cost in high dimensions.
\newblock {\em  J. Theoret. Probab.},  8(1):97--118, 1995.


\bibitem{H}
 A.  Holroyd.
\newblock Geometric properties of Poisson matchings.
\newblock {\em  Probab. Theory Related Fields.},  2010.




\bibitem{HPPS}
 A.  Holroyd, R. Pemantle, Y. Peres and O.  Schramm.
\newblock  Poisson matching.
\newblock {\em  Ann. Inst. Henri Poincar\'e Probab. Stat.}, 45(1):266-287, 2009.

\bibitem{Hu}
M. Huesmann and K.T. Sturm.
\newblock  Optimal Transport from Lebesgue to Poisson.
\newblock {\em  preprint.}, arxiv:1012.3845.

\bibitem{K}
J.~F.~C. Kingman.
\newblock {\em Poisson processes}, volume~3 of {\em Oxford Studies in
  Probability}.
\newblock The Clarendon Press Oxford University Press, New York, 1993.
\newblock Oxford Science Publications.

\bibitem{P78}
C.~Papadimitriou.
\newblock The probabilistic analysis of matching heuristics.
\newblock In {\em Proc. of the $15$th Allerton Conf. on Communication, Control
  and Computing}, pages 368--378. 1978.

\bibitem{rachev}
S.~T. Rachev and L.~R{\"u}schendorf.
\newblock {\em Mass transportation problems. {V}ol. {I}}.
\newblock Probability and its Applications (New York). Springer-Verlag, New
  York, 1998.
\newblock Theory.

\bibitem{RY}
C.~Redmond and J.~Yukich.
\newblock Limit theorems and rates of convergence for {E}uclidean functionals.
\newblock {\em Ann. Appl. Probab.}, 4(4):1057--1073, 1994.

\bibitem{rhee1}
W.~T. Rhee.
\newblock A matching problem and subadditive  {E}uclidean functionals.
\newblock {\em  Ann. Appl. Probab.}, 3:794--801, 1993.

\bibitem{rhee}
W.~T. Rhee.
\newblock On the stochastic {E}uclidean traveling salesperson problem for
  distributions with unbounded support.
\newblock {\em Math. Oper. Res.}, 18(2):292--299, 1993.

\bibitem{S81}
J.~M. Steele.
\newblock Subadditive {E}uclidean functionals and nonlinear growth in geometric
  probability.
\newblock {\em Ann. Probab.}, 9(3):365--376, 1981.

\bibitem{S}
J.~M. Steele.
\newblock {\em Probability theory and combinatorial optimization}, volume~69 of
  {\em CBMS-NSF Regional Conference Series in Applied Mathematics}.
\newblock Society for Industrial and Applied Mathematics (SIAM), Philadelphia,
  PA, 1997.

\bibitem{T}
M.~Talagrand.
\newblock Matching random samples in many dimensions.
\newblock {\em Ann. Appl. Probab.}, 2(4):846--856, 1992.



\bibitem{TY}
M.~Talagrand and J.~Yukich.
\newblock The integrability of the square exponential transport cost.
\newblock {\em Ann. Appl. Probab.}, 3(4):1100--1111, 1993.


\bibitem{villani}
C.~Villani.
\newblock {\em Topics in optimal transportation}, volume~58 of {\em Graduate
  Studies in Mathematics}.
\newblock American Mathematical Society, Providence, RI, 2003.

\bibitem{Y}
J.~Yukich.
\newblock {\em Probability theory of classical {E}uclidean optimization
  problems}, volume 1675 of {\em Lecture Notes in Mathematics}.
\newblock Springer-Verlag, Berlin, 1998.

\end{thebibliography}
\end{document}